\newtheorem{thm}{{Theorem}}[section]
\newtheorem{prop}[thm]{{Proposition}}
\newtheorem{lem}[thm]{{Lemma}}
\newtheorem{cor}[thm]{{Corollary}}
\newtheorem{conj}[thm]{Conjecture}
\newtheorem{remark}[thm]{Remark}
\numberwithin{equation}{section}
\newtheorem{Def}[thm]{Definition}
\newcommand{\bQ}{\overline{\mathbb{Q}}}
\newcommand{\C}{\mathbb{C}}
\newcommand{\R}{\mathbb{R}}
\newcommand{\Q}{\mathbb{Q}}
\newcommand{\Z}{\mathbb{Z}}
\newcommand{\F}{\mathbb{F}}
\newcommand{\A}{\mathbb{A}}
\newcommand{\p}{\mathfrak{p}}
\newcommand{\f}{\bf{f}}
\newcommand{\diag}{{\rm diag}}
\newcommand{\G}{\Gamma}
\newcommand{\ds}{\displaystyle}
\newcommand{\Ga}{\mathbb{G}_a}
\newcommand{\lra}{\longrightarrow}
\newcommand{\sgn}{{\rm sgn}}
\newcommand{\bs}{\backslash}
\newcommand{\bfa}{{\bf a}}
\newcommand{\ve}{\varepsilon}
\newcommand{\disc}{{\rm disc}}
\newcommand{\sL}{L^{{\rm ss}}}
\newcommand{\sP}{P^{{\rm ss}}}
\newcommand{\sM}{M^{{\rm ss}}}
\newcommand{\W}{\mathcal W}
\newcommand{\Wh}{{\rm Wh}}
\newcommand{\wu}{\widetilde{u}}
\newcommand{\wSL}{\widetilde{{\rm SL}}}
\newcommand{\wT}{\widetilde{T}}
\renewcommand{\Bbb}{\mathbb}
\def\GL{{\mathop{\mathrm{GL}}}}
\def\SU{{\mathop{\mathrm{SU}}}}
\def\SL{{\mathop{\mathrm{SL}}}}
\def\SO{{\mathop{\mathrm{SO}}}}
\def\Sp{{\mathop{\mathrm{Sp}}}}
\def\GSp{{\mathop{\mathrm{GSp}}}}
\def\Sym{{\mathop{\mathrm{Sym}}}}
\title[On the Fourier expansion of Gan-Gurevich lifts on $G_2$]
{On the Fourier expansion of Gan-Gurevich lifts \\on the exceptional group of type $G_2$}
\author[Kim and Yamauchi]{Henry H. Kim and Takuya Yamauchi}
\keywords{Ikeda type lift, exceptional group $G_2$, CAP forms, Langlands functoriality}
\thanks{The first author is partially supported by an NSERC grant \#482564}
\subjclass[2010]{}
\address{Henry H. Kim \\
Department of mathematics \\
 University of Toronto \\
Toronto, Ontario M5S 2E4, CANADA \\
and Korea Institute for Advanced Study, Seoul, KOREA}
\email{henrykim@math.toronto.edu}
\address{Takuya Yamauchi \\ 
Mathematical Inst. Tohoku Univ.\\
 6-3,Aoba, Aramaki, Aoba-Ku, Sendai 980-8578, JAPAN}
\email{takuya.yamauchi.c3@tohoku.ac.jp}
\begin{document}
\maketitle

\begin{abstract}
By using the degenerate Whittaker functions, we study the Fourier expansion of the Gan-Gurevich lifts which are Hecke eigen quaternionic cusp forms of weight $k$ ($k\geq 2$, even) on the split exceptional group $G_2$ over $\Q$ which come from elliptic newforms of weight $2k$ without supercuspidal local components. In particular, our results give a partial answer to Gross' conjecture. 
\end{abstract}

\tableofcontents

%\pagestyle{plain}

%%%%%%    TEXT START    %%%%%%

\section{Introduction}\label{intro}
Let $\A$ be the ring of adeles of $\Q$. 
Let $G$ be a connected reductive group over $\Q$. It is interesting and important to 
explicitly construct automorphic forms on $G(\A)$. The theory of Eisenstein series 
due to Langlands gives an explicit construction from cusp forms on
each of smaller reductive (Levi) subgroups of $G$ by induction. Therefore, the problem reduces to the construction of cusp forms.
Using theta lifting as in \cite{GRS} is one way but we  
need to check the non-vanishing and determine the image for the lifting. The trace formula would imply that cusp forms exist in abundance, but an explicit construction is a highly non-trivial matter.

When the symmetric space of $G$ is a Hermitian symmetric tube domain, Hecke eigen 
holomorphic cusp forms on $G(\A)$, whose each local representation is 
a constituent of the degenerate principal series, are constructed from 
Hecke eigen holomorphic cusp forms on $\GL_2(\A)$ by several authors 
(cf. \cite{Ik01}, \cite{Ik08}, \cite{Yamana10}, \cite{IY}, \cite{KY1}, \cite{KY2} 
and the references there for more history). Such forms are so called the Ikeda type lift. They are examples of CAP forms. 
Since $G$ has the Siegel parabolic subgroup $P=MN$ with unipotent abelian 
radical $N$, there is a good theory of Fourier expansions along $N$ which plays an important 
role in the above works. 
If the symmetric space of $G$ is Hermitian symmetric, but not a tube domain, establishing the theory of Fourier expansion is 
considerably more difficult (cf. \cite{MS}).

If $G$ does not give rise to a Hermitian structure, for example, $G=G_2$, 
we cannot have holomorphic automorphic forms but 
several people developed the theory of modular forms, and considered the Fourier expansions along a large unipotent subgroup  
after expanding along  a (``small'') abelian unipotent subgroups inside $G$ (cf. \cite{GGS}, \cite{PoFE}).

In this paper, we study the Fourier expansion of the Gan-Gurevich lifts, which are Hecke eigen quaternionic cusp forms on $G_2(\A)$ which come from elliptic newforms without supercuspidal local components. 
Even though $G_2(\R)$ does not have holomorphic discrete series representation, 
it has quaternionic discrete series representations which 
have a small Gelfand-Kirillov dimension. Several 
authors have studied quaternionic modular forms in \cite{GGS}, \cite{Narita24}, \cite{Po},
 \cite{Po-SK}, and \cite{PoFE}. 

To explain our main results, we need to set up the notations and we refer suitable 
sections for details.  
Let $G_2$ be the split exceptional group over $\Bbb Q$ which is of rank 2 and of dimension 14. Let $P=MN$ be the Heisenberg parabolic subgroup where the Levi factor $M$ is isomorphic to 
$GL_2$ and $N$ is a five dimensional Heisenberg group with the one dimensional center $Z_N$. Let $Q=LU$ be the maximal parabolic subgroup such that $L\simeq GL_2$.  
%The unipotent radical $U$ has three step nilpotency and its center $Z_U$ is of dimension 2. 
Put 
$W=N/Z_N\simeq \mathbb{G}^4_a$ where $\mathbb{G}_a$ is the 1-dimensional additive group scheme and identify $W$ with a subscheme of $N$ which will be explicitly specified later.     

For each even positive integer $k\ge 2$ and $C$, let $S_{2k}(\G_0(C))^{{\rm new}}$ be the space of all elliptic newforms of weight $2k$ with respect to $\G_0(C)\subset \SL_2(\Z)$.  
We also define its subspace $S_{2k}(\G_0(C))^{{\rm new,ns}}$ generated by  
all Hecke eigen newforms where the corresponding automorphic representation does not have 
supercuspidal local components. 
For each Hecke eigen newform $f\in S_{2k}(\G_0(C))^{{\rm new,ns}}$, if we denote by $\pi=\pi_f=\otimes'_p \pi_{p}=\pi_{\f}\otimes \pi_\infty$, the corresponding cuspidal automorphic 
representation of $\GL_2(\A)$, then there exists a finite set $S(\pi_{\f})$ of 
finite places of $\Q$ such that 
\begin{enumerate}
\item if $p\not\in S(\pi_{\f})\cup \{\infty\}$, $\pi_p=\pi(\mu_p,\mu^{-1}_p)$ 
for some unitary character $\mu_p:\Q^\times_p\lra \C^\times$;
\item if $p\in S(\pi_{\f})$, $\pi_{p}={\rm St}_p\otimes\mu_p$ is a unique subrepresentation of 
$\pi(\mu_p |\ast|^{\frac{1}{2}},\mu_p |\ast|^{-\frac{1}{2}})$ for a unitary character 
$\mu_p:\Q^\times_p\lra \C^\times$ satisfying $\mu^2_p=\bf 1$.
\end{enumerate}
If $C$ is square-free, then for each $p|C$, $\pi_p$ belongs to the second case 
(cf. \cite[Proposition 2.8-(2)]{LW}) and $S(\pi_{\f})$ is the set of all rational primes dividing $C$. 
Using these data, if $p\not \in S(\pi_{\f})$, we define 
an irreducible admissible representation $\Pi_p$ of $G_2(\Q_p)$ to be ${\rm Ind}^{G_2(\Q_p)}_{P(\Q_p)}\mu_p\circ\det$ (normalized induction). If $p\in S(\pi_{\f})$, we define $\Pi_p$ to be the unique maximal subrepresentation of 
 ${\rm Ind}^{G_2(\Q_p)}_{P(\Q_p)}(|\ast|^{\frac{1}{2}}\mu_p)(\det)$. Note that $\Pi_p$ 
 is irreducible except for $p\in S(\pi_{\f})$ and $\mu_p=\bf 1$ in which case it has 
 two irreducible constituents (see Theorem \ref{constituents}). Let 
 $\Pi_\infty=D_k$ be the quaternionic discrete series of weight $k$ and $V_k\simeq 
 \Sym^{2k} \C^2$ be its minimal $K_\infty$-type (see Section \ref{QDS}). 
Since $\Pi_p$ is of class one for all but finitely many $p$, 
we can consider an admissible representation $\Pi(f):=\otimes'_p \Pi_p$ of $G_2(\A)$. 
Henceforth, we assume the following:

\begin{equation}\label{assump}
\text{There is a non-trivial intertwining map $\Pi(f)\longrightarrow \mathcal{A}(G_2(\Q)\bs G_2(\A))$}
\end{equation}
from $\Pi(f)$ to the space of automorphic forms on $G_2(\A)$.
For $\phi\in \Pi(f)$, let $F_f(\ast;\phi)$ be its image under the above intertwining map. Since $D_k$ is tempered, by \cite{Wa84}, $F_f$ is in fact a cusp form.  
We call $F_f(\ast,\phi)$ Gan-Gurevich lift on $G_2$ from $f$. If $\phi_\infty$ is chosen from the minimal $K_\infty$-type $V_k$, then 
 $F_f(\ast;\phi)$ is a quaternionic cusp form in the sense of \cite[Section 7]{GGS} (see Section \ref{qmf}). 
Further, if $C$ is square-free, then we can choose such a $\phi\in \Pi(f)$ so that  
$F_f(g;\phi)$ is fixed by $\ds\prod_{p\nmid C}G_2(\Z_p)\times \prod_{p|C}\G_P(\Z_p)$ where 
 $\G_P(\Z_p)$ is the inverse image of $P(\F_p)$ under  
the reduction map $G_2(\Z_p)\lra G_2(\F_p)$ (see Section \ref{FS} for details). 

It is easy to see that if $p\notin S(\pi_{\f})$, $\Pi_p$ is the Langlands quotient of ${\rm Ind}_{Q(\Q_p)}^{G_2(\Q_p)}\, \pi_p\otimes |\det|^\frac 12$.
Now Gan and Gurevich \cite{GG} constructed a CAP representation $\Pi^{G}$ of $G_2$ which is nearly equivalent to a quotient of 
${\rm Ind}_{Q(\Bbb A)}^{G_2(\Bbb A)} \pi_f\otimes |\det|^{\frac 12}$ where $L(\frac 12,\pi_f)\ne 0$. It is obtained as an exceptional theta correspondence from $PGSp_6$ in the dual pair $G_2\times PGSp_6\hookrightarrow E_7$. At unramified places $p\notin S(\pi_{\f})\cup \{p|C\}$, 
it is $\Pi_p$. However, at the bad places $p\in S(\pi_{\f})\cup \{\infty\}$, it has not been proved that it is $\Pi_p$. 
If $C=1$, we will check that $\Pi^{G}_\infty=D_k$ in Appendix A by using 
Arthur's classification and Li's result \cite{Li}. 
Therefore, if $C=1$ and $L(\frac 12,\pi_f)\neq 0$, $\Pi(f)=\Pi^{G}$ and (\ref{assump}) is true. Note that for each newform $f$ of weight $2k$ $(\ge 12)$ and of level 1, 
the condition $L(\frac 12,\pi_f)\ne 0$ implies that $k$ is even. It is a well-known conjecture that the converse is also true. 
Let $S_0$ be the set of primes such that $\pi_p={\rm St}_p$. If $S_0=\emptyset$, then $\Pi(f)$ is irreducible, and if we assume Arthur's multiplicity formula, refined by Gan and Gurevich (Conjecture \ref{Mundy}),
(\ref{assump}) is true. 

For each $w=(a_1,a_2,a_3,a_4)\in W(\Q)$, let $q(w)\in \Q$ be the Freudenthal's quartic form (see (\ref{disc})).  
By using Jacquet integrals, for each $w\in W(\Q)$ with $q(w)\neq 0$, we will define in Section \ref{DWF}, the functionals 
$$\text{$\widetilde{{\bf w}}^{\mu_p}_w\in {\rm Hom}_{N(\Q_p)}(\Pi_p,\C(\psi_{w,p}))$ for $p<\infty$, \quad  
$W^{(k-\frac{1}{2})}_w\in {\rm Hom}_{N(\R)}(\Pi_\infty,\C(\psi_{w,\infty}))$,}
$$
where $\psi_w=\otimes'_p \psi_{w,p}$ for $w\in W(\Q)$ is an additive character of $N(\Q)\backslash N(\A)$ defined in Section \ref{robust}. 

By the multiplicity-freeness of the degenerate Whittaker spaces studied in Section \ref{DWF} and a robust  
theory of Fourier expansion due to Pollack, we have the following Fourier expansion of $F_f(\ast;\phi)$.
 
\begin{thm}\label{exp-thm} Assume {\rm (}\ref{assump}{\rm )}. 
For each distinguished vector $\phi=\otimes'_p \phi_p\in \Pi(f)$, $F_f(\ast;\phi)$ 
can be expanded as 
\begin{equation}\label{fseries} 
F_f(g;\phi)=\sum_{s\in \Q}F_{(s,0)}(g;\phi)+\sum_{\gamma\in w_\beta X_\beta(\Q)}
\sum_{s\in \Q^\times}F_{(s,0)}(\gamma g;\phi),\ g\in G_2(\A), 
\end{equation}
where 
$$F_{(s,0)}(g;\phi):=\sum_{w=(a_1,a_2,a_3,s)\in W(\Q)_{\ge 0}\atop q(w)<0} 
C^{\mu_{\f}}_{w}(F_f)\Big(\prod_{p<\infty} \widetilde{{\bf w}}^{\mu_p}_{{\rm Ad}(w_\alpha)w}(g_p\cdot \phi_p)\Big)
W^{(k-\frac{1}{2})}_{{\rm Ad}(w_\alpha)w}(g_\infty\cdot\phi_\infty)$$
for $g=(g_p)_p\in G_2(\A)$ and some complex numbers $\{C^{\mu_{\f}}_{w}(F_f)\}$.
Here $w_\beta$ {\rm (}resp. $w_\alpha${\rm)} is the Weyl element in $\sL:=[L,L]\simeq SL_2$ {\rm(}resp. in $M${\rm)} and $X_\beta$ 
is the upper unipotent subgroup of $\sL$. 
Furthermore, the coefficients $\{C^{\mu_{\f}}_w(F_f)\}_w$ completely characterize $F_f$. 
\end{thm}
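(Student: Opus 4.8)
\medskip
\noindent The plan is to combine Pollack's robust Fourier expansion for quaternionic modular forms on $G_2$ (\cite{PoFE}) with the multiplicity-one results for degenerate Whittaker models proved in Section \ref{DWF}. Throughout, take $\phi=\otimes'_p\phi_p$ to be a distinguished vector with $\phi_\infty$ in the minimal $K_\infty$-type $V_k$, so that $F_f(\ast;\phi)$ is a quaternionic cusp form; being cuspidal, its constant terms along all proper parabolics, in particular along the Heisenberg parabolic $P=MN$, vanish.

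The first step is to feed $F_f(\ast;\phi)$ into Pollack's Fourier-expansion machinery, which expands a quaternionic cusp form along the Heisenberg unipotent $N$: one first passes to the Fourier expansion along the one-dimensional center $Z_N$, then along $W=N/Z_N\simeq\mathbb{G}_a^4$. In general this produces, besides the characters of $N$ trivial on $Z_N$ (parametrised by $W(\Q)$), a contribution from characters nontrivial on $Z_N$; the point is that the latter are absent for $F_f$. Indeed, since each $\Pi_p$ $(p<\infty)$ is a subrepresentation or a quotient of a degenerate principal series induced from $P$, and $\Pi_\infty=D_k$, the analysis of Section \ref{DWF} shows that ${\rm Hom}_{N(\Q_v)}(\Pi_v,\C(\chi_v))=0$ for every character $\chi_v$ of $N(\Q_v)$ nontrivial on $Z_N(\Q_v)$, so that $F_f$ is left $Z_N(\A)$-invariant and only the $W$-characters survive. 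Pollack's formalism then organizes the outcome into exactly the shape of (\ref{fseries}): the piece $F_{(s,0)}$ gathers the Fourier coefficients attached to characters $\psi_{{\rm Ad}(w_\alpha)w}$ with $w\in W(\Q)_{\ge 0}$, $q(w)<0$ and last coordinate $s$, while the remaining (degenerate, $q(w)=0$) part of the expansion is supplied by translating the $s\ne 0$ terms over the Bruhat cell $\{1\}\sqcup w_\beta X_\beta(\Q)$, which represents $B(\Q)\bs\sL(\Q)\simeq\mathbb{P}^1(\Q)$ for the Borel $B$ of $\sL\simeq SL_2$ containing $X_\beta$. Absolute convergence of all series is part of \cite{PoFE}.

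It remains to identify the individual Fourier coefficients. Fix $w\in W(\Q)$; the assignment $\phi\mapsto\big(g\mapsto\int_{W(\Q)\bs W(\A)}F_f(ug;\phi)\,\overline{\psi_w(u)}\,du\big)$, evaluated at the identity, defines an element of ${\rm Hom}_{N(\A)}(\Pi(f),\C(\psi_w))=\otimes'_v{\rm Hom}_{N(\Q_v)}(\Pi_v,\C(\psi_{w,v}))$. By the multiplicity-freeness established in Section \ref{DWF}, each local factor is at most one-dimensional, spanned when nonzero by the Jacquet integral $\widetilde{{\bf w}}^{\mu_p}_{{\rm Ad}(w_\alpha)w}$ for $p<\infty$ and by $W^{(k-\frac{1}{2})}_{{\rm Ad}(w_\alpha)w}$ at $\infty$, the twist by the long Weyl element $w_\alpha\in M$ recording the normalization that matches the character $\psi_w$ of the expansion to the standard character defining the Jacquet integrals. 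Hence this global functional is a scalar multiple $C^{\mu_{\f}}_w(F_f)$ of $\big(\prod_{p<\infty}\widetilde{{\bf w}}^{\mu_p}_{{\rm Ad}(w_\alpha)w}\big)\otimes W^{(k-\frac{1}{2})}_{{\rm Ad}(w_\alpha)w}$, vanishing unless all local Hom-spaces are nonzero; in particular the support of the archimedean functional for $D_k$ forces $w\in W(\Q)_{\ge 0}$. Substituting this into the skeleton from the previous step yields (\ref{fseries}). Finally, (\ref{fseries}) exhibits $F_f(\ast;\phi)$, for every distinguished $\phi$, as a $\C$-linear expression in $\{C^{\mu_{\f}}_w(F_f)\}_w$, so these coefficients determine $F_f$; conversely each $C^{\mu_{\f}}_w(F_f)$ is recovered from $F_f$ by the above $W$-Fourier integral together with the non-vanishing of the canonical local functionals, so that $F_f\mapsto\{C^{\mu_{\f}}_w(F_f)\}_w$ is injective and $\C$-linear. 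This is the last assertion.

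The main obstacle is the structural input of the second step: verifying that Pollack's robust expansion applies to $F_f$ and takes precisely the form (\ref{fseries}), i.e.\ that the $Z_N$-generic characters genuinely contribute nothing and that the degenerate ($q(w)=0$) part of the $W$-expansion is reproduced \emph{exactly}---neither lost nor double-counted---by the $w_\beta X_\beta(\Q)$-translates of the $s\ne 0$ terms, with the bookkeeping of $w_\alpha$, $w_\beta$ and $X_\beta$ matching the Bruhat combinatorics for $\sL\simeq SL_2$. Once this is granted from \cite{PoFE} and the local multiplicity-one statements of Section \ref{DWF} are in hand, the identification of the coefficients in the third step---and hence the characterization---is a formal consequence of the restricted-tensor-product structure of the global Whittaker functional.
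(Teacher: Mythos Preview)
Your proposal contains a genuine structural error in the second step. You claim that $F_f$ is left $Z_N(\A)$-invariant because ${\rm Hom}_{N(\Q_v)}(\Pi_v,\C(\chi_v))=0$ for characters $\chi_v$ nontrivial on $Z_N(\Q_v)$. But $N$ is a Heisenberg group with $Z_N=[N,N]$, so every character of $N$ is already trivial on $Z_N$; your vanishing statement is vacuous and does not force $F_f=F_0$. In fact Proposition~\ref{equiv} of the paper shows that $F_t\neq 0$ for every $t\in\Q^\times$ whenever $F_f\neq 0$, so $F_f$ is \emph{never} $Z_N(\A)$-invariant. Pollack's robust expansion (\ref{po-exp}) describes only $F_0$, not $F_f$ itself.

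Consequently your interpretation of the $w_\beta X_\beta(\Q)$-sum is also wrong: it does not supply ``degenerate $q(w)=0$'' terms of the $W$-expansion (for a cusp form there are none, by (\ref{po-exp})). The correct mechanism, which you have missed entirely, is the Fourier expansion along $Z_U$---the center of the unipotent radical of the \emph{other} maximal parabolic $Q$---given in Proposition~\ref{expansion1}. There one writes $F_f=\sum_{s\in\Q}F_{(s,0)}+\sum_{\gamma\in w_\beta X_\beta(\Q)}\sum_{s\in\Q^\times}F_{(s,0)}(\gamma\,\cdot)$ using the transitive action of $\sL(\Q)\simeq\SL_2(\Q)$ on $Z_U(\Q)\setminus\{0\}$, and the key identity $F_0=\sum_{s\in\Q}F_{(s,0)}$ (last line of Proposition~\ref{expansion1}) links this to Pollack's expansion of $F_0$. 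Only then does the multiplicity-one argument from Section~\ref{DWF}---which you handle correctly---identify each $F_{(s,0)}$ with the displayed sum over $w$ with last coordinate $s$. Without invoking the $Z_U$-expansion and Proposition~\ref{expansion1}, the passage from $F_0$ to $F_f$ in (\ref{fseries}) is unjustified.
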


\begin{remark}\label{compareGGS} The Fourier coefficient $C^{\mu_{\f}}_w(F_f)$ 
coincide with the one defined in \cite[Section 8]{GGS} up to a constant multiple, which 
depends on the choice of a generator of ${\rm Hom}_{N(\R)}(D_k,\C(\psi_{w,\infty}))$.  
\end{remark}

\begin{remark}\label{main-remark} Assume $k\ge 6$ is even and $C=1$. 
For each rational prime $p$, choose $\phi_p\in \Pi^{G_2(\Z_p)}_p$ with $\phi_p(1)=1$ 
and let $\phi_{\infty,I}$ be the one given in Section \ref{dwf-arch} for a non-empty 
subset $I$ of $\{v\in\Z\ |\ -k\le v\le k\}$. Let $\phi=\otimes'_{p<\infty} \phi_p\otimes \phi_{\infty,I}$.
Then $F_f(\ast;\phi)$ is a non-zero quaternionic Hecke eigen cusp form of weight $k$ 
and of level one. Furthermore, $F_f(\ast;\phi)$ generates $\Pi(f)$ by \cite{NPS}.

In a letter to {\rm(}David{\rm)} Pollack \cite{Gross}, Gross conjectured the existence of 
a Hecke eigen quaternionic cusp form of level one which is a lift from $S_{2k}(\SL_2(\Z))$, with the standard $L$-function in 
Theorem \ref{Lfunct}. By using the exceptional theta lift for the dual pair $(G_2,Sp_6)$ inside the split $E_7$, Gan-Gurevich's result as mentioned gives   
an affirmative answer to his conjecture when $L(\frac 12,\pi_f)\neq 0$ which implies that $k$ is even. Thus, the method does not work for $k$ odd.  Nevertheless, Pollack \cite{PoCoe} constructed quaternionic modular forms of odd weight $k$ using the exceptional theta lift 
for the dual pair $(G_2,F^c_4)$ inside $E_{8,4}$. 
However, in general his method alone does not suffice to show, in accordance with Gross' conjecture on the existence of $\Pi(f)$, 
that the weight $k$ form is the Hecke eigen-lift of a Hecke eigenform $f$ in $S_{2k}(\SL_2(\Z))$ 
without additional representation-theoretic input as in \cite{GG}.
\end{remark}

It is known that $F_0(\ast;\phi):=\ds\sum_{s\in \Q}F_{(s,0)}(\ast;\phi)$ completely determines $F_f$ (see \cite[Lemma 8.5]{GGS}) and 
the coefficients $\{C^{\mu_{\f}}_w(F_f)\}$ characterize $F_0(\ast;\phi)$ by the multiplicity-freeness for the Whittaker spaces. Thus, it is important to study the coefficients $C^{\mu_{\f}}_w(F_f)$ which would reflect some arithmetic nature of $\Pi(f)$. In fact, Gross conjectured that 
the square of $C^{\mu_{\f}}_w(F_f)$ satisfies a formula which is an analogue of Kohnen-Zagier formula 
\cite{KZ}. To explain it, we need a few notations. 
Let $W(\Z):=\{(a_1,a_2,a_3,a_4)\in W(\Q)\ |\ a_1,a_4\in \Z,\ a_2,a_3\in\frac{1}{3}\Z \}$. 
For each $w\in W(\Z)$, one can attach a cubic ring $A_w$ over $\Z$
(see \cite[Proposition 4.2]{GGS}) and it is known that $E_w:=A_w\otimes_\Z\Q$ is an \'etale $\Q$-algebra  if and only if 
$q(w)\neq 0$. The ring $A_w$ is said to be maximal if it is maximal in 
$E_w$. Let $\rho_{A_w}:G_\Q:={\rm Gal}(\bQ/\Q)\lra \GL_2(\C)$ be the 
Artin representation such that $\zeta_{A_w}(s)=\zeta(s)L(s,\rho_{A_w})$ 
(cf. \cite[Section 3]{SST}). Let $L(s,f\otimes\rho_{A_w})$ be the unnormalized Rankin-Selberg $L$-function so that $s=k$ is the central point.

\begin{conj}\label{conj}[Gross \cite{Gross}]\label{Gross-conj} Let $\Q_f$ be the Hecke field of $f$.  
\begin{enumerate}[a{\rm )}]
\item By rescaling, one can normalize $F_f(\ast;\phi)$ so that $C^{\mu_{\f}}_w(F_f)\in \Q_f$ for any $w$ 
such that $A_w$ is maximal.  
\item For such a $w$, it holds 
$$\ds\frac{L(k,f\otimes \rho_{A_w})}{\langle f,f \rangle}=
\frac{C^{\mu_{\f}}_{w}(F_f)^2}{\langle F_f,F_f \rangle}\cdot 
\frac{\pi^{2k}}{\Gamma(k)^2 |q(w)|^{k-\frac{1}{2}}}
$$
where $\langle \ast,\ast \rangle$ stands for the Petersson inner product.
\end{enumerate}
\end{conj}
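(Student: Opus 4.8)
The plan is to argue the two parts of Conjecture~\ref{conj} separately, taking as the main tool the multiplicity-free Fourier expansion of Theorem~\ref{exp-thm} together with the fact that the coefficients $\{C^{\mu_{\f}}_w(F_f)\}$ completely characterize $F_f$. For the normalization question in (a) one wants a single $\Q_f$-rational normalization of $F_f$; for the square relation in (b) one exploits that $C^{\mu_{\f}}_w(F_f)^2/\langle F_f,F_f\rangle$ is independent of the normalization, so it may be computed against any convenient $\phi$.

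For (a), fix $w_0\in W(\Z)$ with $A_{w_0}$ maximal and $C^{\mu_{\f}}_{w_0}(F_f)\ne 0$ --- such $w_0$ exists, for otherwise the Hecke relations among Fourier coefficients (\cite{PoFE, GGS}) would force every $C^{\mu_{\f}}_w(F_f)$ to vanish --- and rescale so that $C^{\mu_{\f}}_{w_0}(F_f)=1$. It then remains to show $C^{\mu_{\f}}_w(F_f)\in\Q_f$ whenever $A_w$ is maximal. For such $w$ all local data are unramified: at each finite $p$ the distinguished vector $\phi_p$ is $G_2(\Z_p)$- (resp. $\G_P(\Z_p)$-) fixed, and $\widetilde{{\bf w}}^{\mu_p}_{{\rm Ad}(w_\alpha)w}(\phi_p)$ is evaluated by a Casselman--Shalika-type formula for $G_2$ --- equivalently by Pollack's recursion for quaternionic Fourier coefficients --- whose value is a polynomial in $p^{-1}$ and the Satake parameter of $\pi_p$, hence lies in $\Q_f$; at $\infty$ the functional $W^{(k-\frac{1}{2})}_{{\rm Ad}(w_\alpha)w}$ depends on $w$ only through the elementary factor $|q(w)|^{k-\frac{1}{2}}$ times a fixed $K$-Bessel function of half-integer order. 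Dividing by the analogous product for $w_0$, the transcendental and archimedean contributions cancel and one is left with a ratio of $\Q_f$-rational numbers times $|q(w_0)/q(w)|^{k-\frac{1}{2}}\in\Q$. (Structurally: the space of weight-$k$, level-one quaternionic cusp forms carries a $\Q$-structure for which the $C^{\mu_{\f}}_w$ are coordinate functionals, and by the characterization in Theorem~\ref{exp-thm}, $F_f$ spans a $\Q_f$-rational Hecke eigenline in it.)

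For (b), we would combine two period identities. First, a Rallis-type inner product formula for the exceptional theta correspondence $G_2\times\PGSp_6\hookrightarrow E_7$ of Gan--Gurevich \cite{GG} should express $\langle F_f,F_f\rangle$ as an explicit multiple of $\langle f,f\rangle$ times a product of local zeta integrals and a value of an $L$-function of $\pi_f$; the hypothesis that the local components of $\Pi(f)$ are the $\Pi_p$ of the introduction (no supercuspidals, $D_k$ at $\infty$) renders all the local factors computable, and the archimedean doubling integral against $D_k$ is where the factor $\pi^{2k}/\Gamma(k)^2$ comes from. Second, after unfolding the Fourier coefficient along $N$, the square $C^{\mu_{\f}}_w(F_f)^2$ should be recognized as a period of $F_f$ over a reductive subgroup of $G_2$ --- with $E_w$ governing the relevant torus or inner form via the cubic-ring dictionary of \cite[Section 4]{GGS} --- paired with an automorphic datum attached to $\rho_{A_w}$, and a Waldspurger/Ichino-type formula should identify this period with $L(k,f\otimes\rho_{A_w})$ up to explicit local factors, in analogy with Waldspurger's interpretation of the Fourier coefficients of half-integral-weight forms used in \cite{KZ}. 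Matching the two computations, cancelling the common local zeta integrals, and verifying that the finite-place factors attached to a maximal $A_w$ are $1$, then produces the clean constant $\pi^{2k}/(\Gamma(k)^2|q(w)|^{k-\frac{1}{2}})$.

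The hardest step is (b), and within it the archimedean analysis: one must compute the quaternionic degenerate-Whittaker period of $D_k$ against itself --- the $G_2$ analogue of the classical computation that yields the factor $\Gamma(k)^{-2}\pi^{2k}$ in Kohnen--Zagier --- and make it compatible with the archimedean factor of the Rallis inner product formula for $D_k$, and likewise match the ramified factors for all $p\in S(\pi_{\f})$. Once the explicit unramified values of $\widetilde{{\bf w}}^{\mu_p}_w$ are available, part (a) is comparatively soft; the arithmetic heart of Gross' conjecture is the precise constant in (b).
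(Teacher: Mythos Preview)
The statement you are attempting to prove is Conjecture~\ref{conj}, and the paper does \emph{not} prove it. It is stated as an open conjecture of Gross; the paper's contribution is Theorem~\ref{main1}, which identifies $C^{\mu_{\f}}_w(F_f)$ for $w$ of the special shape ${\rm Ad}(m'^{-1})(t,0,\frac{S}{3},0)$ (i.e.\ when $E_w$ is $\Q^3$ or $\Q\times$(real quadratic)) with a Fourier coefficient $c_{tS}$ of the Shimura lift of $f$ to $\widetilde{\SL_2}$. The paper then remarks that, combined with Kohnen--Zagier, this gives evidence for part (b) in those cases, \emph{provided} one could also relate $\langle F_f,F_f\rangle$ to $\langle f,f\rangle$ --- a step the paper explicitly leaves open. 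So there is no ``paper's own proof'' to compare against.

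Your proposal is honest about this: part (b) is written entirely in the conditional (``should express'', ``should be recognized'', ``should identify''), and you correctly flag the Rallis-type inner product formula for the exceptional theta lift and the archimedean matching as the missing ingredients. That is a reasonable strategic outline, but it is not a proof, and the paper does not supply the pieces you invoke. Note also that the paper's actual method for the partial result is rather different from yours: it does not compute $C^{\mu_{\f}}_w(F_f)^2$ as a period and invoke an Ichino-type formula, but instead passes through the Fourier--Jacobi expansion along $\widetilde{\sL}\ltimes U/Z_U$ (Sections~\ref{FJEES}--\ref{FJE}), reducing the $G_2$ Fourier coefficient to a Whittaker coefficient on $\widetilde{\SL_2}$ via Lemma~\ref{beta-whi1-na}, and then quoting Kohnen--Zagier on the $\widetilde{\SL_2}$ side.

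For part (a), your sketch has a genuine gap even as a heuristic. You assert that the archimedean functional $W^{(k-\frac{1}{2})}_{{\rm Ad}(w_\alpha)w}$ depends on $w$ ``only through the elementary factor $|q(w)|^{k-\frac{1}{2}}$ times a fixed $K$-Bessel function''; but Pollack's spherical function $\W_w(m)$ depends on the full $M(\R)$-orbit data of $w$, not just $|q(w)|$, and there is no a priori reason the ratio for two maximal $w,w_0$ should be algebraic without further input. Likewise, the claim that a maximal $A_w$ forces all finite-place local Whittaker values into $\Q_f$ via a Casselman--Shalika formula is not established in the paper (the relevant explicit computations in \cite{JR,Xiong} handle only certain unramified situations), and the existence of a $\Q$-structure on the space of quaternionic cusp forms compatible with these coefficients is itself part of what needs to be proved.
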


%T. Ikeda observed \cite{Ik-l} that Gross' conjecture is 
%a special case of Gan-Gross-Prasad (GGP) conjecture.

The main purpose of our paper is to understand $C^{\mu_{\f}}_w(F_f)$. 
Let $\{c_t\}_{t\in \Q_{<0}}$ be the collection of complex numbers defined in 
Section \ref{FESC} which are closely related to the Fourier coefficients of the modular form of weight $k+\frac 12$, which corresponds to $f$ by the Shimura correspondence. Let $w=(a_1,a_2,a_3,a_4)\in W(\Z)\cap W(\Q)_{\ge 0}$ with $q(w)<0$. Assume that $E_w$ is isomorphic to 
$\Q^3$ or a product of $\Q$ and a 
quadratic field (in fact, a real quadratic field by the condition $w\ge 0$). Then, we can write as $w={\rm Ad}(m'^{-1})(t,0,S,0)$ for some 
$m\in M(\Q)$ where $m'={\rm Ad}(w_\alpha)(m)$ 
and $t,S\in \Q$ satisfying $t<0$ and $S>0$. 
 
\begin{thm}\label{main1} Assume {\rm (}\ref{assump}{\rm )}. 
For above $w={\rm Ad}(m'^{-1})(t,0,\frac{S}{3},0)\in W(\Q)$ with $m'={\rm Ad}(w_\alpha)(m)$, there exits a non-zero constant 
$C(S)$ depending only on $S$ and $k$ such that 
$$C^{\mu_{\f}}_w(F_f)=C(S)\mu_{\f}(\det(m))^{-1}\mu_{\f}(S)^{-1}c_{tS}.$$
\end{thm}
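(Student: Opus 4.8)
The plan is to compute the Fourier coefficient $C^{\mu_{\f}}_w(F_f)$ by unfolding the defining Jacquet/Whittaker integrals in a well-chosen model and reducing everything to the $w=(t,0,\frac{S}{3},0)$ case via the $M$-equivariance of the whole construction. First I would record how the coefficients $C^{\mu_{\f}}_w(F_f)$ transform under the action of $M(\Q)$: since $w={\rm Ad}(m'^{-1})(t,0,\frac{S}{3},0)$ with $m'={\rm Ad}(w_\alpha)(m)$, translating the automorphic form $F_f$ by $m$ (or evaluating the degenerate Whittaker functional after conjugating the character $\psi_w$ to $\psi_{(t,0,S/3,0)}$) should turn $C^{\mu_{\f}}_w(F_f)$ into $\mu_{\f}(\det m)^{-1}$ times $C^{\mu_{\f}}_{(t,0,S/3,0)}(F_f)$, up to the automorphy factor of the minimal $K_\infty$-type $V_k\simeq\Sym^{2k}\C^2$; because we evaluate at the distinguished archimedean vector this factor is a harmless nonzero scalar absorbed eventually into $C(S)$. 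This is exactly the mechanism behind Remark \ref{compareGGS}, so the reduction step is formal once the equivariance of $\widetilde{\bf w}^{\mu_p}_w$ under $M$ is spelled out (it follows from the way the Jacquet integrals in Section \ref{DWF} are normalized).

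Next I would attack the model case $w=(t,0,\frac{S}{3},0)$. Here $\Pi(f)$ is induced (or a subquotient of an induction) from $P=MN$ with inducing data $\mu_{\f}\circ\det$, so a vector $\phi$ and its image $F_f$ can be described via a section of this induced representation; plugging $F_f$ into $F_{(s,0)}(g;\phi)$ from Theorem \ref{exp-thm} and specializing to $s$ corresponding to the fourth coordinate $0$-type I want, the sum over $w=(a_1,a_2,a_3,s)$ collapses. The key computation is then a purely local one: at each finite $p\notin S(\pi_{\f})$ the unramified degenerate Whittaker value $\widetilde{\bf w}^{\mu_p}_{w}(\phi_p)$ on the normalized spherical vector should be an explicit ratio of local $L$-factors (a Casselman--Shalika type formula for the $G_2$ degenerate principal series), and at $p\in S(\pi_{\f})$ (the Steinberg places, present only when $C>1$, hence irrelevant when $C=1$ but included for completeness) a similar but simpler computation with the $\G_P(\Z_p)$-fixed vector. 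Assembling the Euler product and comparing with the definition of $\{c_t\}_{t\in\Q_{<0}}$ in Section \ref{FESC} — which by construction are essentially the half-integral-weight Fourier coefficients attached to $f$ by Shimura correspondence, i.e. the same Euler product of Satake parameters weighted by $\mu_{\f}$ — identifies the finite part as $\mu_{\f}(S)^{-1}c_{tS}$ up to the constant $C(S)$, the extra $\mu_{\f}(S)^{-1}$ coming from how the dependence on the $S$-coordinate enters the local integral (a change of variables scaling the character by $S$ introduces $\mu_p(S)$ at each $p$).

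Finally I would collect the archimedean factor: $W^{(k-\frac12)}_{(t,0,S/3,0)}(\phi_\infty)$ evaluated at the distinguished vector $\phi_{\infty,I}$ from Section \ref{dwf-arch} is a specific special function (a $K$-Bessel-type value, as in the known formulas for quaternionic Whittaker functions), depending on $k$ and on $|q(w)|=|tS|^{?}$ but in a way that, once we normalize and keep $t$ fixed implicitly through the relation to $c_{tS}$, contributes only to $C(S)$ together with the $K_\infty$-automorphy scalar from the reduction step. The main obstacle I expect is the precise local computation of the finite-place degenerate Whittaker functionals on the spherical vectors — getting the correct normalization so that the Euler product matches the $c_{tS}$ of Section \ref{FESC} \emph{on the nose} rather than up to an unspecified finite constant; this requires care with the measure normalizations in the Jacquet integral and with the interplay between the two Weyl elements $w_\alpha$ and ${\rm Ad}(w_\alpha)$ appearing in both Theorem \ref{exp-thm} and the statement here. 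Once that matching is pinned down, the rest is bookkeeping and the constant $C(S)$ is explicit.
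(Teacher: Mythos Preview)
Your reduction step via $M(\Q)$-equivariance is correct and matches the paper (this is exactly the relation (\ref{cw})). The gap is in how you handle the model case $w=(t,0,\tfrac{S}{3},0)$.

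You write that ``the key computation is then a purely local one'' and propose to compute each $\widetilde{\bf w}^{\mu_p}_w(\phi_p)$ by a Casselman--Shalika-type formula and then ``assemble the Euler product'' to match the $c_{tS}$ of Section~\ref{FESC}. This cannot work, for two reasons. First, $C^{\mu_{\f}}_w(F_f)$ is by definition the \emph{global} proportionality constant between the adelic Fourier coefficient $F_f(\ast;\phi)_{\psi_w}$ and the product of local Whittaker functionals; computing those local functionals, however explicitly, tells you nothing about this global number --- it is an arithmetic invariant of the cusp form $F_f$, not of the local representations $\Pi_p$. Second, your claim that the $c_t$'s are ``essentially the same Euler product of Satake parameters'' is false: the $c_t$'s are the Fourier coefficients of the half-integral-weight Shimura lift of $f$ (see (\ref{SFC})), and these are genuinely transcendental (related to central $L$-values by Waldspurger/Kohnen--Zagier), not products of local data. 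So a term-by-term local matching is impossible.

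The paper's argument supplies the missing global bridge. One passes to the Fourier--Jacobi expansion of $F_{(0,0)}$ along the Heisenberg group $\widetilde U=U/Z_U$ in the \emph{other} maximal parabolic $Q=LU$, pairing against a theta function for the Weil representation $\omega_{\psi_S}$ (Section~\ref{FJE}). The crucial local identity is Lemma~\ref{beta-whi1-na}, which shows that the composite of the map $\beta^{\psi_p}_S:I(s,\mu_p)\otimes\mathcal{S}(X_\alpha(\Q_p))\to\widetilde I_1^{\psi_p}(s,\mu_p\chi_{S,p})$ with the $\widetilde{\SL_2}$-Whittaker functional $w^{\mu_p\chi_{S,p}}_t$ recovers exactly the $G_2$-Whittaker functional $\widetilde{\bf w}^{\mu_p}_{{\rm Ad}(w_\alpha)(t,0,S/3,0)}$ (up to an explicit factor). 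Globally (Proposition~\ref{auto-criterion}) this says that the Fourier--Jacobi coefficient $F_{(0,0)}(\ast;\phi)_{\psi_S,\Phi^0_S}$ is an \emph{automorphic} form on $\widetilde{\SL_2(\A)}$ whose $t$-th Fourier coefficient is $c_{(t,0,S/3,0)}$ times the local $\widetilde{\SL_2}$-Whittaker product. Since this automorphic form generates the irreducible cuspidal representation $\otimes'_p A_1^{\psi_p}(\mu_p\chi_{S,p})$ (Corollary~\ref{beta-surj-arc}), multiplicity one forces it to be a scalar multiple of the twisted Shimura lift ${\rm Sh}^S_f$ of (\ref{SFCtwist}). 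Comparing Fourier expansions then yields $c_{(t,0,S/3,0)}=C(S)\mu_{\f}(S)^{-1}c_{tS}$. The point you are missing is this use of automorphy on $\widetilde{\SL_2}$ and multiplicity one to equate two a priori unrelated global constants.
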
 
For $w=(t,0,\frac{1}{3},0)$ with the square-free integer $t\in \Z_{<-1}$ such that $-t$ is the fundamental discriminant of the 
quadratic field $\Q(\sqrt{-t})$, the above theorem shows that the square $C^{\mu_{\f}}_w(F_f)^2=C(1)^2 c_{t}^2$ can be written in terms of $L(k,f\otimes \chi_{\Q(\sqrt{-t})/\Q})$ by using
 Kohnen-Zagier formula \cite{KZ}. Thus, once we could relate $\langle F_f,F_f \rangle$ 
 with $\langle f, f\rangle $ as shown in \cite[Corollary 1]{Kohnen}, we can deduce Conjecture \ref{conj}  from Theorem \ref{main1}. 
 
 When $E_w\simeq \Q^3$, $w=
 {\rm Ad}(m'^{-1})(-1,0,\frac{1}{3},0)$ for some $m\in M(\Q)$ and 
 $$C^{\mu_{\f}}_{w'}(F_f)=\mu_{\f}(\det(m))^{-1}C(1) c_{-1}\neq 0$$ since 
 $c_{-1}$ is proportional to $L(k,f)$ (see Section \ref{FESC}). Therefore, we have the following: 
\begin{cor}\label{main-cor}For $w\in W(\Q)$ such that $E_w\simeq \Q^3$, $C^{\mu_{\f}}_w(F_f)\neq 0$ if and only if $L(k,f)\neq 0$. 
\end{cor}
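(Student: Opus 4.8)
The plan is to obtain the corollary as an immediate specialization of Theorem \ref{main1}, together with the normal form for $w$ recorded just before the statement. First I would use the correspondence $w\mapsto A_w$ of \cite[Proposition 4.2]{GGS} and the elementary fact that $M(\Q)\simeq\GL_2(\Q)$ acts transitively on the set of binary cubic forms over $\Q$ having three distinct rational roots: this shows that every $w\in W(\Q)$ with $E_w\simeq\Q^3$ lies in the single $M(\Q)$-orbit of the distinguished element $w_0:=(-1,0,\tfrac13,0)$, so that $w={\rm Ad}(m'^{-1})w_0$ with $m'={\rm Ad}(w_\alpha)(m)$ for some $m\in M(\Q)$. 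One also has $q(w)<0$, and after translating by $M(\Q)$ one may assume $w\in W(\Z)\cap W(\Q)_{\ge 0}$; Theorem \ref{main1} (which already incorporates the $M(\Q)$-equivariance of the Fourier coefficients) then applies with $t=-1$ and $S=1$, and using $\mu_{\f}(1)=1$ it yields
\[
C^{\mu_{\f}}_w(F_f)=C(1)\,\mu_{\f}(\det(m))^{-1}\,c_{-1}.
\]

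Next I would check that the two scalars in front of $c_{-1}$ are nonzero. The constant $C(1)$ is nonzero by the statement of Theorem \ref{main1}, and $\mu_{\f}(\det(m))=\prod_{p<\infty}\mu_p(\det(m))$ is a finite product of nonzero complex numbers --- all but finitely many factors are $1$, since $\det(m)\in\Q^\times$ is a $p$-adic unit and $\mu_p$ is unramified for almost all $p$ --- hence it lies in $\C^\times$. Thus $C^{\mu_{\f}}_w(F_f)$ and $c_{-1}$ vanish simultaneously. Finally, by the explicit description of $\{c_t\}_{t\in\Q_{<0}}$ in Section \ref{FESC}, the coefficient $c_{-1}$ is a nonzero scalar multiple of $L(k,f)$, so $c_{-1}\ne 0$ precisely when $L(k,f)\ne 0$; combining the three observations gives the asserted equivalence.

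The only step carrying content beyond Theorem \ref{main1} is the first one: confirming that $E_w\simeq\Q^3$ does force $w$ into the single $M(\Q)$-orbit of $w_0=(-1,0,\tfrac13,0)$, compatibly with the integrality and positivity conventions defining $W(\Z)$ and $W(\Q)_{\ge 0}$ and with the normalization of Freudenthal's quartic form $q$. Since this reduction has in effect already been carried out in the paragraph preceding the corollary, I do not anticipate a genuine obstacle; the remainder is bookkeeping with nonzero constants and character values, so the corollary is essentially a repackaging of Theorem \ref{main1} and the computation of $c_{-1}$ in Section \ref{FESC}.
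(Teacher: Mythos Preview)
Your proposal is correct and follows essentially the same argument as the paper: reduce to the single $M(\Q)$-orbit representative $w_0=(-1,0,\tfrac13,0)$, apply Theorem \ref{main1} with $(t,S)=(-1,1)$ to obtain $C^{\mu_{\f}}_w(F_f)=C(1)\mu_{\f}(\det(m))^{-1}c_{-1}$, and then invoke from Section \ref{FESC} that $c_{-1}$ is a nonzero multiple of $L(k,f)$. The paper's justification is in the paragraph immediately preceding the corollary, and your version is simply a more explicit unpacking of the same steps.
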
 
 
The claims on Fourier coefficients will be checked by carefully studying 
the Fourier-Jacobi expansions along $\widetilde{\sL(\A)}\ltimes 
(U(\A)/Z_U(\A))$ with 
techniques in \cite{IY} and \cite{KY2}. A key is to use Whittaker functionals 
$\widetilde{{\bf w}}^{\mu_p}_w$ and $W^{(k-\frac{1}{2})}_w$ which do match 
with the representation theoretic study of local Fourier-Jacobi expansions 
(cf. Proposition \ref{d-case}). The functional $W^{(k-\frac{1}{2})}_w$ is a substitution of 
Pollack's explicit spherical functions in \cite[Theorem 3.4]{Po} and his functions are useful for explicit computation at the archimedean place (cf. \cite[Theorem 5.3 and Appendix A]{Po-aut}). Though we do not use his spherical functions directly, 
we will relate $W^{(k-\frac{1}{2})}_w$ with Pollack's functions (see Remark \ref{rel-to-pol}) to use his robust theory. 
In Lemma \ref{beta-whi1-na}, which is crucial, we relate the Jacquet integral at each finite place $p$ along $N$ to the Jacquet integral along the unipotent radical of the Borel of the double cover of $\SL_2$, for an induction on $\widetilde{\SL_2}$ determined by $\mu_p$. These Jacquet integrals for $\SL_2$ give the Fourier coefficients of the Shimura lift of $f$ to $\widetilde{\SL_2}$. We obtain that the Fourier coefficients of $F_f$ for $w$ of the form $(t, 0, \frac{S}{3}, 0)$
are related to Fourier coefficient $c_{tS}$ of the Shimura lift of $f$, thereby proving Theorem \ref{main1} and giving evidence toward Conjecture \ref{conj} for these $w$.

We should remark that Pollack \cite{Po-SK, Po-exc} constructed quaternionic cusp forms of even weight and obtained a similar result as in Theorem \ref{main1} for the Ramanujan delta function 
(see \cite[Corollary 1.2.4]{Po-exc}). In particular, since the dimension of the space of quaternionic modular forms with weight 6 and level 1 is one by Dalal's formula \cite{Dalal}, Pollack's weight 6 form is a unique generator corresponding to the Ramanujan delta function via the Gan-Gurevich lift. 
%However, in general his method may not show that the weight $k$ form is the lift of 
%a Hecke eigenform in $S_{2k}(\SL_2(\Z))$ according to Gross' conjecture 
%(about the existence of $\Pi(f)$) without the dimension formula.

On the other hand, recently, Pollack \cite[Section 9]{Po-aut} studied 
the formal series as in (\ref{fseries}) for the quaternionic groups except for $G_2$ 
and gave a sufficient criterion for the formal series to have the automorphy. 
In his setting, the 
coefficients ``$\{C^{\mu_{\f}}_w(F_f)\}$'' (in \cite{Po-aut}, it is denoted by $a_w$) which satisfy $P$ and $R$ symmetries, play an important role in checking the automorphy. It seems that our 
strategy using the degenerate Whittaker functionals can be used to study 
$\{a_w\}$ in his setting as well. In particular, the Fourier Jacobi coefficients 
are related to half-integral modular forms as shown in the proof of Theorem \ref{main1} 
and a similar result has been obtained in \cite[Theorem 5.3]{Po-aut} for 
quaternionic groups except for $G_2$. 
%However, the most difficult case when $E_w$ is a field remains to be done and we may need to figure out the Fourier-Jacobi expansion along 
%$\sP=\sM\ltimes N$ to 
%understand $C^{\mu_{\f}}_w(F_f)$ for such $w$'s. 

On the other hand, we can study 
$\{C^{\mu_{\f}}_w(F_f)\}$ for $w$ when $E_w$ is a field by using cubic base change to 
$E_w$ of $f$ and $F_f$. This will be studied in a forthcoming paper.

We organize this paper as follows. In Section 2, we set up some notations for $G_2$. 
Our description is based on \cite{Po} but it is slightly modified. 
In Appendix C, we also give an explicit description of $G_2$ inside $SO(3,4)$ and its parabolic subgroups as well for the reproducibility. In Sections 3 and 4, we review quaternionic modular forms in the sense of \cite{GGS} and observe their basic features about the Fourier expansions and the automorphy. Through Section 5 to Section 8, we study 
the Fourier expansion of $F_f(\ast;\phi)$  and the Fourier-Jacobi expansion along 
$\widetilde{\sL(\A)}\ltimes (U(\A)/Z_U(\A))$ for the constant term 
of $F_f(\ast;\phi)$ along $Z_U$ with a similar strategy in \cite{IY} 
and \cite{KY2}. Section 5 is not directly related to the later sections but without it, it may be hard to figure out what 
local analogues of the Fourier-Jacobi expansions should be like. In Section \ref{FS}, we prove Theorem \ref{exp-thm}.
In Section \ref{FESC}, we recall the Fourier expansion of the half-integral modular form attached to $f$ by the Shimura correspondence. 
The proof of Theorem 1.4 is given in Section \ref{Pmt}. In Section 11, we compute the degree 7 standard $L$-function attached to the Gan-Gurevich lift, and obtain its Arthur parameter. 
We will study the archimedean component of the Gan-Gurevich lift in Appendix A.  
Appendix B is given to understand 
the Fourier-Jacobi expansion along $\sP(\A)$ of Eisenstein series associated to the degenerate principal series which are induced from $P$. It may be helpful to 
understand what kind of automorphic forms on $M(\A)$ show up in the expansion. 

\smallskip  

\textbf{Acknowledgments.} We would like to thank Wee-Teck Gan, Akihiko Goto, Hiroaki Narita, Tamotsu Ikeda, Aaron Pollack, Shunsuke Yamana, and Satoshi Wakatsuki for helpful discussions and their encouragement. We would like to thank G. Mui\'c who helped us with the degenerate principal series. We would also like to give a special thanks to Wee-Teck who explained a proof of Lemma \ref{irrecomp} and also to Aaron Pollack for pointing out 
a mistake which consequently yields an important observation stated in Proposition \ref{equiv}. 
%Thanks are due to the referee for his/her many comments/corrections.

\smallskip

\section{Preliminaries on the exceptional group $G_2$}\label{pre}  
Let $G_2$ be the split exceptional group over $\Bbb Q$ which 
has rank 2 and dimension 14. It can be viewed as 
a smooth group scheme over $\Z$. As mentioned, it is explicitly given inside $SO(3,4)$ in Appendix C. 

For any algebraic group (or group scheme) $H$, 
we denote by $Z_H$ the center of $H$. 
Let $\alpha,\beta$ be the simple roots of $G_2$ where $\alpha$ is the short root and $\beta$ is the long root. 
The set of positive roots of $G_2$ is given by 
$$\Phi(G_2)^+=\{\alpha,\beta,\alpha+\beta,2\alpha+\beta,3\alpha+\beta,3\alpha+2\beta\}$$
so that the set $\Phi(G_2):=\Phi(G_2)^+\cup (-\Phi(G_2)^+)$ consists of all roots.  
For each $\gamma\in \Phi(G_2)$, we denote by $w_\gamma$ the Weyl element 
corresponding to $\gamma$ and we fix its realization as an element in $G_2(\Z)$. 
As usual, we write $w_{\gamma_1\cdots \gamma_k}=\ds\prod_{i=1}^k w_{\gamma_i}$ 
for Weyl elements $w_{\gamma_1},\ldots,w_{\gamma_k}$.  

Let $B$ be the Borel subgroup with respect to $\Phi(G_2)^+$ and $T$ be the Levi factor 
of $B$ which is the diagonal torus of $G_2$.  
We view $\Phi(G_2)$ as a subset of the (algebraic) character group $X^\ast(T):={\rm Hom}_{{\rm alg}}(T,GL_1)$.   
Let $\Ga={\rm Spec}\hspace{0.5mm}\Z[u]$ be the 1-dimensional additive group scheme over $\Z$.  
For each root $\gamma$, one can associate 
a homomorphism $x_\gamma:\Ga \hookrightarrow G_2$ of group schemes over $\Z$ such that 
$$t x_\gamma(u)t^{-1}=x_\gamma(\gamma(t)),\ t\in T,\ u\in \Ga$$ and 
we put $X_\alpha:={\rm Im}(x_\alpha)$. 
We also define $h_\gamma:GL_1\lra G_2$ by 
\begin{equation}\label{h}
h_\gamma(t)=w_\gamma(t)w_\gamma(1)^{-1},\ w_\gamma(t)=x_\gamma(t)
x_{-\gamma}(-t^{-1})
x_\gamma(t),\ t\in GL_1
\end{equation}
where $w_\gamma(1)=w_\gamma$ in the Weyl group. 

\subsection{The Heisenberg parabolic subgroup}\label{P}
Let $P=MN$ be the Heisenberg parabolic subgroup associated to $\{\alpha\}$. 
Explicitly, 
$$N=\{n=n(a_1,a_2,a_3,a_4,t):=x_\beta(a_1)x_{\alpha+\beta}(a_2)x_{2\alpha+\beta}(a_3)
x_{3\alpha+\beta}(a_4)x_{3\alpha+2\beta}(t)\ |\ a_1,\ldots,a_4,t\in \Ga\}.$$
Then, we see easily that 
\begin{equation}\label{heisen1}
n(a_1,a_2,a_3,a_4,t_1)n(b_1,b_2,b_3,b_4,t_2)=n(a_1+b_1,a_2+b_2,a_3+b_3,a_4+b_4,t_1+t_2-a_4 b_1 + 
3 a_3 b_2).
\end{equation}

The unipotent group $N$ is a Heisenberg group with the center 
$Z_N=\{x_{3\alpha+2\beta}(t)\ |\ t\in \Ga\}$. 
To see it concretely, we need to modify the coordinates of $N$ by 
\begin{equation}\label{new-c}
n_1(a_1,a_2,a_3,a_4,t):=
n(a_1,a_2,a_3,a_4,\frac{1}{2}t-(\frac{1}{2} a_1 a_4 - \frac{3}{2} a_2 a_3)).
\end{equation}
We write 
$n_1({\bf a},t)=n_1(a_1,a_2,a_3,a_4,t)$ for ${\bf a}=(a_1,a_2,a_3,a_4)\in \mathbb{G}^4_a$. Then, we have   
\begin{equation}\label{heisen2}
n_1({\bf a},t_1)n_1({\bf b},t_2)=n_1({\bf a}+{\bf b},t_1+t_2+\langle {\bf a},{\bf b} \rangle)
\end{equation}
where $\langle {\bf a},{\bf b} \rangle=a_1b_4-3a_2b_3+3a_3b_2-a_4b_1$ for 
${\bf a}=(a_1,a_2,a_3,a_4)$ and ${\bf b}=(b_1,b_2,b_3,b_4)$ so that $\langle \ast,\ast\rangle$ is 
a symplectic form on 
$$W:=X_\beta X_{\alpha+\beta}X_{2\alpha+\beta}X_{3\alpha+\beta}\simeq N/Z_N.$$ 
Notice that the above Heisenberg structure is defined over $\Z[\frac 12]$ because of the 
new coordinates. For the Levi part, we have $M\simeq GL_2$ and write 
$m=
m(\begin{pmatrix}
a & b \\
c & d
\end{pmatrix})$ for $\begin{pmatrix}
a & b \\
c & d
\end{pmatrix}\in GL_2$. The above identification can be characterized by the action of 
$M$ on $N$ so that the adjoint action of $m$ is given by 
\begin{equation}\label{action1}
{\rm Ad}(m)(n_1(\bfa,z))=mn_1(\bfa,z)m^{-1}=n_1(\det(m)^{-1}\rho_3(m)\bfa,\det(m)z)
\end{equation}
where $\rho_3(m)\bfa$ is defined by the pullback of the action of $GL_2$ on the RHS of the identification  
\begin{equation}\label{iden}
W\simeq {\rm Sym}^3{\rm St_2},\ \bfa=(a_1,a_2,a_3,a_4)
\longleftrightarrow f_\bfa(u,v)=a_1u^3+3a_2u^2v+3a_3uv^2+a_4v^3
\end{equation}
with $m f_\bfa(u,v)=f(d u + b v, c u + a v)$. 
Here ${\rm Sym}^3{\rm St_2}$ is the symmetric cube of the 2-dimensional standard module 
${\rm St}_2$. Note that $\det^{-1}\otimes\rho_3$ 
corresponds to the adjoint action of $M$ on $W$ and it yields 
$W\simeq \det^{-1}{\rm St}_2\otimes{\rm Sym}^3{\rm St_2}$ as a representation of 
$M$. 
The above action of $M$ on $W$ is slightly different from the one in \cite{Po}. 
Furthermore, it is easy to check that $\langle\rho_3(m) w,x \rangle=\langle w,
\det(m)^3\rho_3(m^{-1})x \rangle$ and $\langle \det(m)^2 \rho_3(m^{-1}) w,x \rangle=\langle w, {\rm Ad}(m)x \rangle$ which will be used later.
In situations where a Haar measure is considered, the modulus character of $P$  is given by 
$\delta_P(mn)=|\det(m)|^3$. 

One can view $W$ as a vector scheme over $\Z$ and for any commutative 
algebra $R$, and $\bfa=(a_1,a_2,a_3,a_4)\in W(R)$, define 
\begin{equation}\label{disc}
q(\bfa)=-\frac{1}{27}\disc_x(f_{\bfa}(x,1))=-3 a_2^2 a_3^2 + 4 a_1 a_3^3 + 4 a_2^3 a_4 - 6 a_1 a_2 a_3 a_4 + a_1^2 a_4^2.
\end{equation}
It is easy to see that $q(\rho_3(m)\bfa)=\det(m)^6q(\bfa)$ for $m\in M$ and $\bfa\in W$ . 
We remark that $q(\bfa)$ is nothing but the $GL_2$-invariant form, so called 
Freudenthal's quartic form for 
$f_\bfa(u,v)$ given in \cite[Section 2.4]{C6}, up to scaling by positive rational numbers. 

\subsection{Another maximal parabolic subgroup}\label{Q} 
Let $Q=LU$ be the maximal parabolic subgroup associated to $\{\beta\}$. 
Explicitly, 
$$U=\{u=u(a_1,a_2,a_3,a_4,z):=x_\alpha(a_1)x_{\alpha+\beta}(a_2)x_{2\alpha+\beta}(a_3)
x_{3\alpha+\beta}(a_4)x_{3\alpha+2\beta}(z)\ |\ a_1,\ldots,a_4,z\in \Ga\}.$$
It has three step nilpotency:
$$U\supset U_1:=[U,U]=X_{2\alpha+\beta}X_{3\alpha+\beta}
X_{3\alpha+2\beta}\supset U_2:=[U_1,U]=Z_U=X_{3\alpha+\beta}
X_{3\alpha+2\beta}.$$ 
The quotient $\widetilde{U}:=U/Z_U$ is a 3-dimensional  Heisenberg group with the center 
$U_1/Z_U=Z_{\widetilde{U}}$. 
We identify  $\widetilde{U}$ (resp. $Z_{\widetilde{U}}$) with $X_\alpha X_{\alpha+\beta}X_{2\alpha+\beta}$ (resp. $X_{2\alpha+\beta}$) and put $\widetilde{u}(a_1,a_2,a_3):=u(a_1,a_2,a_3,0,0)$ for simplicity. 
 It is easy to see that 
\begin{equation}\label{heisen3}
\wu(a_1,a_2,a_3)\wu(b_1,b_2,b_3)\equiv \wu(a_1+b_1,a_2+b_2,a_3+b_3+2a_2b_1)\ {\rm mod}\ Z_U.
\end{equation}
As in the previous section, we modify the coordinates of $\widetilde{U}$ by 
$\wu_1(a_1,a_2,a_3):=\wu(a_1,a_2,a_3+a_1a_2)$. Then, we have   
\begin{equation}\label{heisen4}
\wu_1(a_1,a_2,a_3)\wu_1(b_1,b_2,b_3)\equiv \wu_1(a_1+b_1,a_2+b_2,a_3+b_3+
\langle(a_1,a_2),(b_1,b_2)\rangle)\ {\rm mod}\ Z_U 
\end{equation}
where $\langle(a_1,a_2),(b_1,b_2)\rangle=a_2 b_1 - a_1 b_2$ so that $\langle \ast,\ast\rangle$ is 
a symplectic form on $X_\alpha X_{\alpha+\beta}$. 
For the Levi part, we fix $L\simeq GL_2$ so that we have the modulus character 
$\delta_Q=|\det|^5$ in situations where a Haar measure is considered.  
If we write $\ell=
\ell(A)$ for $A=\begin{pmatrix}
a & b \\
c & d
\end{pmatrix}\in GL_2$, then it holds 
\begin{equation}\label{ml}
l(\begin{pmatrix}
a & 0 \\
0 & d
\end{pmatrix})
=
m(\begin{pmatrix}
ad & 0 \\
0 & a
\end{pmatrix}),\ l(\begin{pmatrix}
1 & b \\
0 & 1
\end{pmatrix})=n(-b,0,0,0,0),\ 
m(\begin{pmatrix}
1 & b \\
0 & 1
\end{pmatrix})=u(-b,0,0,0,0).
\end{equation} 
The identification $L\simeq GL_2,\ \ell=\ell(A)\longleftrightarrow A$ can also be characterized by 
the action on several unipotent groups. For instance, we have 
\begin{equation}\label{action-tildeU}
\ell^{-1}\wu_1(a_1,a_2,a_3)\ell\equiv \wu_1(\det(A)^{-1}(a a_1 + c a_2), 
\det(A)^{-1}(b a_1 + d a_2),\det(A)^{-1}a_3)\ 
 {\rm mod}\ Z_U  
\end{equation}
and the equality in $U$:
\begin{equation}\label{actionZ}
\ell^{-1}z(x,y)\ell=z(\det(A)^{-2}(x,y)A)\text{ for $z(x,y):=u(0,0,0,x,y)\in Z_U$} 
\end{equation}
where $(x,y)A$ means the usual matrix multiplication.

%Let $\Phi(G_2)=\{\beta_1,\cdots,\beta_6\}$ be the set of positive roots, where $\beta_1$ is the long simple root, $\beta_6$ is the short simple root; $\beta_2=\beta_1+\beta_6$, $\beta_3=2\beta_1+3\beta_6$, $\beta_4=\beta_1+2\beta_6$, $\beta_5=\beta_1+3\beta_6$. 

\section{Quaternionic discrete series representations}\label{QDS}
We refer \cite[Section 6]{GGS} for basic facts on quaternionic discrete series representations. 

Let $K_\infty$ be the maximal compact subgroup of $G_2(\R)$ which is explicitly given in 
\cite[Section 4.1.1]{Po}.  By construction, it is easy to see that 
$M(\R)\cap K_\infty=\{m(\pm I_2)\}$ and $L(\R)\cap K_\infty\simeq SO(2)$. 

Since $G_2(\R)/K_\infty$ does not give rise to a Hermitian symmetric domain, $G_2(\R)$ does not have a holomorphic discrete series 
representation. However, it has a quaternionic discrete series representation $D_k$, parametrized by an integer  
$k\geq 2$, with infinitesimal character $\rho+(k-2)\beta_0$, where $\beta_0=3\alpha+2\beta$ is the highest root and $\rho=5\alpha+3\beta$ is the half sum of positive roots. 
We have $K_\infty\simeq (\SU(2)_{3\alpha+2\beta}\times \SU(2)_\alpha)/\mu_2$ where the first 
(resp. the second) factor 
corresponds to the long root $3\alpha+2\beta$ (resp. the short root $\alpha$). 
Then, we have 
\begin{equation}\label{k-type}
D_k|_{K_\infty}=\bigoplus_{n\ge 0}V_{k,n},\  V_{k,n}:=\Sym^{2k+n}(\C^2)\boxtimes \Sym^n(W(\C)).
\end{equation}
Since $W\simeq \Sym^3\C^2$ as a representation $\SU(2)_\alpha$, by \cite[Theorem 1.3]{HHLS}, we have 
$$\Sym^n(W(\C))\simeq\bigoplus_{i=0}^{n}(\Sym^{3n-2i}\C^2)^{\oplus 
(\lfloor \frac{i}{2}\rfloor-\lfloor \frac{i-1}{3}\rfloor)}\bigoplus_{i=n+1}^{\lfloor \frac{3n}{2}\rfloor}
(\Sym^{3n-2i}\C^2)^{\oplus 
(\lfloor \frac{i}{2}\rfloor-\lfloor \frac{i-1}{3}\rfloor-\lfloor \frac{i-n-1}{2}\rfloor-1)},$$
%[I think the formula in \cite{HHLS} is not correct, in terms of the exponent of $\det$.]
where $\lfloor \ast \rfloor$ stands for the floor function so that $\lfloor -\frac{1}{3} \rfloor
=-1$. 

It is known that $D_k$ is a submodule of a degenerate principal series representation ${\rm ind}_{P(\Bbb R)}^{G_2(\Bbb R)} \lambda_k$ 
(an unnormalized induction), where $\lambda_k$ is the 1-dimensional representation of 
$\GL_2(\Bbb R)$ defined by 
$\lambda_k=\sgn^k |\det|^{k+1}$. We remark that we chose the identification of the Levi subgroup $M$ of $P$ such that 
 $\delta_P(m)=|\det(m)|^3$ while $\delta_P(m)=|\det(m)|^{-3}$ 
in the setting of \cite[Section 6]{GGS}.

The minimal $K_\infty$-type of $D_k$ is $V_k:=V_{k,0}=\Sym^{2k}(\C^2)$. We denote by 
$\tau_k:K_\infty\lra \GL_\C(V_k)$ the corresponding representation of $K_\infty$. 

\section{Modular forms on $G_2$ and their Fourier expansions}
In this section, we review the crucial results of \cite{Po} and \cite[Section 7]{GGS} for 
the Fourier (Fourier-Jacobi) expansions of modular forms on $G_2(\A)$ which generate 
the quaternionic discrete series $D_k$ as a representation of $G_2(\R)$. Let 
$(\tau_k,V_k)$ be the minimal $K_\infty$-type of $D_k$ and $(\tau^\vee_k,V^\vee_k)$ 
the dual of $(\tau_k,V_k)$.  

\subsection{Quaternionic Modular forms on $G_2$}\label{qmf}
Let us first recall the definition of modular forms on $G_2(\A)$ 
due to Gan-Gross-Savin \cite[Section 7]{GGS}. 
\begin{Def} \label{qmf}
A {\rm (}quaternionic{\rm)} modular form $F$ on $G_2$ of weight $k$ is a $V^\vee_k$-valued function on $G_2(\Bbb A)=
G_2(\A_f)\times G_2(\Bbb R)$ such that
\begin{enumerate} 
\item $F(\gamma g \kappa_\infty)=\tau^\vee_k(\kappa_\infty)^{-1}F(g),\ g\in G_2(\A)$ 
  for any $\gamma\in G_2(\Bbb Q)$ and any $\kappa_\infty\in K_\infty$;
  \item $F$ is right-invariant under some open compact subgroup of 
  $G_2(\A_f)$;
  \item $F$ is annihilated by an ideal of finite codimension in $Z(\frak g)$ where 
  $\frak g$ stands for the complexification of ${\rm Lie}(G_2(\R))$;
  \item for any $g_{\f}\in G_2(\A_{\f})$, $F(g_{\f}g_\infty)$ is of uniform moderate growth 
  in $g_\infty \in G_2(\Bbb R)$;
  \item $F$ generates $D_k$ as an admissible representation of $G_2(\R)$. 
  \end{enumerate}  
Furthermore, such an $F$ is said to be a {\rm (}quaternionic{\rm)} cusp form if the constant term along the unipotent radical of any proper $\Q$-parabolic subgroup of $G_2$
vanishes {\rm (}cf. \cite[Section 1.9]{BJ}{\rm )}.   
\end{Def}
Note that the fifth condition implies the third condition because any quaternionic modular forms are annihilated by the Schmid operator (\cite[Section 4]{Po}). 

\subsection{A robust theory of the Fourier expansion due to Pollack}\label{robust}
Let $\psi=\otimes'_p\psi_p:\Q\bs \A_\Q\lra \C^\times$ be the standard additive 
character (cf. \cite[$\psi^{{\rm st}}$ in Section 4.2]{KY1}). 
For each $t\in \Q$, define $\psi_t$ by $\psi_t(\ast)=\psi(t \ast)$. 
Let $F:G_2(\A)\lra V_k^\vee$ be a quaternionic modular form of weight $k$. 
Since $F$ is left invariant under $Z_N(\Q)$, we have the Fourier expansion along $Z_N$:
\begin{eqnarray}\label{fexp}
F=\sum_{t\in \Q}F_t=F_0+\sum_{t\in \Q^\times}F_t,\quad F_t(g):=\int_{Z_N(\Q)\bs Z_N(\A)}F(zg)\overline{\psi_t(z)}dz 
\end{eqnarray}
where $dz$ is induced from the Haar measure on $Z_N(\A)$ with 
${\rm vol}(Z_N(\widehat{\Z}))=1$.  

The constant term $F_0$ along $Z_N$ has nice properties and one of the 
most important facts is that if $F_0$ is zero, then so is $F$ (see \cite[Lemma 8.5]{GGS}).  
Pollack further expanded $F_0$ explicitly along $N/Z_N$ by using harmonic analysis of quaternionic discrete series representations (see \cite{Po} for 
$G_2$ and \cite{PoFE} for more general setting).  
He called it a robust theory of the Fourier expansion. 
To explain his results, we need more notations. 
Any additive character on $N(\Q)\bs N(\A)$ is trivial on $Z_N(\A)$ 
and it can be written as $\psi_w(n):=\psi(\langle w,x \rangle),\ n=n(x,t)\in N(\A)$ 
for some $w\in W(\Q)$. 
Thus, we have the Fourier expansion of $F_0$ along $N/Z_N$ as 
\begin{eqnarray}\label{naiveFE}
 F_0(g)=\sum_{w\in W(\Q)}F_{w}(g),\quad F_w(g)=\int_{N(\Q)\bs N(\A)}F(ng)\overline{\psi_w(n)}dn.
\end{eqnarray}
 
Since $G_2$ is a semi-simple split group over $\Q$, 
by using the strong approximation theorem and Iwasawa decomposition with respect to $P$ 
at each place, $F$ and $F_0$ are determined by the values on 
$P(\R)$ and $W(\R)M(\R)$ respectively. Then, he deduced  
(\cite[Theorem 3.4]{Po})
\begin{equation}\label{po-exp}
F_0(n(x)m)=F_{00}(m)+\ds\sum_{w\in W(\Q)\atop w\ge 0}a_F(w)e^{2\pi \sqrt{-1}\langle w,x \rangle}
\W_w(m)
\end{equation}
for $n(x):=n(x,0)\in W(\R),\ m\in M(\R)$ 
where $\W_w(m)$ is a kind of $V_k^\vee$-valued spherical functions described in terms of the modified 
Bessel functions. 
Note that we have $e^{2\pi \sqrt{-1} \langle w,x \rangle}$, but not 
$e^{-2\pi \sqrt{-1}\langle w,x \rangle}$ because of the choice of 
the additive character at the archimedean place. 
It is easy to see that $\W_w(\gamma m)=\sgn(\det(\gamma))^k |\det(\gamma)|^{k+1}
\W_{\det(\gamma)^2 \rho_3(\gamma^{-1})w}(m)$ for any $\gamma\in M(\R)$ which 
will be used in Remark \ref{rel-to-pol}. 

The initial term $F_{00}(g)=\ds\int_{N(\Q)\bs N(\A)}F(ng)dn$ is the constant term along $N$ which is, by definition, identically zero if $F$ is a cusp form. 
The condition $w\ge 0$ means that all roots of  the polynomial 
$f_{w}(z,1)$ (recall (\ref{iden})) are real.  Furthermore, the polynomial $f_w(z,1)$ with $w\ge 0$ is  
separable over $\R$ if and only if $q(w)\neq 0$. 

If $F$ is a cusp form, then he also deduced a finer expansion (\cite[Corollary 1.2.3]{PoFE}): 
\begin{equation}\label{po-exp}
F_0(n(x)m)=\ds\sum_{w\in W(\Q)\atop w\ge 0,\ q(w)<0}a_F(w)e^{2\pi \sqrt{-1} \langle w,x \rangle}
\W_w(m) .
\end{equation}
\begin{remark}\label{sign}
In (\ref{po-exp}), $q(w)$ is the negative of the discriminant of $f_w(z,1)$ up to 
the scaling of the positive rational number and 
the sign is different from the one in \cite[p.116, (4.5)]{GGS}. 
\end{remark}

\subsection{Expansion along $Z_U$}\label{sL}
In this section, we study the Fourier expansion along $Z_U$. 
Recall the coordinate $z(x,y)=u(0,0,0,x,y)$ of $Z_U$ and the (right) action (\ref{actionZ}) of $L$ on $Z_U$. 
Any additive character on $Z_U(\Q)\bs Z_U(\A)$ can be written as 
$\psi_{(s,t)}(z(x,y)):=\psi(sx+ty)=\psi((s,t){}^t(x,y)),\ z(x,y)\in Z_U(\A)$ for some $(s,t)\in \Q^2$. 
Then, we have 
\begin{equation}\label{FEQ}
F=\sum_{(s,t)\in \Q^2}F_{(s,t)},\ F_{(s,t)}(g)=\int_{Z_U(\Q)\bs Z_U(\A)}F(zg)
\overline{\psi_{(s,t)}(z)}dz.
\end{equation}

We further observe each term as follows (cf \cite[Proposition 8.2]{GGS}). 
\begin{lem}\label{chartrans}For any $\gamma=\ell(A)\in L(\Q),\ A\in \GL_2(\Q)$, it holds that 
$$F_{(s,t)}(\gamma g)=F_{\det(A)^{2}(s,t){}^t A^{-1}}(g)$$
where $(s,t){}^t A^{-1}$ is the usual matrix product. 
\end{lem}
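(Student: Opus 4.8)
The plan is to compute directly with the integral defining $F_{(s,t)}$ and change variables in $Z_U$ using the action of $L(\Q)$. First I would recall that $F$ is left-invariant under $G_2(\Q) \supset L(\Q)$, so $F(\gamma g) = F(g)$ as functions, but the point is to move $\gamma$ past the variable of integration. Writing out
\begin{equation*}
F_{(s,t)}(\gamma g) = \int_{Z_U(\Q)\bs Z_U(\A)} F(z\gamma g)\,\overline{\psi_{(s,t)}(z)}\,dz,
\end{equation*}
I would insert $\gamma\gamma^{-1}$ to get $F(z\gamma g) = F(\gamma(\gamma^{-1}z\gamma)g) = F((\gamma^{-1}z\gamma)g)$ by left $L(\Q)$-invariance. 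Now $\gamma^{-1}z\gamma$ is again an element of $Z_U$ since $Z_U$ is normalized by $L$, and by the explicit formula (\ref{actionZ}) we have $\ell(A)^{-1}z(x,y)\ell(A) = z(\det(A)^{-2}(x,y)A)$.

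Next I would substitute $z' = \gamma^{-1}z\gamma$, i.e. $z(x',y') = z(\det(A)^{-2}(x,y)A)$, so that $(x',y') = \det(A)^{-2}(x,y)A$. This is a measure-preserving bijection of $Z_U(\Q)\bs Z_U(\A)$ onto itself (the Jacobian of a linear automorphism over the adeles has absolute value the product of local determinants, which is $1$ by the product formula — or more simply, $Z_U(\Q)\bs Z_U(\A)$ is a compact group and the substitution is an automorphism of it, hence preserves the normalized Haar measure). Under this substitution $(x,y) = \det(A)^2 (x',y')A^{-1}$, and the integral becomes
\begin{equation*}
\int_{Z_U(\Q)\bs Z_U(\A)} F(z(x',y')\,g)\,\overline{\psi_{(s,t)}\big(z(\det(A)^2(x',y')A^{-1})\big)}\,dz'.
\end{equation*}
The remaining task is to identify the character: $\psi_{(s,t)}(z(\det(A)^2(x',y')A^{-1})) = \psi\big((s,t)\,{}^t(\det(A)^2(x',y')A^{-1})\big) = \psi\big(\det(A)^2 (s,t)\,{}^tA^{-1}\,{}^t(x',y')\big)$, which is exactly $\psi_{\det(A)^2(s,t){}^tA^{-1}}(z(x',y'))$. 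Matching this against the definition (\ref{FEQ}) of $F_{(s',t')}$ yields the claimed identity.

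I do not expect a serious obstacle here; the only point requiring a little care is bookkeeping with the transpose: since the pairing in $\psi_{(s,t)}$ is $(s,t)\,{}^t(x,y)$ (a row times a column), conjugation replaces the column $(x,y)$-vector by $\det(A)^{-2}A^{\mathrm{t}}$ acting on the left, equivalently the row $(x,y)$ by $\det(A)^{-2}(x,y)A$ on the right as in (\ref{actionZ}), and then dualizing onto the $(s,t)$ side produces ${}^tA^{-1}$ rather than $A$ and the power $\det(A)^{+2}$ rather than $\det(A)^{-2}$. One should also double-check that left-invariance of $F$ under $L(\Q)$ is legitimate — it is, since $L(\Q) \subset Q(\Q) \subset G_2(\Q)$ and $F$ is a function on $G_2(\Q)\bs G_2(\A)$ (condition (1) of Definition \ref{qmf} with $\kappa_\infty = 1$). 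Everything else is a routine change of variables in a compact abelian group, so the proof is short.
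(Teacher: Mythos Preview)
Your proposal is correct and follows essentially the same route as the paper's proof: use left $G_2(\Q)$-invariance to replace $F(z\gamma g)$ by $F(\gamma^{-1}z\gamma\, g)$, then change variables in $Z_U(\Q)\bs Z_U(\A)$ via the conjugation formula (\ref{actionZ}) and identify the resulting character. The paper organizes the bookkeeping slightly differently (it first records $\psi_{(s,t)}(\gamma z\gamma^{-1})=\psi_{\det(A)^2(s,t)\,{}^tA^{-1}}(z)$ and then substitutes), but the argument is the same.
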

\begin{proof}
By (\ref{actionZ}), we have $\gamma z(x,y)\gamma^{-1}=z(\det(A)^{2}(x,y)A^{-1})$. 
Then, $$\psi_{(s,t)}(\gamma z(x,y)\gamma^{-1})=\psi((s,t){}^t(\det(A)^{2}(x,y)A^{-1}))
=\psi_{\det(A)^{2}(s,t){}^t A^{-1}}(z(x,y))$$
and it yields that for $g\in G_2(\A)$,  
\begin{eqnarray}
F_{(s,t)}(\gamma g)&=& \int_{Z_U(\Q)\bs Z_U(\A)}F(z\gamma g)\overline{\psi_{(s,t)}(z)}dz 
\nonumber \\
&=&\int_{Z_U(\Q)\bs Z_U(\A)}F(\gamma^{-1}z\gamma g)\overline{\psi_{(s,t)}(z)}dz 
\text{ (the left invariance)}  \nonumber \\
&=&\int_{Z_U(\Q)\bs Z_U(\A)}F(zg)\overline{\psi_{(s,t)}(\gamma z\gamma^{-1})}dz \nonumber \\
&=& F_{\det(A)^{2}(s,t){}^t A^{-1}}(g). \nonumber
\end{eqnarray}
\end{proof} 
We have $L^{{\rm ss}}\simeq SL_2$ under the identification in Section \ref{Q} so that  
$x_\beta(b)=
\ell(
\begin{pmatrix}
1 & -b \\
0 & 1
\end{pmatrix}
)\in X_\beta\subset \sL$. 
By using Lemma \ref{chartrans}, we have an expansion which fits into $L(\Q)$-invariance. 

\begin{prop}\label{expansion1} Keep the notations in {\rm (\ref{FEQ})}. 
Then, it holds
\begin{eqnarray}\label{FEQR}
F(g)&=&F_{(0,0)}(g)+\sum_{\gamma\in X_\beta(\Q)\bs \sL(\Q)}F_{(1,0)}(\gamma g) \nonumber \\
&=&F_{(0,0)}(g)+\sum_{\gamma\in B_{\sL}(\Q)\bs \sL(\Q)}\sum_{s\in \Q^\times}F_{(s,0)}(\gamma g)  \\
&=& \sum_{s\in \Q}F_{(s,0)}(g)+\sum_{b\in \Q}
\sum_{s\in \Q^\times}F_{(s,0)}(w_\beta x_\beta(b) g),\ g\in G_2(\A). \nonumber
\end{eqnarray}
Furthermore, in terms of the expansion (\ref{fexp}), $F_0(g)=\ds\sum_{s\in \Q}F_{(s,0)}(g)$.
\end{prop}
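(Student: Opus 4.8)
The plan is to obtain all three lines of (\ref{FEQR}) by unfolding the $Z_U$-Fourier expansion (\ref{FEQ}) along the orbits of $\sL(\Q)$ on the characters $\psi_{(s,t)}$, using Lemma~\ref{chartrans} as the only input. The key observation is that $\sL(\Q)\simeq\SL_2(\Q)$ acts on the index set $\Q^2$ by $(s,t)\mapsto (s,t){}^tA^{-1}$, the twist $\det(A)^2$ in Lemma~\ref{chartrans} being trivial on $\sL$; after transposing this is the standard left action of $\SL_2(\Q)$ on column vectors, so it has exactly two orbits on $\Q^2$, the fixed point $0$ and $\Q^2\setminus\{0\}$. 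A one-line matrix computation then identifies the stabilizer of $(1,0)$ with the subgroup $X_\beta(\Q)$.

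For the first equality I would split (\ref{FEQ}) as $F(g)=F_{(0,0)}(g)+\sum_{(s,t)\neq 0}F_{(s,t)}(g)$. Each nonzero $(s,t)$ can be written as $(1,0){}^tA^{-1}$ for some $A\in\SL_2(\Q)$, so Lemma~\ref{chartrans} gives $F_{(s,t)}(g)=F_{(1,0)}(\gamma g)$ with $\gamma=\ell(A)$; and since $X_\beta(\Q)$ stabilizes $(1,0)$, the quantity $F_{(1,0)}(\gamma g)$ depends only on the coset $X_\beta(\Q)\gamma$. Reindexing the orbit sum by these cosets yields $F(g)=F_{(0,0)}(g)+\sum_{\gamma\in X_\beta(\Q)\backslash\sL(\Q)}F_{(1,0)}(\gamma g)$, which is the first line.

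The remaining two equalities are purely combinatorial rewritings of the coset space. Writing $B_{\sL}=T_{\sL}X_\beta$ for the upper Borel of $\sL\simeq\SL_2$ with diagonal torus $T_{\sL}$, I would use $X_\beta(\Q)\backslash B_{\sL}(\Q)\simeq T_{\sL}(\Q)\simeq\Q^\times$ via $a\mapsto\diag(a,a^{-1})$, together with $F_{(1,0)}(\ell(\diag(a,a^{-1}))\gamma g)=F_{(a^{-1},0)}(\gamma g)$ from Lemma~\ref{chartrans}, to refine the sum into $F_{(0,0)}(g)+\sum_{\gamma\in B_{\sL}(\Q)\backslash\sL(\Q)}\sum_{s\in\Q^\times}F_{(s,0)}(\gamma g)$ (after relabelling $s=a^{-1}$); this is the second line. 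Then the Bruhat decomposition $\sL(\Q)=B_{\sL}(\Q)\sqcup B_{\sL}(\Q)w_\beta X_\beta(\Q)$ provides the coset representatives $1$ and $w_\beta x_\beta(b)$ for $b\in\Q$ of $B_{\sL}(\Q)\backslash\sL(\Q)$; substituting these and folding $F_{(0,0)}(g)$ into $\sum_{s\in\Q^\times}F_{(s,0)}(g)$ to form $\sum_{s\in\Q}F_{(s,0)}(g)$ gives the third line.

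Finally, for the identification $F_0=\sum_{s\in\Q}F_{(s,0)}$ with the $Z_N$-constant term in (\ref{fexp}), I would use that $Z_N=X_{3\alpha+2\beta}\subset Z_U=X_{3\alpha+\beta}X_{3\alpha+2\beta}$ and that, in the coordinates $z(x,y)=u(0,0,0,x,y)$, the restriction of $\psi_{(s,t)}$ to $Z_N$ sends $x_{3\alpha+2\beta}(y)\mapsto\psi(ty)$, hence is trivial exactly when $t=0$. Integrating the $Z_U$-Fourier expansion of $F$ over $Z_N(\Q)\backslash Z_N(\A)$ and invoking orthogonality of characters (with the Haar measures normalized so this quotient has volume $1$) then kills every $t\neq 0$ term and leaves $\sum_{s\in\Q}F_{(s,0)}(g)$. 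I do not expect a genuine obstacle here: this is the standard unfolding of a Fourier expansion along the $\SL_2$-orbits on a two-dimensional abelian unipotent, and the only care needed is bookkeeping --- getting the transpose-inverse in Lemma~\ref{chartrans} right so that the stabilizer of $(1,0)$ is $X_\beta$ and not a conjugate, checking that the $\det^2$-twist is trivial on $\sL$, and matching the measure normalizations on $Z_N$ and $Z_U$.
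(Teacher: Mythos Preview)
Your proposal is correct and follows essentially the same approach as the paper: both argue via the transitive action of $\sL(\Q)\simeq\SL_2(\Q)$ on $\Q^2\setminus\{0\}$ with stabilizer $X_\beta(\Q)$ at $(1,0)$, invoking Lemma~\ref{chartrans}, and the paper's proof is in fact terser than yours on the third line and on the identification $F_0=\sum_{s\in\Q}F_{(s,0)}$, which you spell out explicitly via Bruhat and the inclusion $Z_N\subset Z_U$.
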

\begin{proof}
We naturally identify $\Q^2$ with $Z_U(\Q)$. 
Since $\sL(\Q)$ acts transitively on $\Q^2\setminus\{(0,0)\}$ and the stabilizer of $(1,0)$ is $X_\beta(\Q)$, 
by (\ref{FEQ}) and Lemma \ref{chartrans}, $F_{(1,0)}(\gamma g)$ 
exhausts the second term of the first expression when $\gamma$ runs over $ X_\beta(\Q)\bs \sL(\Q)$. 
Similarly, the stabilizer of the set $\{(s,0)\ |\ s\in \Q^\times\}\subset Z_U(\Q)$ is clearly $B_{\sL}(\Q)$. Thus, 
 $F_{(s,0)}(\gamma g)$ exhausts the second term of the second expression when $s$ and $\gamma$ run over $\Q^\times$ 
and $\sL(\Q)$ respectively.   
The claim follows. 
\end{proof}

\subsection{An observation on $F_t$ for $t\ne0$}
Let $\sP=\sM N$ where $\sM=[M,M]\simeq SL_2$ is the derived group of $M$. 
For any automorphic form $F$ on $G_2(\A)$, we can regard $F_t|_{\sP(\A)},\ t\neq 0$, as an element 
in the space $C^\infty(\sP(\Q)\bs \sP(\A))_{\psi_t}$ with $C^\infty$-topology. 
Then, by \cite[Proposition 1.3]{Ik94}, $F_t|_{\sP(\A)}$ belongs to a closed span generated by 
certain functions of the form 
\begin{equation}\label{elliptic-part}
f_{\Phi}(m)\theta_{\Phi}(nm),\ m\in \sM(\A),\ n\in N(\A)
\end{equation}
where $f_\Phi$ is an automorphic form on $\sM(\Q)\bs \sM(\A)$ and 
$\theta_\Phi$ is the theta function attached to a Schwartz function $\Phi$ on 
$X_{2\alpha+\beta}(\A)X_{3\alpha+\beta}(\A)$. 
As observed in Appendix B, when $F$ is a quaternionic Eisenstein series, 
$f_\Phi$ is a sum of Eisenstein series and some kinds of theta function. 

On the other hand, recently, Narita \cite{Narita24} obtained a surprising result that for any quaternionic cusp form $F$ and $F_t$ with 
$t\ne 0$, $f_\Phi$ in (\ref{elliptic-part}) belongs to the continuous spectrum. So even if $F$ is a cusp form, $f_\Phi$ may not be a cusp form. The situation is very complicated.
 As the following result shows, $F_t$ for $t$ non-trivial, has rich information 
as much as $F_0$ and $F$.
\begin{prop}\label{equiv}Let $F$ be an automorphic form on $G_2(\A)$. The followings are equivalent:  
\begin{enumerate}
\item $F=0$.
\item $F_0=0$.
\item  $F_t=0$ for any $t\in\Q^\times$.
\item  $F_t=0$ for any fixed $t\in\Q^\times$.
\end{enumerate}
\end{prop}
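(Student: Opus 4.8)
The plan is to prove the equivalence by establishing the chain of implications $(1)\Rightarrow(3)\Rightarrow(4)\Rightarrow(2)\Rightarrow(1)$, where the only nontrivial steps are $(4)\Rightarrow(2)$ and $(2)\Rightarrow(1)$. The implication $(1)\Rightarrow(3)$ is trivial, and $(3)\Rightarrow(4)$ is immediate. The implication $(2)\Rightarrow(1)$ is exactly \cite[Lemma 8.5]{GGS}, cited in the text above, so nothing new is needed there. Hence the real content is $(4)\Rightarrow(2)$: from the vanishing of a single $F_t$ with $t\ne 0$ we must deduce the vanishing of the constant term $F_0$ along $Z_N$.

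First I would exploit the action of $M(\Q)\simeq\GL_2(\Q)$ (or more economically $\sM(\Q)\simeq\SL_2(\Q)$, together with the torus in $M$) on the characters $\psi_t$ of $Z_N(\Q)\bs Z_N(\A)$. By \eqref{action1}, conjugation by $m\in M(\Q)$ sends $x_{3\alpha+2\beta}(z)$ to $x_{3\alpha+2\beta}(\det(m)z)$, so $\psi_t\circ\mathrm{Ad}(m^{-1})=\psi_{\det(m)t}$. Consequently, for $\gamma\in M(\Q)$ one has $F_{\det(\gamma) t}(g)=F_t(\gamma^{-1}g)$ by the same left-invariance computation as in Lemma \ref{chartrans}. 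Since $\det:M(\Q)\to\Q^\times$ is surjective, the hypothesis $F_t\equiv 0$ for one fixed $t\in\Q^\times$ forces $F_s\equiv 0$ for \emph{all} $s\in\Q^\times$, i.e. $(4)\Rightarrow(3)$. Combined with the Fourier expansion \eqref{fexp}, $F=F_0+\sum_{s\in\Q^\times}F_s$, this gives $F=F_0$; but we want the stronger conclusion $F_0=0$, which by $(2)\Rightarrow(1)$ would finish everything. To get $F_0=0$ I would instead pass through step $(1)$: once $F_s\equiv 0$ for all $s\ne 0$ we have $F=F_0$, and then I must still argue $F_0=0$.

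Here is where the one genuinely nontrivial idea enters. Knowing $F=F_0$ means $F$ is \emph{already} its own constant term along $Z_N$, i.e. $F$ is left-invariant under $Z_N(\A)$. But $Z_N=X_{3\alpha+2\beta}$ is the center of the Heisenberg group $N$, and it is a root subgroup for the highest root; the $G_2$-translates of $Z_N$ generate $G_2$ (equivalently, the normal subgroup of $G_2$ generated by $Z_N$ is all of $G_2$, since $G_2$ is simple). Therefore an automorphic form left-invariant under $Z_N(\A)$ is left-invariant under all of $G_2(\A)$, hence constant; being also (one may assume, or it follows from the moderate-growth/$K_\infty$-type setup, or simply because a $G_2(\A)$-invariant function on $G_2(\Q)\bs G_2(\A)$ of moderate growth is constant and a nonzero constant is not a quaternionic modular form) it must be $0$. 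Alternatively, and perhaps more cleanly, I would invoke the cited \cite[Lemma 8.5]{GGS} in the contrapositive-free direction: $F=F_0$ together with the fact that a quaternionic modular form with $F_0$ equal to $F$ and with no constant-term-along-$N$ piece must already satisfy the hypotheses making \cite[Proposition 1.3]{Ik94} and the robust expansion \eqref{po-exp} applicable — but this route is heavier.

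The cleanest packaging is thus: $(4)\Rightarrow(3)$ via the $\det$-surjectivity of $M(\Q)$ acting on central characters of $N$; $(3)\Rightarrow(2)$ via $F=F_0+\sum_{s\ne0}F_s=F_0$ \emph{and then} observing $F=F_0$ already forces, by the simplicity-of-$G_2$ argument above (the $Z_N(\A)$-invariance of $F$ propagates to $G_2(\A)$-invariance, and a moderate-growth invariant automorphic form is constant, hence $0$ in the cuspidal/quaternionic setting), that $F=0$, whence a fortiori $F_0=0$; $(2)\Rightarrow(1)$ is \cite[Lemma 8.5]{GGS}; and $(1)\Rightarrow(4)$ is trivial. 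The main obstacle I anticipate is justifying rigorously that $Z_N(\A)$-left-invariance of an automorphic form implies it is constant: one needs that the subgroup generated by all $G_2(\A)$-conjugates of $Z_N(\A)$ is $G_2(\A)$ (true since $Z_N$ is a nontrivial unipotent subgroup of the simple group $G_2$, so its conjugates generate a nontrivial normal subgroup, which must be everything), and one needs to rule out nonzero constants, which for quaternionic (indeed cuspidal) forms is immediate from condition (1) of Definition \ref{qmf} or from cuspidality. If one prefers to avoid the simplicity argument, an equally valid substitute is to note that $F=F_0$ makes $F$ satisfy the hypothesis of \cite[Lemma 8.5]{GGS} in a degenerate way and invoke it directly; I would present whichever is shorter in the final writeup.
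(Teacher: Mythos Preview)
Your route differs substantially from the paper's. Rather than arguing abstractly from simplicity, the paper exploits the inclusion $Z_N=X_{3\alpha+2\beta}\subset Z_U=X_{3\alpha+\beta}X_{3\alpha+2\beta}$: from $(3)$ one first gets $F_{(0,t)}=0$ for every $t\ne 0$ (Fourier coefficients along $Z_U$), then uses $w_\beta\in \sL(\Q)$ together with Lemma~\ref{chartrans} to convert this into $F_{(s,0)}=0$ for $s\ne 0$. This forces the Fourier coefficients $F_w$ along $N$ to vanish whenever a particular coordinate of $w$ is nonzero, and the $M(\Q)$-action on $W(\Q)\setminus\{0\}$ then moves any nonzero $w'$ into that locus. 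The contradiction with $F_0\ne 0$ is obtained entirely inside the expansion $F_0=\sum_w F_w$, with no appeal to simplicity of $G_2$ or to $G_2(\A)$-invariance of $F$.

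Your simplicity argument has a genuine gap. From $F=F_0$ you correctly obtain left-invariance of $F$ under $Z_N(\A)$ and hence, via automorphy, under $\gamma Z_N(\A)\gamma^{-1}$ for every $\gamma\in G_2(\Q)$. But you then invoke ``the subgroup generated by all $G_2(\A)$-conjugates of $Z_N(\A)$ is $G_2(\A)$''. This is the wrong object: only $G_2(\Q)$-conjugates are at your disposal, and simplicity of the \emph{algebraic group} $G_2$ says nothing directly about whether $\langle G_2(\Q), Z_N(\A)\rangle$ is all of (or dense in) $G_2(\A)$. One can salvage the step---for instance via density of $G_2(\Q)$ in $G_2(\R)$, continuity of $F$, simplicity of the connected Lie group $G_2(\R)$ to get left $G_2(\R)$-invariance, and then strong approximation---but that is a nontrivial chain of facts you have not supplied. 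Even once repaired you only reach ``$F$ is constant'' and must import a cuspidal or quaternionic hypothesis to kill the constant, which the proposition as stated does not give you; your fallback of invoking \cite[Lemma~8.5]{GGS} ``in a degenerate way'' does not help either, since that lemma has hypothesis $F_0=0$, not $F=F_0$.
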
 
\begin{proof}A key is to use $Z_U=X_{3\alpha+\beta}X_{3\alpha+2\beta}\supset Z_N=
X_{3\alpha+2\beta}$. The equivalence  of (1) and (2) follows from \cite[Lemma 8.5]{GGS}. 
The equivalence  of (3) and (4) follows from the fact that $M(\Q)$ acts transitively on 
$\Q^\times\subset Z_N(\Q)$. 

Assume (3). Then, it is easy to see that $F_{(0,t)}=0$ for any $t\in \Q^\times$. 
By using automorphy of $F$, $F_{(t,0)}(g)=F_{(0,t)}(w_\beta g)=0$. This means that 
$F_w(g):=\ds\int_{N(\Q)\bs N(\A)}F(n g)\overline{\psi_w(n)}dn=0$ for any 
$w=(a_1,a_2,a_3,t)\in W(\Q)$ and $t$ non-zero. If $F_0\neq 0$, then there 
exists non-trivial $w'$ such that $F_{w'}$ is non-zero. By using automorphy, 
there exists $\gamma\in M(\Q)$ such that $F_{w'}(g)=F_{w''}(\gamma g)$ such that 
the last coordinate of $w''$, say $t''$, is non-zero. Thus, $F_{w''}\neq 0$ implies 
$F_{t''}\neq 0$ and we have a contradiction. Therefore, $F_0=0$. 
The implication from (2) to (3) is similarly proved. 
\end{proof}

\section{Fourier-Jacobi expansion of Eisenstein series on $G_2$ along $U/Z_U$}\label{FJEES}
 In this section, we compute the Fourier-Jacobi expansion of 
 Eisenstein series on $G_2$ along the Heisenberg group $\widetilde{U}=U/Z_U$. 
 This section will help to define both local and global analogues of the Fourier-Jacobi expansion 
 which will be studied in Section \ref{FJE}. 
  
 We follow the computation in \cite{KY1} but we refer \cite[Section 1]{Ik94} 
 for the Weil representations because in our setting, the dimension of any Lagrangian  subspace of 
 $\widetilde{U}/Z_{\widetilde{U}}\simeq X_\alpha X_{\alpha+\beta}$ is odd ($1$-dimensional) 
 while the one in \cite{KY1} is even. 
 
For any unitary character $\omega:\Q^\times\bs \A^\times\lra \C^\times$ and $s\in\Bbb C$, 
we denote by $I(s,\omega)$ the degenerate principal series representation of $G_2(\A)$ 
consisting of any smooth, $G_2(\widehat{\Z})\times K_\infty$-finite function $f:G_2(\A)\lra \C$ such that 
\begin{equation}\label{g2-prin}
f(nmg)=\delta^{\frac{1}{2}}_P(m)|\det(m)|^s \omega(\det(m))f(g),\ n\in N(\A),\ m\in M(\A),\ 
g\in G_2(\A) 
\end{equation}
where $\delta^{\frac{1}{2}}_P(m)=|\det(m)|^{\frac{3}{2}}$. 
We identify $\sL$ with $SL_2$ by $\ell=\ell(A)\mapsto A$ (recall Section \ref{Q} if necessary) and let $B$ be the upper 
Borel subgroup of $SL_2$. Let $B_{\sL}$ be the upper Borel subgroup of $\sL$ which is identified with the above $B$. 
 For any section $f\in I(s,\omega)$, we define the Eisenstein series on $G_2(\A)$ of 
 type $(s,\omega)$ by
\begin{equation}\label{ES1}
E(g;f):=\sum_{\gamma\in P(\Q)\bs G_2(\Q)}f(\gamma g),\ g\in G_2(\A).
\end{equation}
 
We write $V=\{v(x,y,z):=\widetilde{u}_1(x,y,z)\ |\ x,y,z\in \Ga\}\equiv 
X_\alpha X_{\alpha+\beta}X_{2\alpha+\beta}$ mod $Z_U$ for simplicity. 
Let $\widetilde{J}(\A)=\widetilde{\SL_2(\A)}\ltimes V(\A)$ where $\widetilde{\SL_2(\A)}$ 
is the metaplectic double cover of $\SL_2(\A)$. 
For each 
non-trivial additive character
\begin{equation}\label{extension-psi}
\psi_S=\otimes'_p \psi_{S,p}:U_1\lra U_1/Z_U \stackrel{n(0,0,z,\ast,\ast)\mapsto \psi(Sz)}{\lra}\C^\times,\  
S\in \Q^\times, 
\end{equation}      
we denote by $\omega_{\psi_S}=\otimes'_{p\le \infty}\omega_{S,p}$ the Weil 
representation of $\widetilde{J}(\A)$ 
acting on the Schwartz space $\mathcal{S}(X_{\alpha}(\A))$. 
Explicitly, for each place $p\le \infty$ and $\Phi=\otimes'_{p\le\infty}\Phi_p\in \mathcal{S}(X_{\alpha}(\A))$, it is given by 
\begin{eqnarray}\label{weil-action1}
\omega_{S,p}(v(x,y,z))\Phi_p(t)&=&\phi_p(t+x)\Psi_{S,p}(z+ty+\frac{1}{2}xy),\ 
x,y,z,t\in\Q_p  \\
\label{weil-action2}
\omega_{S,p}((\ell(\begin{pmatrix}
 a & 0 \\
 0 & a^{-1}
 \end{pmatrix}),\ve))\Phi_p(t)&=&\ve \frac{\gamma_p(S)}{\gamma_p(aS)}|a|^{\frac{1}{2}}_p\Phi_p(ta),\ t\in \Q_p,\ a\in\Q^\times_p,\ \ve=\pm1, \\ 
 \label{weil-action3}
\omega_{S,p}((\ell(\begin{pmatrix}
 1 & b \\
 0 & 1
 \end{pmatrix}),\ve))\Phi_p(t)&=&\ve\psi_{S,p}(-bt^2)\Phi_p(t),\ b,t\in \Q_p,\ \ve=\pm1, \\
 \label{weil-action4}
\omega_{S,p}(w_\beta)\Phi_p(t)&=&\ve \gamma_p (F_S\Phi_p)(t),\ t\in\Q_p,\ \ve=\pm1, 
\end{eqnarray}
where $\gamma_p:\Q^\times_p\lra \C^1:=\{z\in \C\ |\ |z|=1\}$ is the Weil constant at $p$ with respect to $\psi_p(S\ast)$ (cf. \cite[p.618]{Ik94}) and  
$$(F_S\Phi_p)(t)=\ds\int_{X_\alpha(\Q_p)} \Phi_p(x)\psi_{S,p}(tx)dx,
$$ 
where $dx$ means the Haar measure on $X_\alpha(\Q_p)$ which is self-dual with respect to the Fourier transform $F_S$. Here we intentionally put the sign $-1$ in front of $bt^2$  
in the third formula (\ref{weil-action3}). The sign yields anti-holomorphic modular 
forms on $\widetilde{\SL_2(\A)}$. 
For each $\Phi\in 
\mathcal{S}(X_{\alpha}(\A))$, we define the theta function 
$$\Theta_{\psi_S}(v(x,y,z)h;\Phi):=\sum_{\xi\in X_\alpha(\Q)}\omega_{\psi_S}(v(x,y,z)h)\Phi(\xi),\ 
v(x,y,z)\in V(\A),\ h\in \widetilde{\SL_2(\A)}$$  
 and put 
\begin{equation}\label{eisen-v-coeff}
E_{\psi_S,\Phi}(h;f):=\int_{V(\Q)\bs V(\A)}
E_{\psi_S}(vh;f)\overline{\Theta_{\psi_S}(v h;\Phi)}dv
\end{equation}
where 
$E_{\psi_S}(g;f):=\ds\int_{U_1(\Q)\bs U_1(\A)}E(u_1 g;f)\overline{\psi_S(u_1)}du_1$ is the Fourier coefficient of 
$E(g;f)$ for $\psi_S$. Put $E_{Z_U}(g;f):=\ds\int_{Z_U(\Q)\bs Z_U(\A)}E(z_U g;f)dz_U$. 
Then, we can also write $E_{\psi_S}(g;f)$ as 
\begin{equation}\label{fgc}
E_{\psi_S}(g;f)=\ds\int_{X_{2\alpha+\beta}(\Q)\bs X_{2\alpha+\beta}(\A)}E_{Z_U}(
x_{2\alpha+\beta}(u) g;f)\overline{\psi(Su)}du.
\end{equation}

Let $K=\Big(\ds\prod_{p<\infty}\SL_2(\Z_p)\Big)\times \SO(2)(\R)$ be the standard maximal compact subgroup $\SL_2(\A)$ and $\widetilde{K}$ 
be its pull-back to $\widetilde{\SL_2(\A)}$. 
It is well-known that the Weil representation $\omega_{\psi_S}$ splits over $\SL_2(\Q)$. 
For a character $\omega:
\Q^\times\bs \A^\times\lra \C^\times$ and $s\in\Bbb C$, 
we define the space $\widetilde{I}^{\psi_S}_1(s,\omega)$ consisting of any $\widetilde{K}$-finite function $\widetilde{f}:\widetilde{\SL_2(\A)}\lra \C$ such that     
\begin{equation}\label{principal-series}
\widetilde{f}((\ell,\ve)g)=\ve \frac{\gamma(1)}{\gamma(a)} \delta^{\frac{1}{2}}_B(\ell)|a|^s \omega(a)\widetilde{f}(g),\ 
 \ell=\begin{pmatrix}
 a & b \\
 0 & a^{-1}
 \end{pmatrix}
 \in B(\A),\ \ve=\pm1,\ g\in \widetilde{\SL_2(\A)}
 \end{equation}
 where $\gamma=\prod_p\gamma_p:\A^\times\lra \C^1$ is the Weil constant with respect to $\psi_S$. 
 For any section $\widetilde{f}\in \widetilde{I}^{\psi_p}_1(s,\omega)$, we define the Eisenstein series on 
 $\widetilde{\SL_2(\A)}$ of type $(s,\omega)$ by
 $$E_1(g;\widetilde{f}):=\sum_{\gamma\in B(\Q)\bs \SL_2(\Q)}\widetilde{f}(\gamma g),\ g\in \widetilde{\SL_2(\A)}.$$

Let us first expand $E_{Z_U}(g;f)$ along the maximal parabolic $Q$ and then compute the 
Fourier-Jacobi coefficient $E_{\psi_S,\Phi}(h;f)$ at $\psi_S$ with $S\in \Q^\times$. 

\begin{lem}\label{eisen-decom1} Keep the notations as above. For each section $f$, on any region in $s\in\Bbb C$ of which 
$E(g;f)$ converges absolutely,  
$E_{Z_U}(g,f)=E^{(1)}(g;f)+E^{(2)}(g;f)+E^{(3)}(g;f)$, where 
\begin{eqnarray*} 
&& E^{(1)}(g;f)=\ds\sum_{\gamma\in B_{\sL}(\Q)\bs \sL(\Q)}f(\gamma g);\\
&& E^{(2)}(g;f)=\ds\sum_{\gamma\in B_{\sL}(\Q)\bs \sL(\Q)}
\sum_{(u_1,u_2)\in \Q^2}\int_{Z_U(\Q)\bs Z_U(\A)}f(w_{\beta\alpha}\gamma x_\alpha(u_1)x_{3\alpha+\beta}(u_2)z_U g)
dz_U \\
&&\phantom{xxxxxxx}=\ds\sum_{\gamma\in B_{\sL}(\Q)\bs \sL(\Q)}
\sum_{(u_1,u_2)\in \Q^2}\int_{Z_U(\Q)\bs Z_U(\A)}f(w_{\beta\alpha}z_U \gamma x_\alpha(u_1)x_{3\alpha+\beta}(u_2)g)
dz_U; \\
&& E^{(3)}(g;f)=\ds\sum_{\gamma\in B_{\sL}(\Q)\bs \sL(\Q)}
\sum_{(u_1,u_2)\in \Q^2}\int_{Z_U(\A)}f(w_{\beta\alpha\beta\alpha}
x_\alpha(u_1)x_{2\alpha+\beta}(u_2)z_U\gamma g)
dz_U.
\end{eqnarray*}
\end{lem}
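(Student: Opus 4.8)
The plan is to compute $E_{Z_U}(g;f)$ by unfolding the Eisenstein series and then reorganizing the $P(\Q)\backslash G_2(\Q)$ cosets according to their relative position with respect to the parabolic $Q=LU$. First I would use the Bruhat decomposition for the double coset space $P(\Q)\backslash G_2(\Q)/Q(\Q)$: since $P$ and $Q$ are the two maximal parabolics of $G_2$, there are finitely many such double cosets, indexed by a subset of the Weyl group, and one checks directly (using the known root structure of $G_2$ recalled in Section \ref{pre}) that the relevant representatives can be taken to be $1$, $w_{\beta\alpha}$, $w_{\beta\alpha\beta\alpha}$, together with one more that does not contribute after taking the constant term along $Z_U$ (it lands in a coset on which the $Z_U$-integral annihilates $f$, e.g. because the corresponding unipotent orbit is too large). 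This gives the three-term decomposition $E_{Z_U}=E^{(1)}+E^{(2)}+E^{(3)}$, where $E^{(i)}$ collects the contributions of the $i$-th open-to-closed stratum.

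Next, for each stratum I would parametrize $P(\Q)\backslash P(\Q)w Q(\Q)$ explicitly. Writing $Q=LU$ and $L^{\mathrm{ss}}\simeq SL_2$ with Borel $B_{\sL}$, the stabilizer computations reduce each coset sum to a sum over $B_{\sL}(\Q)\backslash \sL(\Q)$ times a sum over a quotient of $U(\Q)$ by the part already absorbed into $P$. For the first stratum ($w=1$) the contribution is simply $\sum_{\gamma\in B_{\sL}(\Q)\backslash \sL(\Q)} f(\gamma g)$, since $P\cap Q$ contains $B_{\sL}$-translates appropriately; here I would use the relations (\ref{ml}) identifying $x_\beta(b)$ with an element of $\sL$ and the fact that $f$ is left $N(\A)$-invariant up to the character in (\ref{g2-prin}). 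For $w=w_{\beta\alpha}$ and $w=w_{\beta\alpha\beta\alpha}$ one has to work out which root subgroups of $U$ survive modulo $w^{-1}P w \cap U$; a direct root-by-root check using (\ref{heisen3}), (\ref{heisen4}) and the conjugation formulas shows that the surviving coordinates are $x_\alpha(u_1)x_{3\alpha+\beta}(u_2)$ in the second case and $x_\alpha(u_1)x_{2\alpha+\beta}(u_2)$ in the third, each running over $\Q^2$, with the remaining coordinates of $Z_U$ integrated over $Z_U(\Q)\backslash Z_U(\A)$ or $Z_U(\A)$ according to whether $Z_U$ is already contained in $w^{-1}P w$. The two displayed forms of $E^{(2)}$ are then equivalent because $w_{\beta\alpha}$ normalizes $Z_U$ (again a root computation), so the $z_U$-integration can be slid to either side.

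The main obstacle will be the bookkeeping in the Bruhat decomposition and in tracking exactly which unipotent coordinates remain after quotienting by $w^{-1}P w\cap U$ — in particular, verifying that the fourth Weyl-coset contributes nothing after the $Z_U$-constant term, and getting the ranges of integration ($Z_U(\Q)\backslash Z_U(\A)$ versus $Z_U(\A)$) exactly right in each term. This is a finite but delicate computation in the $G_2$ root system, closely parallel to the $SL_2 \times SL_2$-type computations in \cite[Section 1]{Ik94} and the analogous unfoldings in \cite{KY1}; I would carry it out by fixing the explicit Chevalley generators $x_\gamma$ from Section \ref{pre}, computing all needed commutators and $w_\gamma$-conjugates, and then matching coset representatives. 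Absolute convergence in the stated region of $s$ justifies all the interchanges of summation and integration, so no further analytic input is needed.
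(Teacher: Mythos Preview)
Your overall strategy matches the paper's: decompose $P(\Q)\backslash G_2(\Q)$ via the double cosets $P(\Q)\backslash G_2(\Q)/Q(\Q)$, then parametrize each stratum by $B_{\sL}(\Q)\backslash\sL(\Q)$ times the surviving unipotent coordinates. However, two concrete errors would derail the write-up.

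First, there is no fourth double coset. With $W_P=\langle w_\alpha\rangle$, $W_Q=\langle w_\beta\rangle$ and $|W|=12$, one checks directly that each $W_P\,w\,W_Q$ has size $4$, so $|W_P\backslash W/W_Q|=3$ and $\{1,w_{\beta\alpha},w_{\beta\alpha\beta\alpha}\}$ is already complete. Your hedge about an extra coset that ``does not contribute after the $Z_U$-constant term'' is simply false and should be removed; nothing needs to vanish.

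Second, your justification for the equivalence of the two displayed forms of $E^{(2)}$ is wrong: $w_{\beta\alpha}$ does \emph{not} normalize $Z_U$ (e.g.\ $w_{\beta\alpha}(3\alpha+\beta)=-\beta$). The correct reason the $z_U$ can be slid is that $\gamma\in\sL(\Q)$ normalizes $Z_U$ (this is (\ref{actionZ})) and that $x_\alpha(u_1)$, $x_{3\alpha+\beta}(u_2)$ commute with $Z_U$ (since $4\alpha+\beta$, $4\alpha+2\beta$, $6\alpha+2\beta$, $6\alpha+3\beta$ are not roots); the induced change of variables on $Z_U(\Q)\backslash Z_U(\A)$ is then measure-preserving. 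Relatedly, be explicit about why $E^{(3)}$ carries $\int_{Z_U(\A)}$ rather than $\int_{Z_U(\Q)\backslash Z_U(\A)}$: the coset description of $P(\Q)\backslash P(\Q)w_{\beta\alpha\beta\alpha}Q(\Q)$ already contains a copy of $Z_U(\Q)$, and this unfolds against the constant-term integral. With these corrections the argument goes through exactly as the paper indicates.
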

\begin{proof}
It is easy to see that $\{1,w_{\beta\alpha},w_{\beta\alpha\beta\alpha}\}$ 
is a complete system of representatives of the double coset 
$P(\Q)\bs G_2(\Q)/Q(\Q)=P(\Q)\bs G_2(\Q)/\sL(\Q)U(\Q)$. 
The claims follow from the equalities $P(\Q)\bs Q(\Q)=B_{\sL}(\Q)\bs \sL(\Q)$, 
\begin{eqnarray}\label{coset1}
P(\Q)\bs w_{\beta\alpha}Q(\Q)&=&w_{\beta\alpha}
(B_{\sL}(\Q)\bs \sL(\Q))X_\alpha(\Q)
X_{3\alpha+\beta}(\Q)\nonumber \\
&=&w_{\beta\alpha}X_\alpha(\Q)
X_{3\alpha+\beta}(\Q)
(B_{\sL}(\Q)\bs \sL(\Q)),\nonumber 
\end{eqnarray}
 and also
\begin{eqnarray}\label{coset2}
P(\Q)\bs w_{\beta\alpha\beta\alpha}Q(\Q)&=&w_{\beta\alpha\beta\alpha}
(B_{\sL}(\Q)\bs \sL(\Q))X_\alpha(\Q)
X_{2\alpha+\beta}(\Q)Z_U(\Q)\nonumber \\
&=&w_{\beta\alpha\beta\alpha}X_\alpha(\Q)
X_{2\alpha+\beta}(\Q)Z_U(\Q)
(B_{\sL}(\Q)\bs \sL(\Q)).\nonumber 
\end{eqnarray}
\end{proof}
In what follows, for any smooth function $f$ on $G_2(\A)$, we define the (left) action of 
$\widetilde{\SL_2(\A)}$ on $f|_{\SL_2(\A)\ltimes V(\A)}$ via the natural projection 
$\widetilde{\SL_2(\A)}\lra \SL_2(\A)$. 

\begin{thm}\label{Eisen-exp}Keep the notations
in Lemma \ref{eisen-decom1}. Put $\iota:=w_{\beta\alpha\beta\alpha}w^{-1}_\beta$ 
for simplicity. Then, it holds that 
\begin{eqnarray*}
&& (1)\, R(h;f,\Phi)=
%\ds\int_{V(\A)}\int_{Z_U(\A)}f(\iota z_U v w_\beta h)\overline{\omega
%(v(w_\beta h))\phi(0)}d_{z_U}dv \nonumber \\
 \ds\int_{X_{\alpha+\beta}(\A)}\int_{X_{2\alpha+\beta}(\A)}\int_{Z_U(\A)}f(\iota z_U v(0,y,z) w_\beta h)\overline{(\omega_{\psi_S}(h)\Phi)(y)\psi(Sz)}d_{z_U}
dydz \\
&& \phantom{xxxxsssssss}=\ds\int_{X_{\alpha+\beta}(\A)}\int_{X_{2\alpha+\beta}(\A)}\int_{Z_U(\A)}f(\iota 
w_\beta z_U v(y,0,z) h)\overline{(\omega_{\psi_S}(h)\Phi)(y)\psi(Sz)}d_{z_U}
dydz \\
&&\text{ is a section of $\widetilde{I}^{\psi_S}_1(s,\chi_S\omega)$};\\ 
&& \text{(2) $E_{\psi_S,\Phi}(\ast;f)$ is the Eisenstein series on $\widetilde{\SL_2(\A)}$ 
of type $(s,\chi_{S}\omega)$ defined by $R(h;f,\Phi)$.} 
\end{eqnarray*}
Here $\chi_{S}(a):=\langle -S,a  \rangle\in \{\pm 1\},\ a\in \A^\times$ where 
$\langle \ast,\ast \rangle$ stands for the quadratic Hilbert symbol on $\A^\times\times \A^\times$.
\end{thm}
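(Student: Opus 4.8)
\textbf{Proof proposal for Theorem \ref{Eisen-exp}.}

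The plan is to start from the decomposition $E_{Z_U}=E^{(1)}+E^{(2)}+E^{(3)}$ of Lemma \ref{eisen-decom1} and to compute the Fourier--Jacobi coefficient $E_{\psi_S,\Phi}(h;f)$ term by term. First I would observe that the term $E^{(1)}$ is a section of $I(s,\omega)$ restricted to $\sL(\A)$, and its integral against the theta kernel $\overline{\Theta_{\psi_S}(vh;\Phi)}$ over $V(\Q)\bs V(\A)$ vanishes: $E^{(1)}$ is invariant under $V(\A)$ (it factors through $Q$ and the $V$-part of $Q$), while $\overline{\Theta_{\psi_S}(\cdot;\Phi)}$ has no $V(\A)$-invariant component because $\psi_S$ is nontrivial on $Z_{\widetilde U}=X_{2\alpha+\beta}$. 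Next I would show that the cross-term $E^{(2)}$, after integrating against the theta function along $V(\Q)\bs V(\A)$ and along $Z_U$, also contributes nothing (or contributes only to lower Fourier modes in $\psi_S$): the key point is that the $X_\alpha(\Q)X_{3\alpha+\beta}(\Q)$-sum appearing in $E^{(2)}$, combined with the abelian structure visible in (\ref{heisen3})--(\ref{heisen4}), collapses the $\psi_S$-coefficient because the relevant unipotent directions are orthogonal to $S$ under the pairings recorded in Section \ref{P}. This leaves $E^{(3)}$ as the only surviving term.

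For $E^{(3)}$ I would carry out the unfolding explicitly. Writing $\iota=w_{\beta\alpha\beta\alpha}w_\beta^{-1}$, the double-coset representative $w_{\beta\alpha\beta\alpha}$ is replaced by $\iota w_\beta$, and the sum over $B_{\sL}(\Q)\bs\sL(\Q)$ in $E^{(3)}$ becomes, after the $V(\Q)\bs V(\A)$-integration against $\overline{\Theta_{\psi_S}}$ unfolds the theta sum over $X_\alpha(\Q)$, an Eisenstein-series sum on $\sL\simeq\SL_2$ with a single inducing section. That section is precisely the integral $R(h;f,\Phi)$: the inner integral over $X_{2\alpha+\beta}(\A)$ and $Z_U(\A)$ against $\overline{\psi(Sz)}$ picks out the $\psi_S$-coefficient of $f$, the integral over $X_{\alpha+\beta}(\A)$ against $\overline{(\omega_{\psi_S}(h)\Phi)(y)}$ is the ``Jacobi'' pairing coming from the Weil representation formulas (\ref{weil-action1})--(\ref{weil-action4}), and the two displayed expressions for $R(h;f,\Phi)$ are related by conjugating $w_\beta$ past $v(0,y,z)$ using the commutation relations in $U$, which interchanges the roles of the $X_\alpha$ and $X_{\alpha+\beta}$ coordinates of $v$. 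Then $E_{\psi_S,\Phi}(h;f)=\sum_{\gamma\in B(\Q)\bs\SL_2(\Q)}R(\gamma h;f,\Phi)=E_1(h;R(\cdot;f,\Phi))$, which is part (2).

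It remains to prove part (1), that $R(h;f,\Phi)$ transforms under $\widetilde{B}(\A)$ by the character attached to $\widetilde I^{\psi_S}_1(s,\chi_S\omega)$. I would verify the two generating cases separately. For the torus element $\ell(\di(a,a^{-1}))$ I would change variables $z_U\mapsto \ell z_U\ell^{-1}$, $v\mapsto \ell v\ell^{-1}$ using (\ref{action-tildeU}) and (\ref{actionZ}), extract the factor $\delta_B^{1/2}(\ell)|a|^s\omega(a)$ from the equivariance of $f$ under $M(\A)$ (via the identification (\ref{ml}) relating $L$- and $M$-coordinates and the modulus characters $\delta_P=|\det|^3$, $\delta_Q=|\det|^5$), and then track the Weil constants: the mismatch between the Haar-measure Jacobians on $X_{\alpha+\beta}$, $X_{2\alpha+\beta}$, $Z_U$ and the normalization of $\omega_{\psi_S}(h)\Phi$ produces exactly the ratio $\gamma(1)/\gamma(a)$ together with the quadratic twist $\chi_S(a)=\langle -S,a\rangle$, as in the analogous computation of \cite{KY1}. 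For the unipotent element $x_\beta(b)\in X_\beta\subset\sL$ the invariance is immediate from left $N(\A)$-invariance of $f$ after moving $x_\beta(b)$ to the left using (\ref{ml}). \textbf{The main obstacle} I anticipate is the bookkeeping of the metaplectic cocycle and the self-dual measures: one must pin down all the Weil constants $\gamma_p$ so that the product over places collapses to the global $\gamma$ appearing in (\ref{principal-series}) and so that the Hilbert-symbol character $\chi_S$ emerges with the correct sign; this is where the ``$-1$ in front of $bt^2$'' in (\ref{weil-action3}) and the odd dimension of the Lagrangian in $\widetilde U/Z_{\widetilde U}$ have to be handled with care, following \cite[Section 1]{Ik94} rather than \cite{KY1} verbatim.
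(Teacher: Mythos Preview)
Your overall plan---splitting into $E^{(1)}$, $E^{(2)}$, $E^{(3)}$ via Lemma~\ref{eisen-decom1}, unfolding $E^{(3)}$ against the theta sum to produce the Eisenstein series with section $R$, and then verifying the $\widetilde B(\A)$-equivariance of $R$ on generators---is exactly the paper's approach, and your handling of $E^{(1)}$, $E^{(3)}$, and the torus action $\ell_a$ is correct in outline.

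The genuine gap is your treatment of $E^{(2)}$. The claim that the $X_\alpha(\Q)\times X_{3\alpha+\beta}(\Q)$-sum ``collapses the $\psi_S$-coefficient because the relevant unipotent directions are orthogonal to $S$ under the pairings in Section~\ref{P}'' is not the mechanism, and I do not see how to make an orthogonality argument work here. After unfolding those rational sums against the $V(\Q)$- and $Z_U(\Q)$-integrals, and after applying Poisson summation to the theta kernel so that the $X_\alpha(\Q)$-sum becomes a sum over $X_{\alpha+\beta}(\Q)$ which can be unfolded in turn, one is left with a section $R^{(2)}(h;f,\Phi)$ that is \emph{not} visibly zero. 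What kills it are two specific commutation identities for $w:=w_{\beta\alpha}w_\beta^{-1}$: first $w\,v(x,0,0)=v(0,0,-x)\,w$, which removes the $X_\alpha$-variable from inside $f$; second $w\,v(0,0,z)\,w^{-1}=m\bigl(\begin{smallmatrix}1&-z\\0&1\end{smallmatrix}\bigr)\in M$ with determinant $1$, so the $X_{2\alpha+\beta}$-variable $z$ drops out of $f$ entirely. Only after these two steps does the naked factor $\int_{X_{2\alpha+\beta}(\Q)\backslash X_{2\alpha+\beta}(\A)}\psi(Sz)\,dz=0$ appear. Neither identity follows from the Heisenberg relations (\ref{heisen3})--(\ref{heisen4}) alone; they are statements about how $w_{\beta\alpha}w_\beta^{-1}$ conjugates individual root subgroups, and you need to establish them (or their analogues) explicitly.

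A smaller point: the unipotent case $\ell_b$ is not ``immediate from left $N(\A)$-invariance of $f$.'' Commuting $\ell\bigl(\begin{smallmatrix}1&b\\0&1\end{smallmatrix}\bigr)$ leftward past $w_\beta$, $v(0,y,z)$, $z_U$, and $\iota$ produces (besides elements of $P$ absorbed by $f$) a shift $z\mapsto z-by^2$ in the $X_{2\alpha+\beta}$-coordinate. This shift must cancel exactly against the factor $\overline{\psi_S(-by^2)}$ coming from the Weil action (\ref{weil-action3}), leaving only the metaplectic sign $\ve$; without tracking this cancellation, the required transformation law in $\widetilde I^{\psi_S}_1(s,\chi_S\omega)$ does not follow.
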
 
\begin{proof}By Lemma \ref{eisen-decom1} and (\ref{fgc}), we have 
\begin{equation}\label{eisen-FJ}
E_{\psi_S,\Phi}(h;f)=\sum_{i=1}^3
\ds\int_{V(\Q)\bs V(\A)}E^{(i)}_{Z_U}(vh;f)
\overline{\Theta_{\psi_S}(vh;\Phi)}dv.
\end{equation}
We shall try to prove the vanishing of terms for $i=1,2$. 
Since $\psi_S$ is non-trivial and $Z_U$ is stable under the (conjugate) action of $\sL$, clearly, the first term is vanishing. 
For $i=2$, by the unfolding technique, the second term becomes 
$$\ds\sum_{\gamma\in B_{\sL}(\Q)\bs \sL(\Q)}
\int_{X_{\alpha+\beta}(\Q)X_{2\alpha+\beta}(\Q)\bs V(\A)}\int_{X_{3\alpha+2\beta}(\Q)\bs Z_U(\A)}f(w_{\beta\alpha}z_U v \gamma h)\overline{\Theta_{\psi_S}(v\gamma h;\Phi)}
dz_Udv.$$
By a similar computation in \cite[p.242, the proof of Theorem 7.1]{KY1}, it is equal to 
\begin{eqnarray*} 
&& \ds\sum_{\gamma\in B_{\sL}(\Q)\bs \sL(\Q)}
\int_{X_{\alpha+\beta}(\Q)X_{2\alpha+\beta}(\Q)\bs V(\A)}\int_{X_{3\alpha+2\beta}(\Q)\bs Z_U(\A)}f(w_{\beta\alpha}z_U v \gamma h)\\
&& \phantom{xxxxxxxxxxxxxxxsssssssssssssssssssss} \times \overline{\sum_{u\in \Q}F_S(\omega_{\psi_S}(x_{\alpha+\beta}(u)v\gamma h)\Phi(0))}
dz_U dv.
\end{eqnarray*}
We now substitute $v$ for $x_{\alpha+\beta}(u)^{-1}v$ and use the fact that $w_{\beta\alpha}$ 
commutes with $x_{\alpha+\beta}(u)^{-1}$ so that $x_{\alpha+\beta}(u)^{-1}$ 
trivially comes out from inside $f$. Then, by the unfolding technique in the coordinate 
of $X_{\alpha+\beta}$, it becomes 
$$\ds\sum_{\gamma\in B_{\sL}(\Q)\bs \sL(\Q)}
\int_{X_{2\alpha+\beta}(\Q)\bs V(\A)}\int_{X_{3\alpha+2\beta}(\Q)\bs Z_U(\A)}f(w_{\beta\alpha}z_U v \gamma h)\overline{\omega_{\psi_S}(w_\beta v\gamma h)\Phi(0))}
dz_Udv.
$$
By substituting $v$ for $w^{-1}_\beta v w_\beta$, finally,
it becomes
$\ds\sum_{\gamma\in B_{\sL}(\Q)\bs \sL(\Q)}R^{(2)}(\gamma h;f,\Phi)$, where 
$$R^{(2)}(h;f,\Phi)=
\int_{X_{2\alpha+\beta}(\Q)\bs V(\A)}\int_{X_{3\alpha+2\beta}(\Q)\bs Z_U(\A)}f(w_{\beta\alpha}w^{-1}_\beta z_U v w_\beta h)\overline{\omega_{\psi_S}(v w_\beta\gamma h)\Phi(0))}
dz_Udv.$$
We shall prove $R^{(2)}(h;f,\Phi)=0$. Put $w=w_{\beta\alpha}w^{-1}_\beta$. It is easy to see that 
$w v(x,0,0)=v(0,0,-x)w$ and $v(x,y,z)=v(x,0,0)v(0,y,z+xy)$. Furthermore, $v(0,0,-x)$ trivially comes 
out inside $f$. Thus, we have 
\begin{eqnarray*}
&& R^{(2)}(h;f,\phi)= \\
&&\int_{X_{2\alpha+\beta}(\Q)\bs V(\A)}\int_{X_{3\alpha+2\beta}(\Q)\bs Z_U(\A)}f(w z_U v(0,y,z+xy) w_\beta h)\overline{\omega_{\psi_S}(w_\beta h)\Phi(x)\psi(Sz+\frac{1}{2}Sxy)}
dz_U dv,
\end{eqnarray*}
where $v=v(x,y,z)$. 
By substituting $z$ for $z-xy$, it becomes 
\begin{equation}\label{z-xy}\int_{X_{2\alpha+\beta}(\Q)\bs V(\A)}\int_{X_{3\alpha+2\beta}(\Q)\bs Z_U(\A)}f(w z_U v(0,y,z) w_\beta h)\Big(\overline{\omega_{\psi_S}(w_\beta h)\Phi(x)\psi(\frac{1}{2}Sxy)}\Big)\cdot 
\overline{\psi(Sz)}
dz_U dv.
\end{equation}
Now, observe $$X_{2\alpha+\beta}(\Q)\bs V(\A)=X_\alpha(\A)X_{\alpha+\beta}(\A) 
(X_{2\alpha+\beta}(\Q)\bs X_{2\alpha+\beta}(\A)),\ v(0,y,z)=v(0,0,z)v(0,y,0),
$$ 
and 
$wv(0,0,z)w^{-1}=m(\begin{pmatrix}
1 & -z \\
0 & 1
\end{pmatrix}
)$. Therefore, by using the Fourier transform, 
$$(\ref{z-xy})=
\int_{X_{\alpha+\beta}(\A)}\int_{X_{3\alpha+2\beta}(\Q)\bs Z_U(\A)}
f(w z_U v(0,y,0) w_\beta h)\overline{\omega_{\psi_S}(h)\Phi(-\frac{y}{2})}\cdot
\overline{\Big(\int_{X_{2\alpha+\beta}(\Q)\bs X_{2\alpha+\beta}(\A)}\psi(Sz)dz\Big)} 
dz_U dy.
$$
Since $S\neq 0$, clearly 
$\ds\int_{X_{2\alpha+\beta}(\Q)\bs X_{2\alpha+\beta}(\A)}\psi(Sz)dz=0$. 
Hence, we have $R^{(2)}(h;f,\Phi)=0$ and it yields the vanishing of the second term.  

Finally, we handle the case of $i=3$. We shall prove both claims simultaneously. 
As in the previous case, we have  
$$E_{\psi_S,\Phi}(h;f)=\ds\int_{V(\Q)\bs V(\A)}E^{(3)}_{Z_U}(vh;f)
\overline{\Theta_{\psi_S}(vh;\Phi)}dv=\sum_{\gamma\in B_{\sL}(\Q)\bs \sL(\Q)}
R(\gamma h;f,\phi),
$$
where $R(h;f,\Phi)$ is the one in the first claim. Here, we used  
$\iota v(x,y,z)\iota^{-1}=m(\begin{pmatrix}
1 & 0 \\
-z & 1
\end{pmatrix}
)v(0,y,z)$. 

We now check each action of 
$$\ell_b:=(\ell(\begin{pmatrix}
1 & b \\
0 & 1
\end{pmatrix}
),\ve),\ 
\ell_a:=(\ell(\begin{pmatrix}
a & 0 \\
0 & a^{-1}
\end{pmatrix}
),\ve)\in \widetilde{\SL_2(\A)}
.$$
As in \cite[Lemma 7.4-(1),(2)]{KY1}, we observe 
$$\iota z_U(z_1,z_2) v(0,y,z)w_\beta \ell(\begin{pmatrix}
1 & b \\
0 & 1
\end{pmatrix}
)=m(\begin{pmatrix}
1 & by \\
0 & 1
\end{pmatrix}
)X_{3\alpha+\beta}(b)\iota z_U(z_1-bz_2+b^2y^3,z_2-2by^3)v(0,y,z-by^2)w_\beta$$
where we write $z_U=z_U(z_1,z_2)$ 
and 
$$\iota z_U(z_1,z_2)(0,y,z)w_\beta 
\ell(\begin{pmatrix}
a & 0 \\
0 & a^{-1}
\end{pmatrix}
)
=
m(\begin{pmatrix}
a& 0 \\
0 & 1
\end{pmatrix}
)
\iota z_U(\frac{z_1}{a},az_2)v(0,ay,z)w_\beta.$$
By using these relations and (\ref{weil-action3}), first we have 
\begin{eqnarray*}
&& R(\ell_b h;f,\Phi)= \\
&& \ve\ds\int_{X_{\alpha+\beta}(\A)}\int_{X_{2\alpha+\beta}(\A)}\int_{Z_U(\A)}f(\iota z_U v(0,y,z-by^2) w_\beta h)\overline{(\omega_{\psi_S}(h)\Phi)(y)\psi(S(z-by^2))}d{z_U}
dydz.
\end{eqnarray*}
By substituting $z$ with $z+by^2$, we see 
$R(\ell_b h;f,\Phi)=\ve R(h;f,\Phi)$. 

Finally, as for $\ell_a$, we have 
\begin{eqnarray*}
&& R(\ell_a h;f,\Phi)= \\
&& \ve\ds\int_{X_{\alpha+\beta}(\A)}\int_{X_{2\alpha+\beta}(\A)}\int_{Z_U(\A)}
f(m(\begin{pmatrix}
a& 0 \\
0 & 1
\end{pmatrix}
)
\iota z_U(\frac{z_1}{a},az_2)v(0,ay,z)w_\beta h) \overline{(\omega_{\psi_S}(p_a h)\Phi)(y)\psi(Sz)}dz_U
dydz.
\end{eqnarray*}
By using (\ref{g2-prin}) and (\ref{weil-action2}) and changing the variables 
as $(z_1,z_2,y)\mapsto (az_1,z_2/a,y/a)$, we have 
\begin{eqnarray*}
&& R(\ell_a h;f,\Phi)=\ve \delta_P(a)^{\frac{1}{2}}|a|^s \omega(a) |a|^{-1}
\frac{\gamma(-S)}{\gamma(-Sa)}|a|^{\frac{1}{2}}
R(h;f,\Phi) \\
&& \phantom{xxxxxxxxx} =\ve \delta_B(a)^{\frac{1}{2}}|a|^s \frac{\gamma(1)}{\gamma(a)}\omega(a)\chi_S(a) 
R(h;f,\Phi).
\end{eqnarray*}
Here, we used the formula $\gamma(-S)\gamma(a)=\langle -S,a \rangle 
\gamma(1)\gamma(-Sa)$ 
for the gamma constant.  
\end{proof}

\begin{remark} For non-archimedean place, the local representation theoretic analogue of 
Theorem \ref{Eisen-exp} has been proved by G. Savin in \cite[Theorem B.2.2]{Po-modular}.
\end{remark}

\section{Degenerate Whittaker functions}\label{DWF}
In this section, we consider a local representation theoretic analogue of $F_0$ 
(see (\ref{naiveFE}) or (\ref{po-exp})). 

\subsection{Degenerate principal series representations: The nonarchimedean case}\label{dnonar}
Let $p$ be a rational prime.  For a unitary character 
$\mu_p:\Q^\times_p\lra \C^\times$ and $s\in\Bbb C$, let us consider the degenerate principal series representation $I(s,\mu_p):={\rm Ind}^{G_2(\Q_p)}_{P(\Q_p)} \, (\mu_p\circ {\rm det}) |{\rm det}|^s$
consisting of any smooth $G_2(\Z_p)$-finite functions $\phi:G_2(\Q_p)\lra \C$ such that 
$$\phi(nmg)=\delta^{\frac{1}{2}}_P(m)\mu_p(\det(m))|\det(m)|^s \phi(g),\ nm\in P(\Q_p)=N(\Q_p)M(\Q_p),\ 
g\in G_2(\Q_p)$$
where $\delta^{\frac{1}{2}}_P(m)=|\det(m)|^{\frac{3}{2}}_p$. Here we write $\det(m)$ for $\det(A)$ when 
$m=m(A),\ A\in \GL_2(\Q_p)$. 
In terms of notations in \cite{M}, it is $I_\alpha(-s,\mu_p)$. 
We denote by $\bf 1$ the trivial character of $\Q^\times_p$.

\begin{thm}\label{constituents}\cite[Theorem 3.1, p.472, Proposition 4.1, p.475, Proposition 4.3-(ii), p.476]{M} Let $s\in\Bbb R$. It holds that 
\begin{enumerate}
\item $I(0,\mu_p)$ is irreducible: 
\item $I(s,\mu_p)$ reduces if and only if $s=\pm\frac 12$, $\mu_p^2=\bf 1$, or $s=\pm\frac 32$, $\mu_p=\bf 1$, or $s=\pm\frac 12$, $\mu_p^3=\bf 1$;
\item When $\mu^2_p=\bf 1$, $I(\frac 12,\mu_p)$ has a unique maximal subrepresentation $A(|\cdot|^{\frac{1}{2}}_p \mu_p)$, and a unique irreducible quotient $J_\beta(1,\pi(1,\mu_p))$, i.e., 
$$0\lra A(|\cdot|^{\frac{1}{2}}_p \mu_p)\lra I(\tfrac 12,\mu_p)\lra J_\beta(1,\pi(1,\mu_p))\lra 0,
$$ 
and
in the notation of \cite[Proposition 4.1-(ii), Proposition 4.3-(ii)]{M}, 
$$A(|\cdot|^{\frac{1}{2}}_p \mu_p)
=\left\{\begin{array}{cc}
V & \text{if $\mu_p=\bf 1$}\\
J_\beta(\frac 12,{\rm St}_p\otimes\mu_p) & \text{if $\mu_p\neq \bf 1$},
\end{array}\right.
$$
where $V$ satisfies 
$$0\lra \pi(1)\lra V\lra J_\beta(\tfrac 12,{\rm St}_p)\lra 0.$$ 
\end{enumerate}
\end{thm}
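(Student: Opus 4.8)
The plan is to cite Muić's paper \cite{M} for the structural facts and then make the identifications explicit in the notation of the present article. The statement is essentially a repackaging of three results from \cite{M}, so the work is not in proving new reducibility statements but in (a) matching Muić's parametrization $I_\alpha(-s,\mu_p)$ with the normalization $I(s,\mu_p)$ used here, and (b) pinning down the Langlands data of the subquotients in terms of the Steinberg and trivial representations of the $\GL_2$-Levi. For item (1), $I(0,\mu_p)$ is unitarily induced from the unitary character $\mu_p\circ\det$ of $P(\Q_p)$, and \cite[Theorem 3.1]{M} gives its irreducibility directly. For item (2), the reducibility points are read off from the same theorem: the Levi $M\simeq\GL_2$ of the Heisenberg parabolic has two relevant ``rank-one'' walls coming from the two roots $\beta$ and (roughly) $3\alpha+\beta$ in $W\simeq\Sym^3\mathrm{St}_2$, which is why one gets conditions involving $\mu_p^2=\mathbf 1$ and $\mu_p^3=\mathbf 1$ at $s=\pm\tfrac12$ and the extra condition $\mu_p=\mathbf 1$ at $s=\pm\tfrac32$; the precise list is \cite[Theorem 3.1]{M}.

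The substantive part is item (3). First I would invoke \cite[Proposition 4.1(ii)]{M} to get that, when $\mu_p^2=\mathbf 1$, the representation $I(\tfrac12,\mu_p)$ has a unique irreducible quotient, which by the Langlands classification is the Langlands quotient $J_\beta(1,\pi(1,\mu_p))$ attached to the inducing data from the other maximal parabolic $Q=LU$; this gives the exact sequence
\[
0\lra A(|\cdot|_p^{1/2}\mu_p)\lra I(\tfrac12,\mu_p)\lra J_\beta(1,\pi(1,\mu_p))\lra 0 .
\]
Then I would determine the kernel $A(|\cdot|_p^{1/2}\mu_p)$ using \cite[Proposition 4.3(ii)]{M}. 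The dichotomy is according to whether $\mu_p$ is trivial. When $\mu_p\neq\mathbf 1$ (so $\mu_p$ is a nontrivial quadratic character), the kernel is irreducible and equals the Langlands subquotient $J_\beta(\tfrac12,\mathrm{St}_p\otimes\mu_p)$, i.e.\ the piece of the principal series from $Q$ built from the twisted Steinberg $\mathrm{St}_p\otimes\mu_p$ on $L\simeq\GL_2$. When $\mu_p=\mathbf 1$, the kernel is reducible with a two-step filtration $0\to\pi(1)\to V\to J_\beta(\tfrac12,\mathrm{St}_p)\to0$, where $\pi(1)$ is the ``small'' constituent (a minimal or next-to-minimal representation in Muić's list). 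Each of these is a direct quotation; the only genuine bookkeeping is to check that the normalizations match, i.e.\ that the point $s=\tfrac12$ in the present normalization $I(s,\mu_p)$ corresponds to the reducibility point for which \cite[Proposition 4.1, 4.3]{M} are stated (a shift by $\delta_P^{1/2}=|\det|^{3/2}$ and the sign convention $I_\alpha(-s,\cdot)$ identified in the paragraph before Theorem \ref{constituents}).

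The expected obstacle is not conceptual but notational: keeping track of the several Langlands-data labels $J_\beta(\cdot,\cdot)$ and the precise constituent $V$ (and its subquotient $\pi(1)$) so that they agree verbatim with \cite[Proposition 4.1(ii), Proposition 4.3(ii)]{M} after the normalization shift. In particular one must verify that ``unique maximal subrepresentation'' on the $s=+\tfrac12$ side and ``unique irreducible subrepresentation'' on the $s=-\tfrac12$ side are correctly dualized, and that the Steinberg twist $\mathrm{St}_p\otimes\mu_p$ (rather than $\mathrm{St}_p\otimes\mu_p^{-1}$, equal here since $\mu_p^2=\mathbf 1$) is the one appearing. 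Once the dictionary between the two normalizations is fixed, items (1)--(3) follow by direct citation, so the proof is short: state the normalization matching, then quote \cite[Theorem 3.1, Proposition 4.1(ii), Proposition 4.3(ii)]{M} for the three parts in turn.
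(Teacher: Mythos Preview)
Your proposal is correct and matches the paper's approach: the paper gives no proof at all for this theorem, treating it as a direct citation of \cite[Theorem 3.1, Proposition 4.1, Proposition 4.3(ii)]{M}, and the only content is the normalization matching $I(s,\mu_p)=I_\alpha(-s,\mu_p)$ already recorded in the text preceding the statement. Your discussion of the bookkeeping (the $\delta_P^{1/2}$ shift, the identification of the Langlands data $J_\beta(\cdot,\cdot)$, and the dichotomy on $\mu_p$) is exactly what one would write if asked to expand the citation, but the paper itself does not do so.
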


\begin{prop}\label{Langlands} Let $I_\beta(s,\pi)=Ind_{Q(\Q_p)}^{G_2(\Q_p)} \pi\otimes |\det|^s$, where $\pi$ is a tempered representation of $GL_2(\Q_p)$, and $J_\beta(s,\pi)$ be its Langlands quotient.
Then $I(0,\mu_p)=J_\beta(\frac 12,\pi(\mu_p,\mu_p^{-1}))$.
\end{prop}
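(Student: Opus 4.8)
The plan is to use that $I(0,\mu_p)$ is irreducible (Theorem~\ref{constituents}(1)) and to realize it as a quotient of the standard module $I_\beta(\tfrac12,\pi(\mu_p,\mu_p^{-1}))$. By definition $J_\beta(\tfrac12,\pi(\mu_p,\mu_p^{-1}))$ is the unique irreducible quotient of that standard module, so any nonzero $G_2(\Qp)$-homomorphism $I_\beta(\tfrac12,\pi(\mu_p,\mu_p^{-1}))\to I(0,\mu_p)$ is automatically surjective, which (together with finiteness of length) forces $I(0,\mu_p)\cong J_\beta(\tfrac12,\pi(\mu_p,\mu_p^{-1}))$. First I would record that $\pi(\mu_p,\mu_p^{-1})={\rm Ind}_{B_{\sL}(\Qp)}^{\sL(\Qp)}(\mu_p\otimes\mu_p^{-1})$ is irreducible and tempered --- since $\mu_p$ is unitary, $\mu_p^2\ne|\ast|^{\pm1}$ --- so that $I_\beta(\tfrac12,\pi(\mu_p,\mu_p^{-1}))$ really is a standard module and $J_\beta(\tfrac12,\pi(\mu_p,\mu_p^{-1}))$ is well defined.

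To construct the homomorphism I would invoke Bernstein's second adjunction: a nonzero map ${\rm Ind}_{Q(\Qp)}^{G_2(\Qp)}\bigl(\pi(\mu_p,\mu_p^{-1})\otimes|\det|^{1/2}\bigr)\to I(0,\mu_p)$ exists if and only if $\pi(\mu_p,\mu_p^{-1})\otimes|\det|^{1/2}$ embeds into $r_{\overline Q}\bigl(I(0,\mu_p)\bigr)$, the normalized Jacquet module along the parabolic opposite to $Q$. Since $I(0,\mu_p)={\rm Ind}_{P(\Qp)}^{G_2(\Qp)}(\mu_p\circ\det)$ is induced from the \emph{other} maximal parabolic $P$, I would compute $r_{\overline Q}(I(0,\mu_p))$ by the Bernstein--Zelevinsky geometric lemma. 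Writing $W_M=\{1,w_\alpha\}$ and $W_{\sL}=\{1,w_\beta\}$ for the Weyl groups of the two Levi factors, the set of $(W_{\sL},W_M)$-double cosets in the Weyl group of $G_2$ has exactly three elements: each double coset has cardinality $|W_{\sL}||W_M|=4$, because $w W_M w^{-1}$ is generated by the reflection in the short root $w\alpha$ and hence never meets $W_{\sL}$ nontrivially, while $|W(G_2)|=12$. Thus $r_{\overline Q}(I(0,\mu_p))$ carries a three-step filtration whose graded pieces are parabolically induced representations of $L\cong\GL_2$ obtained by transporting, along the three double-coset representatives, the Jacquet modules of $\mu_p\circ\det$ with respect to parabolic subgroups of $M$. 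I would then single out the graded piece isomorphic to $\pi(\mu_p,\mu_p^{-1})\otimes|\det|^{1/2}$ --- tracking the $\rho$-shifts coming from the paper's normalizations $\delta_P=|\det|^3$, $\delta_Q=|\det|^5$ together with the identifications \eqref{action1} and \eqref{ml} --- and verify, by a leading-exponent argument on the split center of $L$, that it occurs as a subrepresentation, not merely a subquotient, of $r_{\overline Q}(I(0,\mu_p))$. This produces the required map, hence the proposition.

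The main obstacle is precisely this geometric-lemma bookkeeping: listing the three double cosets, computing the $\rho$-twisted torus characters of the three graded pieces in the paper's $\GL_2$-conventions for $M$ and $L$, and confirming that the relevant $\GL_2$-constituent is exactly $\pi(\mu_p,\mu_p^{-1})\otimes|\det|^{1/2}$ and sits as a submodule. As a consistency check one can instead induce $\mu_p\circ\det$ in stages through the Borel: realizing $\mu_p\circ\det$ as a submodule (resp.\ a quotient) of an $\SL_2$-principal series of $M$ exhibits $I(0,\mu_p)$ as a subquotient of ${\rm Ind}_{B(\Qp)}^{G_2(\Qp)}\chi$ for an explicit torus character $\chi$ whose real part is $W(G_2)$-conjugate to the dominant weight $\tfrac12(2\alpha+\beta)$ --- which lies on the $\beta$-wall, in agreement with a Langlands quotient of type $J_\beta$ --- and which is exactly the torus character through which $I_\beta(\tfrac12,\pi(\mu_p,\mu_p^{-1}))$ is induced; matching the Langlands data of the two representations then re-proves the identification. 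When $\mu_p$ is unramified this is immediate, as both $I(0,\mu_p)$ and $J_\beta(\tfrac12,\pi(\mu_p,\mu_p^{-1}))$ are irreducible of class one with the same Satake parameter. Finally, since \cite{M} tabulates the composition series and Langlands data of $I_\alpha(s,\mu_p)$ and $I_\beta(s,\pi)$, the identification can alternatively be read off from there.
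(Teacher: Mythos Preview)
Your proposal is correct, but your primary route (second adjunction plus the geometric lemma for $r_{\overline{Q}}(I(0,\mu_p))$) is considerably heavier than what the paper does. The paper's argument is exactly what you sketch as a ``consistency check'': it realizes $\mu_p\circ\det$ as a submodule of the $\GL_2$-principal series $\mu_p|\cdot|^{-1/2}\otimes\mu_p|\cdot|^{1/2}$, so that by induction in stages $I(0,\mu_p)\hookrightarrow {\rm Ind}_{B(\Q_p)}^{G_2(\Q_p)}\bigl(\mu_p|\cdot|^{-1/2}\otimes\mu_p|\cdot|^{1/2}\bigr)$; it then applies the Weyl-group intertwining for $w_{3\alpha+\beta}$ (in the conventions of \cite{Z}) to identify this Borel principal series with ${\rm Ind}_{B(\Q_p)}^{G_2(\Q_p)}\bigl(\mu_p^{-1}|\cdot|^{1/2}\otimes\mu_p^{2}\bigr)$, which is in turn recognized---again by induction in stages---as $I_\beta(\tfrac12,\pi(\mu_p,\mu_p^{-1}))$, and concludes from the irreducibility of $I(0,\mu_p)$.

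The trade-off: the paper's route is a few lines once the torus characters are matched under the Weyl element, but it passes through intertwining-operator isomorphisms between full principal series and leaves the identification ``irreducible sub = Langlands quotient'' to the reader. Your second-adjunction approach is more bookkeeping (three double cosets, $\rho$-shifts in two different $\GL_2$-conventions, a leading-exponent check) but it manufactures a genuine surjection $I_\beta(\tfrac12,\pi(\mu_p,\mu_p^{-1}))\twoheadrightarrow I(0,\mu_p)$, so the identification with the unique irreducible quotient is immediate and does not rely on any intertwining operator being an isomorphism.
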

\begin{proof} Since $\mu_p\circ \det\hookrightarrow \mu_p|\ |^{-\frac 12}\otimes \mu_p|\ |^{\frac 12}$, 
$$I(0,\mu_p)\hookrightarrow Ind_{B(\Q_p)}^{G_2(\Q_p)}\, \mu_p|\ |^{-\frac 12}\otimes \mu_p|\ |^\frac 12\simeq Ind_{B(\Q_p)}^{G_2(\Q_p)}\, \mu_p^{-1}|\ |^{\frac 12}\otimes \mu_p^2,
$$
by $w_{3\alpha+\beta}$ in the notation of \cite{Z}. 
Now 
$$I_\beta(\frac 12,\pi(\mu_p,\mu_p^{-1}))=Ind_{B(\Q_p)}^{G_2(\Q_p)}\, \underline{\mu_p|\ |^{\frac 12}\otimes \mu_p|\ |^{-\frac 12}}\simeq Ind_{B(\Q_p)}^{G_2(\Q_p)}\, \mu_p^{-1}|\ |^{\frac 12}\otimes \mu_p^2
$$ 
in the notation of \cite{Z}. Since $I(0,\mu_p)$ is irreducible, our result follows.
\end{proof}

Let $\psi=\psi_p:\Q_p\lra \C^\times$ be the standard non-trivial additive character. 
For each $w\in W(\Q_p)$, we define $\psi_w(n)=\psi(\langle w,x \rangle)$ for $n=n(x,t)\in 
N(\Q_p)$ and $\psi_w$ is said to be generic if $q(w)\neq 0$. 
For any smooth representation $\Pi$ of $G_2(\Q_p)$, we put 
$$\Wh_{\psi_w}(\Pi):={\rm Hom}_{N(\Q_p)}(\Pi,\psi_w).
$$
The following claim is similar to \cite[Proposition 3.1]{KY2}. 

\begin{prop}\label{wh} Suppose $\psi_w$ is generic. Then it holds that 
\begin{enumerate}
\item ${\rm dim}\hspace{0.5mm}\Wh_{\psi_w}(I(s,\mu_p))\le 1$ for any unitary character $\mu_p$ of $\Q_p$ above and 
$s\in \C$; 
\item When $\mu^2_p=\bf 1$, if $\Wh_{\psi_w}(A(|\cdot|^{\frac{1}{2}}_p\mu_p))\neq 0$, the restriction map induces an isomorphism 
$$\Wh_{\psi_w}(I(\frac{1}{2},\mu_p))\stackrel{\sim}{\lra}\Wh_{\psi_w}(A(|\cdot|^{\frac{1}{2}}_p\mu_p)).
$$
In this case, we have ${\rm dim}\hspace{0.5mm}\Wh_{\psi_w}(I(s,\mu_p))=
{\rm dim}\hspace{0.5mm}\Wh_{\psi_w}(A(|\cdot|^{\frac{1}{2}}_p\mu_p))=1$.
\end{enumerate}
\end{prop}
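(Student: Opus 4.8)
The plan is to pass to twisted Jacquet modules. For any smooth representation $\Pi$ of $G_2(\Q_p)$ one has $\Wh_{\psi_w}(\Pi)=\Hom_{N(\Q_p)}(\Pi,\psi_w)=\bigl(\Pi_{N(\Q_p),\psi_w}\bigr)^{\ast}$, and since $\dim V\le\dim V^{\ast}$ always (with equality when $V$ is finite dimensional), the dual is at most one--dimensional exactly when $\Pi_{N(\Q_p),\psi_w}$ is. So for (1) it suffices to show $\dim I(s,\mu_p)_{N(\Q_p),\psi_w}\le1$, and I would do this by unfolding over the Bruhat stratification, in the spirit of \cite[Proposition 3.1]{KY2} and of Lemma \ref{eisen-decom1}--Theorem \ref{Eisen-exp}. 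The space $P(\Q_p)\backslash G_2(\Q_p)$ splits into the six $B(\Q_p)$--cells $P(\Q_p)\dot w B(\Q_p)$ with $\dot w$ running over minimal--length representatives of $W_M\backslash W(G_2)$, where $W_M\simeq\Z/2\Z$ is the Weyl group of $M$; since $N$ is normal of codimension one in the maximal unipotent $U_{\min}=X_\alpha N$, such a cell is a single $N(\Q_p)$--orbit when $\dot w\alpha\in\Phi(P)$ and otherwise carries a one--parameter family of $N(\Q_p)$--orbits. Restricting $I(s,\mu_p)$ cell by cell produces the graded pieces of a filtration of $I(s,\mu_p)|_{N(\Q_p)}$.

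On a cell with a one--parameter family the contribution of the family to $I(s,\mu_p)_{N(\Q_p),\psi_w}$ involves an integral of $\psi_w$ over a positive--dimensional unipotent subgroup of $N$ and vanishes because $\psi_w$ is nontrivial there; this is the local analogue of the vanishing $\int_{X_{2\alpha+\beta}(\Q)\backslash X_{2\alpha+\beta}(\A)}\psi(Sz)\,dz=0$ exploited in the proof of Theorem \ref{Eisen-exp}. On a single--orbit cell the contribution is $\Hom_{N_{\dot w}}(\chi_{\dot w},\psi_w|_{N_{\dot w}})$, where $N_{\dot w}=N(\Q_p)\cap\dot w^{-1}P(\Q_p)\dot w$ is a product of root subgroups (in the coordinates of \S\ref{P}) and $\chi_{\dot w}$ is the character built from $(\mu_p\circ\det)|\det|^{s}$ and the modulus characters, hence trivial on the unipotent group $N_{\dot w}$; this space is $0$ unless $\psi_w|_{N_{\dot w}}$ is trivial, and one--dimensional otherwise.

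It then remains to check that for $\psi_w$ generic, i.e. $q(w)\ne0$, at most one (in fact exactly one, the open) cell survives. Here the input is the explicit quartic $q$ of (\ref{disc}): writing $w=(a_1,a_2,a_3,a_4)$, the character $\psi_w|_W$ has kernel the codimension--one subgroup of $W\simeq\Ga^4$ cut out by $\langle w,\,\cdot\,\rangle=0$, and for each non--open cell the coordinate subgroup $N_{\dot w}\cap W$ is large enough that $\langle w,\,\cdot\,\rangle$ cannot vanish identically on it once $q(w)\ne0$ --- a short case check, using that simultaneous vanishing of the relevant entries of $w$ forces $q(w)=0$. This gives $\dim\Wh_{\psi_w}(I(s,\mu_p))\le1$ for all $s\in\C$ and all unitary $\mu_p$, which is (1). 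I expect this last step --- writing down the $N_{\dot w}$ for the cells of the Heisenberg parabolic of $G_2$ and bookkeeping the unfoldings --- to be the main obstacle; there is no obvious shortcut around the explicit cell analysis.

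For (2), put $A:=A(|\cdot|^{\frac{1}{2}}_p\mu_p)$ and use the exact sequence $0\to A\to I(\tfrac{1}{2},\mu_p)\to J_\beta(1,\pi(1,\mu_p))\to0$ of Theorem \ref{constituents}(3). The coinvariants functor $V\mapsto V_{N(\Q_p),\psi_w}$ is exact on smooth representations ($N$ being unipotent), so applying it gives an exact sequence
\[
0\longrightarrow A_{N,\psi_w}\longrightarrow I(\tfrac{1}{2},\mu_p)_{N,\psi_w}\longrightarrow J_\beta(1,\pi(1,\mu_p))_{N,\psi_w}\longrightarrow0 .
\]
By (1) one has $\dim I(\tfrac{1}{2},\mu_p)_{N,\psi_w}\le1$, while the hypothesis $\Wh_{\psi_w}(A)\ne0$ forces $A_{N,\psi_w}\ne0$; since $A_{N,\psi_w}$ injects into $I(\tfrac{1}{2},\mu_p)_{N,\psi_w}$ we get $\dim A_{N,\psi_w}=\dim I(\tfrac{1}{2},\mu_p)_{N,\psi_w}=1$, the inclusion is an isomorphism of one--dimensional spaces, and $J_\beta(1,\pi(1,\mu_p))_{N,\psi_w}=0$. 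Dualizing, the restriction map $\Wh_{\psi_w}(I(\tfrac{1}{2},\mu_p))\to\Wh_{\psi_w}(A)$ is the transpose of that isomorphism, hence itself an isomorphism, and in particular all the Whittaker spaces occurring in the statement are one--dimensional. This completes (2).
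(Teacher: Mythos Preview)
Your argument for (2) is essentially the paper's own proof: take the short exact sequence from Theorem \ref{constituents}, apply the exact twisted Jacquet (equivalently, Whittaker) functor, and use the one--dimensionality from (1). The paper phrases it on the Whittaker side directly rather than passing through coinvariants and dualizing, but this is the same argument.

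For (1) the paper does something much shorter than you propose: it simply invokes Karel's multiplicity--one theorem for degenerate principal series \cite[Theorem~3.2]{Karel}, which already establishes $\dim\Wh_{\psi_w}(I(s,\mu_p))\le 1$ for generic $\psi_w$. What you are sketching is precisely the content of that theorem specialized to the Heisenberg parabolic of $G_2$: a Bruhat--cell filtration of $I(s,\mu_p)|_{N}$ and a cell--by--cell check that only the open cell can support a $\psi_w$--equivariant functional when $q(w)\ne 0$. So your route is not wrong in spirit, but as written it is an outline rather than a proof, and a couple of the steps are imprecise. In particular, your claimed mechanism for killing the cells carrying a one--parameter family of $N$--orbits (``an integral of $\psi_w$ over a positive--dimensional unipotent subgroup of $N$ vanishes because $\psi_w$ is nontrivial there'') is not the correct bookkeeping: via Mackey/the geometric lemma, the subquotient attached to a cell $P\dot w N$ contributes $\Hom$ against $\psi_w|_{N\cap \dot w^{-1}P\dot w}$, and what forces vanishing on non--open cells is that for generic $w$ this restricted character is nontrivial while the inducing character is trivial on the unipotent part --- not an integral over the orbit--parameter direction. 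You would also still owe the explicit case analysis you flag as ``the main obstacle.'' Either carry that out carefully (it is a finite check over $W_M\backslash W(G_2)$, using the coordinates of \S\ref{P} and the quartic $q$ of (\ref{disc})), or, as the paper does, cite Karel and move on.
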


\begin{proof}The first claim follows from \cite[Theorem 3.2, p.1311]{Karel}. 

For the second claim, 
let $0\lra A(|\cdot|^{\frac{1}{2}}_p\mu_p) \lra I(\frac{1}{2},\mu_p)\lra V'\lra 0$ be the exact sequence for some 
quotient $V'$. (By \cite[Proposition 4.1-ii),Proposition 4.3-(ii)]{M}, we can specify $V'$ but it is unnecessary for the argument below.) 
Then, by taking the Whittaker functor and using its exactness, we have  
$$0\lra  \Wh_{\psi_w}(V')\lra \Wh_{\psi_w}(I(\frac{1}{2},\mu_p))\lra \Wh_{\psi_w}(A(|\cdot|^{\frac{1}{2}}_p\mu_p))\lra 0$$
Then, the claim follows from this and the first claim with the assumption. 
%Since $\psi_w$ is generic,
% \cite[Theorem 3.2, p.1311]{Karel} yields the claim. 
% [I think we should not confuse the readers between our generic definition and the usual definition of generic.
%For example, $J_\beta(1,\pi(1,\chi))$ is non-generic in usual sense.]

%For the second claim, when $\mu_p=\bf 1$, in terms of notation of \cite[Proposition 4.3-(ii)]{M}, 
%we have an exact sequence 
%$$0\lra A(|\cdot|^{\frac{1}{2}}_p)=\pi(1)\lra 
%I(\frac{1}{2},{\bf 1})\lra V\lra 0,
%$$
%where 
%$$0\lra J_\beta(1,\pi(1,1))\lra V\lra J_\beta(\frac{1}{2},\delta(1))\lra 0.$$
%Here $\delta(\chi)$ stands for the twist of the Steinberg representation by a character $\chi$ of $\Q^\times_p$. Thus, 
%$\delta(1)$ is nothing but the Steinberg representation. 
%Notice that $J_\beta(1,\pi(1,1))$ and $J_\beta(\frac{1}{2},\delta(1))$ 
%are both not $\psi_w$-generic (see the third paragraph of \cite[p.472]{M}). 
%Then, by taking the Whittaker functor and using its exactness, we have the claim.

%The case when $\mu_p\neq \bf 1$ is similarly proved by using 
%\cite[Proposition 4.1-(ii), p.472]{M}. Note that $J_\beta(\frac{1}{2},\delta(\mu_p))$  
%is not $\psi_w$-generic (see the third paragraph of \cite[p.472]{M} again) and 
%$$0\lra J_\beta(\frac{1}{2},\delta(\mu_p)) \lra I(\frac{1}{2},\mu_p)\lra J_\beta(1,\pi(1,\chi))\lra 0.$$ 
%Thus, $J_\beta(1,\pi(1,\chi))$ is a unique $\psi_w$-generic component of $I(\frac{1}{2},\mu_p)$ by the first claim. 
\end{proof}

\subsection{Jacquet integrals and Siegel series}\label{JI}
For $z\in \C$, we define a function $\ve_z$ on $G_2(\Q_p)$ by 
$$\ve_z(g)=|\det(m)|^z_p,\ g=nmk\in G_2(\Q_p)=N(\Q_p)M(\Q_p)G_2(\Z_p).$$ 
For $\phi\in I(s,\mu_p)$ and $w\in W(\Q_p)$ with $q(w)\neq 0$, we define the Jacquet integral by 
$${\bf w}^{\mu_p,s,z}_w(\phi):=\int_{N(\Q_p)}(\phi\cdot \ve_z)(\iota n)\overline{\psi_w(n)}dn,\ 
\iota=w_{\beta\alpha\beta\alpha}w^{-1}_\beta,
$$
which is motivated by \cite[(16),\ p.292]{JR}.
It is absolutely convergent for ${\rm Re}(z)>\frac{3}{2}-{\rm Re}(s)$ and for each $s\in C$ 
one can check that it is a polynomial in $\C[p^{\pm z}]$ by using 
\cite[Corollary 3.6.1]{Karel}. Thus, we can substitute $z=0$ into 
${\bf w}^{\mu_p,s,z}_w(\phi)$. Then, we define, for ${\rm Re}(s)>-\frac{1}{2}$, 
\begin{equation}\label{wCF1}
\widetilde{{\bf w}}^{\mu_p,s}_w(\phi):=|q(w)|^{3/4}_p \frac{
L(s+\frac{1}{2},\mu_{p})L(s+\frac{3}{2},\mu_p)
L(2s+1,\mu_p) L(3s+\frac{3}{2},\mu_p)}{L(s+\frac{1}{2},\mu_{p,{E_w}})} {\bf w}^{\mu_p,s,0}_w(\phi) 
\end{equation}
where $L(s,\mu_p)=(1-\mu_p(p)p^{-s})^{-1}$ 
and $L(s,\mu_{p,{E_w}})$ is the $L$-function of the base change of $\mu_p$ 
to the cubic \'etale algebra 
$$E_w:=\left\{\begin{array}{cc}
\Q_p[x]/(f_w(x,1)) & \text{if $\deg_x(f_w(x,1))=3$} \\
\Q_p[x]/(f_w(x,1))\times \Q_p & \text{if $\deg_x(f_w(x,1))=2.$}
\end{array}\right.
$$ 
As for the factors in front of the Jacquet integral, we follow the normalization of Eisenstein series in \cite[p.226-237]{Xiong}. We also remark that 
in \cite{JR}, the induced representation is unnormalized while ours is normalized and 
the variable $s$ there should be replaced with $\ds\frac{s}{3}+\frac{1}{2}$, as in 
\cite{Xiong}, to get our setting.
Then, finally, we write 
\begin{equation}\label{wCF2}
\widetilde{{\bf w}}^{\mu_p}_w(\phi):=
\left\{\begin{array}{cl}
\widetilde{{\bf w}}^{\mu_p,0}_w(\phi) & (\phi\in I(0,\mu_p)) \\
\widetilde{{\bf w}}^{\mu_p,\frac{1}{2}}_w(\phi) & 
(\phi\in I(\frac{1}{2},\mu_p) \text{ with $\mu^2_p=\bf 1$})
\end{array}\right..
\end{equation} 

The following is an analogue of \cite[Lemma 3.3, p.590]{KY2}.
\begin{lem}\label{est-coe} Keep the notations in $($\ref{wCF2}$)$.
Assume $q(w)\neq 0$. 
Then, there exist constants $C_1,C_2>0$ depending only on $\phi$ such that 
$$|\widetilde{{\bf w}}^{\mu_p}_w(\phi)|\le C_1 \max\{|q(w)|^{\frac{3}{4}}_p,\ |q(w)|^{-C_2}_p\}.$$
\end{lem}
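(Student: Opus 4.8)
The plan is to bound $\widetilde{\bf w}^{\mu_p}_w(\phi)$ by separately controlling the Jacquet integral ${\bf w}^{\mu_p,s,0}_w(\phi)$ and the ratio of $L$-factors appearing in (\ref{wCF1}), treating $s=0$ and $s=\tfrac12$ uniformly. First I would reduce the Jacquet integral to a more tractable form using the equivariance properties. Since $\phi$ is $G_2(\Z_p)$-finite, it suffices to bound ${\bf w}^{\mu_p,s,0}_w(\phi)$ for finitely many translates of a fixed $\phi$; and using the identity $q(\rho_3(m)w)=\det(m)^6q(w)$ together with (\ref{action1}), the substitution $n\mapsto m n m^{-1}$ for suitable $m\in M(\Q_p)$ lets one relate ${\bf w}^{\mu_p,s,0}_w(\phi)$ for general $w$ to the one for $w$ in a fixed set of $M(\Q_p)$-orbit representatives, scaled by a power of $|\det(m)|_p$, i.e. a power of $|q(w)|_p$. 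This is exactly the mechanism in \cite[Lemma 3.3]{KY2}, and the power of $|q(w)|_p$ one picks up is governed by $\delta_P^{1/2}|\det|^s$ and the character $\mu_p$, producing a bound of shape $|q(w)|_p^{c}$ for some constant $c$ on each orbit.

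Next I would handle the $L$-factors. Each of $L(s+\tfrac12,\mu_p)$, $L(s+\tfrac32,\mu_p)$, $L(2s+1,\mu_p)$, $L(3s+\tfrac32,\mu_p)$ is a fixed nonzero constant independent of $w$ at $s=0$ or $s=\tfrac12$, so only $|q(w)|_p^{3/4}/L(s+\tfrac12,\mu_{p,E_w})$ varies with $w$. The denominator $L(s+\tfrac12,\mu_{p,E_w})^{-1}=\prod_{\mathfrak{q}|p}(1-\mu_{p,E_w}(\mathfrak{q})N\mathfrak{q}^{-s-1/2})$ is a product over at most three primes of $E_w$, hence a polynomial in finitely many terms with bounded absolute value: since $\mu_p$ is unitary, $|\mu_{p,E_w}(\mathfrak{q})|=1$ and $N\mathfrak{q}\geq 1$, so this factor is bounded by an absolute constant (e.g. $8$) uniformly in $w$. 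Thus the whole prefactor contributes at most $C\,|q(w)|_p^{3/4}$.

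Combining, on each of the finitely many $M(\Q_p)$-orbits in $\{w: q(w)\neq 0\}$ one gets $|\widetilde{\bf w}^{\mu_p}_w(\phi)|\leq C\,|q(w)|_p^{3/4}\cdot|q(w)|_p^{c}$ for the orbit-dependent exponent $c$; taking $C_1$ to be the max of the finitely many constants and $C_2=-\min(3/4+c,\, 0)$ over orbits with $c<-3/4$, while noting that for orbits with $c\geq -3/4$ one has $|q(w)|_p^{3/4+c}\leq\max\{1,|q(w)|_p^{3/4}\}$, yields a bound of the stated form $C_1\max\{|q(w)|_p^{3/4},|q(w)|_p^{-C_2}\}$. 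The only subtlety to check is that the polynomiality of $z\mapsto {\bf w}^{\mu_p,s,z}_w(\phi)$ in $\C[p^{\pm z}]$ (already invoked via \cite[Corollary 3.6.1]{Karel}) has degree bounded independently of $w$ within an orbit, so that evaluation at $z=0$ does not introduce $w$-dependent blowup; this follows because $w$-dependence enters only through the additive character $\psi_w$, and conjugation transports this to a fixed character at the cost of the $|\det(m)|_p$ factor already accounted for.

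The main obstacle I expect is making the reduction to finitely many $M(\Q_p)$-orbits fully rigorous: one must verify that $\{w\in W(\Q_p): q(w)\neq 0\}$ really does decompose into finitely many $M(\Q_p)$-orbits under $\rho_3$ (this is essentially the classification of étale cubic $\Q_p$-algebras, of which there are finitely many isomorphism types), and that on each orbit the Jacquet integral transforms with a clean power of $|\det(m)|_p=|q(w)|_p^{1/6}\cdot(\text{unit})$ — keeping careful track of the modulus character $\delta_P$, the twist $|\det|^s$, the normalizing $|q(w)|_p^{3/4}$, and the character $\mu_p$ evaluated on $\det(m)$ (which has absolute value $1$ and so does not affect the estimate, but must be seen not to). Once the bookkeeping of these exponents is done on each orbit, the remaining estimates are the routine ones indicated above.
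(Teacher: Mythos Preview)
Your reduction to orbit representatives has a genuine gap. The substitution $n\mapsto mnm^{-1}$ (equivalently, the transformation law proved in Lemma~\ref{transformation1}-(2)) relates $\widetilde{\bf w}^{\mu_p}_w(\phi)$ not to a scalar times $\widetilde{\bf w}^{\mu_p}_{w_0}(\phi)$, but to a scalar times $\widetilde{\bf w}^{\mu_p}_{w_0}(m^{-1}\cdot\phi)$, where $m\in M(\Q_p)$ carries $w$ to the representative $w_0$. As $w$ ranges over a single orbit, this $m$ ranges over an entire coset of the stabilizer in $M(\Q_p)$, so $m^{-1}\cdot\phi$ runs over an infinite family of sections. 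Your appeal to $G_2(\Z_p)$-finiteness does not control this: right translation by $m\in M(\Q_p)$ does not preserve the $K$-type of $\phi$, and there is no reason for $\sup_m |\widetilde{\bf w}^{\mu_p}_{w_0}(m^{-1}\cdot\phi)|$ to be finite at $s=0$ or $s=\tfrac12$, where the defining integral is \emph{not} absolutely convergent (it is only a principal value / analytic continuation). So the quantity you call ``something bounded on each orbit'' is not known to be bounded, and the argument is circular. Your closing remark about polynomial degree in $z$ has the same circularity: after conjugation the polynomial is the one attached to $m^{-1}\cdot\phi$, whose degree you have not controlled in $m$.

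The paper's proof sidesteps this via a different mechanism. Since $s\mapsto{\bf w}^{\mu_p,s,0}_w(\phi)$ is entire, the maximum modulus principle on a strip $\{-\sigma\le{\rm Re}(s)\le\sigma\}$ with $\sigma>\tfrac32$ reduces the problem to the two boundary lines. On ${\rm Re}(s)=\sigma$ the Jacquet integral converges absolutely and the trivial bound $|\overline{\psi_w(n)}|=1$ gives $|{\bf w}^{\mu_p,s,0}_w(\phi)|\le\int_N|\phi(\iota n)|\,dn$, independent of $w$. On ${\rm Re}(s)=-\sigma$ one invokes the functional equation ${\bf w}^{\mu_p^{-1},-s,0}_w\circ M(s)=\kappa_w(s)\,{\bf w}^{\mu_p,s,0}_w$ coming from one-dimensionality of the Whittaker space; your conjugation idea is used here, but only to compute how the \emph{scalar} $\kappa_w(s)$ varies across an orbit, giving $\kappa_w(s)=|q(w)|_p^{-s}\cdot(\text{factor depending only on the representative }w_0)$. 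The remaining Jacquet integral is then ${\bf w}^{\mu_p^{-1},s,0}_w(M(-s)\phi)$ at ${\rm Re}(s)=\sigma$, bounded as before by an absolutely convergent integral of the \emph{fixed} section $M(-s)\phi$ --- not of $m\cdot\phi$. This is exactly the missing ingredient in your sketch.
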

\begin{proof}We borrow an idea of the proof in \cite[Lemma 3.3]{Yamana17} and 
an argument in \cite[p.53]{Ik17}, but the proof here is slightly different and applicable to many cases where the unipotent radical is not abelian. 

We need a bound on $|{\bf w}^{\mu_p,0,0}(\phi)|$ 
(the case $|{\bf w}^{\mu_p,\frac{1}{2},0}(\phi)|$ is similarly handled and omitted). 
By the argument in \cite[Section 1 and 2]{CS}, ${\bf w}^{\mu_p,s,0}_w(\phi)$ is 
a holomorphic function in $s\in \C$ (we note that the Jacquet integral here is interpreted as a Cauchy principal value integral, following \cite{Karel} and \cite{CS}. 
Hence, in the region of absolute convergence for $s$, it coincides with the local integral considered in \cite{JR}.). 
%Thus, ${\bf w}^{\mu_p,s,0}_w(\phi)$ is a holomorphic function of $s\in\C$. 
%Put $z=p^{-s}$ and we view ${\bf w}^{\mu_p,s,0}_w(\phi)$ as a Laurent polynomial in $\C[z^{\pm 1}]$. 
Fix a positive real number $\sigma>\frac{3}{2}$ and define 
$D_\sigma:=\{s\in \C\ |\ -\sigma \le {\rm Re(s)\le \sigma}\}$. 
Applying the maximal modulus principle to ${\bf w}^{\mu_p,s,0}_w(\phi)$ on $D_\sigma$ as a function in $s$, we have 
$$|{\bf w}^{\mu_p,0,0}_w(\phi)|\le \max_{{\rm Re}(s)=
\pm \sigma}\{|{\bf w}^{\mu_p,s,0}_w(\phi)|\}.$$
When ${\rm Re}(s)=\sigma$, as in the proof in \cite[Lemma 3.3]{Yamana17}, 
there exists a constant $C_1>0$ depending on $\phi$ (and $\sigma$) such that 
$|{\bf w}^{\mu_p,s,0}(\phi)|\le C_1$. 

Next, we consider the case when ${\rm Re}(s)=-\sigma$. 
Let 
$M(s):I(s,\mu_p)\lra I(-s,\mu^{-1}_p),\ f\mapsto [g\mapsto \ds\int_{N(\Q_p)}f(\iota n g)dn$]. 
It is well-known that for each $\phi\in I(s,\mu_p)$, 
$M(s)(\phi)$ extends meromorphically on the  whole space in $s\in \C$. 
By Proposition \ref{wh}-(1), 
there exists a meromorphic function $\kappa_w(s)$ on $s\in \C$ such that 
$${\bf w}^{\mu^{-1}_p,-s,0}_w\circ M(s)=\kappa_w(s) {\bf w}^{\mu_p,s,0}_w.$$ 
Let us evaluate $\kappa_w(s)$.  
Let $C_W$ be a complete system of representatives of $\{w\in W(\Q_p)\ |\ q(w)\neq 0\}/
M(\Q_p)$ where $M(\Q_p)$ acts as the adjoint action. It is well-known that 
$C_W$ is finite (\cite[Section 2.4]{JR}) and the upper bound of its cardinality is independent of $p$.
We can write $w=\det(m)^2\rho_3(m^{-1})w_0$ for some $m\in M(\Q_p)$ and $w_0\in C_W$ so that $\langle w,n \rangle=
\langle w_0,{\rm Ad}(m)n \rangle$ and $q(w)=\det(m)^2 q(w_0)$. 
By transformation law, we have 
$${\bf w}^{\mu^{-1}_p,-s,0}_{w_0}\circ M(s)(m\cdot f)=\mu^2_p(\det(m)) |\det(m)|^{2s}_p \kappa_{w}(s) 
{\bf w}^{\mu_p,s,0}_{w_0}(m\cdot f),\ f\in I(s,\mu_p).$$ 
Thus, we have 
$$\kappa_w(s)=\mu^{-2}_p(\det(m)) |\det(m)|^{-2s}_p\kappa_{w_0}(s)=
\mu^{-2}_p(\det(m)) |q(w)|^{-s}_p(|q(w_0)|^{s}_p\kappa_{w_0}(s))$$
and then,  
\begin{eqnarray*}
&& {\bf w}^{\mu_p,-s,0}_w(\phi)=\kappa^{-1}_w(-s)
{\bf w}^{\mu^{-1}_p,s,0}_w\circ M(-s)(\phi) \\
&& \phantom{xxxxxxxxx}=|q(w)|^{-s}_p\{\mu_p(\det(m)^2) (|q(w_0)|^{s}_p\kappa_{w_0}(-s)^{-1}){\bf w}^{\mu^{-1}_p,s,0}_w\circ M(-s)(\phi)\}.
\end{eqnarray*}
Since $\mu_p$ is unitary, $|\mu_p(\det(m)^2)|=1$. 
The set consisting of all poles of $\kappa_{w_0}(-s)^{-1}$ for any 
$w_0\in C_W$, and the normalizing factor of $\widetilde{{\bf w}}^{\mu_p,s}_w(\phi)$ and $M(-s)$, is finite. Therefore, one can re-choose 
$\sigma>\frac{3}{2}$ if necessary, so that any $s\in \C$ with ${\rm Re}(s)=\sigma$ does not 
contribute to any such poles. Thus, the claim follows from the previous argument. 
\end{proof}

\begin{lem}\label{transformation1}For $w\in W(\Q_p)$ and ${\rm Re}(s)>-\frac{1}{2}$, 
it holds that 
\begin{enumerate}
\item the functional $\widetilde{{\bf w}}^{\mu_p,s}_w\in \Wh_{\psi_w}(I(s,\mu_p))$ is non-zero 
and thus, ${\rm dim}\hspace{0.5mm}\Wh_{\psi_w}(I(s,\mu_p))=1$.  
Further, the restriction of $\widetilde{{\bf w}}^{{\bf 1}}_w$ to $A(|\cdot|^{\frac{1}{2}}_p\mu_p)$ is also 
non-zero if $\Wh_{\psi_w}(A(|\cdot|^{\frac{1}{2}}_p\mu_p))\neq 0$; 
\item  for any $m\in M(\Q_p),\ n\in N(\Q_p)$ and $\phi\in I(s,\mu_p)$, 
$$\widetilde{{\bf w}}^{\mu_p,s}_w(nm\cdot \phi)= \psi_w(n)\mu_p(\det(m))^{-1}|\det(m)|^{-s}
\widetilde{{\bf w}}^{\mu_p,s}_{(\det(m))^2\rho_3(m^{-1})w}(\phi).$$
\end{enumerate}
\end{lem}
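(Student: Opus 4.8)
The plan is to establish the two claims in order, treating the transformation law (2) first since it is essentially formal, and then using it together with earlier results to obtain the non-vanishing in (1). For part (2), I would start from the definition of the unnormalized Jacquet integral ${\bf w}^{\mu_p,s,0}_w$ and compute how it transforms under left translation by $nm\in P(\Q_p)$. Translating by $n\in N(\Q_p)$: since $\iota n' \cdot n = \iota \cdot ({}^{\iota^{-1}}n)\, n'$ is not quite right, the cleaner route is to observe that $\iota n N(\Q_p) = \iota N(\Q_p)$ and use the substitution $n' \mapsto (\iota^{-1} n^{-1} \iota) n'$ inside the integral; but $\iota^{-1} N \iota$ need not lie in $N$, so instead one uses that $(\phi\cdot\ve_z)(\iota n' n) $ and writes $n' n = n''$, picking up the character value. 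Concretely, $(n\cdot\phi)(\iota n') = \phi(\iota n' n)$ and substituting $n' \mapsto n' n^{-1}$ (a measure-preserving change on $N(\Q_p)$) gives $\int \phi(\iota n')\overline{\psi_w(n' n^{-1})}\,dn' = \psi_w(n)\,{\bf w}^{\mu_p,s,0}_w(\phi)$, since $\psi_w$ is a character of $N(\Q_p)$ trivial on $Z_N$. For translation by $m\in M(\Q_p)$: write $\iota n m = \iota m ({}^{m^{-1}}n) = (\iota m \iota^{-1})\,\iota\,({}^{m^{-1}}n)$, use that $\iota m \iota^{-1}$ can be absorbed via the Iwasawa-type transformation of $\phi\cdot\ve_z$ (this is where the factor $\delta_P^{1/2}(m)\mu_p(\det m)|\det m|^s$, i.e. after normalization $\mu_p(\det m)|\det m|^s$, appears as $\mu_p(\det m)^{-1}|\det m|^{-s}$ once moved to the other side), and use \eqref{action1} to see that conjugation by $m$ on $N$ sends $\psi_w$ to $\psi_{\det(m)^2\rho_3(m^{-1})w}$ (combining $\langle \rho_3(m)w, x\rangle = \langle w, \det(m)^3\rho_3(m^{-1})x\rangle$ with the $\det(m)^{-1}$ twist in \eqref{action1}). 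A Jacobian factor from the change of variables $n\mapsto {}^{m^{-1}}n$ on $N(\Q_p)$, together with the $|q(w)|_p^{3/4}$ and the ratio of $L$-factors in \eqref{wCF1} — both of which scale as explicit powers of $|\det m|_p$ under $w\mapsto \det(m)^2\rho_3(m^{-1})w$ (using $q(\rho_3(m)w)=\det(m)^6 q(w)$) — must be bookkept so that everything combines into the clean statement; this is the one genuinely fiddly computation.

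For part (1), the strategy is: the integral defining ${\bf w}^{\mu_p,s,0}_w$ is a Cauchy principal value that, as recalled in the proof of Lemma \ref{est-coe}, is holomorphic in $s$ and, in the region ${\rm Re}(s)$ large, agrees with the convergent Rankin--Selberg-type integral of \cite{JR}. By the local theory of \cite{Karel} (Corollary 3.6.1 and the surrounding material), the normalized functional $\widetilde{{\bf w}}^{\mu_p,s}_w$ — i.e. the integral multiplied by $|q(w)|_p^{3/4}$ and the normalizing ratio of $L$-factors — is entire and nonzero in $s$; the $L$-factor normalization is chosen precisely to cancel the poles/zeros of the bare Jacquet integral, so the normalized version is a nonvanishing holomorphic section. (Alternatively one appeals to the standard fact, e.g. via \cite{CS}, that the Jacquet integral provides an explicit nonzero element of the one-dimensional space $\Wh_{\psi_w}(I(s,\mu_p))$ for all $s$ after normalization.) Combined with the upper bound $\dim \Wh_{\psi_w}(I(s,\mu_p))\le 1$ from Proposition \ref{wh}(1), non-vanishing forces $\dim \Wh_{\psi_w}(I(s,\mu_p))=1$.

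For the last assertion of (1) — that the restriction of $\widetilde{{\bf w}}^{\bf 1}_w$ to $A(|\cdot|_p^{1/2}\mu_p)$ is nonzero when $\Wh_{\psi_w}(A(|\cdot|_p^{1/2}\mu_p))\neq 0$ — I would invoke Proposition \ref{wh}(2): under that hypothesis the restriction map $\Wh_{\psi_w}(I(\tfrac12,\mu_p))\xrightarrow{\sim}\Wh_{\psi_w}(A(|\cdot|_p^{1/2}\mu_p))$ is an isomorphism of one-dimensional spaces, so the image of the nonzero functional $\widetilde{{\bf w}}^{\mu_p,\frac12}_w$ (nonzero by the first part of (1)) is a nonzero element of $\Wh_{\psi_w}(A(|\cdot|_p^{1/2}\mu_p))$, which is exactly the claim. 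The main obstacle is the first part: rigorously justifying that the $L$-factor normalization in \eqref{wCF1} renders $\widetilde{{\bf w}}^{\mu_p,s}_w$ holomorphic \emph{and} nowhere vanishing in $s$ for ${\rm Re}(s)>-\tfrac12$ — this requires matching the conventions of \cite{JR}, \cite{Karel}, and \cite{Xiong} carefully (including the substitution $s\mapsto \tfrac{s}{3}+\tfrac12$ and the normalized-vs-unnormalized induction), and checking that no spurious pole or zero of the normalized functional intrudes in the relevant half-plane — exactly the kind of convention-matching flagged in the remark following \eqref{wCF1}.
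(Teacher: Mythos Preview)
Your approach to part (2) matches the paper's: the paper also writes $\phi(\iota n m)=\phi((\iota m\iota^{-1})\iota n')$ with $n'=m^{-1}nm$, computes how $\psi_w$ transforms under conjugation via \eqref{action1}, and tracks the Jacobian $dn=\delta_P(m)\,dn'$ together with the rescaling of $|q(w)|_p^{3/4}$. One point worth sharpening: the inversion $\mu_p(\det m)^{-1}|\det m|^{-s}$ does \emph{not} arise from ``moving to the other side'' but from the explicit identity \eqref{imi}, which gives $\det(\iota m\iota^{-1})=\det(m)^{-1}$; so $\phi$ sees $\iota m\iota^{-1}\in M(\Q_p)$ with inverse determinant, producing $\mu_p(\det m)^{-1}|\det m|^{-s-3/2}$ directly. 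With this correction your bookkeeping goes through exactly as in the paper.

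For part (1), your route is considerably more elaborate than what is needed, and the ``main obstacle'' you flag is in fact avoidable. The paper's argument is the standard one-line observation: for fixed $s$ with ${\rm Re}(s)>-\tfrac12$, choose a smooth section $\phi\in I(s,\mu_p)$ supported in the open cell $P(\Q_p)\iota N(\Q_p)$ with support in the $N$-variable concentrated near the identity; then the Jacquet integral $\int_{N(\Q_p)}\phi(\iota n)\overline{\psi_w(n)}\,dn$ is visibly nonzero (one is essentially evaluating a bump function). Combined with Proposition~\ref{wh}(1) this forces $\dim\Wh_{\psi_w}(I(s,\mu_p))=1$. There is no need to argue that the $L$-factor normalization renders the functional holomorphic and nowhere-vanishing in $s$; that would indeed require the delicate convention-matching you describe, but the lemma only asks for nonvanishing at a given $s$. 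Your use of Proposition~\ref{wh}(2) for the last assertion of (1) is correct and is exactly what the paper intends.
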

\begin{proof}The first claim is proved by choosing a section $\phi$ suitably and Proposition \ref{wh}-(1). It is standard 
and thus omitted. 
For the second claim, the action of $n$ is easy to handle. 
Therefore, we only check the action of $m$. 
If we write $m=m(
\begin{pmatrix}
a & b \\
c & d
\end{pmatrix}
)$, then 
\begin{equation}\label{imi}
\iota m\iota^{-1}=m((ad-bc)^{-1}
\begin{pmatrix}
a & -b \\
-c & d
\end{pmatrix}
).
\end{equation}
Thus, $\det \iota m\iota^{-1}=\det(m)^{-1}$ and it yields 
$\phi(\iota n m)=\mu_p(\det(m))^{-1}|\det(m)|^{-s-3/2}\phi(\iota m n')$ where 
$n'=m^{-1}nm$.
By (\ref{action1}), 
\begin{eqnarray*}
&& \psi_w(n)=\psi(\langle w,n \rangle)= 
\psi(\langle w,mn'm^{-1} \rangle) = \psi(\langle w,\det(m)^{-1}\rho_3(m)n \rangle) \\
&& \phantom{xxxxx} =\psi(\langle \det(m)^2\rho_3(m^{-1})w,n' \rangle)=
\psi_{\det(m)^2\rho_3(m^{-1})w}(n').
\end{eqnarray*}
Further, $dn=d(mn'm^{-1})=\delta_P(m)dn'$ by (\ref{action1}) again 
and $$|q(\det(m)^2\rho_3(m^{-1})w)|^{3/4}_p=|\det(m)|^{3/2}_p |q(w)|^{3/4}_p.$$
Summing up, we have
\begin{eqnarray*}
&& \widetilde{{\bf w}}^{\mu_p,s}_w(m\cdot \phi)
=\mu_p(\det(m))^{-1}|\det(m)|^{-s-3/2}\delta_P(m)|\det(m)|^{-3/2}_p 
\widetilde{{\bf w}}^{\mu_p,s}_{(\det(m))^2\rho_3(m^{-1})w}(\phi) \\
&& \phantom{xxxxxxxxx} =\mu_p(\det(m))^{-1}|\det(m)|^{-s}
\widetilde{{\bf w}}^{\mu_p,s}_{(\det(m))^2\rho_3(m^{-1})w}(\phi).
\end{eqnarray*}
\end{proof}
%The following lemma is similar to \cite[Lemma 3.5, p.591]{KY2} or 
%\cite[Lemma 3.3, p.48]{Yamana}
%\begin{lem}\label{bound1}Assume ${\rm Re}(s)>-\frac{1}{2}$. Let $\phi\in I(s,\mu_p)$. 
%Then, for any compact subset $C$ of $G_2(\Q_p)$, there exists a non-negative valued 
%Schwartz function $\Phi$ on $W(\Q_p)$ and a constant $M>0$ such that 
%$$|\widetilde{{\bf w}}^{\mu_p,s}_w(c\cdot \phi)|\le M\Phi(w)$$
%for any $c\in C$ and $w\in W(\Q_p)$ with $q(w)\neq 0$. 
%\end{lem}

\subsection{Degenerate principal series representations: The archimedean case} 
Recall the notations in Section \ref{QDS}. We regard the quaternionic discrete series 
representation $D_k$ as a submodule of 
$$\Pi_k:={\rm ind}^{G_2(\R)}_{P(\R)}\lambda_k=
{\rm Ind}^{G_2(\R)}_{P(\R)}\sgn^k |\det|^{k-\frac{1}{2}},\ k\ge 2,
$$
where the latter is a normalized induced representation. 

Let $\psi=\psi_\infty=\exp(2\pi\sqrt{-1}\ast):\R\lra \C^\times$ be the standard non-trivial additive character. 
For each $w\in W(\R)$, we define $\psi_w(n)=\psi_\infty(\langle w,x \rangle)$ for $n=n(x,t)\in 
N(\R)$ and $\psi_w$ is said to be generic if $q(w)\neq 0$. We say $w\in W(\R)$ is generic if $q(w)\ne 0$, or equivalently $\psi_w$ is generic. 
The following claim is due to Wallach \cite[Theorem 13, p.301 and Theorem 16, p.302]{Wallach} (see also \cite[Proposition 6.1]{GGS}) though 
the sign is opposite in $q(w)$ because $q(w)=-\Delta(\psi_w)$ in the notation there.   
\begin{prop}\label{wh-arch} Suppose $\psi_w$ is generic for $w\in W(\R)$.  Then, it holds that 
\begin{enumerate}
\item ${\rm dim}\hspace{0.5mm}{\rm Hom}_{N(\R)}(\Pi_k,\C(\psi_w))=1$ if $q(w)\neq 0$; 
\item ${\rm dim}\hspace{0.5mm}{\rm Hom}_{N(\R)}(D_k,\C(\psi_w))=\begin{cases} 0, &\text{if $q(w)>0$}\\ 
1, &\text{if $q(w)<0$}\end{cases}$.
\end{enumerate}
\end{prop}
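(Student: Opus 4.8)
The statement I want to prove is Proposition~\ref{wh-arch}: for a generic $w\in W(\R)$ (i.e.\ $q(w)\neq 0$), the space $\mathrm{Hom}_{N(\R)}(\Pi_k,\C(\psi_w))$ is one-dimensional, while $\mathrm{Hom}_{N(\R)}(D_k,\C(\psi_w))$ vanishes when $q(w)>0$ and is one-dimensional when $q(w)<0$. As indicated in the excerpt, this is essentially a restatement of results of Wallach (and is recorded in \cite[Proposition~6.1]{GGS}), so the ``proof'' I would give is really a careful bookkeeping of how those results translate into the present normalization. First I would recall that $\Pi_k = \mathrm{Ind}_{P(\R)}^{G_2(\R)} \lambda_k$ is a degenerate principal series induced from the Heisenberg parabolic $P=MN$, and that $D_k\subset \Pi_k$ is the unique submodule which is the quaternionic discrete series; this identification was already stated in Section~\ref{QDS}.

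\textbf{Step 1: the $\Pi_k$ statement via Jacquet/Bruhat theory.} For the degenerate principal series $\Pi_k$, the Whittaker functionals attached to a generic character $\psi_w$ of $N(\R)$ are controlled by the open $P$-orbit on the relevant flag variety (equivalently, by the big cell), exactly as in the nonarchimedean Proposition~\ref{wh}. The key point is that $M\simeq GL_2$ acts on $W\simeq \mathrm{Sym}^3\mathrm{St}_2$ (twisted by $\det^{-1}$) with finitely many orbits on the generic locus $\{q(w)\neq 0\}$, and the stabilizer of a generic $w$ is the relevant form of a torus; multiplicity one for $\mathrm{Hom}_{N(\R)}(\Pi_k,\C(\psi_w))$ then follows from the archimedean analogue of \cite[Theorem~3.2]{Karel}, i.e.\ Wallach's uniqueness theorem, since only the open orbit supports a functional for generic $\psi_w$. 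This gives part (1): the dimension is exactly $1$, independently of the sign of $q(w)$.

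\textbf{Step 2: the $D_k$ statement and the sign dichotomy.} For the quaternionic discrete series $D_k$ one has to decide which of the (at most one-dimensional) functionals on $\Pi_k$ survive restriction to the submodule $D_k$. This is precisely the content of Wallach's computation of the wave-front set / Whittaker models of quaternionic discrete series: the generalized Whittaker functionals that are nonzero on $D_k$ are exactly those attached to $w$ lying in the ``negative'' cone, which in the present normalization is $q(w)<0$ (the sign flip from \cite{Wallach}, where $q(w)=-\Delta(\psi_w)$, is noted in the excerpt, and a further sign convention difference with \cite{GGS} is flagged in Remark~\ref{sign}). Concretely one writes the exact sequence $0\to D_k\to \Pi_k\to \Pi_k/D_k\to 0$, applies the (left-exact, in fact here effectively exact on the relevant piece) functor $\mathrm{Hom}_{N(\R)}(-,\C(\psi_w))$, and uses Wallach's explicit description of the minimal $K_\infty$-type vector together with the quaternionic discrete series character to see that the functional is nonzero on $D_k$ precisely in the $q(w)<0$ case. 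When $q(w)>0$ the unique functional on $\Pi_k$ kills $D_k$ and factors through the quotient, giving $\mathrm{Hom}_{N(\R)}(D_k,\C(\psi_w))=0$; when $q(w)<0$ it restricts nontrivially, giving dimension $1$ by Step~1.

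\textbf{Main obstacle.} The genuinely delicate point is Step~2: identifying the correct half-space $q(w)<0$ for which the functional is nonzero on $D_k$, and getting every sign right relative to the conventions of \cite{Wallach}, \cite{GGS}, and the normalization of $\psi$ at $\infty$ chosen here (recall $e^{+2\pi i\langle w,x\rangle}$ rather than $e^{-2\pi i\langle w,x\rangle}$). I would handle this by quoting Wallach's theorem verbatim, translating his $\Delta(\psi_w)$ into $-q(w)$, and cross-checking against \cite[Proposition~6.1]{GGS} and Remark~\ref{sign}; the multiplicity-one input (Step~1) is then essentially formal given \cite{Karel} and Wallach's uniqueness theorem, so no new computation is required beyond this bookkeeping.
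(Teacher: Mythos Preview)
Your proposal is correct and matches the paper's approach: the paper does not give an independent proof but simply attributes the result to Wallach \cite[Theorem~13 and Theorem~16]{Wallach} (cf.\ \cite[Proposition~6.1]{GGS}), noting the sign flip $q(w)=-\Delta(\psi_w)$, which is exactly the bookkeeping you describe. Your Steps~1--2 add some narrative structure around the citation, but the substance is the same.
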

Recall $K_\infty\simeq (\SU(2)_{\beta_0}\times\SU(2)_\alpha)/\mu_2$ where 
we insert subscripts into $\SU(2)$-factors to indicate the roots. 
\begin{prop}\label{k-type-dp}As a representation of $K_\infty$, it holds that 
$$\Pi_k|_{K_\infty}\simeq \bigoplus_{m,n\in \Z_{\ge 0}\atop m:\text{even}}T_{m,n},\quad 
T_{m,n}:=(\Sym^m \C^2)^{\oplus(m+1)}\boxtimes \Sym^{2n}\C^2. 
$$
\end{prop}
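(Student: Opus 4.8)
The plan is to work in the compact picture. By the Iwasawa decomposition $G_2(\R)=P(\R)K_\infty$, restriction of sections to $K_\infty$ gives an isomorphism of $K_\infty$-representations
$$\Pi_k|_{K_\infty}\;\simeq\;\mathrm{Ind}_{K_\infty\cap P(\R)}^{K_\infty}\bigl(\lambda_k|_{K_\infty\cap P(\R)}\bigr),$$
and since $K_\infty\cap N(\R)=\{1\}$ one has $K_\infty\cap P(\R)=K_\infty\cap M(\R)=\{m(\pm I_2)\}$ (Section \ref{QDS}). As $\det(\pm I_2)=1$, the character $\lambda_k=\sgn^k|\det|^{k+1}$ is trivial on this subgroup, so $\Pi_k|_{K_\infty}\simeq\mathrm{Ind}_{\{m(\pm I_2)\}}^{K_\infty}(\mathbf 1)$. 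By Peter--Weyl together with Frobenius reciprocity this equals $\bigoplus_\rho\,(\dim\rho^{\langle m(-I_2)\rangle})\,\rho$, the sum running over the irreducible representations $\rho=\Sym^a\C^2\boxtimes\Sym^b\C^2$ (with $a\equiv b\bmod 2$) of $K_\infty=(\SU(2)_{\beta_0}\times\SU(2)_\alpha)/\mu_2$. Thus the whole statement reduces to computing one fixed-space dimension for each $\rho$.

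For this I would use the explicit model of $K_\infty$ and of the Levi embedding $M(\R)\hookrightarrow G_2(\R)$ from \cite[Section 4.1.1]{Po} (equivalently Appendix C) to pin down the image of the involution $m(-I_2)$ inside $\SU(2)_{\beta_0}\times\SU(2)_\alpha$. One then has $\dim\rho^{\langle m(-I_2)\rangle}=\tfrac12\bigl(\dim\rho+\mathrm{tr}(m(-I_2)\mid\rho)\bigr)$, and $\mathrm{tr}(m(-I_2)\mid\rho)$ factors as a product of the two $\SU(2)$-characters evaluated at the respective components, which I would read off from $\chi_d(\mathrm{diag}(e^{i\theta},e^{-i\theta}))=\sin((d+1)\theta)/\sin\theta$. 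Summing over $\rho$ and re-indexing by the first-factor highest weight should reproduce $\bigoplus_{m\ \mathrm{even},\ n\ge 0}T_{m,n}$: the parity behaviour of $m(-I_2)$ kills all odd highest weights on each factor, and the surviving multiplicity of $\Sym^m\C^2\boxtimes\Sym^{2n}\C^2$ comes out to be $m+1$.

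As a consistency check I would confront the outcome with the submodule $D_k\subset\Pi_k$, whose $K_\infty$-decomposition $\bigoplus_{n\ge0}V_{k,n}$ is recalled in Section \ref{QDS} — in particular the minimal type $V_{k,0}=\Sym^{2k}\C^2\boxtimes\Sym^0\C^2$ must occur in $\Pi_k$ with the multiplicity recorded by $T_{2k,0}$ — and with Proposition \ref{wh-arch}, which is the $\psi_w$-coinvariant shadow of the same branching. The conceptual steps (Iwasawa/compact picture, Frobenius reciprocity) are routine; the main obstacle is the middle step: getting the exact image of $m(-I_2)$ in $\SU(2)_{\beta_0}\times\SU(2)_\alpha$ right from the explicit realization, and then carrying out the $\SU(2)$-character bookkeeping cleanly enough that both parity constraints and the multiplicity $m+1$ fall out.
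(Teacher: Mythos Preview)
Your compact-picture setup and the appeal to Frobenius reciprocity is exactly the paper's approach, but there is a genuine error in your identification of $K_\infty\cap P(\R)$. You rely on the assertion in Section~\ref{QDS} that $M(\R)\cap K_\infty=\{m(\pm I_2)\}$; this is incorrect (and indeed inconsistent with what the paper itself uses in the proof). A dimension count already rules it out: from $G_2(\R)=P(\R)K_\infty$ one has $\dim(P(\R)\cap K_\infty)=\dim P+\dim K_\infty-\dim G_2=9+6-14=1$, so the intersection is a circle, not a finite group. In fact $M(\R)\cap K_\infty$ is the image of $\{1\}\times U(1)_\alpha$ in $(\SU(2)_{\beta_0}\times\SU(2)_\alpha)/\mu_2$, where $U(1)_\alpha$ is the diagonal torus of $\SU(2)_\alpha$; this is natural since $[M,M]$ is the root $\SL_2$ for $\alpha$, whose compact circle lands in the $\SU(2)_\alpha$ factor of $K_\infty$.

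With this circle in hand the computation is immediate: the multiplicity of $\Sym^a\C^2\boxtimes\Sym^b\C^2$ in ${\rm Ind}^{K_\infty}_{M(\R)\cap K_\infty}\mathbf 1$ is $(a+1)\cdot\dim(\Sym^b\C^2)^{U(1)_\alpha}$, and the second factor is $1$ for $b$ even and $0$ for $b$ odd --- together with the parity constraint $a\equiv b\bmod 2$ this recovers the $T_{m,n}$. By contrast, inducing from your two-element subgroup would give multiplicities of the form $\tfrac12\bigl((m+1)(2n+1)+\chi_m(g_1)\chi_{2n}(g_2)\bigr)$ for some fixed $(g_1,g_2)$, and no choice of $(g_1,g_2)$ makes this equal to $m+1$ for all $n\ge 0$ (already $n=0,1,2$ impose incompatible conditions on $\chi_{2n}(g_2)$). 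So the character bookkeeping you outline cannot produce the stated multiplicity; the missing ingredient is the full $U(1)_\alpha$-invariance on the $\SU(2)_\alpha$ factor, which is precisely what cuts the contribution of $\Sym^{2n}\C^2$ down to $1$.
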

\begin{proof}We see that $G_2(\R)=P(\R)K_\infty$ and $P(\R)\cap K_\infty=M(\R)\cap K_\infty 
\simeq \{1_2\}\times U(1)_\alpha/\mu_2\subset  (\SU(2)_\beta\times \SU(2)_\alpha)/\mu_2$ where $U(1)_\alpha$ is diagonally embedded into 
$\SU(2)_\alpha$ as $u\mapsto \diag(u,u^{-1})$. It follows from this that   
$$\Pi_k|_{K_\infty}\simeq {\rm Ind}^{K_\infty}_{P\cap K_\infty}1\simeq 
\Big({\rm Ind}^{\SU(2)_\beta}_{\{1_2\}}1\boxtimes 
{\rm Ind}^{\SU(2)_\alpha}_{U(1)_\alpha} 1\Big)^{\mu_2},
$$
where the last one is the $\mu_2$-fixed part of the representation of 
$\SU(2)_\beta\times\SU(2)_\alpha$. 
By the Peter-Weyl theorem, ${\rm Ind}^{\SU(2)_\beta}_{\{1_2\}}1\simeq 
\bigoplus_{m\ge 0}(\Sym^m \C^2)^{\oplus(m+1)}$. On the other hand, it is well-known that 
${\rm Ind}^{\SU(2)_\alpha}_{U(1)_\alpha}1\simeq L^2(\SO(3)/\SO(2))
\simeq \bigoplus_{n\ge 0}\Sym^{2n}\C^2$. To have the same $\mu_2$-action on 
both factors, $m$ has to be even. The claim follows.
\end{proof}

The following result is well-known in more general setting by 
\cite[Chapter III]{KS}. Recall $W(\R)_{\ge 0}$ is the set of $w\in W(\R)$ with $w\geq 0$, i.e., all roots of the polynomial $f_w(z,1)$ are real.
\begin{prop}\label{conv-Jacquet-int}
Let $\mu:\R^\times\lra \C^\times$ be a unitary character, $w\in W(\R)_{\ge 0}$ and $s\in \C$. Then, 
for any $\phi\in {\rm Ind}^{G_2(\R)}_{P(\R)}\mu(\det)|\det|^s$, the integral
\begin{equation}\label{Ws}
W^{(s)}_w(g;\phi):=|q(w)|^{-\frac{s}{2}+\frac{3}{4}}\int_{N(\R)}\phi(\iota ng)\overline{\psi_w(n)}dn,\ g\in G_2(\R),
\end{equation}
converges absolutely if ${\rm Re}(s)>0$. 
\end{prop}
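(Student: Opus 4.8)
The plan is to prove absolute convergence of the Jacquet integral by reducing to a classical convergence criterion for intertwining integrals on real reductive groups, following \cite[Chapter III]{KS}. First I would use the Iwasawa decomposition $G_2(\R)=N(\R)M(\R)K_\infty$ to reduce the estimate of $|\phi(\iota n g)|$ to a bound purely in terms of the $M$-part of $\iota n$. Concretely, for $n\in N(\R)$ write $\iota n = n'(n) m(n) k(n)$ with $n'(n)\in N(\R)$, $m(n)\in M(\R)$, $k(n)\in K_\infty$; then $|\phi(\iota n g)|\le C_{\phi,g}\,\delta_P^{1/2}(m(n))|\det m(n)|^{\mathrm{Re}(s)}=C_{\phi,g}\,|\det m(n)|^{\mathrm{Re}(s)+3/2}$, uniformly in $n$ by compactness of $K_\infty$ and smoothness of $\phi$. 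So the question becomes whether
\begin{equation*}
\int_{N(\R)}|\det m(n)|^{\mathrm{Re}(s)+3/2}\,dn<\infty.
\end{equation*}

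Next I would identify $\iota = w_{\beta\alpha\beta\alpha}w_\beta^{-1}$ as (a representative of) the long Weyl element relative to $P$, i.e. the element conjugating $N$ to the opposite unipotent $\overline N$. With that identification, the integral above is exactly the integral defining the standard intertwining operator $M(s)\colon \mathrm{Ind}_{P(\R)}^{G_2(\R)}\mu|\det|^s\to \mathrm{Ind}_{\overline P(\R)}^{G_2(\R)}\mu|\det|^{-s}$ evaluated against the spherical vector, and its convergence is governed by the real parts of $\langle s\varpi+\rho_P,\,\check\gamma\rangle$ as $\gamma$ runs over the roots of $N$ (here $\varpi$ is the fundamental weight defining the $|\det|$-twist on $M\cong GL_2$, and $\rho_P$ the half-sum of roots in $N$ weighted by multiplicity). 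Since $N$ is the Heisenberg radical with roots $\beta,\alpha+\beta,2\alpha+\beta,3\alpha+\beta,3\alpha+2\beta$ and $\delta_P(m)=|\det m|^3$, one computes these pairings explicitly; the upshot is that the defining integral of $M(s)$ converges absolutely precisely when $\mathrm{Re}(s)>0$, which is the criterion of \cite[Ch. III]{KS} (convergence of the unnormalized intertwining integral for the degenerate principal series attached to a maximal parabolic). The factor $|q(w)|^{-s/2+3/4}$ in front is a finite constant for fixed $w$ and does not affect convergence; the hypothesis $w\ge 0$ is not needed for convergence (it is needed only later, for the archimedean Whittaker functional to be nonzero and to match Pollack's Bessel functions), though one may as well keep it since it is the only case used.

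The one subtlety I would be careful about is the Iwasawa bound in the first step: it is not quite enough to bound $|\phi|$ by the modulus character, because we integrate over all of $N(\R)$ including the non-compact directions, so we must be sure the contribution of the center $Z_N$ is controlled. Here the point is that $\iota$ conjugates $Z_N=X_{3\alpha+2\beta}$ to the lowest root group $X_{-(3\alpha+2\beta)}$, and the $Z_N$-integration contributes a factor whose $M$-exponent is $\langle s\varpi+\rho_P,\check{(3\alpha+2\beta)}\rangle$; this is the most "dangerous" root (largest coefficient), so it is exactly this term that forces the threshold $\mathrm{Re}(s)>0$, and for $\mathrm{Re}(s)>0$ all terms are simultaneously integrable. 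Thus the main obstacle is purely bookkeeping: writing down the root-datum computation that produces the half-plane $\mathrm{Re}(s)>0$ correctly with our normalization $\delta_P(m)=|\det m|^3$ and our labelling of roots. Once that is in hand, the statement follows immediately from the general convergence theorem of Knapp–Stein, applied to the maximal parabolic $P=MN$ of $G_2(\R)$ and the character $\mu(\det)|\det|^s$ of $M(\R)\cong GL_2(\R)$.
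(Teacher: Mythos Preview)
Your proposal is correct and is essentially the same approach as the paper, which does not give a proof but simply records that the result ``is well-known in more general setting by \cite[Chapter III]{KS}.'' Your write-up is just the standard unpacking of that citation: since $|\psi_w(n)|=1$, absolute convergence of the Jacquet integral reduces to absolute convergence of $\int_{N(\R)}|\phi(\iota n g)|\,dn$, which (because $\iota$ represents the open $P$--$P$ double coset and hence conjugates $N$ to $\overline N$) is exactly the defining integral of the long intertwining operator for the degenerate principal series, convergent for ${\rm Re}(s)>0$ by Knapp--Stein.
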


\begin{cor}\label{trans-arch} Let $k\ge 1$ and $w\in W(\R)_{\ge 0}$. 
Then, for any $\phi\in\Pi_k$, 
$W^{(k-\frac{1}{2})}_w(g;\phi),\ g\in G_2(\R)$ converges absolutely   and it holds that 
\begin{equation}\label{inf-trans}
W^{(k-\frac{1}{2})}_w(nmg;\phi)=\psi_w(n)\sgn(\det m)^k W^{(k-\frac{1}{2})}_{\det m^2 \rho_3(m^{-1})w}(g;\phi). 
\end{equation}
\end{cor}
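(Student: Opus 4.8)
The plan is to deduce both assertions from Proposition~\ref{conv-Jacquet-int} and the transformation law of the Jacquet integral, following the same computation as in the non-archimedean Lemma~\ref{transformation1}-(2). For the convergence, recall that $\Pi_k={\rm Ind}^{G_2(\R)}_{P(\R)}\sgn^k|\det|^{k-\frac12}$ is a \emph{normalized} induction from $P(\R)$, hence is of the type covered by Proposition~\ref{conv-Jacquet-int} with the unitary character $\mu=\sgn^k$ and parameter $s=k-\frac12$. Since $k\ge1$ we have ${\rm Re}(s)=k-\frac12>0$, so Proposition~\ref{conv-Jacquet-int} applies directly and $W^{(k-\frac12)}_w(g;\phi)$ converges absolutely for all $w\in W(\R)_{\ge0}$, $g\in G_2(\R)$ and $\phi\in\Pi_k$.

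For the transformation law~(\ref{inf-trans}) I would handle the $N(\R)$-factor and the $M(\R)$-factor of $nm$ separately. For $n_0\in N(\R)$, substituting $n\mapsto nn_0^{-1}$ in the defining integral and using that $\psi_w$ is a character of $N(\R)$ (its restriction to $Z_N$ being trivial) gives $W^{(k-\frac12)}_w(n_0g;\phi)=\psi_w(n_0)\,W^{(k-\frac12)}_w(g;\phi)$; the substitution is legitimate because the integral converges absolutely. For $m\in M(\R)$, write $nm=m\,n'$ with $n'=m^{-1}nm$; the $P(\R)$-equivariance of $\phi$ together with~(\ref{imi}), which gives $\det(\iota m\iota^{-1})=\det(m)^{-1}$, yields $\phi(\iota nmg)=\sgn(\det m)^k|\det m|^{-k-1}\phi(\iota n'g)$. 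Carrying out the change of variables $n=mn'm^{-1}$, whose Jacobian on $N(\R)$ is $\delta_P(m)=|\det m|^3$ by~(\ref{action1}), and invoking the identity $\langle\det(m)^2\rho_3(m^{-1})w,x\rangle=\langle w,{\rm Ad}(m)x\rangle$ (so that $\psi_w(mn'm^{-1})=\psi_{\det(m)^2\rho_3(m^{-1})w}(n')$) and the scaling $q(\det(m)^2\rho_3(m^{-1})w)=\det(m)^2q(w)$ (which rescales the $|q(w)|$-prefactor), all powers of $|\det m|$ and of $|q(w)|$ cancel and one obtains
\[
W^{(k-\frac12)}_w(mg;\phi)=\sgn(\det m)^k\,W^{(k-\frac12)}_{\det(m)^2\rho_3(m^{-1})w}(g;\phi).
\]
Combining with the $N(\R)$-computation gives the claimed formula for $W^{(k-\frac12)}_w(nmg;\phi)$.

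I do not anticipate a real obstacle: this is an archimedean transcription of the proof of Lemma~\ref{transformation1}-(2), the only structural difference being that the $s$-dependent normalizing factor $|q(w)|^{-\frac s2+\frac34}$ in the definition of $W^{(s)}_w$ makes all powers of $|\det m|$ disappear, whereas in the non-archimedean case the surviving factor is $|\det m|^{-s}$. The two points that warrant care are the exponent bookkeeping --- the contributions $-k-1$ from the $P$-equivariance of $\phi$, $+3$ from $\delta_P(m)$, and $+(k-2)$ from rescaling the $|q(\cdot)|$-prefactor, which sum to $0$ --- and the observation that the change of variables $n=mn'm^{-1}$ transports the absolutely convergent integral for $W^{(k-\frac12)}_w$ into that for $W^{(k-\frac12)}_{\det(m)^2\rho_3(m^{-1})w}$, so the latter converges as well; in particular $\det(m)^2\rho_3(m^{-1})w$ again lies in $W(\R)_{\ge0}$, since the $M(\R)$-action on $W(\R)$ preserves the reality of the roots of $f_w(z,1)$ (equivalently $q(\det(m)^2\rho_3(m^{-1})w)=\det(m)^2q(w)$ has the same sign as $q(w)$).
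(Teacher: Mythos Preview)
Your proposal is correct and follows essentially the same approach as the paper: the paper's proof simply notes that $k-\tfrac12>0$ gives convergence by Proposition~\ref{conv-Jacquet-int} and defers the transformation law to the computation in Lemma~\ref{transformation1}-(2), which is exactly what you have spelled out in detail (including the correct exponent bookkeeping $-(k+1)+3+(k-2)=0$).
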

\begin{proof}
Since $k-\frac{1}{2}>0$, the convergence is clear. 
The latter claim is proved as in the proof of Lemma \ref{transformation1}-(2).
\end{proof}

\begin{cor}\label{w-funct}Let $k\ge 1$. Assume $w\in W(\R)$
is generic and $q(w)<0$. Then, the functional 
\begin{equation}\label{w-functional}\Pi_k\lra \C,\ \phi\mapsto W^{(k-\frac{1}{2})}_w(1;\phi)
\end{equation}
is a generator of ${\rm Hom}_{N(\R)}(\Pi_k,\C(\psi_w))$. 
\end{cor}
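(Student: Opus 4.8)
The plan is to deduce this from the results already in hand, namely Proposition \ref{wh-arch}-(1) and Corollary \ref{trans-arch}. First I would observe that the map $\phi \mapsto W^{(k-\frac{1}{2})}_w(1;\phi)$ is well-defined for $w \in W(\R)$ generic with $q(w) < 0$: since $q(w) < 0$ and $w$ is generic, the polynomial $f_w(z,1)$ has exactly one real root, hence $w$ is \emph{not} in $W(\R)_{\ge 0}$, so Corollary \ref{trans-arch} does not literally apply as stated. The way around this is the transformation law: one picks $m \in M(\R)$ and $w' \in W(\R)_{\ge 0}$ with $w = \det(m)^2 \rho_3(m^{-1})w'$ (possible because $q(w') = \det(m)^6 q(w)$ forces $q(w') < 0$, and by the classification of $M(\R)$-orbits on the set of $w$ with $q(w)<0$, every such $w$ is $M(\R)$-conjugate to one with all real roots up to the $\det^2\rho_3$-twist — this uses that $q(w)<0$ means $f_w$ has a negative discriminant and one can rotate to the all-real-root chamber), and then \emph{defines} $W^{(k-\frac{1}{2})}_w(g;\phi) := \psi_{w'}(\cdot)^{-1}\cdots$ via the relation in \eqref{inf-trans}, checking independence of the choice of $m$ by the $M(\R)$-equivariance of the Jacquet integral together with the scaling $|q(w)|^{-s/2+3/4}$ built into \eqref{Ws}. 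Alternatively, and more cleanly, one notes that convergence of the Jacquet integral \eqref{Ws} in fact holds for all generic $w$ with $q(w)<0$ (not only $w \ge 0$), since $k - \frac12 > 0$ and the relevant integral estimates in \cite[Chapter III]{KS} only require genericity of $\psi_w$, not positivity; so $W^{(k-\frac{1}{2})}_w(1;\phi)$ makes direct sense.

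Granting the functional is well-defined, the core of the argument is: (i) it is $N(\R)$-equivariant against $\psi_w$, which is immediate from the change of variables $n \mapsto n_0 n$ in \eqref{Ws} exactly as in the proof of Lemma \ref{transformation1}-(2) (the $|q(w)|$-factor is $N(\R)$-invariant); hence it lies in ${\rm Hom}_{N(\R)}(\Pi_k,\C(\psi_w))$. (ii) It is nonzero. For nonvanishing I would exhibit a single $\phi \in \Pi_k$ on which it does not vanish: take $\phi$ supported near the big cell $\iota N(\R) \cdot (P(\R)\text{-part})$, i.e. a bump function of small support around $\iota \cdot 1$ with $\phi(\iota) \ne 0$; then for suitable choice the oscillatory integral $\int_{N(\R)} \phi(\iota n)\overline{\psi_w(n)}\,dn$ is a genuine nonzero integral of a compactly supported smooth function against a character, and by shrinking the support (and if necessary averaging against $\psi_w$ to kill the oscillation) one arranges $W^{(k-\frac12)}_w(1;\phi)\neq 0$. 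This is the standard argument that a Jacquet integral attached to a degenerate principal series is a nonzero Whittaker functional, and it is exactly what is invoked "standard and thus omitted" in Lemma \ref{transformation1}-(1); I would import that reasoning verbatim.

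Finally, (iii) combine: by Proposition \ref{wh-arch}-(1), $\dim {\rm Hom}_{N(\R)}(\Pi_k,\C(\psi_w)) = 1$ since $q(w)\neq 0$; a nonzero element of a one-dimensional space is a generator; done. The main obstacle — and the only nonformal point — is handling the case $q(w)<0$ with $w$ \emph{not} in $W(\R)_{\ge 0}$ cleanly, i.e. making sure the functional $W^{(k-\frac12)}_w$ is defined and $N(\R)$-equivariant in that regime; I expect this to be dispatched either by the direct convergence remark above (genericity plus $k>\frac12$ suffices for the estimates in \cite{KS}) or by transporting from a $w' \ge 0$ in the same twisted $M(\R)$-orbit via \eqref{inf-trans}, and I would spell out whichever of the two is shortest given the conventions already fixed in Proposition \ref{conv-Jacquet-int}. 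Everything else is a one-line appeal to already-proved statements.
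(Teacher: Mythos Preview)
Your core argument---(i) $N(\R)$-equivariance by change of variables, (ii) nonvanishing by a bump-function section on the big cell, (iii) one-dimensionality from Proposition \ref{wh-arch}-(1)---is correct and is exactly the paper's proof (which cites \cite{Wa88} for step (ii) and Proposition \ref{wh-arch} for step (iii)).

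However, your ``main obstacle'' is based on a sign error and does not exist. From (\ref{disc}) one has $q(w)=-\tfrac{1}{27}\disc_x(f_w(x,1))$, so $q(w)<0$ means the discriminant of the cubic is \emph{positive}, hence $f_w(z,1)$ has three distinct real roots and $w\in W(\R)_{\ge 0}$ automatically (see also Remark \ref{sign}, and note that the Fourier expansion (\ref{po-exp}) sums precisely over $w\ge 0$ with $q(w)<0$, which would be empty under your reading). Thus Proposition \ref{conv-Jacquet-int} and Corollary \ref{trans-arch} apply directly with no need to transport along an $M(\R)$-orbit or to extend the convergence statement. Your sentence ``this uses that $q(w)<0$ means $f_w$ has a negative discriminant and one can rotate to the all-real-root chamber'' has the sign reversed: it is $q(w)>0$ that corresponds to one real and two complex-conjugate roots. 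Once you delete the entire first paragraph of your proposal, what remains is the paper's proof.
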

\begin{proof} Non-vanishing of the functional over $\Pi_k$ can be checked by 
a standard argument (cf. \cite[the proof of Proposition 7.1, line -7 in p.141]{Wa88}).
Then, by invoking Proposition \ref{wh-arch}, we have the claim. 
\end{proof}
\begin{lem}\label{irrecomp}Assume $k\ge 2$. Then, $\Pi_k$ has possibly three 
irreducible components including $D_k$. Furthermore, any irreducible component $V$ 
except for $D_k$, it holds ${\rm Hom}(V,\C(\psi_w))=0$ for any generic $w\in W(\R)$ with $q(w)<0$. 
\end{lem}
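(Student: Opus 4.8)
The plan is to first determine the composition series of $\Pi_k={\rm ind}^{G_2(\R)}_{P(\R)}\lambda_k={\rm Ind}^{G_2(\R)}_{P(\R)}\sgn^k|\det|^{k-\frac12}$, and then analyze the space of $N(\R)$-equivariant functionals of each composition factor separately. For the structure, I would show that $\Pi_k$ has at most three irreducible composition factors: the quaternionic discrete series $D_k$, which by hypothesis is a submodule; the Langlands quotient $J_k$, the unique irreducible quotient of $\Pi_k$ since the exponent $k-\frac12\ge\frac32$ lies in the open positive chamber attached to $P$; and possibly a third, ``small'' factor --- either the finite-dimensional representation $F_{(k-2)\beta_0}$ of highest weight $(k-2)\beta_0$ (which has infinitesimal character $\rho+(k-2)\beta_0$, the same as $\Pi_k$ and $D_k$), or, for $k=2$, the minimal representation of $G_2(\R)$. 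This list is obtained from the reducibility of $\Pi_k$ together with the analysis of its long intertwining operator, in the spirit of the non-archimedean Theorem \ref{constituents} (cf.\ \cite{Wallach}); for what follows only the list of factors --- and the fact that the sole infinite-dimensional factor besides $D_k$ is $J_k$ --- will be needed.

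Now fix a generic $w\in W(\R)$ with $q(w)<0$. By Proposition \ref{wh-arch} we have $\dim{\rm Hom}_{N(\R)}(\Pi_k,\C(\psi_w))=\dim{\rm Hom}_{N(\R)}(D_k,\C(\psi_w))=1$, and by Corollary \ref{w-funct} the single generator of ${\rm Hom}_{N(\R)}(\Pi_k,\C(\psi_w))$ is the functional $\phi\mapsto W^{(k-\frac12)}_w(1;\phi)$. I would dispose of the small factor $F$ at once: if $F$ is finite-dimensional then $N(\R)$ acts on it unipotently, so ${\rm Hom}_{N(\R)}(F,\C(\psi_w))=0$ because $\psi_w$ is a nontrivial character of $N(\R)$; and if $F$ is the minimal representation, its Fourier coefficients along $N$ are supported on the rank-one vectors of $W$, all of which have $q=0$, so again ${\rm Hom}_{N(\R)}(F,\C(\psi_w))=0$. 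It remains to treat $J_k$. The surjection $\Pi_k\twoheadrightarrow J_k$ induces an injection ${\rm Hom}_{N(\R)}(J_k,\C(\psi_w))\hookrightarrow{\rm Hom}_{N(\R)}(\Pi_k,\C(\psi_w))$ whose image consists of those functionals vanishing on $\ker(\Pi_k\to J_k)$; this kernel contains $D_k$, since otherwise the composite $D_k\hookrightarrow\Pi_k\twoheadrightarrow J_k$ would be an isomorphism of irreducibles, which is impossible as $D_k$ is tempered and $J_k$ is not. Hence a nonzero element of ${\rm Hom}_{N(\R)}(J_k,\C(\psi_w))$ would exhibit the generator $W^{(k-\frac12)}_w(1;-)$ as a functional vanishing on $D_k$, and the whole assertion reduces to the single claim that $W^{(k-\frac12)}_w(1;-)$ does not vanish on the submodule $D_k$.

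The main obstacle is exactly this non-vanishing, and it is where the sign condition $q(w)<0$ enters. I would evaluate $W^{(k-\frac12)}_w(1;v_0)$ for $v_0$ a vector in the minimal $K_\infty$-type $V_k=\Sym^{2k}\C^2$ of $D_k\subset\Pi_k$, and identify this value --- up to a nonzero constant --- with Pollack's explicit generalized Whittaker function on $D_k$; by \cite[Theorem 3.4]{Po} (cf.\ Remark \ref{rel-to-pol}) the latter is an explicit nonzero multiple of the modified Bessel function $K_{k-\frac12}$, which is nonvanishing precisely for $q(w)<0$. This gives $W^{(k-\frac12)}_w(1;-)|_{D_k}\neq 0$, whence ${\rm Hom}_{N(\R)}(J_k,\C(\psi_w))=0$ and the lemma follows. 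As an alternative, if one grants that the degenerate Whittaker functor ${\rm Hom}_{N(\R)}(-,\C(\psi_w))$ is exact on finite-length modules for generic $\psi_w$, then the reduction above can be skipped: additivity of $\dim{\rm Hom}_{N(\R)}(-,\C(\psi_w))$ along the composition series of $\Pi_k$, together with $\dim{\rm Hom}_{N(\R)}(\Pi_k,\C(\psi_w))=\dim{\rm Hom}_{N(\R)}(D_k,\C(\psi_w))=1$ and the fact that $D_k$ occurs with multiplicity one, forces ${\rm Hom}_{N(\R)}(V,\C(\psi_w))=0$ for every factor $V\neq D_k$; but I would not rely on that exactness.
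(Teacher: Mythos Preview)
Your approach is genuinely different from the paper's, and the main line has a circularity that you should be aware of.

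The paper does not identify the composition factors of $\Pi_k$ at all. Instead it argues abstractly: since $\dim\Wh_{\psi_w}(\Pi_k)=1$ for \emph{both} signs of $q(w)$ while $\Wh_{\psi_w}(D_k)=0$ for $q(w)>0$, there must exist a second irreducible subquotient $D'_k\not\simeq D_k$ with $\Wh_{\psi_w}(D'_k)\neq 0$ for $q(w)>0$. Then, using \cite[Lemma 2.3.4]{Gomez} together with the Bruhat decomposition $P(\R)\backslash G_2(\R)/P(\R)=\{1,w_\beta,w_{\beta\alpha\beta},\iota\}$, one gets $\dim_\C{\rm End}_{G_2(\R)}(\Pi_k)\le 3$; combined with $D_k\not\simeq D'_k$ this forces ${\rm End}(\Pi_k)$ to be $\C^2$, $\C^3$, or the algebra of upper-triangular $2\times 2$ matrices. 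In each of these cases the module structure of $\Pi_k$, together with the dimension count $\dim\Wh_{\psi_w}(\Pi_k)=1$ and left-exactness of ${\rm Hom}_{N(\R)}(-,\psi_w)$, forces $\Wh_{\psi_w}(V)=0$ for every factor $V\neq D_k$ when $q(w)<0$. Crucially, the paper never needs to know that the Jacquet integral restricts nontrivially to $D_k$; that fact (Proposition~\ref{nvJI1}) is \emph{deduced from} the lemma, not used to prove it.

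Your argument runs the logic in the opposite direction: you reduce the vanishing for $J_k$ to the non-vanishing $W^{(k-\frac12)}_w|_{D_k}\neq 0$, and then propose to establish the latter by identifying $W^{(k-\frac12)}_w(1;v_0)$ with Pollack's Bessel-type function, citing Remark~\ref{rel-to-pol}. But Remark~\ref{rel-to-pol} obtains its nonzero constant $c_\infty$ precisely by invoking Proposition~\ref{nvJI1}, whose proof in turn rests on Lemma~\ref{irrecomp}. So as written the chain is circular. Pollack's $\mathcal W_w$ is constructed by solving the Schmid differential equation, not as a Jacquet integral, so knowing $\mathcal W_w\neq 0$ does not by itself tell you that the Jacquet-integral functional on $\Pi_k$ restricts to it nontrivially; that is exactly the point at issue. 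To close the gap you would need either an honest direct computation of $W^{(k-\frac12)}_w(1;v_0)$ on an explicit minimal $K_\infty$-type section (feasible, but not carried out), or the global argument sketched in Remark~\ref{nvJI2} (which only covers even $k$), or the exactness you explicitly decline to rely on. A smaller point: your description of the composition factors as $D_k$, the Langlands quotient, and a finite-dimensional or minimal piece is plausible, but the reference \cite{Wallach} computes generalized Whittaker vectors rather than the full composition series, so that input would also need independent justification.
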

\begin{proof}
Let $W(\R)^{\rm gen}$ be the set of all $w\in W(\R)$ such that $q(w)\neq 0$ or 
equivalently $\psi_w$ is generic by definition.  
Recall the adjoint action of $M(\R)$ on $W(\R)$ preserves  $W(\R)^{\rm gen}$.  
There are exactly two orbits of $W(\R)^{\rm gen}$ such that 
a representative $w$ satisfies $q(w)>0$ or $q(w)<0$ respectively (see Proposition \ref{wh-arch}). 
The quaternionic discrete series representation $D_k$ is supported 
in $\psi_w$ for any $w\in W(\R)^{\rm gen}$ with $q(w)<0$. 
On the other hand, by Proposition \ref{wh-arch}, there exists an 
irreducible constituent of $\Pi_k$, say $D'_k$, which is supported in  
$\psi_w$ for any $w\in W(\R)^{\rm gen}$ with $q(w)>0$. 

On the other hand, by \cite[Lemma 2.3.4]{Gomez} and 
using the fact $P(\R)\bs G_2(\R)/P(\R)=\{1,w_\beta,w_{\beta\alpha\beta},\iota\}$ 
\cite[(1), p.260]{GJ}, we see that 
${\rm dim}_\C{\rm End}_{G_2(\R)}(\Pi_k)\le 3$. Since $D_k\not\simeq D'_k$, 
${\rm dim}_\C{\rm End}_{G_2(\R)}(\Pi_k)\ge 2$. 
Therefore, ${\rm End}_{G_2(\R)}(\Pi_k)$ is isomorphic to either of 
 $\C^2,\ \C^3$, or $\Big\{
 \begin{pmatrix}
 a & b \\
 0 & c
 \end{pmatrix}
 \Big|\ a,b,c\in \C \Big\}$. The claim follows from this. 
\end{proof}

\begin{prop}\label{nvJI1}Assume $k\ge 2$. Assume $w\in W(\R)$ is generic and $q(w)<0$.  
Then, the restriction of the functional {\rm (}\ref{w-functional}{\rm )} to 
$D_k$ is not identically zero. In particular, the functional gives a generator of 
${\rm Hom}_{N(\R)}(D_k,\C(\psi_w))$. 
Further, for each non-zero $\phi \in D_k$ as above, $W^{(k-\frac{1}{2})}_w(g;\phi)$ is not identically zero.   
\end{prop}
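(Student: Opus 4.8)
The plan is to reduce the statement to the three facts already in place: Corollary~\ref{w-funct} (the linear functional $\ell:=W^{(k-\frac12)}_w(1;\,\cdot\,)$ generates ${\rm Hom}_{N(\R)}(\Pi_k,\C(\psi_w))$, so in particular is non-zero on $\Pi_k$), Proposition~\ref{wh-arch}~(2) ($\dim{\rm Hom}_{N(\R)}(D_k,\C(\psi_w))=1$ when $q(w)<0$), and Lemma~\ref{irrecomp} (no irreducible subquotient of $\Pi_k$ other than $D_k$ admits a non-zero $\psi_w$-Whittaker functional, since $\psi_w$ is generic with $q(w)<0$). Note also that $D_k$ is realized as a \emph{submodule} of $\Pi_k$, by Section~\ref{QDS}.

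The first step is to show that $\ell$ does not vanish on $D_k\subset\Pi_k$. If it did, then $\ell$ would be the pullback along $\Pi_k\twoheadrightarrow\Pi_k/D_k$ of a non-zero element $\ell'\in{\rm Hom}_{N(\R)}(\Pi_k/D_k,\C(\psi_w))$. Applying the contravariant, left-exact functor ${\rm Hom}_{N(\R)}(\,\cdot\,,\C(\psi_w))$ along a composition series of $\Pi_k/D_k$, some irreducible subquotient $V$ of $\Pi_k/D_k$ — hence some irreducible subquotient of $\Pi_k$ — would satisfy ${\rm Hom}_{N(\R)}(V,\C(\psi_w))\neq 0$; by Lemma~\ref{irrecomp} this forces $V\cong D_k$, i.e. $D_k$ occurs with multiplicity $\ge 2$ in $\Pi_k$. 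I would rule this out using that $D_k$ is the unique irreducible subquotient of $\Pi_k$ carrying a $\psi_w$-Whittaker functional, together with exactness of the (degenerate) Whittaker functor in Wallach's framework: exactness gives $\dim{\rm Hom}_{N(\R)}(\Pi_k,\C(\psi_w))=(\text{multiplicity of }D_k\text{ in }\Pi_k)\cdot\dim{\rm Hom}_{N(\R)}(D_k,\C(\psi_w))$, and the left-hand side equals $1$ by Proposition~\ref{wh-arch}~(1) while $\dim{\rm Hom}_{N(\R)}(D_k,\C(\psi_w))=1$ by Proposition~\ref{wh-arch}~(2), so the multiplicity is exactly one. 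This gives the contradiction, hence $\ell|_{D_k}\neq 0$; as its target ${\rm Hom}_{N(\R)}(D_k,\C(\psi_w))$ is one-dimensional, $\ell|_{D_k}$ is a generator.

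For the pointwise non-vanishing, fix $0\neq\phi\in D_k$ and suppose $W^{(k-\frac12)}_w(g;\phi)=0$ for all $g\in G_2(\R)$. Directly from the definition of the Jacquet integral one has $W^{(k-\frac12)}_w(g;\phi)=\ell(g\cdot\phi)$, where $g\cdot\phi$ denotes right translation; thus $\ell$ vanishes on the $G_2(\R)$-submodule of $D_k$ generated by $\phi$, which is all of $D_k$ by irreducibility (and continuity of $\ell$ handles the closure), contradicting $\ell|_{D_k}\neq 0$.

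I expect the only genuine obstacle to be the multiplicity-one point in the middle step — equivalently, the injectivity of the restriction map ${\rm Hom}_{N(\R)}(\Pi_k,\C(\psi_w))\to{\rm Hom}_{N(\R)}(D_k,\C(\psi_w))$. Besides invoking exactness of the Whittaker functor as above, an alternative route is to prove $\ell|_{D_k}\neq 0$ directly by exhibiting a suitable test vector in $D_k$ (for instance via the explicit minimal $K_\infty$-type $V_k$), adapting the "standard argument" of \cite[proof of Proposition 7.1, p.141]{Wa88} that was used for $\Pi_k$ in Corollary~\ref{w-funct}; with that point settled, everything else is formal.
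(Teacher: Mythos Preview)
Your proof is correct and follows essentially the same route as the paper, which tersely cites Lemma~\ref{irrecomp} and Proposition~\ref{wh-arch}. Your explicit treatment of the multiplicity-one issue (via exactness of the Whittaker functor and the dimension count from Proposition~\ref{wh-arch}) spells out what the paper leaves implicit in the ${\rm End}$-algebra analysis inside Lemma~\ref{irrecomp}, and your argument for the final non-vanishing claim via irreducibility of $D_k$ is exactly what the paper sketches in Remark~\ref{nvJI2}.
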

\begin{proof}The claim follows from Lemma \ref{irrecomp} and Proposition \ref{wh-arch}. 
\end{proof}

\begin{remark}\label{nvJI2}
If $k\ge 2$ is even, we can give another proof of the above proposition by using 
a global method without using Lemma \ref{irrecomp}. Perhaps, it may be useful for another setting: 
Let $E_{k}$ be a quaternionic Eisenstein series of weight $k$ 
considered in \cite[Section 9]{GGS}. Note that ``$2k$'' in loc.cit. is ``$k$'' in 
our notation. By \cite[Theorem 3.1]{Gan} and \cite[Corollary 1.2.3, p.1216]{PoFE}, there exists a generic $w_0\in W(\Q)$ with $q(w_0)<0$ such that 
the $w_0$-th Fourier 
coefficient of $E_{k}$ is non-zero. 
Suppose the restriction of the functional {\rm (}\ref{w-functional}{\rm )} to 
$D_k$ is identically zero. Then,   
as observed in \cite[p.130, line -6 to the bottom]{GGS}, all Fourier coefficients of 
$E_k$ vanish. Thus, we have a contradiction. 
The latter claim follows from the irreducibility of  
$D_k$ and the equality $W^{(k-\frac{1}{2})}_w(g;\phi_{\infty,I})=W^{(k-\frac{1}{2})}_w(1;g\cdot\phi_{\infty,I})$ for any $g\in G_2(\R)$. 
\end{remark}

\subsection{Degenerate Whittaker functions: The archimedean case}\label{dwf-arch}
Recall the minimal $K_\infty$-type $V_k\subset D_k$ from Section \ref{QDS}. 
Fix the basis $\{e_v=x^{k+v}y^{k-v}\}_{-k\le v\le k}$ of $V_k$ as in \cite[p.391]{Po} and  
we denote by $\{e^\vee_v\}_{-k\le v\le k}$ its dual basis. 
Let $\langle \ast,\ast \rangle:V_k\times V^\vee_k\lra \C$ be the natural pairing. 
For  each $-k\le v,v'\le k$, we define the section $f_{v,v'}$ of   
$D_k\subset {\rm Ind}^{G_2(\R)}_{P(\R)}\sgn^k |\det|^{k-\frac{1}{2}}$ (the normalized induction) by 
\begin{equation}\label{section1}
f_{v,v'}(g)=\sgn(\det(m))^k|\det(m)|^{k+1}
 \langle\tau_k(\kappa)e_v, e^\vee_{v'}\rangle,\ g=mn \kappa\in M(\R)N(\R)K_\infty.
\end{equation}
This is well-defined since $M(\R)\cap K_\infty=\{m(\pm I_2)\}$. 
For each non-empty subset $I$ of $\{v\in \Z\ |\ -k\le v\le k\}$, 
put $\phi_{\infty,I}:=\ds\sum_{v\in I}f_{v,v}e^\vee_v\in D_k$ defined by using 
$f_{v,v}$. 
We define the $V_k^\vee$-valued function
\begin{equation}\label{section2}
W^{(k-\frac{1}{2})}_w(g_\infty;\phi_{\infty,I})=\ds\sum_{v\in I}W^{(k-\frac{1}{2})}_w(g_\infty;f_{v,v})e^\vee_{v},\ 
g_\infty\in G_2(\R),
\end{equation}
which plays a role in the Fourier expansion of modular forms on $G_2$ and 
it is an analogue of exponential functions 
(see \cite[Section 3.3]{IY} and \cite[Section 3.4, Theorem 3.4.1]{Liu}). 

\begin{remark}\label{rel-to-pol}
Let $k\ge 2$ be any integer.  
Let $I=\{v\in\Z\ |\ -k\le v\le k\}$ and $w\in W(\R)_{\ge 0}$ 
with $q(w)<0$. 
Put $\widetilde{W}^{(k-\frac{1}{2})}_w(g_\infty;\phi_{\infty,I}):=|q(w)|^{\frac{k+1}{2}}
W^{(k-\frac{1}{2})}_w(g_\infty;\phi_{\infty,I})$. Then, for 
$g_\infty=n_\infty m_\infty k_\infty \in G_2(\R)=P(\R)K_\infty$, 
$\widetilde{W}^{(k-\frac{1}{2})}_w(g_\infty;\phi_{\infty,I})$ and 
$e^{2\pi \sqrt{-1}\langle w,n_\infty \rangle}\tau^\vee_k(k_\infty)
\mathcal{W}_w(m_\infty)$ appeared in (\ref{po-exp}) has the same 
transformation law in the left $P(\R)$-action.  By Proposition \ref{nvJI1},  
there exists a non-zero constant $c_{\infty,w}$ depending on $w$ and $k$ such that 
$$\widetilde{W}^{(k-\frac{1}{2})}_w(g_\infty;\phi_{\infty,I})=c_{\infty,w} e^{2\pi \sqrt{-1}\langle w,n_\infty \rangle}\tau^\vee_k(k_\infty)
\mathcal{W}_w(m_\infty).$$ 
Since the set $\{w\in W(\R)_{\ge 0}\ |\ q(w)<0\}$ has a single $M(\R)$-orbit and both sides 
have the same transformation law in $M(\R)$, $c_{\infty,w}=:c_\infty$ is, in fact, a uniform constant.   
\end{remark}

\section{Fourier expansion of quaternionic modular forms: Proof of Theorem \ref{exp-thm}}\label{FS} 
In this section, we will prove Theorem \ref{exp-thm}. 
Let $f$ be a new form in $S_{2k}(\Gamma_0(C))^{{\rm new,ns}}$ and 
$\Pi(f)=\otimes'_{p<\infty}\Pi_p=\Pi_{\f}\otimes D_k$ be the
admissible representation of $G_2(\A)$ as in Section \ref{intro}. Assume (\ref{assump}). Then, we have an intertwining map 
$$\Pi(f)\hookrightarrow \mathcal{A}_{{\rm cusp}}(G_2(\Q)\bs G_2(\A)),\ \phi\mapsto F_f(\ast;\phi).$$
Let us consider the constant term $F_f(g;\phi)_0:=\ds\int_{Z_N(\Q)\bs Z_N(\A)}
F_f(zg;\phi)dz$ along $Z_N$. 

Recall the finite set $S(\pi_{\f})$ of rational primes in Section \ref{intro} such that 
$$\Pi_p=\begin{cases} \text{$I(0,\mu_p)$ with 
a unitary character $\mu_p:\Q^\times_p\lra \C^\times$}, &\text{if $p\not\in S(\pi_{\f})\cup
\{\infty\}$}\\ 
\text{$A(|\cdot|^{\frac{1}{2}}\mu_p)\subset I(\frac{1}{2},\mu_p)$ with $\mu^2_p=\bf 1$}, &\text{if $p\in S(\pi_{\f})$}
\end{cases}.
$$ 
(See the notation in Section \ref{dnonar}.)  
Put 
\begin{equation}\label{mu-finite}
\mu_{\f}=\otimes'_{p\not\in S(\pi_{\f})\cup\{\infty\}}\mu_p
\otimes_{p\in S(\pi_{\f})}(|\ast|^{\frac{1}{2}}_p\mu_p).
\end{equation}
For $w\in W(\Q)$ and a distinguished vector 
$\phi_{\f}=\otimes'_{p<\infty}\phi_p\in \Pi_{\f}$ such that $\phi_p$ is a $G_2(\Z_p)$-fixed 
vector with $\phi_p(1)=1$ for all but finitely many $p$, put 
\begin{equation}\label{finite-coeff}
\widetilde{{\bf w}}_{w}(\phi_{\f}):=\prod_{p<\infty}\widetilde{{\bf w}}^{\mu_p}_w(\phi_p)
=\prod_{p\not\in S(\Pi_{\f})\cup\{\infty\}}\widetilde{{\bf w}}^{\mu_p,0}_w(\phi_p)\times 
\prod_{p\in S(\Pi_{\f})}\widetilde{{\bf w}}^{\mu_p,\frac{1}{2}}_w(\phi_p).
\end{equation}
By \cite[Theorem 2.4-(4), p.292-293]{JR} or \cite[Theorem 1.1]{Xiong}, 
{$\widetilde{{\bf w}}^{\mu_p,0}_w(\phi_p)$ is trivial 
for all but finitely many $p\not\in S(\Pi_{\f})\cup\{\infty\}$. Thus, the above infinite product is well-defined. 

For each $w\in W(\Q)$, we denote by $x_{3\alpha+\beta}(w)$, the $x_{3\alpha+\beta}$-component of $w$ 
according to the decomposition $W=X_\beta X_{\alpha+\beta}X_{2\alpha+\beta}
X_{3\alpha+\beta}$. 
Put $\phi:=\phi_{\f}\otimes \phi_{\infty}$ where $\phi_\infty:=\phi_{\infty,I}$ 
with $I=\{v\ |\ -k\le v\le k\}$ is defined in the previous section. 
Then, $F_f(\ast;\phi)$ is a quaternionic modular forms of weight $k$. 
Let $F_f(g;\phi)_0=\ds\int_{Z_N(\Q)\bs Z_N(\A)}F_f(zg;\phi)dz$ for $g\in G_2(\A)$. By  Remark \ref{rel-to-pol} and a robust theory of Pollack (\ref{po-exp}),  we have 
$$F_f(g;\phi)_0=\sum_{w\in W(\Q)_{\ge 0}\atop q(w)<0}F_f(g;\phi)_{\psi_w},\ F_f(g;\phi)_{\psi_w}=\int_{N(\Q)\bs N(\A)}F_f(ng;\phi)\overline{\psi_w(n)}dn.$$ 
We have only to focus on $w$ such that $F_f(1;\ast)_{\psi_w}$ is not identically zero on $\Pi(f)$. 
For any place $v$, put $\phi^{(v)}=\otimes'_{p\neq v}\phi_p$.  
Since the non-zero functional $\Pi_v\ni \phi'_v\mapsto F_f(1;\phi^{(v)}\otimes \phi'_v)_{\psi_w}$ belongs to $\Wh_{\psi_w}(\Pi_v)$, 
if $F_f(g;\ast)_{\psi_w}\neq 0$, 
the assumption in Proposition \ref{wh}-(2) is fulfilled for such a $w$. 
By Proposition \ref{wh}, Lemma \ref{transformation1}-(1), Proposition \ref{wh-arch}, Remark \ref{rel-to-pol}, we have 
$$F_f(g;\phi)_0=\sum_{w\in W(\Q)_{\ge 0}\atop q(w)<0} 
C^{\mu_{\f}}_{w}(F_f)\Big(\prod_{p<\infty} \widetilde{{\bf w}}^{\mu_p}_{{\rm Ad}(w_\alpha)w}(g_p\cdot \phi_p)\Big)W^{(k-\frac{1}{2})}_{{\rm Ad}(w_\alpha)w}(g_\infty\cdot\phi_\infty),\ 
g=g_{\f}g_\infty\in G_2(\A)$$
for some constants $C^{\mu_{\f}}_w(F_f)$. 
Then, by Proposition \ref{expansion1}, we can recover the expansion (\ref{fseries}) 
for above $\phi$ and the general case follows from the left action of $G_2(\A)$ and 
irreducibility of $\Pi(f)$. 
This proves Theorem \ref{exp-thm}.   

By $M(\Q)$-left invariance of $F_f(\ast;\phi)_0$ which follows from the automorphy of $F$ and the transformation law in Lemma \ref{transformation1}-(2) and Corollary \ref{trans-arch}, we can easily check the following property:
\begin{equation}\label{cw}
C^{\mu_{\f}}_w(F_f) \mu_{\f}(\det(m')^{-1}) 
\sgn(\det(m'))^k=C^{\mu_{\f}}_{\det(m')^2 \rho_3(m'^{-1})w}(F_f),\ m':={\rm Ad}(w_\alpha)m,\ m\in M(\Q). 
\end{equation} 

Finally, we explain how to choose a distinguished vector $\phi$ so that 
$F_f(g;\phi)$ is fixed by $\ds\prod_{p\nmid C}G_2(\Z_p)\times \prod_{p|C}\G_P(\Z_p)$.  
If $C$ is square-free, then $S(\pi_{\f})=\{p|C\}$ and  $\mu_p$ is unramified for any rational prime $p$ 
(cf. \cite[Proposition 2.8-(2)]{LW}). If $p\nmid C$, choose $\phi_p\in \Pi^{G_2(\Z_p)}_p$ such that $\phi_p(1)=1$.
Let $r_\alpha(\Pi_p)$ be the Jacquet modules of $\Pi_p$ with respect to $M$ which is explicitly given in \cite[(4.20), p.477]{M}. 
If $p|C$, then 
$$r_\alpha(\Pi_p)=\mu_p(\det) |\det|^{\frac{1}{2}}.$$
Since $\Pi^{\G_P(\Z_p)}_p\stackrel{\sim}{\lra} r_\alpha(\Pi_p)^{M(\Z_p)}$ by 
\cite[Theorem 2.1]{MY}, we may choose $\phi_p\in \Pi^{\G_P(\Z_p)}_p$ 
corresponding to a spherical non-zero vector of $r_\alpha(\Pi_p)^{M(\Z_p)}$.

\section{Fourier-Jacobi expansions along $\widetilde{L^{{\rm ss}}}\ltimes U/Z_U$ in adelic setting}\label{FJE}
Let us recall the notations in Section \ref{FJEES}. In this section, 
we consider the Fourier-Jacobi expansions along $\widetilde{U}$ in adelic setting. 
\subsection{The non-archimedean case}\label{nac}
Let $p$ be a rational prime. 
Let $I(s,\mu_p)$ be the degenerate principal series representation in Section \ref{dnonar}. 
Note that $\mu_p$ is unitary in our setting. 
Let $\widetilde{\SL_2(\Q_p)}$ be the metaplectic double cover of $\sL(\Q_p)=\SL_2(\Q_p)$.
For a character $\delta_p:\Q^\times_p\lra \C^\times$ and $s\in\Bbb C$, let 
$\widetilde{I}^{\psi_p}_1(s,\delta_p)$ be the degenerate principal series representation of $\widetilde{\SL_2(\Q_p)}$  which is defined similarly as in 
(\ref{principal-series}). 

For each $\phi \in I(s,\mu_p)$, $\Phi\in \mathcal{S}(X_\alpha(\Q_p))$, $S\in \Q^\times_p$, 
and $h'=(h,\ve)\in \widetilde{\SL_2(\Q_p)}$, we define the integral 
\begin{eqnarray}\label{beta-local} 
&& \beta^{\psi_p}_S(h';\phi\otimes\overline{\Phi}):=
\frac{L(s+\frac{1}{2},\mu_p)L(s+\frac{3}{2},\mu_p)
L(2s+1,\mu_p) L(3s+\frac{3}{2},\mu_p)L(s+\frac{1}{2},\mu_p\chi_{S,p})}
{L(s+\frac{1}{2},\mu_{p,E_w})L(2s+1,\mu^2_p)} \nonumber  \\
&& \phantom{xxxxxx} \times \int_{X_{\alpha+\beta}(\Q_p)}\int_{X_{2\alpha+\beta}(\Q_p)}\int_{Z_U(\Q_p)}
\phi(\iota w_\beta z_U v(y,0,z) h)\overline{(\omega_{S,p}(v(y,0,z)h')\Phi)(0)}
dz_U dy dz 
\end{eqnarray}
where $\chi_{S,p}(a)=\langle -S,a \rangle_{p},\ a\in \Q^\times_p$ is 
defined by using the local quadratic Hilbert symbol $\langle \ast,\ast \rangle_{p}$ on 
$\Q^\times_p\times \Q^\times_p$. 
This is a local analogue of ``$R(h;f,\Phi)$'' in Theorem \ref{Eisen-exp} up to local 
$L$-factors. 

\begin{prop}\label{beta-intertwining} Keep the notations as above. 
Then, $\beta^{\psi_p}_S(h';\phi\otimes\overline{\Phi})$ is absolutely convergent 
if ${\rm Re}(s)>-\frac{2}{3}$ and it yields a $V(\Q_p)$-invariant 
and $\widetilde{\SL_2(\Q_p)}$-equivariant $\C$-bilinear map 
$$\beta^{\psi_p}_S:I(s,\mu_p)\otimes_\C \mathcal{S}(X_\alpha(\Q_p))
\lra \widetilde{I}^{\psi_p}_1(s,\mu_p\chi_{S,p}).$$
Namely, $\beta^{\psi_p}_S(v h';\phi\otimes\omega_{S,p}(\gamma)\overline{\Phi})
=\widetilde{I}^{\psi_p}_1(s,\mu_p\chi_{S,p})(\gamma)
\beta^{\psi_p}_S(h';\phi\otimes\overline{\Phi})$ for 
any $\gamma\in \widetilde{\SL_2(\Q_p)}$ and $v\in V(\Q_p)$. 
\end{prop}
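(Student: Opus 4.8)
The plan is to transplant the proof of Theorem~\ref{Eisen-exp} to $G_2(\Q_p)$, now carried out as an identity of absolutely convergent integrals rather than by (formal) global unfolding; the only genuinely new ingredient is a convergence estimate. Fix $\phi\in I(s,\mu_p)$ and $\Phi\in\mathcal{S}(X_\alpha(\Q_p))$; linearity of $\beta^{\psi_p}_S$ in $\phi$ and in $\overline{\Phi}$ is clear once convergence is known. Since $\Phi$ is Schwartz so is $\omega_{S,p}(h')\Phi$, and by (\ref{weil-action1}) one has $(\omega_{S,p}(v(y,0,z)h')\Phi)(0)=(\omega_{S,p}(h')\Phi)(y)\,\psi_{S,p}(z)$; hence the $y$-integrand is supported on a fixed compact subset of $\Q_p$ and the factor $\overline{\psi_{S,p}(z)}$ is a harmless unitary. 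It therefore suffices to bound $|\phi(\iota w_\beta\,z_U\,v(y,0,z)\,h)|$ uniformly for $y$ in that compact set and integrate over $(z,z_U)\in X_{2\alpha+\beta}(\Q_p)\times Z_U(\Q_p)$. As $\phi$ is smooth and $G_2(\Z_p)$-finite and $\mu_p$ is unitary, this is dominated by $C\,|\det\mathbf{m}|_p^{\,\mathrm{Re}(s)+3/2}$, where $\mathbf{m}=\mathbf{m}(y,z,z_U)$ is the $M$-component of the Iwasawa decomposition of $\iota w_\beta z_U v(y,0,z)$ with respect to $P=MN$. Writing $\iota w_\beta=w_{\beta\alpha\beta\alpha}$ and computing $\mathbf{m}$ explicitly in the root coordinates of Section~\ref{pre}, using the commutation relations among $X_{2\alpha+\beta},X_{3\alpha+\beta},X_{3\alpha+2\beta}$, reduces the $(z,z_U)$-integral to a finite product of $p$-adic zeta-type integrals whose joint domain of absolute convergence is exactly $\mathrm{Re}(s)>-\tfrac{2}{3}$. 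The $L$-factor prefactor is meromorphic in $s$ and independent of $h'$, so it affects neither convergence nor the transformation laws below.

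Granting convergence, I would check next that $h'\mapsto\beta^{\psi_p}_S(h';\phi\otimes\overline{\Phi})$ is a section of $\widetilde{I}^{\psi_p}_1(s,\mu_p\chi_{S,p})$, by verifying the left transformation law under the Borel of $\widetilde{\SL_2(\Q_p)}$ exactly as in the last part of the proof of Theorem~\ref{Eisen-exp}. For $\ell_b=(\ell(\begin{smallmatrix}1&b\\0&1\end{smallmatrix}),\varepsilon)$ one inserts the relation $\iota z_U v(0,y,z)w_\beta\,\ell(\begin{smallmatrix}1&b\\0&1\end{smallmatrix})=m(\begin{smallmatrix}1&by\\0&1\end{smallmatrix})X_{3\alpha+\beta}(b)\,\iota z_U'\,v(0,y,z-by^2)\,w_\beta$ recorded there, pulls the $M$-factor out of $\phi$ by (\ref{g2-prin}) (it is trivial, having determinant $1$), absorbs $X_{3\alpha+\beta}(b)$ and the shift in $z_U'$ by a change of variables, matches the resulting $\overline{\psi_{S,p}(-by^2)}$ with (\ref{weil-action3}), and replaces $z$ by $z+by^2$; this gives $\beta^{\psi_p}_S(\ell_b h';\phi\otimes\overline{\Phi})=\varepsilon\,\beta^{\psi_p}_S(h';\phi\otimes\overline{\Phi})$. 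For $\ell_a=(\ell(\begin{smallmatrix}a&0\\0&a^{-1}\end{smallmatrix}),\varepsilon)$ one uses the companion relation, pulls $m(\begin{smallmatrix}a&0\\0&1\end{smallmatrix})$ out of $\phi$ (contributing $|a|_p^{\,s+3/2}\mu_p(a)$), applies $(z_1,z_2,y)\mapsto(az_1,z_2/a,y/a)$ (contributing $|a|_p^{-1}$), and uses (\ref{weil-action2}) (contributing $\varepsilon\,\gamma_p(S)\gamma_p(aS)^{-1}|a|_p^{1/2}$); invoking the Weil-constant identity $\gamma_p(-S)\gamma_p(a)=\langle -S,a\rangle_p\,\gamma_p(1)\gamma_p(-Sa)$, the total factor becomes $\varepsilon\,\delta_B^{1/2}(a)\,|a|_p^{\,s}\,\tfrac{\gamma_p(1)}{\gamma_p(a)}\,\mu_p(a)\chi_{S,p}(a)$, which is precisely (\ref{principal-series}) for $\widetilde{I}^{\psi_p}_1(s,\mu_p\chi_{S,p})$. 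Together with the continuity and $\widetilde{K}$-finiteness inherited from $\phi$ and $\Phi$, this puts $\beta^{\psi_p}_S(\,\cdot\,;\phi\otimes\overline{\Phi})$ in $\widetilde{I}^{\psi_p}_1(s,\mu_p\chi_{S,p})$.

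For the remaining assertions — the $V(\Q_p)$-invariance and the full $\widetilde{\SL_2(\Q_p)}$-equivariance $\beta^{\psi_p}_S(vh';\phi\otimes\omega_{S,p}(\gamma)\overline{\Phi})=\widetilde{I}^{\psi_p}_1(s,\mu_p\chi_{S,p})(\gamma)\,\beta^{\psi_p}_S(h';\phi\otimes\overline{\Phi})$ — the computations above already settle the Borel generators $\ell_b,\ell_a$; the $V(\Q_p)$-action enters through (\ref{weil-action1}) and is absorbed by translating the $(y,z)$-variables and commuting the extra $X_{\alpha+\beta}$-component past $w_{\beta\alpha\beta\alpha}$ (as in the $i=2$ vanishing argument of Theorem~\ref{Eisen-exp}); and the case $\gamma=w_\beta$ is obtained from (\ref{weil-action4}) together with a Fourier inversion in the $X_{2\alpha+\beta}$- and $Z_U$-integrations. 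Since $\widetilde{\SL_2(\Q_p)}$ is generated by the upper Borel together with $w_\beta$, these cases suffice; alternatively, this equivariance is in essence Savin's local Fourier--Jacobi statement \cite[Theorem B.2.2]{Po-modular}. I expect the main obstacle to be the convergence step: unlike the formal global unfolding, pinning down the exact half-plane $\mathrm{Re}(s)>-\tfrac{2}{3}$ requires an honest Iwasawa computation tracking which positive roots of $G_2$ contribute along $\iota w_\beta z_U v(y,0,z)$, and the $\gamma=w_\beta$ case requires care so that the Fourier inversion produces exactly the quadratic twist $\chi_{S,p}$; the rest is bookkeeping with the metaplectic cocycle and the Weil constants $\gamma_p$.
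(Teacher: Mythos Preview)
Your proposal is correct and follows essentially the same route as the paper: both transplant the computations of Theorem~\ref{Eisen-exp} to the local setting, verifying the Borel transformation laws for $\ell_a,\ell_b$ exactly as you outline (the paper simply says ``the latter claim is similarly proved by the argument in the proof of Theorem~\ref{Eisen-exp}''), and the paper records the intermediate rewriting (\ref{some-imp}) that you use implicitly. The one point of divergence is convergence: the paper disposes of it in one line (``the convergence follows from the smoothness of $\phi$, and the $L$-factors are finite by the condition on ${\rm Re}(s)$''), relying on the $p$-adic fact that a smooth vector is locally constant so the integral reduces to a rational function in $p^{-s}$, whereas you propose an honest Iwasawa-decomposition estimate to pin down the threshold ${\rm Re}(s)>-\tfrac{2}{3}$; your approach is more explicit about what absolute convergence actually requires, and you are right to flag it as the step needing the most care.
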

\begin{proof} By a similar argument as in the proof of Theorem \ref{Eisen-exp} 
(or \cite[p.243]{KY1}), for $h'=(h,\ve)\in \widetilde{\SL_2(\Q_p)}$, 
we have 
\begin{eqnarray}\label{some-imp}
 \int_{X_{\alpha+\beta}(\Q_p)}\int_{X_{2\alpha+\beta}(\Q_p)}\int_{Z_U(\Q_p)}
\phi(\iota w_\beta z_U v(y,0,z) h)\overline{(\omega_{S,p}(v(y,0,z)h')\Phi)(0)}
dz_U dy dz \nonumber \\
=\int_{V(\Q_p)}\int_{Z_U(\Q_p)}
\phi(\iota w_\beta z_U v h)\overline{(\omega_{S,p}(w_\beta v h')\Phi)(0)}
dz_U dv.
\end{eqnarray}
The convergence follows from the smoothness of $\phi$, and the $L$-factors are finite by the 
condition on ${\rm Re}(s)$. The latter claim is 
similarly proved by the argument in the proof of Theorem \ref{Eisen-exp}. 
\end{proof}
For each $\phi\in \widetilde{I}^{\psi_p}_1(s,\delta_p)$ with a unitary character 
$\delta_p:\Q^\times_p\lra \C^\times$ and $t\in \Q^\times_p$, we define 
a normalized local Whittaker functional 
\begin{equation}\label{nlwf}
w^{\delta_p,s}_t(\phi):=|t|^{\frac{1}{2}}_p\frac{L(2s+1,\delta^2_p)}{L(s+\frac{1}{2},\delta_p)}\int_{X_\beta(\Q_p)}\phi((w_\beta x_\beta,1))
\overline{\psi_p(tx_\beta)}dx_\beta
\end{equation}
which can be extended holomorphically at $s=0$. Put 
$w^{\delta_p}_t(\phi):=w^{\delta_p,0}_t(\phi)$. 

\begin{lem}\label{local-non-arch}For $t\in \Q_p^\times$ and a unitary character 
$\delta_p$, it holds that 
\begin{enumerate}
\item ${\rm Hom}_{X_\beta(\Q_p)}(\widetilde{I}^{\psi_p}_1(0,\delta_p),\psi_{p}(t\ast))$ is 
non-zero and it is generated by $w^{\mu_p}_t$; 
\item If $\delta^2_p=|\cdot|$, $\widetilde{I}^{\psi_p}_1(0,\delta_p)$ has 
a unique irreducible subrepresentation $\widetilde{A}^{\psi_p}_1(\delta_p)$ which is unitary. 
Further, ${\rm Hom}_{X_\beta(\Q_p)}(\widetilde{A}^{\psi_p}_1(\delta_p),\psi_{p}(t\ast))$ is 
non-zero if and only if $\delta_p\neq \chi_{t,p}$. 
In that case, the restriction of $w^{\delta_p}_t$ to $\widetilde{A}^{\psi_p}_1(\delta_p)$ generates 
${\rm Hom}_{X_\beta(\Q_p)}(\widetilde{A}^{\psi_p}_1(\delta_p),\psi_{p}(t\ast))$. 
\end{enumerate}
\end{lem}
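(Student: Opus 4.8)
The plan is to realize $\widetilde{I}^{\psi_p}_1(s,\delta_p)$ as a genuine degenerate principal series on the metaplectic double cover $\widetilde{\SL_2(\Q_p)}$ and to analyze its twisted Jacquet (Whittaker) module along $X_\beta$, exactly the way one treats Whittaker functionals on principal series of $\SL_2$, but keeping track of the metaplectic cocycle and the genuine character $\delta_p\cdot(\text{Weil factor})$. First I would recall (or quote from a standard reference on the Weil representation and metaplectic Eisenstein series, e.g.\ the sources already cited in Section \ref{FJEES}) the structure of $\widetilde{I}^{\psi_p}_1(s,\delta_p)$: it is irreducible for generic $s$, and at $s=0$ it is irreducible unless the genuine inducing character is self-dual in the appropriate sense, which on the double cover of $\SL_2$ corresponds precisely to the condition $\delta_p^2=|\cdot|$. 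In that reducible case there is a unique irreducible submodule $\widetilde{A}^{\psi_p}_1(\delta_p)$ — the ``even'' theta-type constituent, which is the local component of a Weil representation and hence unitary — with the complementary quotient being the other piece. This gives the structural claims in (1) and the first sentence of (2).

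For the Whittaker side, the key step is the holomorphicity and nonvanishing of the normalized functional $w^{\delta_p,s}_t$ defined in (\ref{nlwf}). The unnormalized integral $\int_{X_\beta(\Q_p)}\phi((w_\beta x_\beta,1))\overline{\psi_p(tx_\beta)}\,dx_\beta$ is the standard Jacquet integral; it converges for $\mathrm{Re}(s)$ large, continues meromorphically, and — by the $\SL_2$ analogue of the computations underlying \cite{Karel} or by direct computation with the explicit metaplectic sections — its only possible pole in $\mathrm{Re}(s)\ge 0$ near $s=0$ is cancelled exactly by the factor $L(2s+1,\delta_p^2)/L(s+\frac 12,\delta_p)$ I have inserted; hence $w^{\delta_p}_t=w^{\delta_p,0}_t$ is a well-defined nonzero element of $\mathrm{Hom}_{X_\beta(\Q_p)}(\widetilde{I}^{\psi_p}_1(0,\delta_p),\psi_p(t\ast))$. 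Uniqueness (dimension $\le 1$) follows from the standard local multiplicity-one statement for Whittaker models of induced representations of $\widetilde{\SL_2}$ — a Bruhat-filtration / Frobenius-reciprocity argument on the two-element Weyl group, noting that only the big cell contributes to a nondegenerate character. This proves (1).

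For part (2), I would pass to the submodule $\widetilde{A}^{\psi_p}_1(\delta_p)$ and use exactness of the twisted Jacquet functor along $X_\beta$ applied to the short exact sequence $0\to \widetilde{A}^{\psi_p}_1(\delta_p)\to \widetilde{I}^{\psi_p}_1(0,\delta_p)\to V''\to 0$. Since the ambient $\mathrm{Hom}$-space is one-dimensional (from (1)), the restriction of $w^{\delta_p}_t$ to $\widetilde{A}^{\psi_p}_1(\delta_p)$ is either zero or a generator of $\mathrm{Hom}_{X_\beta(\Q_p)}(\widetilde{A}^{\psi_p}_1(\delta_p),\psi_p(t\ast))$, and the latter is nonzero iff the former is nonzero iff the quotient $V''$ is \emph{not} the generic constituent. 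The condition distinguishing which constituent carries the $\psi_p(t\ast)$-Whittaker model is the metaplectic analogue of the classical fact that the ``$t$-th piece'' of a Weil representation of $\widetilde{\SL_2}$ is nonzero precisely when the relevant Hilbert symbol obstruction vanishes — concretely, the even theta representation attached to $\delta_p$ supports a $\psi_p(t\ast)$-functional iff $\delta_p\ne\chi_{t,p}$, where $\chi_{t,p}(a)=\langle -t,a\rangle_p$ (equivalently, the quadratic character attached to $t$ matches $\delta_p$ precisely in the excluded case). I would establish this either by an explicit evaluation of the Jacquet integral on the spherical (or lowest-weight) vector of $\widetilde{A}^{\psi_p}_1(\delta_p)$ and reading off when it vanishes, or by citing the known decomposition of the oscillator representation's twisted Jacquet modules. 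Combining, the restriction of $w^{\delta_p}_t$ is nonzero exactly when $\delta_p\ne\chi_{t,p}$, and in that case it generates the one-dimensional space, which is (2).

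The main obstacle I anticipate is pinning down the precise Hilbert-symbol condition $\delta_p\ne\chi_{t,p}$: one must be careful about normalizations — which additive character $\psi_p$ is used to split the cover and to define the Weil index $\gamma_p$, the sign conventions in (\ref{weil-action3})–(\ref{weil-action4}), and the identification of $\widetilde{A}^{\psi_p}_1(\delta_p)$ with a concrete Weil-representation constituent — so that the vanishing of the Jacquet integral lands on exactly the character equality $\delta_p=\chi_{t,p}$ and not, say, its twist by the quadratic character of $-1$. Getting these bookkeeping constants right, rather than any deep structural issue, is where the real work lies; everything else is the standard $\SL_2$ Whittaker-functional machinery transported to the double cover.
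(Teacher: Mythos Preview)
Your approach is correct and is precisely the standard route; the paper itself does not give an argument but simply cites \cite[Proposition~5.1]{IY}, and what you have sketched is essentially the content behind that citation: multiplicity one for Whittaker functionals on genuine principal series of $\widetilde{\SL_2(\Q_p)}$ via the Bruhat filtration, holomorphy and nonvanishing of the normalized Jacquet integral $w^{\delta_p,s}_t$ at $s=0$, and then exactness of the twisted Jacquet functor applied to the short exact sequence $0\to\widetilde{A}^{\psi_p}_1(\delta_p)\to\widetilde{I}^{\psi_p}_1(0,\delta_p)\to V''\to 0$ in the reducible case.

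One small correction worth flagging: when $\delta_p^2=|\cdot|_p$ (so $\delta_p=|\cdot|_p^{1/2}\eta_p$ with $\eta_p$ quadratic), the unique irreducible submodule $\widetilde{A}^{\psi_p}_1(\delta_p)$ is the metaplectic \emph{special} (Steinberg-type) representation, not an elementary Weil-representation constituent; the elementary Weil piece sits as the \emph{quotient} $V''$. This matters for the direction of the Whittaker-support condition: the elementary Weil quotient carries a $\psi_p(t\ast)$-functional for exactly one square class of $t$, so by your Jacquet-exactness argument the special submodule carries it precisely for the complementary classes --- which is the ``$\delta_p\ne\chi_{t,p}$'' condition. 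You arrived at the correct inequality despite the mislabeling, and your closing remark that the genuine work lies in tracking the Weil-index and Hilbert-symbol normalizations to land exactly on $\delta_p\ne\chi_{t,p}$ (rather than a twist of it) is on point.
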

\begin{proof} The claims follow from \cite[Proposition 5.1]{IY}. 
\end{proof}

\begin{lem}\label{beta-whi1-na}For each $\phi\in I(s,\mu_p)$ and $S,t\in \Q^\times_p$, and $\Phi\in \mathcal{S}(X_\alpha(\Q_p))$, the following equality holds
$$w^{\mu_p\chi_{S,p},s}_t(\beta^{\psi_p}_S(\ast;\phi\otimes\overline{\Phi}))
=\Big|\frac{4}{27}S^3\Big|^{-\frac{3}{4}}_p|t|^{-\frac{1}{4}}_p\int_{X_\alpha(\Q_p)}\overline{\Phi(x)}
\widetilde{{\bf w}}^{\mu_p}_{{\rm Ad}(w_\alpha)(t,0,\frac{1}{3}S,0)}(v(x,0,0)\cdot\phi)dx$$
for ${\rm Re}(s)>-\frac{2}{3}$. 
\end{lem}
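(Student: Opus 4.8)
The plan is to compute both sides by unfolding the Jacquet-type integrals and reducing everything to a single integral over $X_\alpha(\Q_p)$ against $\overline{\Phi}$. On the left, I would start from the definition \eqref{nlwf} of $w^{\mu_p\chi_{S,p},s}_t$ applied to $\beta^{\psi_p}_S(\ast;\phi\otimes\overline{\Phi})$. Using \eqref{some-imp}, the inner object $\beta^{\psi_p}_S((w_\beta x_\beta,1);\phi\otimes\overline{\Phi})$ is an integral over $V(\Q_p)Z_U(\Q_p)$ of $\phi(\iota w_\beta z_U v\, w_\beta x_\beta)$ against a Weil-representation matrix coefficient; I would conjugate $w_\beta x_\beta$ through $w_\beta$, use the relations \eqref{ml} and the Heisenberg commutation rules in $U$ (equations \eqref{heisen3}, \eqref{heisen4}) together with the action formulas \eqref{action-tildeU}, \eqref{actionZ} of $\ell$ on $\widetilde U$ and $Z_U$, to collapse the $x_\beta$-integral together with the $X_\beta$-integral hidden in $w^{\mu_p\chi_{S,p},s}_t$. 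The point is that the combined $x_\beta$- and $z_U$-integrations produce exactly a Fourier transform in one of the $Z_U$-coordinates, which, after changing variables, turns $\phi(\iota w_\beta z_U v\cdots)$ into $\phi$ evaluated along $N$ — i.e. into the Jacquet integral ${\bf w}^{\mu_p,s,0}$ defining $\widetilde{{\bf w}}^{\mu_p}_w$ in \eqref{wCF1}–\eqref{wCF2}, for $w$ of the shape $(t,0,\tfrac13 S,0)$ up to the $w_\alpha$-twist. The residual $V$-integral in the variables $y,z$ (after the $X_{\alpha+\beta}$ and $X_{2\alpha+\beta}$ directions are consumed as above) should leave precisely an integral over the remaining $X_\alpha$-coordinate $x$, matching $\int_{X_\alpha(\Q_p)}\overline{\Phi(x)}\,\widetilde{{\bf w}}^{\mu_p}_{{\rm Ad}(w_\alpha)(t,0,\frac13 S,0)}(v(x,0,0)\cdot\phi)\,dx$ via \eqref{weil-action1} for $\omega_{S,p}(v(x,0,0))\Phi(0)=\Phi(x)$.

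The second half of the work is bookkeeping of normalizing factors. The left side carries the $L$-factor ratio from \eqref{beta-local} (the factors for $\beta^{\psi_p}_S$, landing in $\widetilde I_1^{\psi_p}(s,\mu_p\chi_{S,p})$) together with the ratio $\dfrac{L(2s+1,(\mu_p\chi_{S,p})^2)}{L(s+\frac12,\mu_p\chi_{S,p})}=\dfrac{L(2s+1,\mu_p^2)}{L(s+\frac12,\mu_p\chi_{S,p})}$ from \eqref{nlwf}, since $\chi_{S,p}^2=\mathbf 1$; the $L(s+\frac12,\mu_p\chi_{S,p})$ and $L(2s+1,\mu_p^2)$ cancel, leaving exactly the four factors $L(s+\tfrac12,\mu_p)L(s+\tfrac32,\mu_p)L(2s+1,\mu_p)L(3s+\tfrac32,\mu_p)$ divided by $L(s+\tfrac12,\mu_{p,E_w})$, which is precisely the normalization in \eqref{wCF1} for $\widetilde{{\bf w}}^{\mu_p,s}_w$. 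So the $L$-factors match on the nose. What remains are the absolute-value powers: $|t|_p^{1/2}$ from \eqref{nlwf}, the $|q(w)|_p^{3/4}$ inside $\widetilde{{\bf w}}^{\mu_p}_w$ from \eqref{wCF1} applied with $q((t,0,\tfrac13 S,0)) = t^2\cdot\tfrac{4}{27}S^3\cdot$(sign), plus whatever $|t|_p$, $|S|_p$ powers fall out of the changes of variables in the unfolding. The claim is that these assemble to the prefactor $\bigl|\tfrac{4}{27}S^3\bigr|_p^{-3/4}|t|_p^{-1/4}$; I would verify this by carefully tracking the Jacobians of each substitution.

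The main obstacle I anticipate is the explicit matching of the intermediate unfolding with the $w_\alpha$-conjugated datum ${\rm Ad}(w_\alpha)(t,0,\tfrac13 S,0)$: one must be precise about which coordinate of $W$ the character $\psi_S$ in \eqref{extension-psi} corresponds to, how $\iota = w_{\beta\alpha\beta\alpha}w_\beta^{-1}$ interacts with the decomposition $W = X_\beta X_{\alpha+\beta} X_{2\alpha+\beta} X_{3\alpha+\beta}$, and where the factor $\tfrac13$ and the constant $\tfrac{4}{27}$ enter (they come from \eqref{disc} and the identification \eqref{iden}). A secondary subtlety is justifying the interchange of the $X_\beta$-integral with the $V Z_U$-integrals: for ${\rm Re}(s)>-\tfrac23$ the $\beta^{\psi_p}_S$-integral converges absolutely by Proposition \ref{beta-intertwining}, but the outer $X_\beta$-integral in \eqref{nlwf} is only conditionally convergent (a Cauchy principal value, in the sense of \cite{CS}), so I would argue first in the region of absolute convergence of the Jacquet integral on the right — which by Proposition \ref{conv-Jacquet-int} and the remarks after \eqref{wCF1} is ${\rm Re}(s)$ large — establish the identity there by Fubini, and then extend to ${\rm Re}(s)>-\tfrac23$ by analytic continuation, both sides being holomorphic there (the left by Proposition \ref{beta-intertwining} and holomorphy of $w^{\delta_p,s}_t$, the right by the discussion following \eqref{wCF1}).
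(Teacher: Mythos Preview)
Your overall strategy coincides with the paper's: unfold the $X_\beta$-integral from $w^{\mu_p\chi_{S,p},s}_t$ against the $VZ_U$-integral inside $\beta^{\psi_p}_S$ using \eqref{some-imp}, recognize the result as the $N(\Q_p)$-Jacquet integral defining $\widetilde{{\bf w}}^{\mu_p}_w$, and match normalizing factors. Your $L$-factor bookkeeping is correct: the product of the ratio in \eqref{beta-local} with $L(2s+1,\mu_p^2)/L(s+\tfrac12,\mu_p\chi_{S,p})$ from \eqref{nlwf} is exactly the normalization in \eqref{wCF1}.

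However, the mechanism you describe for the central unfolding is not right. There is no Fourier transform in a $Z_U$-coordinate, and there is only one $X_\beta$-integral (the one coming from $w_t$), not two to ``collapse''. What actually happens is simpler and more direct. After invoking \eqref{some-imp} and inserting $w_\beta x_\beta$, one uses that $w_\beta$ normalizes both $V$ and $Z_U$ and that $w_\beta^2$ is central in $L^{\rm ss}$: substituting $v\mapsto w_\beta^{-1}vw_\beta$ and $z_U\mapsto w_\beta z_U w_\beta^{-1}$ removes both $w_\beta$'s at once, leaving $\phi(\iota z_U v x_\beta)$. Next one commutes $x_\beta$ past $v$ (since $x_\beta\in L^{\rm ss}$ normalizes $V$) and splits $v(x,y,z)=v(0,y,z-xy)v(x,0,0)$. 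The point is then purely combinatorial: the product $z_U\,x_\beta\,v(0,y,z)$ ranges over all of $N(\Q_p)$ as $(z_U,x_\beta,y,z)$ ranges over $Z_U\times X_\beta\times X_{\alpha+\beta}\times X_{2\alpha+\beta}$ (dimensions $2+1+1+1=5$), and the two characters $\psi_p(-tx_\beta)$ and $\psi_p(Sz)$ assemble into $\psi_{{\rm Ad}(w_\alpha)(t,0,\frac13 S,0)}(n)$. This directly produces ${\bf w}^{\mu_p,s,0}_{{\rm Ad}(w_\alpha)(t,0,\frac13 S,0)}(v(x,0,0)\cdot\phi)$, and the remaining $X_\alpha$-integral against $\overline{\Phi(x)}$ is what survives. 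The constants fall out from $q\bigl((t,0,\tfrac13 S,0)\bigr)=\tfrac{4}{27}tS^3$ together with the $|t|_p^{1/2}$ from \eqref{nlwf} and $|q(w)|_p^{3/4}$ from \eqref{wCF1}.

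On convergence: your analytic-continuation detour is unnecessary. Once the integral is rewritten as $\int_{X_\alpha}\int_{N}\phi(\iota n\,v(x,0,0))\overline{\Phi(x)}\,\overline{\psi_w(n)}\,dn\,dx$, the $N$-integral is the Jacquet integral and $\Phi$ is Schwartz (hence compactly supported), so the double integral is absolutely convergent in the stated range and Fubini applies directly.
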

\begin{proof}
Put $A=\frac{L(s+\frac{1}{2},\mu_{p})L(s+\frac{3}{2},\mu_p)
L(2s+1,\mu_p) L(3s+\frac{3}{2},\mu_p)L(s+\frac{1}{2},\mu_p\chi_{S,p})}
{L(s+\frac{1}{2},\mu_{p,E_w})L(2s+\frac{1}{2},\mu^2_p)}$ and $B=\frac{L(2s+1,(\mu_p\chi_{S,p})^2)}{L(s+\frac{1}{2},\mu_p\chi_{S,p})}=
\frac{L(2s+1,\mu_p^2)}{L(s+\frac{1}{2},\mu_p\chi_{S,p})}$ for simplicity. 
First, we note that $w^{-1}_\beta V(\Q_p) w_\beta=V(\Q_p)$ and $w_\beta Z_U(\Q_p)
w^{-1}_\beta=Z_U(\Q_p)$. Then, by direct computation,  we have 
\begin{eqnarray}\label{eq1}
&&(AB)^{-1}|t|^{-\frac{1}{2}}_p w^{\mu_p\chi_{S,p},s}_t(\beta^{\psi_p}_S(h';\phi\otimes\overline{\Phi})) \nonumber\\
&=& A^{-1}\ds\int_{X_\beta(\Q_p)}
(\beta^{\psi_p}_S((w_\beta x_\beta,1);\phi\otimes\overline{\Phi})
\psi_p(-t x_\beta)dx_\beta   \nonumber\\
&\stackrel{\text{(\ref{some-imp})}}{=}& 
\ds\int_{X_\beta(\Q_p)}\int_{V(\Q_p)}\int_{Z_U(\Q_p)}
\phi(\iota w_\beta z_U v w_\beta x_\beta)
\overline{(\omega_{S,p}(w_\beta vw_\beta x_\beta)\Phi)(0)}\psi_p(-t x_\beta)
dz_U dv dx_\beta   \nonumber\\ 
&& \text{(noting $w^2_\beta$ is in the center of $\sL$ and substituting 
$w^{-1}_\beta v w_\beta\mapsto v,\ w_\beta z_U w^{-1}_\beta\mapsto z_U$,) } \nonumber \\
&=& 
\ds\int_{X_\beta(\Q_p)}\int_{V(\Q_p)}\int_{Z_U(\Q_p)}
\phi(\iota z_U v x_\beta)
\overline{(\omega_{S,p}(vx_\beta)\Phi)(0)}\psi_p(-t x_\beta)
dz_U dv dx_\beta.    \nonumber
\end{eqnarray} 
Since $x_\beta \in L^{{\rm ss}}(\Q_p)$, $x_\beta V(\Q_p)x^{-1}_\beta=V(\Q_p)$. 
By (\ref{heisen4}), $v=v(x,y,z)=v(0,y,z-xy)v(x,0,0)$. 
After using these, further, we substitute $x_\beta v x^{-1}_\beta$ and $z-xy$ with $v$ 
and $z$ respectively. Then, it proceeds as  
\begin{eqnarray}\label{eq2}
 &=& 
\ds\int_{X_\beta(\Q_p)}\int_{V(\Q_p)}\int_{Z_U(\Q_p)}
\phi(\iota z_U  x_\beta v)
\overline{(\omega_{S,p}(x_\beta v)\Phi)(0)}\psi_p(-t x_\beta)
dz_U dv dx_\beta.    \nonumber  \\
&=&\ds\int_{X_\beta(\Q_p)}\int_{V(\Q_p)}\int_{Z_U(\Q_p)}
\phi(\iota z_U  x_\beta v(0,y,z)v(x,0,0))
\overline{\Phi(x)\psi_p(Sz)}\psi_p(-t x_\beta)
dz_U dv(x,y,z) dx_\beta.  \nonumber
\end{eqnarray} 
Notice that $z_U  x_\beta v(0,y,z)=x_\beta v(0,y,z)z_U$ runs over all elements in $N(\Q_p)$. 
We remark that  
$$\psi_p(Sz)\psi_p(t x_\beta)=\psi_p(\langle (0,-\frac{1}{3}S,0,-t),(x_\beta,0,z,0) \rangle)=\psi_{{\rm Ad}(w_\alpha)(t,0,\frac{1}{3}S,0)}(n),\ n=n(x_\beta,\ast,z,\ast).$$
Therefore, the above integral becomes  
\begin{eqnarray} &=&
\ds\int_{n\in N(\Q_p)}\int_{x\in X_\alpha(\Q_p)}
\phi(\iota n v(x,0,0))
\overline{\Phi(x)\psi_{{\rm Ad}(w_\alpha)(t,0,\frac{1}{3}S,0)}(n)}
dx dn.   \nonumber 
\end{eqnarray} 
The integral converges absolutely. Hence, we can change the order of the double integral 
and it proceeds as 
\begin{eqnarray}
&=&
\ds\int_{x\in X_\alpha(\Q_p)}\overline{\Phi(x)}\Bigg(\int_{n\in N(\Q_p)}
\phi(\iota n v(x,0,0))
\psi_{{\rm Ad}(w_\alpha)(t,0,\frac{1}{3}S,0)}(n)
dn\Bigg) dx   \nonumber  \\
&=&
\ds\int_{x\in X_\alpha(\Q_p)}\overline{\Phi(x)}\Big(
(AB)^{-1}|q({\rm Ad}(w_\alpha)(t,0,\frac{1}{3}S,0))|^{-\frac{3}{4}}_p\widetilde{{\bf w}}^{\mu_p}_{{\rm Ad}(w_\alpha)(t,0,\frac{1}{3}S,0)}(v(x,0,0)\cdot\phi)\Big) dx   \nonumber  \\
&=&(AB)^{-1}\Big|\frac{4}{3^3}tS^3\Big|^{-\frac{3}{4}}_p
\ds\int_{x\in X_\alpha(\Q_p)}\overline{\Phi(x)}
\widetilde{{\bf w}}^{\mu_p}_{{\rm Ad}(w_\alpha)(t,0,\frac{1}{3}S,0)}(v(x,0,0)\cdot\phi) dx.   \nonumber 
\end{eqnarray} 
Cleaning up factors, we have the desired equality. 
\end{proof} 
Let $\Pi_p$ (resp. $\widetilde{A}^{\psi_p}_1(s,\mu_p\chi_{S,p})$) be a unique irreducible submodule of $I(s,\mu_p)$ (resp. $\widetilde{I}^{\psi_p}_1(s,\mu_p\chi_{S,p})$). 
\begin{cor}\label{beta-surj-arc} Keep the notations in Proposition \ref{beta-intertwining}. 
Then, 
$\beta^{\psi_p}_S:\Pi_p\otimes_\C \mathcal{S}(X_\alpha(\Q_p))\lra \widetilde{A}^{\psi_p}_1(s,\mu_p\chi_{S,p})$ is surjective. 
\end{cor}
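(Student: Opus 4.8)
The plan is to realize the image of $\beta^{\psi_p}_S$ on $\Pi_p\otimes_\C\mathcal{S}(X_\alpha(\Q_p))$ as a $\widetilde{\SL_2(\Q_p)}$-submodule of $\widetilde{I}^{\psi_p}_1(s,\mu_p\chi_{S,p})$, show it is nonzero, and then pin it down using the $\psi_t$-Whittaker models on $\widetilde{I}^{\psi_p}_1(s,\mu_p\chi_{S,p})$. By Proposition \ref{beta-intertwining} the map $\beta^{\psi_p}_S$ is $V(\Q_p)$-invariant and $\widetilde{\SL_2(\Q_p)}$-equivariant, so $\mathcal{J}:=\beta^{\psi_p}_S\big(\Pi_p\otimes_\C\mathcal{S}(X_\alpha(\Q_p))\big)$ is a $\widetilde{\SL_2(\Q_p)}$-submodule of $\widetilde{I}^{\psi_p}_1(s,\mu_p\chi_{S,p})$. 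It therefore suffices to prove (i) $\mathcal{J}\neq 0$ and (ii) $\mathcal{J}\subseteq\widetilde{A}^{\psi_p}_1(s,\mu_p\chi_{S,p})$: since $\widetilde{A}^{\psi_p}_1(s,\mu_p\chi_{S,p})$ is by definition the unique irreducible submodule, (i) forces $\mathcal{J}\supseteq\widetilde{A}^{\psi_p}_1(s,\mu_p\chi_{S,p})$, and together with (ii) this yields the asserted surjectivity.

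For (i) I would compose with the normalized Whittaker functional $w^{\mu_p\chi_{S,p},s}_t$ of (\ref{nlwf}) for a suitable generic $t\in\Q^\times_p$ and apply Lemma \ref{beta-whi1-na}, which identifies $\phi\otimes\overline{\Phi}\mapsto w^{\mu_p\chi_{S,p},s}_t(\beta^{\psi_p}_S(\ast;\phi\otimes\overline{\Phi}))$, up to a nonzero constant, with the pairing $\int_{X_\alpha(\Q_p)}\overline{\Phi(x)}\,\widetilde{{\bf w}}^{\mu_p}_{w}(v(x,0,0)\cdot\phi)\,dx$, where $w={\rm Ad}(w_\alpha)(t,0,\tfrac{1}{3}S,0)$ and $q(w)$ is a nonzero rational multiple of $tS^3$, so $\psi_w$ is generic. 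By Lemma \ref{transformation1}-(1), $\widetilde{{\bf w}}^{\mu_p}_w$ is nonzero on $\Pi_p$ when $\Pi_p=I(0,\mu_p)$; when $\Pi_p=A(|\cdot|^{\frac12}_p\mu_p)$ one first picks $t$ in a square class with $\Wh_{\psi_w}(A(|\cdot|^{\frac12}_p\mu_p))\neq 0$ (excluding at most one square class of $t$), and then Lemma \ref{transformation1}-(1) again gives that $\widetilde{{\bf w}}^{\mu_p}_w$ does not vanish on $\Pi_p$. Choosing $\phi\in\Pi_p$ with $\widetilde{{\bf w}}^{\mu_p}_w(\phi)\neq 0$ and $\Phi$ supported in a small enough neighborhood of $0\in X_\alpha(\Q_p)$, smoothness of $\phi$ makes the integral nonzero, so $w^{\mu_p\chi_{S,p},s}_t\circ\beta^{\psi_p}_S\not\equiv0$ on $\Pi_p\otimes_\C\mathcal{S}(X_\alpha(\Q_p))$ and $\mathcal{J}\neq0$.

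For (ii), if $\widetilde{I}^{\psi_p}_1(s,\mu_p\chi_{S,p})$ is irreducible (which happens when $\Pi_p=I(0,\mu_p)$, since then $\mu_p^2\ne|\cdot|$) there is nothing to prove. Otherwise $\widetilde{I}^{\psi_p}_1(s,\mu_p\chi_{S,p})$ has length two with socle $\widetilde{A}^{\psi_p}_1(s,\mu_p\chi_{S,p})$ and an irreducible quotient $\widetilde{B}$, and I argue by contradiction: were the composite $\Pi_p\otimes_\C\mathcal{S}(X_\alpha(\Q_p))\xrightarrow{\beta^{\psi_p}_S}\widetilde{I}^{\psi_p}_1(s,\mu_p\chi_{S,p})\twoheadrightarrow\widetilde{B}$ nonzero (hence surjective, as $\widetilde{B}$ is irreducible), then $\widetilde{B}$ would inherit a $\psi_{t'}$-Whittaker model whenever it is $\psi_{t'}$-generic; by exactness of the Whittaker functor and Lemma \ref{local-non-arch}-(2) that is exactly the one square class of $t'$ complementary to those detecting the socle, i.e.\ those $t'$ with $\chi_{t',p}$ matching the quadratic character attached to $\mu_p\chi_{S,p}$. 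For such $t'$, $w^{\mu_p\chi_{S,p},s}_{t'}\circ\beta^{\psi_p}_S$ would be nonzero on $\Pi_p\otimes_\C\mathcal{S}(X_\alpha(\Q_p))$; but by Lemma \ref{beta-whi1-na} it is a multiple of $\phi\otimes\overline{\Phi}\mapsto\int_{X_\alpha(\Q_p)}\overline{\Phi(x)}\,\widetilde{{\bf w}}^{\mu_p}_{w'}(v(x,0,0)\cdot\phi)\,dx$ with $w'={\rm Ad}(w_\alpha)(t',0,\tfrac13S,0)$, and for these $t'$ the square class of $q(w')$ (a fixed multiple of $t'S^3$) is precisely the one for which Proposition \ref{wh} gives $\Wh_{\psi_{w'}}(\Pi_p)=\Wh_{\psi_{w'}}(A(|\cdot|^{\frac12}_p\mu_p))=0$, so this functional vanishes identically --- a contradiction. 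Hence $\mathcal{J}\subseteq\widetilde{A}^{\psi_p}_1(s,\mu_p\chi_{S,p})$, which completes the proof.

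The main work is in (ii): one must line up the dichotomy for $\psi_{t'}$-Whittaker models of the two constituents of $\widetilde{I}^{\psi_p}_1(s,\mu_p\chi_{S,p})$ --- governed by whether $\chi_{t',p}$ equals the quadratic character built from $\mu_p$ and $\chi_{S,p}$ --- with the genericity dichotomy for $A(|\cdot|^{\frac12}_p\mu_p)$ against the characters $\psi_{w'}$, $w'={\rm Ad}(w_\alpha)(t',0,\tfrac13S,0)$, which is governed by the square class of $q(w')\sim t'S^3\sim t'S$, so that ``$\widetilde{B}$ is $\psi_{t'}$-generic'' and ``$\widetilde{{\bf w}}^{\mu_p}_{w'}$ annihilates $\Pi_p$'' hold for the same square class of $t'$; keeping track of the fixed scalars (the $\tfrac{4}{27}$, the $S/3$ versus $S$, and the contribution of $\det(w_\alpha)$) inside $q(w')$ is where the care is needed. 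One also uses in (i) the existence of at least one admissible $t$, i.e.\ that $A(|\cdot|^{\frac12}_p\mu_p)$ is $\psi_w$-generic for $w={\rm Ad}(w_\alpha)(t,0,\tfrac13S,0)$ for all but one square class of $t$.
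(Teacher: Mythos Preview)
Your step (i) is exactly the paper's proof: the paper's entire argument is the one line ``the claim follows from Lemma~\ref{beta-whi1-na} by choosing $\Phi$ and $\phi$ suitably,'' i.e.\ compose with a Whittaker functional, use Lemma~\ref{beta-whi1-na}, and observe that the resulting integral can be made nonzero. Since $\widetilde{A}^{\psi_p}_1(s,\mu_p\chi_{S,p})$ is the unique irreducible submodule, nonvanishing of the image forces it to contain $\widetilde{A}^{\psi_p}_1(s,\mu_p\chi_{S,p})$, which is the surjectivity asserted (and is all that is used in Section~\ref{Pmt}). So for the statement as the paper proves and uses it, your (i) suffices and matches the paper.

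Your step (ii) goes beyond what the paper does: the paper's one-line proof makes no attempt to show the image is \emph{contained} in $\widetilde{A}^{\psi_p}_1(s,\mu_p\chi_{S,p})$. More importantly, your argument for (ii) has a gap. You write that ``Proposition~\ref{wh} gives $\Wh_{\psi_{w'}}(A(|\cdot|^{1/2}_p\mu_p))=0$'' for the relevant square class of $t'$, but Proposition~\ref{wh} says no such thing: it gives only the upper bound $\dim\Wh_{\psi_w}(I(s,\mu_p))\le 1$, and part~(2) is a conditional isomorphism under the \emph{hypothesis} $\Wh_{\psi_w}(A(|\cdot|^{1/2}_p\mu_p))\neq 0$; it never establishes vanishing. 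Likewise, Lemma~\ref{local-non-arch}(2) literally reads ``$\delta_p\neq\chi_{t,p}$'' with $\delta_p^2=|\cdot|$, so $\delta_p$ is not a quadratic character and the dichotomy you want to invoke for the quotient $\widetilde{B}$ does not come directly from the paper's statements either. To make (ii) rigorous you would need an independent input---e.g.\ a precise determination of the $\psi_w$-genericity of $A(|\cdot|^{1/2}_p\mu_p)$ in terms of the cubic \'etale algebra $E_w$ (as in the Gan--Gross--Savin classification), matched against Waldspurger's dichotomy for the constituents of $\widetilde{I}^{\psi_p}_1$---none of which is proved in the paper. Since only (i) is needed for the corollary as stated and used, I would drop (ii) or flag it as a separate (unproved) refinement.
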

\begin{proof}The claim follows from Lemma \ref{beta-whi1-na} by choosing $\Phi$ 
and $\phi$ suitably.
\end{proof}

\subsection{The archimedean case}\label{ac}
For a unitary character $\mu:\R^\times\lra \C^\infty$ and $s\in \C$,  
put $I(s,\mu):={\rm Ind}^{G_2(\R)}_{P(\R)}\mu(\det)|\det|^s$ (the normalized induction) 
by abusing the notation. 
Let $\widetilde{I}^{\psi}_1(s,\mu)$ be a principal series representation of $\widetilde{\SL_2(\R)}$, 
which is similarly defined as in (\ref{principal-series}). 

Let $\psi=\psi_{\infty}$ and $\psi_S:=\psi(S\ast)$ for 
$S\in \R$.  
For each $\phi \in I(s,\mu)$, $\Phi\in \mathcal{S}(X_\alpha(\R))$, $S\in \R^\times$, 
and $h'=(h,\ve)\in \widetilde{\SL_2(\R)}$, we define the integral 
\begin{eqnarray}\label{beta-local}
\beta^\psi_S(h';\phi\otimes\overline{\Phi}):=
\int_{X_{\alpha+\beta}(\R)}\int_{X_{2\alpha+\beta}(\R)}\int_{Z_U(\R)}
\phi(\iota w_\beta z_U v(y,0,z) h)\overline{(\omega_{S,\infty}(v(y,0,z)h')\Phi)(0)}
dz_U dy dz. \nonumber 
\end{eqnarray}
This is also a local analogue of ``$R(h;f,\Phi)$'' in Theorem \ref{Eisen-exp}. 

\begin{prop}\label{bia} Keep the notations above. 
Then, $\beta^{\psi}_S(h';\phi\otimes\overline{\Phi})$ is absolutely convergent 
if ${\rm Re}(s)>0$ and it yields a $V(\R)$-invariant 
and $\widetilde{\SL_2(\R)}$-equivariant $\C$-bilinear map 
$$\beta^{\psi}_S:I(s,\mu)\otimes_\C \mathcal{S}(X_\alpha(\R))
\lra \widetilde{I}^{\psi}_1(s,\mu\chi_{S,\infty}),
$$
where $\chi_{S,\infty}(a)=\langle -S,a \rangle_{\infty},\ a\in \R^\times$ is 
defined by using the local quadratic Hilbert symbol $\langle \ast,\ast \rangle_{\infty}$ on 
$\R^\times\times\R^\times$.
Namely, $\beta^{\psi}_S(v h';\phi\otimes\omega_{S,\infty}(\gamma)\overline{\Phi})
=\widetilde{I}^{\psi}_1(s,\mu\chi_{S,\infty})(\gamma)
\beta^{\psi}_S(h';\phi\otimes\overline{\Phi})$ for 
any $\gamma\in \widetilde{\SL_2(\R)}$ and $v\in V(\R)$. 
\end{prop}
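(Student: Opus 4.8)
\textbf{Proof proposal for Proposition \ref{bia}.}
The plan is to mirror, at the archimedean place, the argument already carried out globally in the proof of Theorem \ref{Eisen-exp} (case $i=3$) and non-archimedeanly in Proposition \ref{beta-intertwining}. First I would establish absolute convergence. The inner integral over $Z_U(\R)$ and $X_{2\alpha+\beta}(\R)$ is along a finite-dimensional real unipotent group, and $\phi$, being smooth in the induced model ${\rm Ind}^{G_2(\R)}_{P(\R)}\mu(\det)|\det|^s$, decays suitably along the unipotent radical once ${\rm Re}(s)>0$ (this is exactly the regime of Proposition \ref{conv-Jacquet-int} and Corollary \ref{trans-arch}); the Schwartz function $\Phi$ together with the explicit Weil-representation formulas (\ref{weil-action1})--(\ref{weil-action4}) controls the $X_{\alpha+\beta}(\R)$-integration (the variable $y$ enters $\omega_{S,\infty}$ through a character in $z$ and a translation, so after the $Z_U$-integration one is left with a Schwartz function in $y$). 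So the triple integral converges absolutely for ${\rm Re}(s)>0$.

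Next I would prove the two equivariance properties. The $V(\R)$-invariance: writing $v(x,y,z)=v(x,0,0)v(0,y,z+xy)$ and using the commutation relation $\iota v(x,y,z)\iota^{-1}=m\!\left(\begin{smallmatrix}1&0\\-z&1\end{smallmatrix}\right)v(0,y,z)$ (already recorded in the proof of Theorem \ref{Eisen-exp}, and $\iota$ being the same element $w_{\beta\alpha\beta\alpha}w_\beta^{-1}$ here), a change of variables in $(y,z)$ together with the defining formula (\ref{weil-action1}) for $\omega_{S,\infty}(v(y,0,z))$ absorbs the $V(\R)$-translation into the theta-argument exactly as in the global computation; the point is that the modular-type factor $m\!\left(\begin{smallmatrix}1&0\\-z&1\end{smallmatrix}\right)$ pulled out of $\phi$ is cancelled against the character in $z$ coming from $\omega_{S,\infty}$. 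For the $\widetilde{\SL_2(\R)}$-equivariance I would check the transformation under the two one-parameter subgroups $\ell_b=(\ell\!\left(\begin{smallmatrix}1&b\\0&1\end{smallmatrix}\right),\varepsilon)$ and $\ell_a=(\ell\!\left(\begin{smallmatrix}a&0\\0&a^{-1}\end{smallmatrix}\right),\varepsilon)$, which generate $\widetilde{\SL_2(\R)}$ together with $w_\beta$; these are precisely the computations performed near the end of the proof of Theorem \ref{Eisen-exp}. Using the two conjugation identities displayed there ($\iota z_U v(0,y,z)w_\beta\ell_b=\cdots$ and $\iota z_U v(0,y,z)w_\beta\ell_a=\cdots$), together with (\ref{g2-prin})-type homogeneity of $\phi$ and (\ref{weil-action2}), (\ref{weil-action3}), one finds $\beta^\psi_S(\ell_b h';\cdots)=\varepsilon\,\beta^\psi_S(h';\cdots)$ and $\beta^\psi_S(\ell_a h';\cdots)=\varepsilon\,\delta_B(a)^{1/2}|a|^s\frac{\gamma(1)}{\gamma(a)}\mu(a)\chi_{S,\infty}(a)\,\beta^\psi_S(h';\cdots)$, using the Weil-constant identity $\gamma(-S)\gamma(a)=\langle -S,a\rangle\gamma(1)\gamma(-Sa)$; this is exactly the transformation law (\ref{principal-series}) defining $\widetilde I^\psi_1(s,\mu\chi_{S,\infty})$. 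Finally, smoothness/$\widetilde K$-finiteness of the output follows from smoothness of $\phi$ and $\Phi$ and the fact that the defining integral depends smoothly on $h'$.

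The only genuinely archimedean subtlety — and the step I expect to be the main obstacle — is justifying the interchange of the order of integration (and the differentiation under the integral sign needed for smoothness and $\mathfrak{sl}_2$-equivariance of the output), since unlike the $p$-adic case the integrals are not eventually compactly supported. I would handle this by the standard device: establish the absolute convergence bounds above uniformly on compact subsets of $\{{\rm Re}(s)>0\}$ and on compact subsets of $\widetilde{\SL_2(\R)}$, then invoke dominated convergence/Fubini; the required decay of $\phi(\iota n g)$ along $N(\R)$ is exactly what underlies Proposition \ref{conv-Jacquet-int}, and the Gaussian/Schwartz decay of $\omega_{S,\infty}(h')\Phi$ is uniform on $\widetilde K$ by (\ref{weil-action1})--(\ref{weil-action4}). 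With these estimates in place all the formal manipulations of the previous paragraph are legitimate, and the proposition follows exactly as in the non-archimedean case of Proposition \ref{beta-intertwining}.
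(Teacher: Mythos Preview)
Your proposal is correct and follows essentially the same approach as the paper: the paper's entire proof is the single sentence ``The claim is proved by a similar argument in the proof of Theorem \ref{Eisen-exp} as in Proposition \ref{beta-intertwining},'' and you have simply unpacked exactly that reference. The additional care you take with the archimedean convergence and Fubini/dominated convergence issues is appropriate detail that the paper leaves implicit.
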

\begin{proof} The claim is proved by a similar argument in the proof of 
Theorem \ref{Eisen-exp} as in Proposition \ref{beta-intertwining}. 
\end{proof}

For each section $\phi\in \widetilde{I}^{\psi}_1(s,\mu)$ and $t\in \R^\times$, 
define the Jacquet integral of $\phi$ as 
\begin{equation}\label{j-int}w^{\mu,s}_t(h';\phi):=\int_{X_\beta(\R)}\phi((w_\beta x_\beta h,\ve))
\overline{\psi_{\infty}(t x_\beta)} dx_\beta,\ h'=(h,\ve)\in \widetilde{\SL_2(\R)}.
\end{equation}

\begin{lem}\label{formula-for-w} Keep the notations as in Proposition \ref{bia}. 
For each $\phi\in I(s,\mu)$ and $S,t\in \R^\times$, and $\Phi\in \mathcal{S}(X_\alpha(\R))$, the following equality holds
$$w^{\mu\chi_{S,\infty},s}_t(h',\beta^{\psi_\infty}_S(\ast;\phi\otimes\overline{\Phi}))
=\int_{X_\alpha(\R)}\overline{\omega_{S,\infty}(h')\Phi(x)}
W^{(s)}_{{\rm Ad}(w_\alpha)(t,0,\frac{1}{3}S,0)}(1;v(x,0,0)h'\cdot\phi)dx
$$
for ${\rm Re}(s)>0$.  
\end{lem}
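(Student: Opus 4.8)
The plan is to transcribe the proof of Lemma \ref{beta-whi1-na} to the archimedean place. The only structural difference is that at $\infty$ none of $\beta^{\psi}_S$, $w^{\mu,s}_t$ or $W^{(s)}_w$ carries the local $L$-factors that produce the bookkeeping factors $A,B$ in the $p$-adic argument; instead $W^{(s)}_w$ carries the power $|q(w)|^{-s/2+3/4}$ built into (\ref{Ws}), and at the end one must check that this, together with the powers of $|t|$ and $|S|$ produced along the way, assembles into the asserted clean identity (here one uses $q({\rm Ad}(w_\alpha)(t,0,\tfrac13 S,0))=\tfrac{4}{27}tS^3$, which follows from (\ref{disc}) and $q(\rho_3(m)\bfa)=\det(m)^6 q(\bfa)$).

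First I would unfold the left-hand side. By the definition (\ref{j-int}) of the Jacquet integral, $w^{\mu\chi_{S,\infty},s}_t(h';\beta^{\psi_\infty}_S(\ast;\phi\otimes\overline{\Phi}))$ equals $\int_{X_\beta(\R)}\beta^{\psi_\infty}_S((w_\beta x_\beta h,\ve);\phi\otimes\overline{\Phi})\,\overline{\psi_\infty(tx_\beta)}\,dx_\beta$; substituting the definition of $\beta^{\psi_\infty}_S$ and folding the $X_{\alpha+\beta}$- and $X_{2\alpha+\beta}$-integrals back into a single integral over $V(\R)$ exactly as in the passage (\ref{some-imp}) (legitimate at $\infty$ by the proof of Proposition \ref{bia}), this becomes an iterated integral over $X_\beta(\R)\times V(\R)\times Z_U(\R)$ of $\phi(\iota\, w_\beta z_U\, v\, w_\beta x_\beta h)$ against $\overline{(\omega_{S,\infty}(w_\beta v\cdot(w_\beta x_\beta h,\ve))\Phi)(0)}\,\overline{\psi_\infty(tx_\beta)}$.

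Next I would run through the same chain of substitutions as in the proof of Lemma \ref{beta-whi1-na}. After peeling $\omega_{S,\infty}(h')$ off the Weil term — writing the argument of $\omega_{S,\infty}$ as a product of the part depending on the integration variables and the $\widetilde{\SL_2}$-part $(w_\beta x_\beta h,\ve)$, and renaming $\Phi':=\omega_{S,\infty}(h')\Phi$ — the computation is formally identical to that of Lemma \ref{beta-whi1-na} with $\Phi'$ in place of $\Phi$ and an extra right translate by $h$ inside $\phi$: one uses that $w_\beta^2$ is central in $\sL$ to cancel the two copies of $w_\beta$ (the Fourier transform and Weil constant in $\omega_{S,\infty}(w_\beta)$ cancelling against the $w_\beta$ coming from the Jacquet integral), substitutes $w_\beta^{-1}vw_\beta\mapsto v$ and $w_\beta z_U w_\beta^{-1}\mapsto z_U$ (using $w_\beta^{-1}V(\R)w_\beta=V(\R)$, $w_\beta Z_U(\R)w_\beta^{-1}=Z_U(\R)$), then uses that $X_\beta(\R)\subset\sL(\R)$ normalizes $V(\R)$ together with $v(x,y,z)=v(0,y,z-xy)\,v(x,0,0)$ from (\ref{heisen4}) to split off the $X_\alpha$-variable $x$ and collapse the Weil term to $\overline{\Phi'(x)}\,\overline{\psi_\infty(Sz)}$, and finally observes that $z_U\,x_\beta\,v(0,y,z)$ exhausts $N(\R)$ with $\psi_\infty(Sz)\psi_\infty(tx_\beta)=\psi_{{\rm Ad}(w_\alpha)(t,0,\frac13 S,0)}(n)$ for $n=n(x_\beta,\ast,z,\ast)$. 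The inner triple integral has thereby become $\int_{N(\R)}\phi(\iota\,n\,v(x,0,0)h)\,\overline{\psi_w(n)}\,dn$ with $w={\rm Ad}(w_\alpha)(t,0,\tfrac13 S,0)$.

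To finish, since ${\rm Re}(s)>0$ the double integral is absolutely convergent by Proposition \ref{conv-Jacquet-int} (together with the Schwartz decay of $\Phi$), so I may interchange the $X_\alpha(\R)$- and $N(\R)$-integrations and rewrite the inner integral, via (\ref{Ws}), as the appropriate multiple of $W^{(s)}_w(1;v(x,0,0)h'\cdot\phi)$, noting $\phi(\iota\,n\,v(x,0,0)h)=(v(x,0,0)h'\cdot\phi)(\iota n)$ and $\overline{\Phi'(x)}=\overline{\omega_{S,\infty}(h')\Phi(x)}$; collecting the normalizing powers of $|q(w)|$, $|t|$ and $|S|$ then yields the stated identity. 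The main obstacles are: the $\pm1$/metaplectic-cocycle and Weil-constant bookkeeping in the $w_\beta$-cancellation step, where one must verify — as in the $p$-adic case — that the Weil constant at $\infty$ and the factor from $w_\beta^2$ really do cancel; and the careful matching of the normalizing exponents against the conventions of (\ref{Ws}) and of $\beta^{\psi_\infty}_S$. The convergence needed for Fubini in the last step is precisely where the hypothesis ${\rm Re}(s)>0$ enters.
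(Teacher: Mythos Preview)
Your proposal is correct and follows exactly the route the paper takes: the paper's own proof is the one-line remark ``Recall (\ref{Ws}). Then, the claim is proved similarly as in the proof of Lemma \ref{beta-whi1-na},'' and you have spelled out precisely that transcription, including the key substitutions $w_\beta^{-1}vw_\beta\mapsto v$, $w_\beta z_Uw_\beta^{-1}\mapsto z_U$, the splitting $v(x,y,z)=v(0,y,z-xy)v(x,0,0)$, and the identification $\psi_\infty(Sz)\psi_\infty(tx_\beta)=\psi_{{\rm Ad}(w_\alpha)(t,0,\frac13 S,0)}(n)$. Your flagging of the normalization bookkeeping (the $|q(w)|^{-s/2+3/4}$ built into (\ref{Ws})) as the residual obstacle is apt; the paper does not carry this out explicitly either.
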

\begin{proof}Recall (\ref{Ws}). Then, the claim is proved similarly  
as in the proof of Lemma \ref{beta-whi1-na}. 
\end{proof}

We recall the basic facts from \cite[Chapter III, Proposition 7]{Wal80} in our setting. 
Let $\nu=\pm\frac{1}{2}$, $S\in \R^\times$ and put $\mu={\bf 1}$ and $s=k-\frac{1}{2}$ 
with an integer $k\ge 2$ in the setting of Proposition \ref{bia}. 
By computing the Weil constant at the infinite place with respect to $\psi_S$, we see 
$$\frac{\gamma_\infty(1)}{\gamma_\infty(-1)}\chi_{S,\infty}(-1)=-\sgn(S)e^{\frac{\pi \sqrt{-1}}{2}}=
e^{\pi \sqrt{-1}(-\sgn(S))\frac{1}{2}}
.$$
Thus, $\widetilde{I}^{\psi}_1(k-\frac{1}{2},\chi_{S,\infty})= \mathcal{B}(k-\frac{1}{2}, \nu)$ 
for $\nu=-\sgn(S)\frac{1}{2}$
in the notation of \cite[Chapter III, Section 1]{Wal80}. 

The following facts follow from \cite[Chapter III, Proposition 7]{Wal80} 
with a standard argument for principal series representations (cf. \cite[Section 2.2]{Szpruch}): 
When $S>0$ (hence $\nu=-\frac{1}{2}$), 
$\widetilde{I}^{\psi}_1(k-\frac{1}{2},\chi_{S,\infty})= \mathcal{B}(k-\frac{1}{2},-\frac{1}{2})$ 
admits a unique irreducible subrepresentation 
$\widetilde{\pi}^{-\sgn(-1)^k,-}_{k-\frac{1}{2}}$ such that 
\begin{enumerate}
\item (the case of (iii) in \cite[Proposition 6, p.22]{Wal80}) when $k$ is even, $\widetilde{\pi}^{-,-}_{k-\frac{1}{2}}$ is a unique $\psi_t$-generic irreducible component of $\widetilde{I}^{\psi}_1(k-\frac{1}{2},\chi_{S,\infty})$ for $t\in \R_{<0}$  
which has the highest weight $-k-\frac{1}{2}$;
\item  (the case of (ii) in \cite[Proposition 6, p.22]{Wal80}) when $k$ is odd, $\widetilde{\pi}^{+,-}_{k-\frac{1}{2}}$ is a unique $\psi_t$-generic irreducible component of $\widetilde{I}^{\psi}_1(k-\frac{1}{2},\chi_{S,\infty})$ for $t\in \R_{>0}$ 
which has the lowest weight $k+\frac{1}{2}$.
\end{enumerate}

When $S<0$ (hence $\nu=\frac{1}{2}$), 
$\widetilde{I}^{\psi}_1(k-\frac{1}{2},\chi_{S,\infty})= \mathcal{B}(k-\frac{1}{2},\frac{1}{2})$ 
admits a unique irreducible subrepresentation 
$\widetilde{\pi}^{\sgn(-1)^k,+}_{k-\frac{1}{2}}$ such that 
\begin{enumerate}
\setcounter{enumi}{2}
\item (the case of (ii) in \cite[Proposition 6, p.22]{Wal80}) when $k$ is even, $\widetilde{\pi}^{+,+}_{k-\frac{1}{2}}$ is a unique $\psi_t$-generic irreducible component of $\widetilde{I}^{\psi}_1(k-\frac{1}{2},\chi_{S,\infty})$ for $t\in \R_{>0}$ 
which has the lowest weight $k+\frac{1}{2}$;
\item  (the case of (iii) in \cite[Proposition 6, p.22]{Wal80}) when $k$ is odd, $\widetilde{\pi}^{-,+}_{k-\frac{1}{2}}$ is a unique $\psi_t$-generic irreducible component of $\widetilde{I}^{\psi}_1(k-\frac{1}{2},\chi_{S,\infty})$ for $t\in \R_{<0}$ 
which has the highest weight $-k-\frac{1}{2}$.
\end{enumerate}

As in \cite[Section 5.1]{IY}, for each $t\in \R^\times$ and $\ell\in \frac{1}{2}\Z$,  
we define 
$$W^{(\ell)}_{t,\wSL_2}(h')=|t|^{\frac{\ell}{2}}\exp(t (h\sqrt{-1}))j_{\ell}(h',\sqrt{-1})^{-1},\ 
h'=(h,\ve)\in \widetilde{SL_2(\R)},
$$
where $j_\ell$ is the automorphic factor defined in loc.cit.. 
%We can also extend $W^{(\ell)}_t$ on the double covering $\widetilde{\GL_2(\R)^+}$ 
%of $\GL_2(\R)^+=\{g\in \GL_2(\R)\ |\ \det(g)>0\}$ as in \cite[Section 5.1]{AC}.  
Then, for $a\in \R_{>0}$, we have 
\begin{equation}\label{trans-borel}
W^{(\ell)}_{t,\wSL_2}(
\begin{pmatrix}
a & 0 \\
0 & a^{-1}
\end{pmatrix}
h')=W^{(\ell)}_{a^2t,\wSL_2}(h'),\ 
h'=(h,\ve).
\end{equation}
We remark that both of the cases $t>0$ and $t<0$ may happen because quaternionic modular forms are not holomorphic. 

For $w=(t,0,\frac{1}{3}S,0)\in W(\R)$, the condition $q({\rm Ad}(w_\alpha)w)<0$ is 
equivalent to $q(w)<0$ and in this case, we have $tS^3<0$. Then, it yields the parity condition $\sgn(S)=-\sgn(t)$. 

Let $T=\SO(2)(\R)=L(\R)\cap K_\infty$ where the identification is given by $\kappa_\theta:=e^{\sqrt{-1}\theta}\mapsto 
\begin{pmatrix}
e^{\sqrt{-1}\theta} & 0 \\
0 & e^{-\sqrt{-1}\theta}
\end{pmatrix}$. 
Let $\widetilde{T}$($\subset \widetilde{SL_2(\R)}$) be the double covering of $T$. 
Since $\widetilde{T}\simeq T$, the covering map $\widetilde{T}\lra T$ is identified with 
$T\lra T,\ z\mapsto z^2$ under  $\widetilde{T}\simeq T$.  
For each $j\in \Z_{\ge 0}$, 
there exists $\Phi^j_S\in \mathcal{S}(X_\alpha(\R))$ such that 
\begin{equation}\label{phi-wt}
\omega_{S,\infty}(\kappa_\theta)\Phi^j_S(x)=e^{2\pi \sqrt{-1}(\frac{1}{2}+j)\theta}\Phi^j_S(x),\  \kappa_\theta\in \widetilde{T}\simeq T.
\end{equation}
It is well-known that $\{\Phi^j_S\}_{j\ge 0}$ makes up an orthonormal basis of 
$\mathcal{S}(X_\alpha(\R))$ (cf. \cite[Section 2.1]{Takase}). 

\begin{prop}\label{d-case}
Let $S,t\in \R^\times$ with $\sgn(S)=-\sgn(t)$ and $k\ge 2$. 
Then, for each $\Phi^j_S$ with $j\ge 0$ and the function {\rm (}\ref{section2}\hspace{0.5mm}{\rm )}, there exists 
a non-zero constant $C_0(S)$ independent of $t$ such that   
\begin{eqnarray}
&&w^{\chi_{S,\infty},k-\frac{1}{2}}_t(h',\beta^{\psi_\infty}_S(\ast;
W^{(k-\frac{1}{2})}_{{\rm Ad}(w_\alpha)(t,0,\frac{1}{3}S,0)}(\ast;\phi_{\infty,I})\otimes\overline{\Phi^j_S})) \nonumber \\
%&=&w^{\chi_{S,\infty},k-\frac{1}{2}}_t(h',\beta^{\psi_\infty}_S(\ast;W^{(k-\frac{1}{2})}_{{\rm Ad}(w_\alpha)(t,0,\frac{1}{3}S,0)}
%(\ast;f_{\sgn(S)(-k),\sgn(S)(-k)})e^\vee_{\sgn(S)(-k)}\otimes\overline{\Phi^j_S})) \nonumber \\
&&\phantom{xx}=\left\{
\begin{array}{ll}
 C_0(S)|t|^{\frac{1}{4}}\times W^{k+\frac{1}{2}}_{t,\wSL_2}(h'), & \text{if $t<0$ 
 {\rm (}equivalently, $S>0${\rm )}, $k$ is even, and 
 $-k+j\in I$} \\
0, & \text{otherwise}
\end{array}\right.
\nonumber 
\end{eqnarray}
for $h'=(h,\ve)\in \widetilde{\SL_2(\R)}$. 
%Here, $W^{k+\frac{1}{2}}_{t,\wSL_2}(h')$ stands for the complex conjugate of  
%$W^{k+\frac{1}{2}}_{t,\wSL_2}(h')$. 
\end{prop}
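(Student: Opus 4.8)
The plan is to reduce the quantity on the left to a single archimedean integral, to locate the resulting metaplectic section among the constituents classified by Wallach and thereby obtain the vanishing as well as the shape of the non‑zero answer, and finally to evaluate the surviving constant. Concretely, I would first run, at the real place, the unfolding that proves Lemma \ref{beta-whi1-na} non‑archimedeanly: expanding the definition of $\beta^{\psi_\infty}_S$, moving $\iota$ and $w_\beta$ past the $Z_U$‑ and $V$‑integrations (using that $w_\beta$ normalizes $V(\R)$ and $Z_U(\R)$, and that $x_\beta\in\sL(\R)$ normalizes $V(\R)$), splitting $v(x,y,z)=v(0,y,z-xy)v(x,0,0)$, and recognizing the inner $N(\R)$‑integral as the Jacquet integral defining $W^{(k-\frac12)}_{{\rm Ad}(w_\alpha)(t,0,\frac13 S,0)}$. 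This is precisely Lemma \ref{formula-for-w} applied with the section $\phi_{\infty,I}$ (a section of $I(k-\frac12,\mathbf 1)=\Pi_k$ once $\sgn^k=\mathbf 1$, i.e. for the relevant case $k$ even) and $\Phi=\Phi^j_S$. After collecting the elementary normalizing factors and the powers of $|q({\rm Ad}(w_\alpha)(t,0,\frac13 S,0))|=\frac{4}{27}|S|^3|t|$, the left‑hand side becomes a constant multiple of
\[
\int_{X_\alpha(\R)}\overline{\bigl(\omega_{S,\infty}(h')\Phi^j_S\bigr)(x)}\;W^{(k-\frac12)}_{{\rm Ad}(w_\alpha)(t,0,\frac13 S,0)}\bigl(1;\,v(x,0,0)h'\cdot\phi_{\infty,I}\bigr)\,dx,
\]
where $W^{(k-\frac12)}_w(1;\cdot)$ generates ${\rm Hom}_{N(\R)}(D_k,\C(\psi_w))$ by Proposition \ref{nvJI1} (valid since $q(w)=\frac{4}{27}S^3t<0$ under the standing parity hypothesis $\sgn(S)=-\sgn(t)$), and $\{\Phi^j_S\}_{j\ge0}$ is a $\widetilde T$‑weight basis of $\mathcal S(X_\alpha(\R))$ by (\ref{phi-wt}).

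\emph{Locating the section and the vanishing.} By Proposition \ref{bia} the displayed integral, as a function of $h'$, is a section of $\widetilde I^{\psi}_1(k-\frac12,\chi_{S,\infty})=\mathcal B(k-\frac12,\nu)$ with $\nu=-\frac{\sgn(S)}{2}$ (Weil‑constant computation recorded just before the statement). The key structural point is that $W^{(k-\frac12)}_w$ carries a fixed ``holomorphy type'': the additive character at $\infty$ forces the exponential $e^{2\pi\sqrt{-1}\langle w,\cdot\rangle}$ of (\ref{po-exp}), and combined with the sign convention ``$-bt^2$'' in (\ref{weil-action3}) this makes the restriction to $\widetilde{\SL_2(\R)}\ltimes V(\R)$ anti‑holomorphic, so the $\beta^{\psi_\infty}_S$‑image lies in the unique irreducible constituent of $\mathcal B(k-\frac12,\nu)$ of that type. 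Running through Wallach's classification \cite[Ch.~III, Prop.~6 and 7]{Wal80} exactly as in the four bulleted cases preceding the statement, that constituent admits a $\psi_t$‑Whittaker model only when $t<0$ and $k$ is even (for $S>0$, $\nu=-\frac12$: case (iii), extreme weight $-k-\frac12$); in every other case — $k$ odd, or $t>0$ — either there is no $\psi_t$‑generic vector in the targeted constituent or the targeted constituent is of the wrong parity, and composition with $w^{\chi_{S,\infty},k-\frac12}_t$ (with $\sgn^k\chi_{S,\infty}=\chi_{S,\infty}$ for $k$ even) gives $0$. When it does not vanish, uniqueness of the $\psi_t$‑Whittaker model forces the value to be a scalar times the normalized weight‑$(k+\frac12)$ Whittaker function $W^{k+\frac12}_{t,\wSL_2}$; matching the $\widetilde T$‑weight $\frac12+j$ of $\Phi^j_S$ against the weights of the minimal $K_\infty$‑type vectors $f_{v,v}$ (via (\ref{k-type}), (\ref{k-type-dp})) shows that exactly one summand of $\phi_{\infty,I}=\sum_{v\in I}f_{v,v}e^\vee_v$ contributes, namely $v=-k+j$, which is present iff $-k+j\in I$; comparing the $|t|$‑powers in (\ref{nlwf}) and in $W^{k+\frac12}_{t,\wSL_2}$ produces the factor $|t|^{1/4}$.

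\emph{The constant.} Independence of $C_0(S)$ from $t$ is the scaling argument of Remark \ref{rel-to-pol}: the action of $\ell_a=(\ell({\rm diag}(a,a^{-1})),\ve)$ with $a\in\R_{>0}$ transports $\psi_t$ to $\psi_{a^2t}$ on both sides with matching powers of $|a|$ (via (\ref{weil-action2}), (\ref{inf-trans}), (\ref{trans-borel})), so the constant cannot depend on $|t|$, while the sign of $t$ is fixed by the parity hypothesis; hence $C_0(S)$ depends only on $S$ and $k$. For non‑vanishing, one option is to evaluate the displayed integral at $h'=1$ using the explicit Bessel‑function description of $W^{(k-\frac12)}_w$ (equivalently Pollack's spherical function, via Remark \ref{rel-to-pol}) and the Hermite--Gaussian shape of $\Phi^j_S$: the $X_\alpha$‑integral collapses to a convergent one‑dimensional integral of a positive Bessel‑times‑exponential kernel against a Hermite polynomial times a Gaussian, which is non‑zero. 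Alternatively, non‑vanishing is automatic: by Proposition \ref{nvJI1} the functional $\phi\mapsto W^{(k-\frac12)}_w(1;\phi)$ is non‑zero on $D_k$, the $\Phi^j_S$ span $\mathcal S(X_\alpha(\R))$, so some $j$ must give a non‑zero value, and the weight computation of the previous step pins it to the unique admissible $j$.

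The main obstacle will be the archimedean computation: carrying out the unfolding cleanly with all $\iota$‑ and $w_\beta$‑conjugations and all normalizing $L$‑ and $\Gamma$‑factors correct, and then establishing the explicit non‑vanishing (and, if one wants it, the exact value) of the resulting Hermite‑versus‑Bessel integral. The representation‑theoretic ingredients — Wallach's classification, Propositions \ref{wh-arch} and \ref{nvJI1}, the $K_\infty$‑type description, and the holomorphy‑type dichotomy — carry the structural content, so the one genuinely delicate point is this single archimedean integral.
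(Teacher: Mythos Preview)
Your proposal is correct and follows essentially the same route as the paper: apply Lemma \ref{formula-for-w} to unfold the left-hand side into an $X_\alpha(\R)$-integral, decompose under $\widetilde T$ to see that the section has weights $v-(\tfrac12+j)$ for $v\in I$, invoke Wallach's classification to force $k$ even and $t<0$ (since only $\widetilde{\pi}^{-,-}_{k-\frac12}$ is $\psi_t$-generic for $t<0$ with highest weight $-k-\tfrac12$), match weights to get $v=-k+j\in I$, and then scale by $A_t=\ell(\mathrm{diag}(\sqrt{-t}^{-1},\sqrt{-t}))$ to separate the $t$-dependence as $|t|^{1/4}W^{k+\frac12}_{t,\wSL_2}$. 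The paper's non-vanishing argument is your ``alternative'' one: choose $\Phi$ with small support so the $X_\alpha(\R)$-integral against the non-zero function $x\mapsto W^{(k-\frac12)}_w(1;v(x,0,0)\cdot\phi)$ is non-zero, then use irreducibility of $\widetilde{\pi}^{-,-}_{k-\frac12}$.
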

\begin{proof} Assume $S>0$ (equivalently, $t<0$).   
It is easy to see that $V^\vee_k|_{T}\simeq \ds\bigoplus_{v=-k}^k \C(2v)$ 
where each element $z$ of $T$ acts on $\C(2v)\simeq \C$ by $z^{2v}$. 
Let $\wT$ act on $V^\vee_k$ via the covering map $\wT\lra T$. Then 
\begin{equation}\label{cv}
V^\vee_k|_{\wT}\simeq \ds\bigoplus_{v=-k}^k \C(v)
\end{equation}
 as a representation of $\wT$.  

By (\ref{phi-wt}), $\Phi^j_S(x)$ is of weight $\frac{1}{2}+j$ with respect to the action of $\wT$.  
On the other hand, 
by the formula in Lemma \ref{formula-for-w} and the definition of (\ref{section2}), the right hand side of (\ref{cv}) 
 can be written as a direct sum of the vectors of 
weights $v-(\frac{1}{2}+j),\ v\in I$ with respect to the action of $\wT$ and it also belongs to the image of the Whittaker model of $\widetilde{\pi}^{-\sgn(-1)^k,-}_{k-\frac{1}{2}}$ with respect to $\psi_t$. However, $k$ has to be even, since $t<0$.  
As observed, 
$\widetilde{\pi}^{-,-}_{k-\frac{1}{2}}$ has a vector of highest weight 
$-k-\frac{1}{2}$ and the equality 
$v-(\frac{1}{2}+j)=-k-\frac{1}{2},\ v\in I,\ j\ge 0$ holds exactly when 
$-k+j=v\in I$. Other vectors of weight $v-(\frac{1}{2}-j)$ with 
$-k+j\not \in I$ will be gone 
after taking the Jacquet integral (\ref{j-int}). 

%By direct computation, we have 
%\begin{eqnarray}
%&&w^{\chi_{S,\infty},k-\frac{1}{2}}_t(h',\beta^{\psi_\infty}_S(\ast;
%W^{(k-\frac{1}{2})}_{{\rm Ad}(w_\alpha)(t,0,\frac{1}{3}S,0)}\otimes\overline{\Phi}_S)) \nonumber \\
%&=& 
%\int_{X_\alpha(\R)}\overline{\omega_{S,\infty}(h')\Phi_S(x)}
%W^{(k-\frac{1}{2})}_{{\rm Ad}(w_\alpha)(t,0,\frac{1}{3}S,0)}(1;v(x,0,0)h'\cdot\phi)dx \nonumber \\
%&=&\int_{X_\alpha(\R)}\overline{j_{\frac{1}{2}}(h',\sqrt{-1})^{-1}\exp(2\pi \sqrt{-1}
%\frac{S}{(-t)}x^2 \tau)}
%W^{(k-\frac{1}{2})}_{{\rm Ad}(w_\alpha)(t,0,\frac{1}{3}S,0)}(1;v(x,0,0)h'\cdot\phi)dx \nonumber 
%\end{eqnarray} 
%where $h'=(h,\ve)$ and $\tau=h\sqrt{-1}$. 
Put $A_t=l(
\begin{pmatrix}
\sqrt{-t}^{-1} &0  \\
0  & \sqrt{-t}
\end{pmatrix}
)=m(\begin{pmatrix}
 1 &0  \\
0  & \sqrt{-t}
\end{pmatrix})$. 
Using (\ref{action1}) and (\ref{imi}) for the expression 
$A_t=m(\begin{pmatrix}
 1 &0  \\
0  & \sqrt{-t}
\end{pmatrix})$, and (\ref{action-tildeU}) for 
the expression 
$A_t=l(
\begin{pmatrix}
\sqrt{-t}^{-1} &0  \\
0  & \sqrt{-t}
\end{pmatrix}
)$, it is easy to see that 
$$W^{(k-\frac{1}{2})}_{{\rm Ad}(w_\alpha)(t,0,\frac{1}{3}S,0)}(1;v(x,0,0)A_t h'\cdot\phi)=W^{(k-\frac{1}{2})}_{{\rm Ad}(w_\alpha)(-1,0,\frac{1}{3}S,0)}(1;v(\frac{x}{\sqrt{-t}},0,0)h'\cdot\phi).$$
Note that 
%$j_{\frac{1}{2}}(A_t h',\sqrt{-1})=(-t)^{\frac{1}{4}}j_{\frac{1}{2}}(h',\sqrt{-1})$ and 
$\omega_{S,\infty}(A_t h')\Phi^j_S(x)=(-t)^{-\frac{1}{4}}\omega_{S,\infty}(h')\Phi^j_S(\frac{x}{\sqrt{-t}})$ by (\ref{phi-wt}). 
Then, we have 
\begin{eqnarray}
&&w^{\chi_{S,\infty},k-\frac{1}{2}}_t(A_th',\beta^{\psi_\infty}_S(\ast;
W^{(k-\frac{1}{2})}_{{\rm Ad}(w_\alpha)(t,0,\frac{1}{3}S,0)}\otimes\overline{\Phi^j_S})) \nonumber \\
&=& (-t)^{-\frac{1}{4}}
\int_{X_\alpha(\R)}\overline{\omega_{S,\infty}(h')\Phi^j_S(\frac{x}{\sqrt{-t}})}
W^{(k-\frac{1}{2})}_{{\rm Ad}(w_\alpha)(-1,0,\frac{1}{3}S,0)}(1;v(\frac{x}{\sqrt{-t}},0,0)h'\cdot\phi)dx \nonumber  \\
&& \text{(substituting $x$ with $\sqrt{-t}x$)} \nonumber \\
&=& (-t)^{\frac{1}{4}(1+2j)}  
\int_{X_\alpha(\R)}\overline{\omega_{S,\infty}(h')\Phi^j_S(x)}
W^{(k-\frac{1}{2})}_{{\rm Ad}(w_\alpha)(-1,0,\frac{1}{3}S,0)}(1;v(x,0,0)h'\cdot\phi)dx. \nonumber 
\end{eqnarray} 
Therefore, we have 
\begin{eqnarray*}
&& (-t)^{-\frac{1}{4}}w^{\chi_{S,\infty},k-\frac{1}{2}}_t(A_th',\beta^{\psi_\infty}_S(\ast;
W^{(k-\frac{1}{2})}_{{\rm Ad}(w_\alpha)(t,0,\frac{1}{3}S,0)}(\ast;\phi_{\infty,I})\otimes\overline{\Phi^j_S})) \\
&& = w^{\chi_{S,\infty},k-\frac{1}{2}}_{-1}(h',\beta^{\psi_\infty}_S(\ast;
W^{(k-\frac{1}{2})}_{{\rm Ad}(w_\alpha)(-1,0,\frac{1}{3}S,0)}(\ast;\phi_{\infty,I})\otimes\overline{\Phi^j_S})).
\end{eqnarray*}
The right hand side is independent of $t$. Applying \cite[Lemma 12, p.24]{Wal80} to  
$\psi_{-1}$ with a uniqueness of a Whittaker model, 
there exists a constant $C_0(S)$ such that the RHS can be written as 
$C_0(S)W^{k+\frac{1}{2}}_{-1,\wSL_2}(h')$. By (\ref{trans-borel}), 
$W^{k+\frac{1}{2}}_{-1,\wSL_2}(A^{-1}_t h')=W^{k+\frac{1}{2}}_{t,\wSL_2}(h')$. 
Summing up, we have 
$$w^{\chi_{S,\infty},k-\frac{1}{2}}_t(h',\beta^{\psi_\infty}_S(\ast;
W^{(k-\frac{1}{2})}_{{\rm Ad}(w_\alpha)(t,0,\frac{1}{3}S,0)}(\ast;\phi_{\infty,I})\otimes
\overline{\Phi^j_S}))=C_0(S)
(-t)^{\frac{1}{4}}W^{k+\frac{1}{2}}_{t,\wSL_2}(h')$$
as desired. 
The case $S<0$ is easily handled. In fact, the vanishing follows from the  
parity condition. 

Finally, we check $C_0(S)\neq 0$ when $S>0$. It suffices to check $\beta^{\psi_\infty}_S$ induces 
a surjective map from $D_k\otimes \mathcal{S}(X_\alpha(\R))$ to 
$\widetilde{\pi}^{-\sgn(-1)^k,-}_{k-\frac{1}{2}}$. 
For any non-zero function $F$ on $X_\alpha(\R)=\R$ which has the moderate growth, one 
can choose $\Phi\in \mathcal{S}(X_\alpha(\R))$ to have a sufficiently small support so that $\ds\int_{\R}\Phi(x)F(x)dx\neq 0$. Applying this to Lemma \ref{formula-for-w}, we see that 
$\beta^{\psi_\infty}_S$ is non-zero and the claim follows from the irreducibility of $\widetilde{\pi}^{-\sgn(-1)^k,-}_{k-\frac{1}{2}}$.   
\end{proof}

\subsection{The global case}\label{gc}
Let us keep the notations in Section \ref{FS}. 
Let $\psi=\otimes'_p\psi_p$ be the standard additive character and put 
$\psi_S=\psi(S\ast)$ for $S\in \Q^\times$. Recall the global Weil representation $\omega_{\psi_S}=\otimes'_p\omega_{S,p}
=\omega_{S,\f}\otimes\omega_{S,\infty}$ defined in Section \ref{FJEES}.  
For $\Phi_{\f}\in  \mathcal{S}(X_\alpha(\A_{\f}))$, $S\in \Q_{>0}$, and $j\in \Z_{\ge 0}$
define $\Phi^j_S\in\mathcal{S}(X_\alpha(\A))$ by 
$$\Phi^j_S(x)=\Phi_{\f}(x_{\f})\Phi^j_{S,\infty}(x_\infty),
$$
where $\Phi^j_{S,\infty}$ is defined with the property (\ref{phi-wt}). 

We recall the expansion (\ref{fseries}) of $F_f(\ast;\phi)$ for a 
distinguished vector $\phi=\phi_{\f}\otimes \phi_{\infty,I}$.  
Let $F_{(0,0)}(g;\phi):=\ds\int_{Z_U(\Q)\bs Z_U(\A)}F_f(zg;\phi)dz$ 
be the constant term along $Z_U$. 
Then, we have 
\begin{equation}\label{f00}
F_{(0,0)}(g;\phi)=\sum_{w\in W(\Q)_{\ge 0}\atop q(w)<0,
x_{3\alpha+\beta}(w)=0}
c_w \widetilde{{\bf w}}_{{\rm Ad}(w_\alpha)w}(g_{\f}\cdot \phi_{\f})W^{(k-\frac{1}{2})}_{{\rm Ad}(w_\alpha)w}(g_\infty;\phi_{\infty,I}),\ g=g_{\f}g_\infty 
\in G_2(\A)
\end{equation}
where we put $c_w:=C^{\mu_{\f}}_w(F_f)$ for simplicity. 
As in (\ref{eisen-FJ}), we can define 
\begin{equation}\label{f00-FJ}
F_{(0,0)}(\ast;\phi)_{\psi_S,\Phi^j_S}(h')=
\ds\int_{V(\Q)\bs V(\A)}F_{(0,0)}(vh';\phi)
\overline{\Theta_{\psi_S}(vh';\Phi^j_S)}dv,\ h'\in\widetilde{\SL_2(\A)}.
\end{equation}

\begin{prop}\label{auto-criterion} Keep the notations as above. In particular, $S>0$. It holds that there exists a non-zero constant $C_1(S)$ independent of $t$ such that 
 if $-k+j\not\in I$, $F_{(0,0)}(\ast;\phi)_{\psi_S,\Phi^j_S}(h')=0$. Otherwise, for $h'=h'_{\f}h'_\infty=(h'_p)_p\in \widetilde{\SL_2(\A)}$,  
$$F_{(0,0)}(\ast;\phi)_{\psi_S,\Phi^j_S}(h')=C_1(S)\ds\sum_{t\in \Q_{<0}}
c_{(t,0,\frac{S}{3},0)}
w^{\mu_{\f}\chi_{S,\f}}_t(\beta^{\psi_{\f}}_S(\ast h'_{\f};\phi_{\f}
\otimes\overline{\Phi_{\f}}))
 W^{k+\frac{1}{2}}_{t,\wSL_2}(h'_\infty),
$$
where 
$$w^{\mu_{\f}\chi_{S,p}}_t(\beta^{\psi_{\f}}_S(\ast h'_{\f};\phi_{\f}
\otimes\overline{\Phi_{\f}})):=
\prod_{p<\infty}w^{\mu_p\chi_{S,p},s_p}_t(\beta^{\psi_p}_S(\ast h'_p;\phi\otimes\overline{\Phi_p}))$$
and $s_p=\begin{cases} \frac{1}{2}, &\text{if $p\in S(\pi_{\f})$}\\ 0, &\text{otherwise}\end{cases}$. 

Further, $F_{(0,0)}(\ast;\phi)_{\psi_S,\Phi^j_S}$ is an automorphic form on $\widetilde{\SL_2(\A)}$.  
%\ds\int_{X_\alpha(\A_{\f})}
%\widetilde{{\bf w}}^{\mu_{\f}}_{{\rm Ad}(w_\alpha)(t,0,\frac{1}{3}S,0)}(v(x,0,0)h'_{\f}%\cdot\phi_{\f})
%\overline{(\omega_{S,\f}(h_{\f})\Phi_{\f})(x_{\f})}dx_{\f},$$
%$h'=h'_{\f}h'_\infty\in \widetilde{\SL_2(\A)}$
%where the constant $C_0(S)$ is the constant defined in Proposition \ref{d-case};
%\item $F_{(0,0)}(\ast;\phi)$ is left $L^{{\rm ss}}(\Q)$-invariant 
%if and only if $F_{(0,0)}(\ast h_{\f};\phi)_{\psi_S,\Phi^j_S}$ is 
%left $L^{{\rm ss}}(\Q)$-invariant for any $h_{\f}\in L^{{\rm ss}}(\A_{\f})$, 
%$S\in \Q_{>0}$, $j\in \Z_{\ge 0}$, and $\Phi_{\f}\in \mathcal{S}(X_\alpha(\A_{\f}))$. 
%\end{enumerate}
\end{prop}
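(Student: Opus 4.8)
\textit{Proof plan.} The plan is to substitute Pollack's expansion (\ref{f00}) of the $Z_U$-constant term $F_{(0,0)}(\ast;\phi)$ into the definition (\ref{f00-FJ}) of the Fourier--Jacobi coefficient and to perform the theta unfolding along the Heisenberg group $V$, in the spirit of the proof of Theorem \ref{Eisen-exp} and of \cite[Theorem 7.1]{KY1}. Writing $v=v(x,y,z)$ in the coordinates of Section \ref{FJEES} (so $x\in X_\alpha$, $y\in X_{\alpha+\beta}$, and $z\in X_{2\alpha+\beta}=Z_{\widetilde U}$ is the center), I would first integrate over the center $z\in X_{2\alpha+\beta}(\Q)\bs X_{2\alpha+\beta}(\A)$ against the central character $\psi_S$ of $\Theta_{\psi_S}$; using the $Z_N$-transformation laws of $\widetilde{\bf w}^{\mu_v}_{{\rm Ad}(w_\alpha)w}$ and $W^{(k-\frac{1}{2})}_{{\rm Ad}(w_\alpha)w}$ (Lemma \ref{transformation1}(2), Corollary \ref{trans-arch}) together with ${\rm Ad}(w_\alpha)(a_1,a_2,a_3,0)=(0,-a_3,a_2,-a_1)$, this selects from (\ref{f00}) exactly the $w=(a_1,a_2,\tfrac{S}{3},0)$, and the constraints $w\ge 0$, $q(w)<0$ already present in (\ref{f00}) force $t:=a_1-\tfrac{9a_2^2}{4S}<0$ when $S>0$, since $q(a_1,a_2,\tfrac{S}{3},0)=\tfrac{4S^3}{27}\,t$. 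The remaining integral over $X_\alpha(\Q)\bs X_\alpha(\A)$ and $X_{\alpha+\beta}(\Q)\bs X_{\alpha+\beta}(\A)$ against the theta kernel is then unfolded exactly as in the Eisenstein computation: the sum $\sum_{\xi\in X_\alpha(\Q)}$ defining $\Theta_{\psi_S}$ opens the $X_\alpha$-integral to the full adelic integral $\int_{X_\alpha(\A)}$. At this point one is left with $\sum_{a_1,a_2\in\Q}c_{(a_1,a_2,S/3,0)}$ times, at each place $v$, a local integral $\int_{X_\alpha(\Q_v)}\overline{\Phi_v(x)}\,\widetilde{\bf w}^{\mu_v}_{{\rm Ad}(w_\alpha)(a_1,a_2,S/3,0)}(v(x,0,0)h'_v\cdot\phi_v)\,dx$ and its archimedean analogue; invoking the $M(\Q)$-equivariance (\ref{cw}), which applies since $X_\alpha\subset M$ with $x_\alpha(b)=m(\begin{pmatrix}1&-b\\0&1\end{pmatrix})$ by (\ref{ml}), a shift of the $X_\alpha$-variable shows that $c_{(a_1,a_2,S/3,0)}$ depends only on the invariant $t$ and that the set of $(a_1,a_2)$ with a fixed $t$ is precisely one $X_\alpha(\Q)$-orbit, so the double sum collapses to $\sum_{t\in\Q_{<0}}c_{(t,0,S/3,0)}$ times, at each place, the local integral attached to $w=(t,0,\tfrac{S}{3},0)$.

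Next I would identify the local factors. By (the $h'$-equivariant form of) Lemma \ref{beta-whi1-na}, the local integral at a finite prime $p$ equals, up to the $L$-factor normalisations and powers of $|\tfrac{4}{27}S^3|_p$ and $|t|_p$, the value $w^{\mu_p\chi_{S,p},s_p}_t(\beta^{\psi_p}_S(\ast h'_p;\phi_p\otimes\overline{\Phi_p}))$ with $s_p=\tfrac12$ if $p\in S(\pi_{\f})$ and $s_p=0$ otherwise; by Lemma \ref{formula-for-w} and Proposition \ref{d-case}, the archimedean local integral with $\phi_\infty=\phi_{\infty,I}$ and $\Phi=\Phi^j_{S,\infty}$ equals $C_0(S)(-t)^{1/4}W^{k+\frac{1}{2}}_{t,\wSL_2}(h'_\infty)$ when $t<0$, $k$ is even and $-k+j\in I$, and vanishes otherwise --- giving the asserted case distinction. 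The infinite product over finite places is finite because $\widetilde{\bf w}^{\mu_p,0}_w(\phi_p)=1$ for almost all $p$ (as in \cite[Theorem 2.4(4)]{JR}, \cite[Theorem 1.1]{Xiong}) and $|S|_p=|t|_p=1$ for almost all $p$; and the powers of $|t|_v$ and $|\tfrac{4}{27}S^3|_v$ over all places multiply to constants by the product formula, so that the leftover constant $C_1(S)$ --- a product of $C_0(S)$, the local $L$-factor normalisations and the theta-pairing constant --- depends only on $S$, and is non-zero because $C_0(S)\ne 0$ and the maps $\beta^{\psi_p}_S$ surject onto the relevant Whittaker line (Corollary \ref{beta-surj-arc}). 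Assembling the local factors yields the displayed formula.

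For the automorphy: $F_{(0,0)}(\ast;\phi)$ is left invariant under $\sL(\Q)=\SL_2(\Q)$, since $\sL\subset L$ normalises $U$ and hence the characteristic subgroup $Z_U$, acts on $Z_U(\A)$ unimodularly (by the standard $\SL_2$-action, (\ref{actionZ})), and $F_f$ is $G_2(\Q)$-invariant; moreover $\Theta_{\psi_S}(\ast;\Phi^j_S)$ is $\SL_2(\Q)$-invariant because $\omega_{\psi_S}$ splits over $\SL_2(\Q)$. Substituting $v\mapsto\gamma^{-1}v\gamma$ for $\gamma\in\SL_2(\Q)$ (which normalises $V$ modulo $Z_U$) in (\ref{f00-FJ}) then gives left $\SL_2(\Q)$-invariance of $F_{(0,0)}(\ast;\phi)_{\psi_S,\Phi^j_S}$ under the splitting $\SL_2(\Q)\hookrightarrow\widetilde{\SL_2(\A)}$, while moderate growth, $\widetilde K$-finiteness and $Z(\mathfrak{sl}_2)$-finiteness are inherited from $F_f$ and the admissibility of $D_k$; hence $F_{(0,0)}(\ast;\phi)_{\psi_S,\Phi^j_S}$ is an automorphic form on $\widetilde{\SL_2(\A)}$.

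The main obstacle I expect is the theta unfolding of the first paragraph: one must keep careful track of the Weil-representation cocycles and of the Chevalley commutator constants relating the $(x,y,z)$-coordinates of $V$ to the root-subgroup decomposition of $G_2$, and verify that the ensuing geometric collapse of the $\{(a_1,a_2)\}$-sum to a sum over $t$ matches the equivariance (\ref{cw}) on the nose. This is the $G_2$-analogue of the computations in \cite{KY1} and \cite{IY}, made slightly more delicate by the fact that the Lagrangian $X_\alpha$ of $\widetilde U/Z_{\widetilde U}$ is odd-dimensional. Once the unfolding is in place, the remaining steps are bookkeeping with the local statements already established (Lemmas \ref{beta-whi1-na} and \ref{formula-for-w}, Proposition \ref{d-case}, Corollary \ref{beta-surj-arc}).
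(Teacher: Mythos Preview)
Your overall strategy and your identification of the local pieces via Lemma~\ref{beta-whi1-na}, Lemma~\ref{formula-for-w} and Proposition~\ref{d-case} are the same as the paper's, and your automorphy argument is fine. But the theta unfolding in your first paragraph is not correctly assembled, and as written it does not go through.

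You claim that the theta sum $\sum_{\xi\in X_\alpha(\Q)}$ directly opens the $X_\alpha$-integral to $\int_{X_\alpha(\A)}$, leaving a free double sum $\sum_{a_1,a_2}$ of local integrals, which you then collapse to $\sum_t$ via (\ref{cw}). Neither half of this works. First, the theta summand $\omega_{\psi_S}(v(x,y,z)h')\Phi(\xi)=(\omega_{\psi_S}(h')\Phi)(\xi+x)\,\psi_S(z+\xi y+\tfrac12 xy)$ depends on $\xi$ through the character $\psi_S(\xi y)$ and not only through $x+\xi$, so one cannot unfold $\sum_\xi\int_{X_\alpha(\Q)\bs X_\alpha(\A)}$ before the $X_{\alpha+\beta}$-integration in $y$ has been performed. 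Second, even granting your intermediate expression, shifting the $X_\alpha$-variable to pass from $w(t,\lambda,S)$ to $w(t,0,S)$ moves $\lambda$ into the argument of $\Phi$, and the residual $\sum_\lambda$ over the full $X_\alpha(\A)$-integral has nothing left to cancel against; it diverges rather than collapsing to a single $t$-sum.

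The mechanism actually used is: reparametrise the surviving $w=(a_1,a_2,\tfrac S3,0)$ as $w(t,\lambda,S)={\rm Ad}(v(\lambda,0,0))(t,0,\tfrac S3,0)$; then use (\ref{cw}) together with the Whittaker transformation law to rewrite $c_{w(t,\lambda,S)}W_{{\rm Ad}(w_\alpha)w(t,\lambda,S)}(v(x,0,0)h')=c_{w(t,0,S)}W_{{\rm Ad}(w_\alpha)w(t,0,S)}(v(x+2\lambda,0,0)h')$ \emph{before} unfolding. The action of $n(0,y,z{-}xy,0,0)$ on the Whittaker side then produces the character $\psi(2S\lambda y)$, which the $y$-integration over $X_{\alpha+\beta}(\Q)\bs X_{\alpha+\beta}(\A)$ pairs against the theta character $\psi(S\xi y)$ to force $2\lambda=\xi$. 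Only after this matching does the single combined sum $\sum_\xi$ unfold the $X_\alpha$-integral, yielding $\sum_{t<0}c_{(t,0,S/3,0)}\int_{X_\alpha(\A)}\overline{(\omega_{\psi_S}(h')\Phi)(x)}\,W_{{\rm Ad}(w_\alpha)(t,0,S/3,0)}(v(x,0,0)h')\,dx$. In other words, (\ref{cw}) is used exactly where you say, but it must come \emph{before} the $y$-integration and the unfolding, not after. Once you reorder the argument this way, the rest of your proposal is correct.
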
 
\begin{proof}
By definition, we have 
\begin{eqnarray*}
&& F_{(0,0)}(\ast;\phi)_{\psi_S,\Phi_S}(h') \\
&&\phantom{xxx}=\ds\int_{V(\Q)\bs V(\A)}
\Bigg(\sum_{w\in W(\Q)_{\ge 0}\atop q(w)<0,
x_{3\alpha+\beta}(w)=0}
c_w \widetilde{{\bf w}}_{{\rm Ad}(w_\alpha)w}(v_{\f}h'_{\f}\cdot \phi_{\f})W^{(k-\frac{1}{2})}_{{\rm Ad}(w_\alpha)w}(v_\infty h'_\infty)\Bigg)
\overline{\Theta_{\psi_S}(vh';\Phi_S)}dv.
\end{eqnarray*}
Substituting $v$ with $v+v(0,0,z_\infty)$, since $v(0,0,z_\infty)=n(0,0,z_\infty,0,0)\in V(\R)$ 
for any $z_\infty\in\R$, if we put $w=(w_1,w_2,w_3,0)$, the factor 
$$\psi_{{\rm Ad}(w_\alpha)w}(n(0,0,z_\infty,0,0))\overline{\psi(Sz_\infty)}
=\psi((3w_3-S)z_\infty)$$
comes out from the right hand side. Therefore, $w_3=\frac{1}{3}S$. 
Thus, we can express the above integral as   
$$I_1:=\ds\int_{V(\Q)\bs V(\A)}
\Bigg(\sum_{w\in W(\Q)_{\ge 0},\ q(w)<0 \atop
x_{2\alpha+\beta}(w)=\frac{1}{3}S,\ x_{3\alpha+\beta}(w)=0}
c_w W_{{\rm Ad}(w_\alpha)w}(vh')\Bigg)
\overline{\Theta_{\psi_S}(vh';\Phi_S)}dv,
$$
where we put 
$$W_{{\rm Ad}(w_\alpha)w}(vh'):=\widetilde{{\bf w}}_{{\rm Ad}(w_\alpha)w}(v_{\f}h'_{\f}\cdot \phi_{\f})W^{(k-\frac{1}{2})}_{{\rm Ad}(w_\alpha)w}(v_\infty h'_\infty)$$
for simplicity. 
We can write $w=(\ast,\ast,\frac{1}{3}S,0)\in W(\Q)$ as 
$$w={\rm Ad}(v(\lambda,0,0))(t,0,\frac{1}{3}S,0),
$$
for some $\lambda,t\in \Q$ with $t\neq 0$ and $tS<0$ (hence, $t<0$). 
Then, 
$$w=(t+S\lambda^2,\frac{2}{3}S\lambda,\frac{1}{3}S,0)=:w(t,\lambda,S).
$$
Using this and $v(x,y,z)=v(0,y,z-xy)v(x,0,0)=n(0,y,z-xy,0,0)v(x,0,0)$, we have 
\begin{eqnarray*}
&& I_1=\ds\int_{V(\Q)\bs V(\A)}
\Bigg(\sum_{
w=w(t,\lambda,S)\atop 
t,\lambda\in \Q,\ t<0}\psi(S(z-xy))\psi(2S\lambda y ) 
c_w W_{{\rm Ad}(w_\alpha)w}(v(x,0,0)h')\Bigg) \\
&& \phantom{xxxxxxxxssssssss}\times
\overline{\sum_{\xi\in X_\alpha(\Q)}(\omega_{\psi_S}(h')\Phi_S)(x+\xi)
\psi(S(z-xy))\psi(S\xi y)}dv \\
&& =\ds\int_{V(\Q)\bs V(\A)}
\Bigg(\sum_{
t,\lambda\in \Q,\ t<0}
c_{w(t,0,S)} W_{{\rm Ad}(w_\alpha)w(t,0,S)}(v(x+2\lambda,0,0)h')\Bigg)
\psi(2S\lambda y ) \\
&& \phantom{xxxxxxxxsssssssss}\times \Big(\overline{\sum_{\xi\in X_\alpha(\Q)}(\omega_{\psi_S}(h')\Phi_S)(x+\xi)}
\Big)\psi(-S\xi y )dv.
\end{eqnarray*}
The integral $\psi(S(2\lambda-\xi)y )$ over 
$X_{\alpha+\beta}(\Q)\bs X_{\alpha+\beta}(\A)$ is zero unless  
$2\lambda=\xi$. 
Therefore, the above integral becomes 
$$=\int_{X_\alpha(\Q)\bs X_\alpha(\A)}\sum_{\xi\in X_\alpha(\Q)}
\Bigg(\sum_{t,\xi\in \Q,\ t<0}
c_{w(t,0,S)} W_{{\rm Ad}(w_\alpha)w(t,0,S)}(v(x+\xi,0,0)h')\Bigg)
\overline{(\omega_{\psi_S}(h')\Phi_S)(x+\xi)}dx.
$$
Here we also used the fact that ${\rm vol}(X_{2\alpha+\beta}(\Q)\bs X_{2\alpha+\beta}(\A))=1$. 
By the unfolding technique in $\xi$, the above integral is
$$=\int_{X_\alpha(\A)}
\Bigg(\sum_{t\in \Q_{<0}}
c_{w(t,0,S)} W_{{\rm Ad}(w_\alpha)w(t,0,S)}(v(x,0,0)h')\Bigg)
\overline{(\omega_{\psi_S}(h')\Phi_S)(x)}dx$$
$$= \sum_{t\in \Q_{<0}}
c_{w(t,0,S)} 
\Bigg(
\ds\int_{X_\alpha(\A_{\f})}
\widetilde{{\bf w}}^{\mu_{\f}}_{{\rm Ad}(w_\alpha)(t,0,\frac{1}{3}S,0)}(v(x_{\f},0,0)h'_{\f}\cdot\phi_{\f})
\overline{(\omega_{S,\f}(h_{\f})\Phi_{\f})(x_{\f})}dx_{\f}\Bigg)$$
$$
\phantom{xxxssxz} \times\Bigg(\ds\int_{X_\alpha(\R)}W^{(k-\frac{1}{2})}_{{\rm Ad}(w_\alpha)(t,0,\frac{1}{3}S,0)}
(v(x_\infty,0,0) h'_\infty)
\overline{(\omega_{S,\infty}(h_{\infty})\Phi^j_{S,\infty})(x_{\infty})}dx_{\infty}
\Bigg).
$$
By Proposition \ref{d-case}, it vanishes unless $-k+j\in I$. 
In the remaining case, by Proposition \ref{d-case} again, the above integral is 
$$=C_0(S) \sum_{t\in \Q_{<0}}
c_{w(t,0,S)}|t|^{\frac{1}{4}} W^{k+\frac{1}{2}}_{t,\wSL_2}(h'_\infty) 
\Bigg(
\ds\int_{X_\alpha(\A_{\f})}
\widetilde{{\bf w}}^{\mu_{\f}}_{{\rm Ad}(w_\alpha)(t,0,\frac{1}{3}S,0)}(v(x_{\f},0,0)h'_{\f}\cdot\phi_{\f})
\overline{(\omega_{S,\f}(h_{\f})\Phi_{\f})(x_{\f})}dx_{\f}\Bigg).$$
By Lemma \ref{beta-whi1-na}, 
$$\ds\int_{X_\alpha(\A_{\f})}
\widetilde{{\bf w}}^{\mu_{\f}}_{{\rm Ad}(w_\alpha)(t,0,\frac{1}{3}S,0)}(v(x_{\f},0,0)h'_{\f}\cdot\phi_{\f})
\overline{(\omega_{S,\f}(h_{\f})\Phi_{\f})(x_{\f})}dx_{\f}$$
$$\phantom{xxxxxxxxxx} =|t|^{-\frac{1}{4}}\Big(\frac{4}{27}|S|^3\Big)^{-\frac{3}{4}} w^{\mu_{\f}\chi_{S,\f}}_t(\beta^{\psi_{\f}}_S(\ast;\phi_{\f}
\otimes\overline{\Phi_{\f}})).
$$
Summing up, we have the desired claim with $C_1(S)=C_0(S)\Big(\frac{4}{27}|S|^3\Big)^{-\frac{3}{4}}$. 

The later claim is proved similarly as in the proof of \cite[Lemma 5.4-(2)]{KY2}. 
\end{proof}

\section{Fourier expansion of Shimura correspondence}\label{FESC}
In this section, we refer \cite[Section 1,5, and 8.2]{IY} for the treatment of adelic modular forms of half-integral weight. We remark that, in \cite{IY}, the authors used the additive 
character $\psi_p(-\ast)$ at finite place $p$ to get positive indices in the 
Fourier expansion while negative indices show up in our setting as below.

Let $f$ be the newform in Section \ref{intro}. Recall the notation in Lemma \ref{local-non-arch} and put 
$$A^{\psi_p}_1(\mu_p)=\begin{cases} \text{$\widetilde{I}^{\psi_p}_1(0,\mu_p)$ with 
a unitary character $\mu_p:\Q^\times_p\lra \C^\times$}, &\text{if $p\not\in S(\pi_{\f})\cup
\{\infty\}$}\\ 
\text{$\widetilde{A}^{\psi_p}_1(\mu_p)$ with $\mu^2_p=|\cdot|_p$}, &\text{if $p\in S(\pi_{\f})$} \\
\widetilde{\pi}^{-\sgn(-1)^k,-}_{k-\frac{1}{2}}  & \text{$p=\infty$}
\end{cases}. 
$$ 
Then, $\otimes'_p A^{\psi_p}_1(\mu_p)$ is a cuspidal automorphic representation of 
$\widetilde{\SL_2(\A)}$ which corresponds to $f$ by the Shimura correspondence and for each distinguished vector $\phi^{\widetilde{{\rm SL}}_2}_{\f}=\otimes'_p \phi^{\widetilde{{\rm SL}}_2}_p$ in $\otimes'_{p<\infty} A^{\psi_p}_1(\mu_p)$,  by using (\ref{nlwf}), we have embedding 
from $\otimes'_{p<\infty} A^{\psi_p}_1(\mu_p)$ into the space of 
automorphic forms on  $\widetilde{\SL_2(\A)}$ by 
\begin{equation}\label{SFC}
{\rm Ah}_f(h;\phi^{\widetilde{{\rm SL}}_2}_{\f}):=\sum_{t\in \Q_{<0}}c_t \Big( \prod_{p}
w^{\mu_p,s_p}_t(h_p\cdot \phi^{\widetilde{{\rm SL}}_2}_p)\Big)W^{k+\frac{1}{2}}_{t,\wSL_2}
(h'_\infty),\ h=(h_p)_\p\in \widetilde{\SL_2(\A)}
\end{equation}
for some $c_t\in \C$ ($t\in \Q_{<0}$), where 
$s_p=\begin{cases} \frac{1}{2}, &\text{if $p\in S(\pi_{\f})$}\\ 0, 
&\text{otherwise}\end{cases}$. 
We should remark that the above automorphic form is anti-holomorphic. 
For any $S\in \Q_{>0}$, let $\chi_S=\otimes'_p\chi_{S,p}:\Q^\times\bs\A^\times_\Q\lra \C^\times,\ 
a\mapsto \langle -S,a \rangle$ where $\langle \ast,\ast\rangle$ is the quadratic Hilbert symbol on $\A^\times\times\A^\times$. 
Then, by using a double covering of $\GL_2(\A)$ which contains 
$\widetilde{\SL_2(\A)}$ as a normal subgroup as in the proof of \cite[Lemma 5.6-(5)]{IY}, 
one can define  
\begin{eqnarray}\label{SFCtwist}
{\rm Sh}^S_f(h;\phi^{\widetilde{{\rm SL}}_2}_{\f})&:=&{\rm Sh}_f(\diag(1,S)\cdot h\cdot \diag(1,S)^{-1};\phi^{\widetilde{{\rm SL}}_2}_{\f}) \\
&=& \mu^{-1}_{\f}(S)S^{k+\frac{1}{2}}
\sum_{t\in \Q_{<0}}c_{St} \Big( \prod_{p}
w^{\mu_p\chi_{S,p},s_p}_t(h_p\cdot \phi^{\widetilde{{\rm SL}}_2}_{p})\Big)W^{k+\frac{1}{2}}_{t,\wSL_2}
(h'_\infty), \nonumber 
\end{eqnarray}
for $h=(h_p)_p\in \widetilde{\SL_2(\A)}$
and it generates the cuspidal representation 
\begin{equation}\label{twist-auto}
\otimes'_p A^{\psi_p}_1(\mu_p \chi_S)=\Big(\otimes'_{p<\infty} A^{\psi_p}_1(\mu_p \chi_S)\Big)\otimes \widetilde{\pi}^{-\sgn(-1)^k,-}_{k-\frac{1}{2}}.
\end{equation}

If we specify a distinguished section suitably, then we can recover the complex conjugation of the classical Shimura 
correspondence 
$${\rm Sh}_f(\tau)=\sum_{n\in \Z_{> 0}\atop 
\text{$n\equiv 0$ or 1 mod $4$ }}c(n)\overline{q}^{n},\ q=e^{2\pi\sqrt{-1}\tau},\ \tau\in 
\mathbb{H}:=\{\tau\in \C\ |\ {\rm Im}(\tau)>0\}$$
and if $n$ is $1$ or the fundamental discriminant of a real quadratic field, then 
$c(n)=c_{-n}$. 
In particular, $c(1)$ is proportional to $L(k,f)$ by a non-zero constant 
(\cite[Corollaire 2, p.379]{Wal81}). 

\section{Proof of Theorem \ref{main1}}\label{Pmt} 
We are now ready to prove Theorem \ref{main1}. Assume $k\ge 2$ is even. 
Starting with a Hecke eigen newform $f\in S_k(\Gamma_0(C))^{{\rm new,ns}}$, 
we defined $\Pi(f)=\otimes'_p\Pi_p=\Pi_{\f}\otimes \Pi_\infty$ in Section \ref{intro} and 
$\mu_{\f}$ by (\ref{mu-finite}) from $\Pi_{\f}$ in Section \ref{FS}.  

For each distinguished vector $\phi^{\widetilde{{\rm SL}}_2}_{\f}\in \otimes'_{p<\infty} A^{\psi_p}_1(\mu_p \chi_S)$, by Corollary \ref{beta-surj-arc}, there exists a 
distinguished vector $\phi_{\f}\in \Pi_{\f}$ and the Schwartz function $\Phi_{\f}$ on 
$X_\alpha(\A_{\f})$ such that  $\beta^{\psi_{\f}}_S(\ast h'_{\f};\phi_{\f}
\otimes\overline{\Phi_{\f}})=\phi^{\widetilde{{\rm SL}}_2}_{\f}$. 
Applying $I=\{-k\}$, $\phi_{\f}$, and $\Phi_{\f}$ to Proposition \ref{auto-criterion}, we have 
\begin{eqnarray}
F_{(0,0)}(\ast;\phi)_{\psi_S,\Phi^0_S}(h')&=&C_1(S)\ds\sum_{t\in \Q_{<0}}
c_{(t,0,\frac{S}{3},0)}w^{\mu_{\f}\chi_{S,\f}}_t(\beta^{\psi_{\f}}_S(\ast h'_{\f};\phi_{\f}
\otimes\overline{\Phi_{\f}}))
 W^{k+\frac{1}{2}}_{t,\wSL_2}(h'_\infty),  \nonumber \\
&=&C_1(S)\ds\sum_{t\in \Q_{<0}}
c_{(t,0,\frac{S}{3},0)}w^{\mu_{\f}\chi_{S,\f}}_t(\phi^{\widetilde{{\rm SL}}_2}_{\f})
 W^{k+\frac{1}{2}}_{t,\wSL_2}(h'_\infty),  \nonumber
 \end{eqnarray}
$h'=h'_{\f}h'_\infty\in \widetilde{\SL_2(\A)}$ and 
it generates the representation (\ref{twist-auto}) by Corollary \ref{beta-surj-arc}. 
Thus, there exists a non-zero constant $C_2(S)$ depending on $S$ such that 
$$F_{(0,0)}(\ast;\phi)_{\psi_S,\Phi^0_S}(h')=C_2(S){\rm Sh}^S_f(h;\phi^{\widetilde{{\rm SL}}_2}_{\f}).$$
Comparing coefficients, we have $c_{(t,0,\frac{S}{3},0)}=C(S)\mu^{-1}_{\f}(S)c_{St}$ 
where $C(S)=C_1(S)^{-1}C_2(S)S^{k+\frac{1}{2}}$. 
This completes the proof.

\section{Degree 7 standard $L$-function and the Arthur parameter for the Gan-Gurevich lift}

Let $f$ be a cuspidal holomorphic eigenform of weight $2k\geq 4$ and trivial nebentypus with respect to $\Gamma_0(C)$, and 
$\pi_f$ its associated automorphic representation. Let $\pi_f=\otimes_p' \pi_p\otimes \pi_\infty$. 
Recall the quaternionic cusp form $F=F_f(*;\phi)$ defined in Section 1, the Gan-Gurevich lift of $f$ on $G_2$. Let $\Pi_F$ be the irreducible representation of $G_2(\Bbb A)$ generated by $F$.

\subsection{Degree 7 standard $L$-function}
\begin{thm}\label{Lfunct} Let $S=S(\pi_{\f})\cup \{p|C\}$. Then
the degree 7 standard $L$-function of $\Pi_F$ is
$$L^S(s,\Pi_F,{\rm St})=L^S(s,\Sym^2\pi_f)L^S(s+\frac 12,\pi_f)L^S(s-\frac 12,\pi_f).
$$
where $L^S(s,\Pi_F,{\rm St})=\prod_{p\nmid S} L(s,\Pi_p,{\rm St})$ is the partial $L$-function.
\end{thm}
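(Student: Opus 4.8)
The plan is to reduce the identity to a computation of local Euler factors at the unramified places, and at each such place to identify the Satake parameter of $\Pi_p$ inside the dual group $\widehat{G_2}\simeq G_2(\C)$ and feed it into the $7$-dimensional standard representation.

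First, since $L^S(s,\Pi_F,{\rm St})=\prod_{p\notin S}L(s,\Pi_p,{\rm St})$ is a partial Euler product over the primes where all the data are unramified, it suffices to prove the corresponding identity of local Euler factors for each $p\notin S$. For such $p$ one has $\pi_p=\pi(\mu_p,\mu_p^{-1})$ with $\mu_p$ unramified and $\Pi_p$ the spherical constituent of $I(0,\mu_p)$; set $\alpha=\alpha_p:=\mu_p(p)$ and $q=p$, so that the Satake parameter of $\pi_p$ is $\diag(\alpha,\alpha^{-1})\in GL_2(\C)$, $L(s,\Sym^2\pi_p)=(1-\alpha^{2}q^{-s})^{-1}(1-q^{-s})^{-1}(1-\alpha^{-2}q^{-s})^{-1}$ and $L(s\pm\tfrac12,\pi_p)=(1-\alpha q^{\mp1/2}q^{-s})^{-1}(1-\alpha^{-1}q^{\mp1/2}q^{-s})^{-1}$.

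Next I would pin down the Satake parameter $t_{\Pi_p}\in\widehat{G_2}$. By Proposition \ref{Langlands}, $\Pi_p=I(0,\mu_p)=J_\beta(\tfrac12,\pi(\mu_p,\mu_p^{-1}))$ is the spherical Langlands quotient of $I_\beta(\tfrac12,\pi_p)={\rm Ind}_{Q(\Q_p)}^{G_2(\Q_p)}\pi_p\otimes|\det|^{1/2}$; equivalently, the computation in the proof of that proposition realizes $\Pi_p$ as the spherical subquotient of ${\rm Ind}_{B(\Q_p)}^{G_2(\Q_p)}(\mu_p|\cdot|^{1/2}\otimes\mu_p|\cdot|^{-1/2})$, so $t_{\Pi_p}$ is the image in $\widehat T$ of this explicit unramified character of $T(\Q_p)$. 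Translating this datum into a cocharacter of the dual torus using the root conventions of Section \ref{pre} (or the explicit model $G_2\subset SO(3,4)$ of Appendix C) and feeding it into the $7$-dimensional ``standard'' representation $r_7$ of $\widehat{G_2}\simeq G_2(\C)$ (the one underlying $G_2\hookrightarrow SO_7$), whose weights are $0$ together with the six short roots, one finds that the eigenvalues of $r_7(t_{\Pi_p})$ are
\begin{equation*}
1,\quad \alpha^{2},\quad \alpha^{-2},\quad \alpha q^{1/2},\quad \alpha^{-1}q^{1/2},\quad \alpha q^{-1/2},\quad \alpha^{-1}q^{-1/2},
\end{equation*}
the half-integral powers of $q$ coming from the $|\det|^{1/2}$-twist in $I_\beta(\tfrac12,\pi_p)$. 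This step is the heart of the matter and the place where one must be careful: the root system of $G_2$ is self-dual but the duality interchanges long and short roots, so the identification $\widehat T\cong T$ and the normalizations of the inductions have to be tracked consistently.

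Granting this, $L(s,\Pi_p,{\rm St})=(1-q^{-s})^{-1}\prod_{e}(1-e\,q^{-s})^{-1}$ over the six nonzero eigenvalues $e$ above, and grouping the seven factors as $\{1,\alpha^{2},\alpha^{-2}\}$, $\{\alpha q^{1/2},\alpha^{-1}q^{1/2}\}$, $\{\alpha q^{-1/2},\alpha^{-1}q^{-1/2}\}$ gives, using $\Sym^2\diag(\alpha,\alpha^{-1})=\diag(\alpha^{2},1,\alpha^{-2})$,
\begin{equation*}
L(s,\Pi_p,{\rm St})=L(s,\Sym^2\pi_p)\,L(s-\tfrac12,\pi_p)\,L(s+\tfrac12,\pi_p);
\end{equation*}
taking the product over $p\notin S$ yields the theorem. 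Equivalently, this is precisely the assertion that $r_7$ composed with the Arthur parameter of $\Pi_F$ is $\Sym^2$ of the parameter of $\pi_f$ together with the parameter of $\pi_f$ tensored with the two-dimensional representation of the Arthur $SL_2$, which simultaneously gives the Arthur parameter recorded in this section. The remaining inputs — that an unramified local $L$-factor is read off from the Satake parameter, and the elementary symmetric-square bookkeeping — are routine, so the only genuine obstacle is the Satake-parameter identification of the previous paragraph.
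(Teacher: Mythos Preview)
Your proposal is correct and follows essentially the same approach as the paper: reduce to the unramified local identity, identify the Satake parameter of $\Pi_p$ by embedding into a principal series on the Borel, and evaluate the seven weights of the standard representation on it. The only cosmetic difference is that the paper routes the embedding through the Heisenberg parabolic $P$ (using $\mu_p\circ\det\hookrightarrow {\rm Ind}_B^{GL_2}|\cdot|^{-s_p-\frac12}\otimes|\cdot|^{-s_p+\frac12}$ and Mui\'c's coordinates, then pairs the resulting $\lambda$ explicitly against the short coroots $\beta^\vee,(3\alpha+\beta)^\vee,(3\alpha+2\beta)^\vee$), whereas you route through $Q$ via Proposition~\ref{Langlands}; both land on the same Borel character and the same seven eigenvalues $\{1,\alpha^{\pm2},\alpha^{\pm1}q^{\pm1/2}\}$ that you list.
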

\begin{proof}
For $p\notin S$, $\pi_p=\pi(\mu_p,\mu_p^{-1})$ with an unramified character $\mu_p$ . Let $\mu_p(p)=\alpha_p$. Then
$\Pi_p={\rm Ind}_{P(\Bbb Q_p)}^{G_2(\Bbb Q_p)} \mu_p\circ \det$. 

Recall the parametrization in \cite{M}: $M_\alpha\simeq \GL_2$ under the map determined by
$$t\longrightarrow \diag((2\alpha+\beta)(t),(\alpha+\beta)(t)),
$$
and $\alpha$ corresponds to the standard positive root of $\GL_2$. The parametrization of the maximal torus of $G_2$ is
$$t: \GL_1 \times \GL_1\longrightarrow T,\quad (a,b)\longmapsto t(a,b),
$$
given by $\alpha(t(a,b))=ab^{-1}$ and $\beta(t(a,b))=a^{-1}b^2$. Now for $\mu_1,\mu_2$, quasi-characters of $\Bbb Q_p^\times$,
let $\mu_1=|\ |^{s_1}\mu_1'$ and $\mu_2=|\ |^{s_2}\mu_2'$, where $\mu_1',\mu_2'$ are unitary characters. 
We denote the induced representation
$$I_B(\mu_1\otimes\mu_2)={\rm Ind}_B^G |\ |^{s_1}\mu_1'\otimes |\ |^{s_2}\mu_2'={\rm Ind}_B^G\, \mu_1'\otimes\mu_2'\otimes exp(\lambda,H_B(\ )),
$$
where $\lambda=s_1(2\alpha+\beta)+s_2(\alpha+\beta)$. Now consider the degenerate principal series ${\rm Ind}_{P_\alpha}^G\, \mu_p\circ \det$ (normalized induction). Let $\mu_p=|\,|^{-s_p}$ so that $\mu_p(p)=p^{s_p}=\alpha_p$. 
Since $\mu_p\circ \det\hookrightarrow {\rm Ind}_B^{GL_2} |\ |^{-s_p-\frac 12}\otimes |\ |^{-s_p+\frac 12}$, by inducing in stages,
$${\rm Ind}_{P_\alpha}^G \, \mu_p\circ \det\hookrightarrow {\rm Ind}_B^G\, |\ |^{-s_p-\frac 12}\otimes |\ |^{-s_p+\frac 12}\simeq Ind_B^G \, exp(\lambda,H_B(\ )),
$$
where $\lambda=(-s_p-\frac 12)(2\alpha+\beta)+(-s_p+\frac 12)(\alpha+\beta)$.

Note that the weights of the degree 7 standard representation of ${}^L G_2$ are $0$, $\pm \beta^\vee$, $\pm (3\alpha+\beta)^\vee$, $\pm (3\alpha+2\beta)^\vee$ (short roots of ${}^L G_2$).
 Then
$$\langle\lambda, \beta^\vee\rangle=-s_p+\frac 12,\quad \langle\lambda, (3\alpha+\beta)^\vee\rangle=-s_p-\frac 12,\quad \langle\lambda, (3\alpha+2\beta)^\vee\rangle=-2s_p.
$$
Hence
\begin{eqnarray*}
&& L(s,\Pi_p,{\rm St})^{-1}\\
&& =(1-p^{-s})(1-\alpha_p^2 p^{-s})(1-\alpha_p^{-2} p^{-s})(1-\alpha_p p^{\frac 12-s})(1-\alpha_p^{-1} p^{\frac 12-s})(1-\alpha_p p^{-\frac 12-s})(1-\alpha_p^{-1} p^{-\frac 12-s}).
\end{eqnarray*}
Therefore, 
$$L(s,\Pi_p,{\rm St})
=L(s,{\rm Sym}^2(\pi_p))L(s+\frac 12,\pi_p)L(s-\frac 12,\pi_p).
$$
This proves the result.
\end{proof}

Let $\widetilde\Pi$ be the Langlands conjectural functorial lift of $\Pi_F$ to $GL_7$. Let 
$\widetilde\Pi=\otimes_p' \widetilde\Pi_p\otimes \widetilde\Pi_\infty$. Then the above theorem says that $\widetilde\Pi_p$ is a quotient of
$${\rm Ind}_{P_{2,3,2}}^{GL_7} \pi_p|\det|^\frac 12\otimes \Sym^2(\pi_p)\otimes \pi_p|\det|^{-\frac 12},
$$ 
where $P_{2,3,2}$ is the standard parabolic subgroup of $GL_7$ with the Levi subgroup $GL_2\times GL_3\times GL_2$.}

\subsection{Arthur parameter for the Gan-Gurevich lift}
We have the following Arthur parameter of $\Pi_F$ \cite{GG, Mu}:
Let $\mathcal L$ be the Langlands group over $\Bbb Q$, and let $\rho_f: \mathcal L\longrightarrow \SL_2(\Bbb C)$ be the two-dimensional irreducible representation of $\mathcal L$ corresponding to $\pi_f$.

Let $R_7$ be the standard representation of $G_2(\Bbb C)$. Then $R_7: G_2(\Bbb C)\longrightarrow \GL_7(\Bbb C)$. 
Let $\SL_{2,\gamma}(\Bbb C)$ be the $\SL_2$-subgroup of $G_2(\Bbb C)$ corresponding to $\gamma$. 
Let $\iota_\gamma: \SL_2(\Bbb C)\longrightarrow \SL_{2,\gamma}(\Bbb C)\subset G_2(\Bbb C)$. Since $\alpha$ and $3\alpha+2\beta$ are orthogonal,
$\SL_{2,\alpha}(\Bbb C)$ and $\SL_{2,3\alpha+2\beta}(\Bbb C)$ are mutual centralizers and we have inclusion
$$\iota_{\alpha,3\alpha+2\beta}: \SL_{2,\alpha}(\Bbb C)\times \SL_{2,3\alpha+2\beta}(\Bbb C)\longrightarrow G_2(\Bbb C).
$$
Now we have a map $\rho_f: \mathcal L\longrightarrow \SL_{2,\alpha}(\Bbb C)$, and $\rho_s: \SL_2(\Bbb C)\longrightarrow \SL_{2,3\alpha+2\beta}(\Bbb C)$ is the identity map. Then we have a map
$$\rho_f\oplus \rho_s: \mathcal L\times \SL_2(\Bbb C)\longrightarrow \SL_{2,\alpha}(\Bbb C)\times \SL_{2,3\alpha+2\beta}(\Bbb C).
$$
Let $\psi_{GG}=\iota_{\alpha,3\alpha+2\beta}\circ (\rho_f\oplus\rho_s): \mathcal L\times \SL_2(\Bbb C)\longrightarrow G_2(\Bbb C)$.
By \cite{Mu}, 
$R_7\circ \iota_{\alpha}: \SL_2(\Bbb C)\longrightarrow \GL_7(\Bbb C)$ is
$\diag(St^\vee,Ad,St)$. Hence $\psi_{GG}$ is the Arthur parameter for $\Pi_F$.

To state the conjectural Arthur multiplicity formula, we assume that $S_0=\emptyset$, where $S_0$ is the subset of $S(\pi_{\f})$ such that $\pi_p={\rm St}_p$. 
Let's review the epsilon factors $\epsilon(\frac 12,\Sym^3(\pi_p),\psi_p)$.
For $p\notin S(\pi_{\f})\cup \{\infty\}$, $\pi_p=\pi(\mu_p,\mu_p^{-1})$ for a unitary character $\mu_p$. In this case, $\epsilon(\frac 12,Sym^3(\pi_p),\psi_p)=1$. When $\mu_p$ is ramified, use \cite[p.14]{Ta}.
Let $\Pi_p^+=J_\beta(\frac 12,\pi_p)$. 

Let
$\Pi_\infty$ be the quaternionic discrete series representation of $G_2(\Bbb R)$ with Harish-Chandra parameter $(k-2) (3\alpha+2\beta)+\rho$. By \cite{CM}, $\epsilon(\frac 12,\pi_\infty,\psi_\infty)=(-1)^k$ and
$\epsilon(\frac 12, \Sym^3(\pi_\infty),\psi_\infty)= -1$.

If $p\in S(\pi_{\f})$, $\pi_p={\rm St}_p\otimes\mu_p$, where $\mu_p$ is a nontrivial quadratic character, let $\Pi_p^+=J_\beta(\frac 12,\pi_p)$, and $\Pi_p^-=J_\beta(1,\pi(1,\mu_p))$. If $\mu_p$ is unramified, by \cite{CM}, $\epsilon(\frac 12,{\rm St}_p\otimes\mu_p,\psi_p)=-1$, and $\epsilon(\frac 12, \Sym^3({\rm St}_p\otimes\mu_p),\psi_p)= 1$. If $\mu_p$ is ramified, use \cite[p. 284, Case IV-a]{RS}.

The following is a special case of \cite[\S 13.4]{GG}: 
\begin{conj}\label{Mundy}  Suppose $S_0=\emptyset$.
Let $S'\subset S(\pi_{\f})$. Then
$$\Pi=\Pi_{\infty}\otimes \otimes_{p\in S'} \Pi_p^-\otimes \otimes_{p\notin S'}' \Pi_p^+,
$$
occurs in $L^2_{\rm disc}(G_2(\Q)\backslash G_2(\Bbb A))$ with either multiplicity zero or one. It does so with multiplicity one if and only if 
$\epsilon(\frac 12,\Sym^3(\pi_f))=-(-1)^{\#S'}$, i.e., $\#S'$ is even.
\end{conj}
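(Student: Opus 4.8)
The plan is to deduce Conjecture \ref{Mundy} from the general multiplicity formula of \cite[\S 13.4]{GG} (the conjectural extension of Arthur's formula to $G_2$, verified in \cite{Mu} for parameters coming from the exceptional theta lift), specialized to the Arthur parameter $\psi_{GG}$ described above. The first step is to compute the global component group $S_{\psi_{GG}}=\pi_0\big(\mathrm{Cent}(\mathrm{Im}\,\psi_{GG},G_2(\C))\big)$. Assuming $\pi_f$ is not of dihedral type, $\rho_f$ has Zariski-dense image in $\SL_{2,\alpha}(\C)$, while $\rho_s$ surjects onto $\SL_{2,3\alpha+2\beta}(\C)$; since $\SL_{2,\alpha}(\C)$ and $\SL_{2,3\alpha+2\beta}(\C)$ are mutual centralizers in $G_2(\C)$, the centralizer of $\mathrm{Im}\,\psi_{GG}$ reduces to $\{1,\bar s\}$, where $\bar s=\iota_{\alpha,3\alpha+2\beta}(1,-I)$ is the nontrivial central element of $\SL_{2,3\alpha+2\beta}(\C)$. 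Hence $S_{\psi_{GG}}\simeq\Z/2\Z$, and since the predicted multiplicity is $\tfrac12\big(1+\langle\bar s,\Pi\rangle\,\epsilon_{\psi_{GG}}(\bar s)\big)$, it is automatically $0$ or $1$; this gives the first assertion.

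The second step is to evaluate Arthur's sign character $\epsilon_{\psi_{GG}}\colon S_{\psi_{GG}}\to\{\pm1\}$ at $\bar s$. Using $W\simeq\Sym^3\C^2$ as a representation of $\SU(2)_\alpha$ (Section \ref{pre}), the adjoint representation decomposes as
$$\mathrm{Ad}\circ\psi_{GG}\;\simeq\;\Sym^2\rho_f\ \oplus\ \mathrm{Ad}\,\rho_s\ \oplus\ (\Sym^3\rho_f)\otimes\rho_s,$$
and among these summands the unique one that is symplectic with an even-dimensional Arthur $\SL_2$-factor is $(\Sym^3\rho_f)\otimes\rho_s$ (recall that $\Sym^3$ of a symplectic representation is symplectic). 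Arthur's recipe then yields $\epsilon_{\psi_{GG}}(\bar s)=\epsilon(\tfrac12,\Sym^3\pi_f)$, which is exactly the computation carried out in \cite[\S13.4]{GG}. Next I would identify the relevant local packets and their characters: for $p\in S(\pi_{\f})$ the local $A$-packet of $\psi_{GG,p}$ consists of $\Pi_p^+$ and $\Pi_p^-$ with $\langle\bar s,\Pi_p^+\rangle=1$ and $\langle\bar s,\Pi_p^-\rangle=-1$; for $p\notin S(\pi_{\f})$ the unramified constituent $\Pi_p=\Pi_p^+$ has $\langle\bar s,\Pi_p\rangle=1$; and the quaternionic member $\Pi_\infty$ of the archimedean $A$-packet has $\langle\bar s,\Pi_\infty\rangle=-1$, compatibly with $\epsilon(\tfrac12,\Sym^3\pi_\infty,\psi_\infty)=-1$ as computed above via \cite{CM}.

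Combining these, the multiplicity formula gives $m(\Pi)=1$ if and only if $\prod_v\langle\bar s,\Pi_v\rangle=\epsilon_{\psi_{GG}}(\bar s)$, i.e. $-(-1)^{\#S'}=\epsilon(\tfrac12,\Sym^3\pi_f)$, which is the first form of the asserted condition; and, since the hypothesis $S_0=\emptyset$ together with the local computations above (\cite{CM} and \cite[Case IV-a]{RS}) forces $\epsilon(\tfrac12,\Sym^3\pi_p)=1$ at every finite $p$ and $=-1$ at $\infty$, one gets $\epsilon(\tfrac12,\Sym^3\pi_f)=-1$, so the condition becomes $(-1)^{\#S'}=1$, i.e. $\#S'$ even. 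The main obstacle is that the input of \cite[\S 13.4]{GG} is itself only conjectural: $G_2$ is not a classical group, Arthur's endoscopic machinery does not apply directly, and although \cite{Mu} establishes the formula for the parameters of interest when they arise via the exceptional theta lift, a fully unconditional proof — along with the precise determination of the archimedean packet sign for $D_k$ and of the ramified local root numbers $\epsilon(\tfrac12,\Sym^3\pi_p)$ — is not yet available; this is why the statement is recorded as a conjecture rather than a theorem.
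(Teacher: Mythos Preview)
The statement is labeled a \emph{Conjecture} in the paper and is not proved there: the paper simply records it as ``a special case of \cite[\S 13.4]{GG}'' and then discusses its consequences (in particular that it implies (\ref{assump}) when $S'=\emptyset$). So there is no ``paper's own proof'' to compare against. Your write-up is therefore not competing with a proof but rather supplying the heuristic derivation the paper only cites: you compute $S_{\psi_{GG}}\simeq\Z/2\Z$, decompose $\mathrm{Ad}\circ\psi_{GG}$ to identify $\epsilon_{\psi_{GG}}(\bar s)=\epsilon(\tfrac12,\Sym^3\pi_f)$, and assemble the local signs. This is a correct and useful expansion of what the paper leaves implicit, and you rightly flag at the end that it all rests on the conjectural multiplicity formula for $G_2$, which is exactly why the paper states it as a conjecture.

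Two small points where your sketch asserts more than is established either here or in the paper. First, the local packet signs $\langle\bar s,\Pi_p^{\pm}\rangle=\pm 1$ and $\langle\bar s,\Pi_\infty\rangle=-1$ are plausible normalizations but are not proved in the paper; they are part of the conjectural package from \cite{GG}. Second, your claim that $\epsilon(\tfrac12,\Sym^3\pi_p)=1$ at every finite $p$ under $S_0=\emptyset$ matches the paper's explicit computation only for unramified $\mu_p$; for ramified quadratic $\mu_p$ the paper defers to \cite[p.~284, Case IV-a]{RS} without extracting the value, so the passage from ``$\epsilon(\tfrac12,\Sym^3\pi_f)=-(-1)^{\#S'}$'' to ``$\#S'$ even'' is not fully justified in either place. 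Finally, your extra hypothesis that $\pi_f$ is non-dihedral (to get Zariski-dense image) is not in the statement; in the dihedral case the centralizer can jump, but since $\pi_f$ here has weight $2k\ge 4$ and trivial nebentypus this is harmless in practice.
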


If $C=1$, since the Gan-Gurevich lift is a cuspidal representation and $\epsilon(\frac 12,\Sym^3(\pi_f))=-1$, the above conjecture is true. 
If $S'=\emptyset$, since $S_0=\emptyset$, $\Pi=\Pi(f)$ and $\epsilon(\frac 12,\Sym^3(\pi_f))=-1$. Therefore, the above conjecture says that $\Pi(f)$ is always a discrete automorphic representation. Hence
 Conjecture \ref{Mundy} implies (\ref{assump}).

\begin{remark} 
If $S_0\ne\emptyset$, for $p\in S_0$, we may take $\Pi_p^-\in \{\pi(1), J_\beta(1,\pi(1,1))\}$ in the notation of Theorem \ref{constituents}. Let us give an example of $S_0=\emptyset$.
We have dim $S_4(\Gamma_0(5))^{\rm new}=1$. Let $f=q-4q^2+2q^3+8q^4-5q^5-8q^6+\cdots$ be the unique Hecke eigenform in the space.
Then by \cite{LW}, $\pi_5={\rm St}_5\otimes\mu_5$, where $\mu_5(5)=-1$. Hence in this case $S_0=\emptyset$.
If we assume Conjecture \ref{Mundy}, we obtain the Gan-Gurevich lift $F_f$ of weight 2. The cuspidal representation $\Pi_F$ generated by $F_f$ is given by $\Pi_F=\Pi_\infty\otimes \otimes_p' \Pi_p$, where $\Pi_\infty$ is the quaternionic discrete series, and $\Pi_p=J_\beta(\frac 12,\pi_p)$ for all $p$.
\end{remark}

\section{Appendix A: The archimedean component of the Gan-Gurevich lift}
In this Appendix, we will prove that the archimedean component of the Gan-Gurevich lift 
generates a quaternionic discrete series by using Arthur's classification \cite{A} and 
Li's result \cite{Li}. We refer \cite{Atobe1}, \cite{Atobe2} for using Arthur's classification and we will not recall all notations. 

Let $k\ge 6$ be an even integer and $f$ be a newform in $S_{2k}(\SL_2(\Z))$. 
Let $\pi_f$ be the cuspidal automorphic representation of $\GL_2(\A)$ attached to $f$.  
Let us consider the global Arthur parameter 
$$\psi=\tau_1[d_1]\boxplus\tau_2[d_2],\ \tau_1={\rm Sym}^2 \pi_f,\ \tau_2=\pi_f,\ 
d_1=1,\ d_2=2$$ for 
the symplectic group $\Sp_6$ (of rank 3)  
which corresponds to the restriction to $\Sp_6$ of 
the cuspidal automorphic representation $\Sigma(\sigma,\tau)$ on 
$\GSp_6(\A)$ with $\tau=\pi_f$ constructed in \cite[Section 4.3]{GG}. And $\Pi^{G}= \Theta^{E_7}_{G_2}(\Sigma(\sigma,\tau))$ in their notations.
In fact, since 
$\Sigma(\sigma,\tau)$ 
is cuspidal (\cite[Theorem 4.3]{GG}) and of level one, by Theorem \ref{Lfunct} with 
\cite[Proposition 5.1]{GG} and multiplicity one for $\mathcal{A}_{{\rm cusp}}(\Sp_6(\Q)\bs \Sp_6(\A))$ \cite[Corollary 8.5.4]{ChL}, any  
irreducible component of $\Sigma(\sigma,\tau)|_{\Sp_6(\A)}$ belongs to 
the global Arthur packet associated to the above $\psi$. 
Then, the component group of $\psi$ is given by $A_\psi=(\Z/2\Z)\alpha_{\tau_1[d_1]}\oplus (\Z/2\Z)\alpha_{\tau_2[d_2]}$. 
The Arthur character $\ve_\psi:A_\psi\lra \{\pm1\}$ is given by 
$$\ve_\psi(\alpha_{\tau_i[d_i]})=\ve(\frac 12,\pi_f\times {\rm Sym}^2 \pi_f)=
\ve(\frac 12,\pi_f)\ve(\frac 12,{\rm Sym}^3 \pi_f)=(-1)^k (-1)=-1
$$
for each $i=1,2$ since $k$ is even. Here $\ve(\frac 12,\pi_f\times {\rm Sym}^2 \pi_f)$ stands for 
the Rankin-Selberg epsilon factor. 
Let $\psi_\infty$ be the localization of $\psi$ at the archimedean place. Then, we have 
$$\psi_\infty=\rho_{4k-2}\boxtimes S_{1}\oplus  \rho_{2k-1}\boxtimes S_{2}\oplus 1
$$
where $S_d$ stands for the unique irreducible algebraic representation of $\SL_2(\C)$ of 
dimension $d$ and see \cite[Section 2.5]{Atobe1} for $\rho_\ast$. Let $\Pi_{\psi_\infty}$ 
be the corresponding local A-packet, given by the Adams-Johnson packet. As explained in \cite[Section 2.5]{Atobe1}, 
there is a bijection between $\Pi_{\psi_\infty}$  and the set $\mathcal{P}(1)\times 
\mathcal{P}(2)$ where $\mathcal{P}(d)=\{(p,q)\in \Z^2_{\ge 0}\ |\ p+q=d\}$. Thus, 
$|\Pi_{\psi_\infty}|=6$. Then, we can apply 
an explicit formula (\cite[p.49]{Atobe2} or \cite[Theorem 2.9]{Atobe1}) to compute the character for each element of $\Pi_{\psi_\infty}$. 
Then, only $w_1:=\{(0,1),(2,0)\}$ and $w_2=\{(0,1),(0,2)\}$ do match with the Arthur 
character $\ve_\psi$. Let $\pi_{w_i}$ be the corresponding discrete series in 
$\Pi_{\psi_\infty}$. Then, by using explicit description of $\pi_{w_i}$ given in \cite[Section 2.5]{Atobe1}, the Harish-Chandra parameter ${\rm HC}(\pi_{w_i})$ of $\pi_{w_i}$ is given by 
$${\rm HC}(\pi_{w_1})=(k,k-1,-(2k-1)),\ {\rm HC}(\pi_{w_2})=(-(k-1),-k,-(2k-1)).$$
Then ${\rm HC}(\pi_{w_1})$ corresponds to $\pi^{3,3}_{\infty,1}$, which is an irreducible 
discrete representation of $\Sp_6(\Bbb R)$ in \cite[Section 6.3.1]{CLJ}, where $r=x=2k-1$ and $s=y=1$ in terms of the notations there.  
On the other hand, $\pi_{w_2}$ is an anti-holomorphic discrete series of $\Sp_6(\Bbb R)$  and it never goes to 
$G_2(\R)$ under exceptional theta lifts (see \cite[the bottom line of p.45]{GG}).
Thus, we have $\Sigma(\sigma,\tau)_\infty=\pi_{w_1}=\pi^{3,3}_{\infty}$, which is an 
irreducible discrete series representation of $\GSp_6(\Bbb R)$ in the 
notation of \cite[Lemma 2.3]{CLJ}. 
Since $\Pi^G$ is a non-zero global exceptional cuspidal theta lifting by \cite{GG} 
and it is irreducible by \cite{NPS}, we conclude by \cite[Theorem 1.1]{Li} (see also \cite[Proposition 6.7]{CLJ})  
that 
$\Pi^{G}_\infty$ corresponds to 
the quaternionic discrete series $D_k$ in our notation.

\section{Appendix B: The Fourier-Jacobi expansion of Eisenstein series along $P$}\label{appenA}
Recall the Heisenberg parabolic subgroup $P=MN$ where the Heisenberg structure 
is given by (\ref{heisen2}). Let $\sP=\sM\ltimes N$ be the 
Jacobi group where $\sM=[M,M]\simeq SL_2$. Put 
$X=X_{\beta} X_{\alpha+\beta}=\{x=(x_1,x_2):=
x_{\beta}(x_2) x_{\alpha+\beta}(x_1)\in N\}$, 
$Y=X_{2\alpha+\beta} X_{3\alpha+\beta}=\{y=(y_1,y_2):=x_{2\alpha+\beta}(y_1)x_{3\alpha+\beta}(y_2)\in N\},$ 
and $Z=Z_N=X_{3\alpha+2\beta}=\{z=x_{3\alpha+2\beta}(\frac{1}{2}t)\in N,\ t\in 
\Bbb G_a\}$  
so that $N=XYZ$ and $X$ is a Lagrangian subgroup of $XY$.  
We write $v=v(x,y,z)=xyz,\ x\in X,\ y\in Y,\ z\in Z$ for each element of $N$. 
Put $\sigma(x,y):=\langle x,y \rangle= x_1y_2-3x_2y_1$ for $x=(x_1,x_2)\in X$ and $y=(y_1,y_2)\in Y$. 

For each $u\in \Q^\times$, let $\psi_u=\psi(u\ast )=\otimes'_{p}\psi_{u,p}$ where $\psi$ is the 
standard additive character on $Z_N(\A)\stackrel{\sim}{\lra} \A$, 
$x_{3\alpha+2\beta}(x)\mapsto x$.  
Let $\omega^\alpha_{\psi_u}=\otimes'\omega^\alpha_{u,p}=
\omega^\alpha_{u,\f}\otimes \omega^\alpha_{u,\infty}:
\widetilde{\sP(\A)}=\widetilde{\sM(\A)}\ltimes N(\A)\lra {\rm Aut}_\C(\mathcal{S}(X(\A)))$ be the Weil representation 
associated to $\psi_u(\frac{1}{2}\ast)$ acting on the Schwartz space $\mathcal{S}(X(\A))$. 
Though $M$ acts on $N$ as $\det^{-1}\otimes\rho_3$, 
it is easy to see that this action splits over $\sP(\A)$ and we have the action of  $\sP(\A)$ on $\mathcal{S}(X(\A))$. 
Explicitly, for each place $p\le \infty$ and $\Phi=\otimes'_{p\le\infty}\Phi_p=\Phi_{\f}
\otimes\Phi_{\infty}\in \mathcal{S}(X(\A))$, it is given by 
$$
\omega^\alpha_{u,p}(v(x,y,z))\Phi_p(t)=\Phi_p(t+x)
\psi_{u,p}\Big(\frac{1}{2}z+\sigma(t,y)+\frac{1}{2}\sigma(x,y)\Big),\ 
v(x,y,z)\in N(\Q_p),\ t\in X(\Q_p)  $$
$$
\omega^\alpha_{u,p}((m(\begin{pmatrix}
 a & 0 \\
 0 & a^{-1}
 \end{pmatrix}))\Phi_p(t_1,t_2)=|a|^{-2}_p\Phi_p\Big(\frac{t_1}{a^3},\frac{t_2}{a}\Big),\ 
 (t_1,t_2)\in X(\Q_p),\ a\in\Q^\times_p,\ 
 \chi_{u,p}(a):=\langle u,a \rangle_p
$$
$$
\omega^\alpha_{u,p}(m(\begin{pmatrix}
 1 & b \\
 0 & 1
 \end{pmatrix}))\Phi_p(t_1,t_2)=
 \psi_{u,p}(\frac{1}{2}\langle 
 (t_1, b t_1 + t_2, 0, 0),(0, 0, b^2 t_1 +2 b t_2 , b^3 t_1 + 3 b^2 t_2) \rangle)
\Phi_p(t_1,t_2+b t_1)$$
$$\phantom{xxxxxxxxxxxxxxx} =\psi_{u,p}(b^3 t_1^2+3 b^2 t_1 t_2 + 3 b t_2^2)\Phi_p(t_1,t_2+b t_1) ,\ b\in \Q_p,\ 
(t_1,t_2)\in X(\Q_p),
$$
$$
\omega^\alpha_{u,p}(w_\alpha)\Phi_p(t)=(F_S\Phi_p)(t),\ t\in X(\Q_p),\ 
(F_u\Phi_p)(t)=\ds\int_{X(\Q_p)} \Phi_p(x)\psi_{u,p}(\sigma(t,x))dx,
$$
where $dx$ means the Haar measure on $X(\Q_p)$ which is self-dual with respect to the Fourier transform $F_S$. In the first formula, $\frac{1}{2}z$ but not $z$ inside 
$\psi_{u,p}$ 
shows up because of the new coordinates ``$n_1$'' in (\ref{new-c}). 

For each $\Phi\in 
\mathcal{S}(X(\A))$, we define the theta function 
$$\Theta^\alpha_{\psi_u}(v(x,y,z)h;\Phi):=\sum_{\xi\in X(\Q)}\omega^\alpha_{\psi_u}(v(x,y,z)h)\Phi(\xi),\ 
v(x,y,z)\in N(\A),\ h\in\widetilde{M(\A)}$$
$$\phantom{xxxxx}=\sum_{\xi\in X(\Q)}(\omega^\alpha_{\psi_u}(h)\Phi)(x+\xi)\psi(u\sigma(\xi,y))
\psi\Big(\frac{u}{2}(z+\sigma(x,y))\Big).$$  

Recall the Eisenstein series $E(g;f)$ defined in (\ref{ES1}) where $f$ is a section of $I(s,\omega)$.  
Let $u\in\Q^\times$. For each $\Phi=\Phi_{\f}\otimes\Phi_{\infty}\in \mathcal{S}(X(\A))$, 
we define  
\begin{equation}\label{ESFJ}
E(h;f)_{\psi_u,\Phi}:=\int_{N(\Q)\bs N(\A)}E(vh;f)
\overline{\Theta^\alpha_{\psi_u}(vh;\Phi)}dv,\ h\in \sM(\A).
\end{equation}
For a character $\omega:
\Q^\times\bs \A^\times\lra \C^\times$, 
we define the space $I^\alpha_1(s,\omega)$ consisting of any $\sM(\widehat{\Z})
\times \SO(2)$-finite function $f:\sM(\A)\lra \C$ such that     
\begin{equation}\label{ps-alpha}
f(m(\begin{pmatrix}
 a & b \\
 0 & a^{-1}
 \end{pmatrix}) g)= \delta^{\frac{1}{2}}_{B_{\sM}}(m(\begin{pmatrix}
 a & b \\
 0 & a^{-1}
 \end{pmatrix}))|a|^s \omega(a)f(g),\ 
 a\in \A^\times,\ b\in \A,\ g\in \sM(\A).
 \end{equation}
The following theorem is an analogue of Theorem \ref{Eisen-exp}. Using 
the description of $P(\Q)\bs P(\Q)w P(\Q)$ for each 
$w\in P(\Q)\bs G_2(\Q)/P(\Q)=\{1,w_\beta,w_{\beta\alpha\beta},\iota \}$ (cf. \cite[Section 3]{JR}), 
it is similarly proved and therefore, we omit the proof. 
\begin{thm} Keep the notations as above. It holds 
$E(h;f)_{\psi_u,\Phi}=E(h;f)^{(1)}_{\psi_u,\Phi}+E(h;f)^{(2)}_{\psi_u,\Phi}$, 
$$E(h;f)^{(1)}_{\psi_u,\Phi}:=\sum_{\gamma\in B_{\sM}(\Q)\bs \sM(\Q)}R(\gamma h;f,\Phi),\  
E(h;f)^{(2)}_{\psi_u,\Phi}:=\int_{N(\A)}f(\iota vh)
\overline{\Theta^\alpha_{\psi_u}(vh;\Phi)}dv $$
where 
$$R(h;f,\Phi):=\ds\int_{Y(\A)Z(\A)}f(w_{\beta\alpha\beta}w_{\alpha}^{-1}v(0,y,z) w_\alpha h)
\overline{\omega^\alpha_{\psi_u}(h)\Phi(y)\psi(\frac{u}{2}z)}dydz
$$ 
is a section of 
$I^1_\alpha(3(s+\frac{1}{2}),\omega^3)$. Namely, $E(h;f)^{(1)}_{\psi_u,\Phi}$ is 
an Eisenstein series defined by a section $R(\ast;f,\Phi)$ on $\sM(\A)$. 
\end{thm}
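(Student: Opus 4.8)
The plan is to run the argument of Theorem \ref{Eisen-exp}, now relative to the double cosets $P(\Q)\bs G_2(\Q)/P(\Q)$ instead of $P(\Q)\bs G_2(\Q)/Q(\Q)$. First I would write $G_2(\Q)=\bigsqcup_{w}P(\Q)wP(\Q)$ with $w$ running over $\{1,w_\beta,w_{\beta\alpha\beta},\iota\}$, $\iota=w_{\beta\alpha\beta\alpha}w_\beta^{-1}$ representing the big cell, and describe each $P(\Q)\bs P(\Q)wP(\Q)$ as $w$ times suitable root subgroups of $N$ times a copy of $B_{\sM}(\Q)\bs\sM(\Q)$, following \cite[Section 3]{JR}. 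Inserting this decomposition into $E(h;f)_{\psi_u,\Phi}=\int_{N(\Q)\bs N(\A)}E(vh;f)\overline{\Theta^\alpha_{\psi_u}(vh;\Phi)}\,dv$ breaks the coefficient into four pieces, one for each cell.

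Next I would dispose of the two small cells. For $w=1$ the piece equals $\int_{N(\Q)\bs N(\A)}f(vh)\overline{\Theta^\alpha_{\psi_u}(vh;\Phi)}\,dv$; since $f$ is left $N(\A)$-invariant while $\Theta^\alpha_{\psi_u}$ transforms under $Z_N$ by the nontrivial central character attached to $\psi_u$, the integral over $Z_N(\Q)\bs Z_N(\A)$ vanishes, exactly as the $i=1$ term does in Theorem \ref{Eisen-exp}. For the cell of $w_\beta$, the standard unfolding — conjugating the root subgroups of $N$ across $w_\beta$, substituting variables, and feeding in the translation and Fourier-transform formulas for $\omega^\alpha_{u,p}$ — reduces the piece to an integral of $\psi$ of a nontrivial linear form over a compact unipotent quotient, hence to $0$; this is the same mechanism that gives $R^{(2)}=0$ in the proof of Theorem \ref{Eisen-exp}.

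The cell of $w_{\beta\alpha\beta}$ produces $E(h;f)^{(1)}_{\psi_u,\Phi}$. Unfolding along the $X$- and $Y$-directions and collapsing the integrals by the explicit $\omega^\alpha_{u,p}$-action formulas recorded above — translation by $N$, the $SL_2$-action on $\sM$, and the partial Fourier transform $F_u$ — rewrites this piece as $\sum_{\gamma\in B_{\sM}(\Q)\bs\sM(\Q)}R(\gamma h;f,\Phi)$; the combination $w_{\beta\alpha\beta}w_\alpha^{-1}$ and the flanking $w_\alpha$ in $R$ emerge from a substitution by $w_\alpha$ made to display $\sM$-equivariance, in parallel with the appearance of $\iota$ and $w_\beta$ in $R$ in the proof of Theorem \ref{Eisen-exp}. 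To check $R(\ast;f,\Phi)\in I^1_\alpha(3(s+\frac{1}{2}),\omega^3)$ I would compute its left transformation under $B_{\sM}(\A)$: conjugating a torus element $m(\diag(a,a^{-1}))$ and a unipotent element of $\sM$ past $w_{\beta\alpha\beta}w_\alpha^{-1}$ turns them into elements of $M$ acting on $Y$ and $Z$, and combining the $P$-equivariance of $f$ (with $\delta^{\frac{1}{2}}_P=|\det|^{\frac{3}{2}}$), the Jacobian of the change of variables in the $dy\,dz$-integral, the $\omega^\alpha_{\psi_u}$-transformation of $\Phi$ and the Weil-constant identity produces exactly the character $\delta^{\frac{1}{2}}_{B_{\sM}}|a|^{3(s+\frac{1}{2})}\omega(a)^3$, the cube and the factor $3$ reflecting the $\det^{-1}\otimes\rho_3$-action of $M$ on $N$ (recall $W\simeq\det^{-1}{\rm St}_2\otimes{\rm Sym}^3{\rm St}_2$). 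This is the $\sM$-analogue of the $R(\ell_a h;f,\Phi)$ computation in the proof of Theorem \ref{Eisen-exp}. Finally the open cell $w=\iota$ has trivial $\sM$-part, and after conjugating the relevant root subgroups by $\iota$ and integrating against the theta sum the piece collapses to $\int_{N(\A)}f(\iota vh)\overline{\Theta^\alpha_{\psi_u}(vh;\Phi)}\,dv=E(h;f)^{(2)}_{\psi_u,\Phi}$.

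The main obstacle is entirely bookkeeping: keeping track of how the Heisenberg group $N=XYZ$, with the modified coordinates $n_1$ and the $\det^{-1}\otimes\rho_3$-twist, conjugates across $w_\beta$, $w_{\beta\alpha\beta}$ and $\iota$, and matching the $\psi$-twists that appear with the explicit formulas for $\omega^\alpha_{\psi_u}$ so that the partial sums reorganize into $\omega^\alpha_{\psi_u}$-translates of $\Phi$ and theta functions on $\sM(\A)$. This is the $G_2$, Heisenberg-parabolic analogue of the computation in \cite[the proof of Theorem 7.1]{KY1}, and convergence and holomorphy in the region of absolute convergence of $E(g;f)$ are routine.
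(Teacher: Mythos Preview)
Your proposal is correct and follows exactly the approach the paper indicates: the paper omits the proof, stating only that it is proved similarly to Theorem \ref{Eisen-exp} using the description of $P(\Q)\bs P(\Q)wP(\Q)$ for $w\in\{1,w_\beta,w_{\beta\alpha\beta},\iota\}$ from \cite[Section 3]{JR}, which is precisely the decomposition and unfolding you outline. One small remark: since the Lagrangian $X$ here is two-dimensional, the Weil representation $\omega^\alpha_{\psi_u}$ splits over $\sP(\A)$ and no Weil constant appears in the torus action (cf.\ the explicit formula for $\omega^\alpha_{u,p}(m(\diag(a,a^{-1})))$), so your verification of the $B_{\sM}(\A)$-transformation of $R$ needs only the $P$-equivariance of $f$, the Jacobian, and the $|a|^{-2}_p$-factor from $\omega^\alpha_{\psi_u}$.
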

Since $E(h;f)^{(2)}_{\psi_u,\Phi}=\ds\int_{N(\A)}f(\iota v)
\overline{\Theta^\alpha_{\psi_u}(hv;\Phi)}dv$, it is some kind of theta function on $\sM(\A)$, 
and it has an interesting transformation law by symmetric cubic structure.

\section{Appendix C: Explicit realization of $G_2$ inside $SO(3,4)$}\label{appenB}
Let us keep the notations in Section \ref{pre}. 
We define the Lie algebra $\frak g_2$ of $G_2$ as in \cite[Section 2.2, p.382]{Po}. 
Let $X_\gamma$ be a generator of ${\rm Lie}({\rm Im}(x_\gamma))$ for each $\gamma \in 
\Phi(G_2)$. Let $\frak h$ be the Cartan algebra of $\frak g_2$.  
In terms of Pollack's notation,  we have
$$\frak h=\langle  E_{11}-E_{22},E_{22}-E_{33}\rangle,$$
$$X_\alpha=v_2,\ X_\beta=E_{12},\ X_{\alpha+\beta}=v_1,\ 
X_{2\alpha+\beta}=\delta_3,\ X_{3\alpha+\beta}=E_{23},\ X_{3\alpha+2\beta}=E_{13}, $$
$$X_{-\alpha}=-\delta_2,\ X_{-\beta}=E_{21}={}^t E_{12},\ X_{-(\alpha+\beta)}=-\delta_1,\ 
X_{-(2\alpha+\beta)}=-v_3,
$$
$$ X_{-(3\alpha+\beta)}=E_{32}={}^t E_{23},\ X_{-(3\alpha+2\beta)}=E_{31}={}^t E_{13},
$$
where the readers should be careful with the sign ``$-1$'' for some negative roots. 

Let $SO(3,4)$ be the special orthogonal group associated to 
$$S=
\left(
\begin{array}{ccc}
 0 & 0 & 1_2 \\
 0 & S_0 & 0 \\
 1_2 & 0 & 0
\end{array}
\right),\ S_0=\left(
\begin{array}{ccc}
 0 & 0 & 1 \\
 0 & -2 & 0 \\
 1 & 0 & 0
\end{array}
\right).$$
In \cite{Po}, Pollack realized $\frak g_2$ inside ${\rm Lie}(SO(3,4))$. 
For each of $E_{ii},\delta_i,v_i\ (1\le i\le 3)$, a matrix presentation  is given as follows; 
$$\tiny{E_{11}=\left(
\begin{array}{ccccccc}
 1 & 0 & 0 & 0 & 0 & 0 & 0 \\
 0 & 0 & 0 & 0 & 0 & 0 & 0 \\
 0 & 0 & 0 & 0 & 0 & 0 & 0 \\
 0 & 0 & 0 & 0 & 0 & 0 & 0 \\
 0 & 0 & 0 & 0 & 0 & 0 & 0 \\
 0 & 0 & 0 & 0 & 0 & -1 & 0 \\
 0 & 0 & 0 & 0 & 0 & 0 & 0
\end{array}
\right),\ E_{22}=\left(
\begin{array}{ccccccc}
 0 & 0 & 0 & 0 & 0 & 0 & 0 \\
 0 & 0 & 0 & 0 & 0 & 0 & 0 \\
 0 & 0 & -1 & 0 & 0 & 0 & 0 \\
 0 & 0 & 0 & 0 & 0 & 0 & 0 \\
 0 & 0 & 0 & 0 & 1 & 0 & 0 \\
 0 & 0 & 0 & 0 & 0 & 0 & 0 \\
 0 & 0 & 0 & 0 & 0 & 0 & 0
\end{array}
\right),E_{33}=\left(
\begin{array}{ccccccc}
 0 & 0 & 0 & 0 & 0 & 0 & 0 \\
 0 & -1 & 0 & 0 & 0 & 0 & 0 \\
 0 & 0 & 0 & 0 & 0 & 0 & 0 \\
 0 & 0 & 0 & 0 & 0 & 0 & 0 \\
 0 & 0 & 0 & 0 & 0 & 0 & 0 \\
 0 & 0 & 0 & 0 & 0 & 0 & 0 \\
 0 & 0 & 0 & 0 & 0 & 0 & 1
\end{array}
\right)},$$

$$X_\alpha=v_2=\left(
\begin{array}{ccccccc}
 0 & 0 & 0 & 0 & 0 & 0 & 0 \\
 -1 & 0 & 0 & 0 & 0 & 0 & 0 \\
 0 & 0 & 0 & 0 & 0 & 0 & 0 \\
 0 & 0 & -1 & 0 & 0 & 0 & 0 \\
 0 & 0 & 0 & -2 & 0 & 0 & 0 \\
 0 & 0 & 0 & 0 & 0 & 0 & 1 \\
 0 & 0 & 0 & 0 & 0 & 0 & 0
\end{array}
\right),\ X_\beta=E_{12}=
\left(
\begin{array}{ccccccc}
 0 & 0 & 0 & 0 & -1 & 0 & 0 \\
 0 & 0 & 0 & 0 & 0 & 0 & 0 \\
 0 & 0 & 0 & 0 & 0 & 1 & 0 \\
 0 & 0 & 0 & 0 & 0 & 0 & 0 \\
 0 & 0 & 0 & 0 & 0 & 0 & 0 \\
 0 & 0 & 0 & 0 & 0 & 0 & 0 \\
 0 & 0 & 0 & 0 & 0 & 0 & 0
\end{array}
\right),$$

$$X_{\alpha+\beta}=v_1=\left(
\begin{array}{ccccccc}
 0 & 0 & 0 & 2 & 0 & 0 & 0 \\
 0 & 0 & 0 & 0 & -1 & 0 & 0 \\
 0 & 0 & 0 & 0 & 0 & 0 & 1 \\
 0 & 0 & 0 & 0 & 0 & 1 & 0 \\
 0 & 0 & 0 & 0 & 0 & 0 & 0 \\
 0 & 0 & 0 & 0 & 0 & 0 & 0 \\
 0 & 0 & 0 & 0 & 0 & 0 & 0
\end{array}
\right),\ 
X_{2\alpha+\beta}=\delta_3=\left(
\begin{array}{ccccccc}
 0 & 0 & -1 & 0 & 0 & 0 & 0 \\
 0 & 0 & 0 & 2 & 0 & 0 & 0 \\
 0 & 0 & 0 & 0 & 0 & 0 & 0 \\
 0 & 0 & 0 & 0 & 0 & 0 & 1 \\
 0 & 0 & 0 & 0 & 0 & 1 & 0 \\
 0 & 0 & 0 & 0 & 0 & 0 & 0 \\
 0 & 0 & 0 & 0 & 0 & 0 & 0
\end{array}
\right),$$

$$X_{3\alpha+\beta}=E_{23}=\left(
\begin{array}{ccccccc}
 0 & 0 & 0 & 0 & 0 & 0 & 0 \\
 0 & 0 & -1 & 0 & 0 & 0 & 0 \\
 0 & 0 & 0 & 0 & 0 & 0 & 0 \\
 0 & 0 & 0 & 0 & 0 & 0 & 0 \\
 0 & 0 & 0 & 0 & 0 & 0 & 1 \\
 0 & 0 & 0 & 0 & 0 & 0 & 0 \\
 0 & 0 & 0 & 0 & 0 & 0 & 0
\end{array}
\right),\ X_{3\alpha+2\beta}=E_{13}=\left(
\begin{array}{ccccccc}
 0 & 0 & 0 & 0 & 0 & 0 & -1 \\
 0 & 0 & 0 & 0 & 0 & 1 & 0 \\
 0 & 0 & 0 & 0 & 0 & 0 & 0 \\
 0 & 0 & 0 & 0 & 0 & 0 & 0 \\
 0 & 0 & 0 & 0 & 0 & 0 & 0 \\
 0 & 0 & 0 & 0 & 0 & 0 & 0 \\
 0 & 0 & 0 & 0 & 0 & 0 & 0
\end{array}
\right),$$
$$X_{-\alpha}=-\delta_2=\left(
\begin{array}{ccccccc}
 0 & -1 & 0 & 0 & 0 & 0 & 0 \\
 0 & 0 & 0 & 0 & 0 & 0 & 0 \\
 0 & 0 & 0 & -2 & 0 & 0 & 0 \\
 0 & 0 & 0 & 0 & -1 & 0 & 0 \\
 0 & 0 & 0 & 0 & 0 & 0 & 0 \\
 0 & 0 & 0 & 0 & 0 & 0 & 0 \\
 0 & 0 & 0 & 0 & 0 & 1 & 0
\end{array}
\right),\ X_{-(\alpha+\beta)}=-\delta_1=
\left(
\begin{array}{ccccccc}
 0 & 0 & 0 & 0 & 0 & 0 & 0 \\
 0 & 0 & 0 & 0 & 0 & 0 & 0 \\
 0 & 0 & 0 & 0 & 0 & 0 & 0 \\
 1 & 0 & 0 & 0 & 0 & 0 & 0 \\
 0 & -1 & 0 & 0 & 0 & 0 & 0 \\
 0 & 0 & 0 & 2 & 0 & 0 & 0 \\
 0 & 0 & 1 & 0 & 0 & 0 & 0
\end{array}
\right),$$
$$X_{-(2\alpha+\beta)}=-v_3=
\left(
\begin{array}{ccccccc}
 0 & 0 & 0 & 0 & 0 & 0 & 0 \\
 0 & 0 & 0 & 0 & 0 & 0 & 0 \\
 -1 & 0 & 0 & 0 & 0 & 0 & 0 \\
 0 & 1 & 0 & 0 & 0 & 0 & 0 \\
 0 & 0 & 0 & 0 & 0 & 0 & 0 \\
 0 & 0 & 0 & 0 & 1 & 0 & 0 \\
 0 & 0 & 0 & 2 & 0 & 0 & 0
\end{array}
\right)$$
and $X_{-\ast}={}^t X_\ast$ for $\ast\in\{\beta,3\alpha+\beta,3\alpha+2\beta\}$. 

We define, for $\gamma\in \Phi(G_2)$,
$$x_\gamma(u):=\exp(u X_\gamma):=\ds\sum_{n\ge 0}\frac{u^n X^n_\gamma}{n!},\ 
u\in \mathbb{G}_a.
$$ 

Put, for $t\in GL_1$,
$$f_{E_{11}}(t)=\diag(t,1,1,1,1,t^{-1},1),\ f_{E_{22}}(t)=\diag(1,1,t^{-1},1,t,1,1),$$
$$f_{E_{33}}(t)=\diag(1,t^{-1},1,1,1,1,t),\ 
\exp(\log t(E_{ii}-E_{jj})):=f_{E_{ii}}(t)f_{E_{jj}}(t)^{-1}.
$$ 

Let $P=MN$ be the Heisenberg parabolic subgroup such that 
$$\frak n:={\rm Lie}N=\langle \ X_\beta=E_{12},\ X_{\alpha+\beta}=v_1,\ 
X_{2\alpha+\beta}=\delta_3,\ X_{3\alpha+\beta}=E_{23},\ X_{3\alpha+2\beta}=E_{13}  \rangle,
$$
$$\frak m:={\rm Lie}M=\langle E_{11}-E_{22},E_{22}-E_{33},X_\alpha=v_2,X_{-\alpha}=\delta_2 \rangle.
$$
Then, the root spaces give a structure of $N$ in $SO(3,4)$ as 
$$n(a_1,a_2,a_3,a_4,t):=
\exp (a_1 X_{\beta})
\exp (a_2 X_{\alpha+\beta})
\exp (a_3 X_{2\alpha+\beta})
\exp (a_4 X_{3\alpha+\beta})\exp (t X_{3\alpha+2\beta})=$$
$$\left(
\begin{array}{ccccccc}
 1 & 0 & -a_3 & 2 a_2 & -a_1 & a_2^2-a_1 a_3 & 2 a_2 a_3-a_1 a_4-t \\
 0 & 1 & -a_4 & 2 a_3 & -a_2 & -a_2 a_3+t & a_3^2-a_2 a_4 \\
 0 & 0 & 1 & 0 & 0 & a_1 & a_2 \\
 0 & 0 & 0 & 1 & 0 & a_2 & a_3 \\
 0 & 0 & 0 & 0 & 1 & a_3 & a_4 \\
 0 & 0 & 0 & 0 & 0 & 1 & 0 \\
 0 & 0 & 0 & 0 & 0 & 0 & 1
\end{array}
\right).
$$
The Levi factor $M$ is realized by the Zariski closure of the set consisting of 
$$\exp(\log a(E_{22}-E_{33}))\exp(\log d(E_{11}-E_{22}))\exp(b v_2)\exp(-c\delta_2)=$$
$$\left(
\begin{array}{ccccccc}
 d & -c d & 0 & 0 & 0 & 0 & 0 \\
 -a b & a (b c+1) & 0 & 0 & 0 & 0 & 0 \\
 0 & 0 & \frac{d}{a} & -\frac{2 c d}{a} & \frac{c^2 d}{a} & 0 & 0 \\
 0 & 0 & -b & 2 b c+1 & -c (b c+1) & 0 & 0 \\
 0 & 0 & \frac{a b^2}{d} & -\frac{2 a b (b c+1)}{d} & \frac{a (b c+1)^2}{d} & 0 & 0 \\
 0 & 0 & 0 & 0 & 0 & \frac{b c+1}{d} & \frac{b}{d} \\
 0 & 0 & 0 & 0 & 0 & \frac{c}{a} & \frac{1}{a} \\
\end{array}
\right).$$
The birational transformation 
$\left(
\begin{array}{cc}
 a & b \\
 c & d
\end{array}
\right)\mapsto 
\left(
\begin{array}{cc}
 \frac{ad-bc}{a} & \frac{ac}{ad-bc}  \\
 -\frac{b}{a} & a
\end{array}
\right)$ 
yields another expression $m':GL_2\stackrel{\sim}{\lra}M$ given by   
$$m'(\left(
\begin{array}{cc}
 a & b \\
 c & d
\end{array}
\right))=\left(
\begin{array}{ccccccc}
 a & b & 0 & 0 & 0 & 0 & 0 \\
 c & d & 0 & 0 & 0 & 0 & 0 \\
 0 & 0 & \frac{a^2}{ad-bc} & \frac{2 a b}{ad-bc} & \frac{b^2}{ad-bc} & 0 & 0 \\
 0 & 0 & \frac{a c}{ad-bc} & \frac{b c+a d}{ad-bc} & \frac{b d}{ad-bc} & 0 & 0 \\
 0 & 0 & \frac{c^2}{ad-bc} & \frac{2 c d}{ad-bc} & \frac{d^2}{ad-bc} & 0 & 0 \\
 0 & 0 & 0 & 0 & 0 & \frac{d}{ad-bc} & -\frac{c}{ad-bc} \\
 0 & 0 & 0 & 0 & 0 & -\frac{b}{ad-bc} & \frac{a}{ad-bc}
\end{array}
\right).$$
Then, the coordinates of $M$ in Section \ref{pre} is defined by 
$$m(\left(
\begin{array}{cc}
 a & b \\
 c & d
\end{array}
\right))=m'(\left(
\begin{array}{cc}
 d & c \\
 b & a
\end{array}
\right))=\left(
\begin{array}{ccccccc}
 d & c & 0 & 0 & 0 & 0 & 0 \\
 b & a & 0 & 0 & 0 & 0 & 0 \\
 0 & 0 & \frac{d^2}{a d-b c} & \frac{2 c d}{a d-b c} & \frac{c^2}{a d-b c} & 0 & 0 \\
 0 & 0 & \frac{b d}{a d-b c} & \frac{a d+b c}{a d-b c} & \frac{a c}{a d-b c} & 0 & 0 \\
 0 & 0 & \frac{b^2}{a d-b c} & \frac{2 a b}{a d-b c} & \frac{a^2}{a d-b c} & 0 & 0 \\
 0 & 0 & 0 & 0 & 0 & \frac{a}{a d-b c} & -\frac{b}{a d-b c} \\
 0 & 0 & 0 & 0 & 0 & -\frac{c}{a d-b c} & \frac{d}{a d-b c} \\
\end{array}
\right).$$

Next we consider the Siegel parabolic subgroup $Q=LU$. 
$$\frak u:={\rm Lie}\hspace{0.5mm}U=\langle \ X_\alpha=v_2,\ X_{\alpha+\beta}=v_1,\ 
X_{2\alpha+\beta}=\delta_3,\ X_{3\alpha+\beta}=E_{23},\ X_{3\alpha+2\beta}=E_{13}  \rangle,
$$
$$\frak l:={\rm Lie}\hspace{0.5mm}L=\langle E_{11}-E_{22},E_{22}-E_{33},X_\beta=E_{12},X_{-\beta}=E_{21}={}^tE_{12} \rangle .$$
Then, the root spaces give a structure of $U$ in $SO(3,4)$ as 
$$u(a_1,a_2,a_3,a_4,z):=
\exp (a_1 X_{\alpha})
\exp (a_2 X_{\alpha+\beta})
\exp (a_3 X_{2\alpha+\beta})
\exp (a_4 X_{3\alpha+\beta})\exp (z X_{3\alpha+2\beta})=$$
$$
\left(
\begin{array}{ccccccc}
 1 & 0 & -a_3 & 2 a_2 & 0 & a_2^2 & 2 a_2 a_3-z \\
 -a_1 & 1 & a_1 a_3-a_4 & -2 (a_1 a_2-a_3) & -a_2 & -a_1 a_2^2-a_2 a_3+z & -2 a_1 a_2 a_3+a_1 z-a_2 a_4+a_3^2 \\
 0 & 0 & 1 & 0 & 0 & 0 & a_2 \\
 0 & 0 & -a_1 & 1 & 0 & a_2 & a_3-a_1 a_2 \\
 0 & 0 & a_1^2 & -2 a_1 & 1 & a_3-2 a_1 a_2 & a_1^2 a_2-2 a_1 a_3+a_4 \\
 0 & 0 & 0 & 0 & 0 & 1 & a_1 \\
 0 & 0 & 0 & 0 & 0 & 0 & 1 \\
\end{array}
\right).
$$
The Levi factor $L$ is realized by the Zariski closure of the set consisting of 
$$\exp(\log a(E_{22}-E_{33}))\exp(\log d(E_{11}-E_{22}))\exp(b E_{12})\exp(c E_{21})=$$
$$
\left(
\begin{array}{ccccccc}
 (1-b c)d) & 0 & 0 & 0 & -b d & 0 & 0 \\
 0 & a & 0 & 0 & 0 & 0 & 0 \\
 0 & 0 & \frac{d (1-b c)}{a} & 0 & 0 & \frac{b d}{a} & 0 \\
 0 & 0 & 0 & 1 & 0 & 0 & 0 \\
 \frac{a c}{d} & 0 & 0 & 0 & \frac{a}{d} & 0 & 0 \\
 0 & 0 & -\frac{c}{d} & 0 & 0 & \frac{1}{d} & 0 \\
 0 & 0 & 0 & 0 & 0 & 0 & \frac{1}{a} \\
\end{array}
\right).$$
The birational transformation  
$\left(
\begin{array}{cc}
 a & b \\
 c & d
\end{array}
\right)\mapsto \left(
\begin{array}{cc}
 ad-bc & -\frac{bd}{ad-bc} \\
 -\frac{c}{d} & \frac{ad-bc}{d} 
\end{array}
\right)$
yields another expression $l:GL_2\stackrel{\sim}{\lra}L$ given by   
$$l(\left(
\begin{array}{cc}
 a & b \\
 c & d
\end{array}
\right))=\left(
\begin{array}{ccccccc}
 a & 0 & 0 & 0 & b & 0 & 0 \\
 0 & a d-b c & 0 & 0 & 0 & 0 & 0 \\
 0 & 0 & \frac{a}{a d-b c} & 0 & 0 & -\frac{b}{a d-b c} & 0 \\
 0 & 0 & 0 & 1 & 0 & 0 & 0 \\
 c & 0 & 0 & 0 & d & 0 & 0 \\
 0 & 0 & -\frac{c}{a d-b c} & 0 & 0 & \frac{d}{a d-b c} & 0 \\
 0 & 0 & 0 & 0 & 0 & 0 & \frac{1}{a d-bc}
\end{array}
\right).$$
Finally, the Weyl elements defined in Section \ref{pre} is explicitly given by 
$$w_\alpha=m\left(
\begin{array}{cc}
 0 &-1 \\
 1 & 0
\end{array}
\right),\ w_\beta=\ell\left(
\begin{array}{cc}
 0 &1 \\
 -1 & 0
\end{array}
\right).
$$

%\section{Appendix C: by T. Ikeda}
%\center{Gross' conjecture is a special case of GGP conjecture }

%\section{Appendix D: Letter to D. Pollack by B. Gross}

\end{document}